\setlist{leftmargin=15pt,labelindent=15pt}
\setlist[enumerate]{align=left}
\newcommand*\bigcdot{\mathpalette\bigcdot@{.5}}
\newcommand*\bigcdot@[2]{\mathbin{\vcenter{\hbox{\scalebox{#2}{$\m@th#1\bullet$}}}}}
\newcommand{\RN}[1]{%
	\textup{\uppercase\expandafter{\romannumeral#1}}%
}
\def\pa{\partial}
\def\ve{\varepsilon}
\def\Re{ \mathrm{Re}}
\def\Im{ \mathrm{Im}}
\def\C{\mathbb{C}}
\def\R{\mathbb{R}}
\def\erf{\textup{erf}}
\newcommand{\bfE}{\mathbf{E}}
\newcommand{\SC}{\sigma_{\mathrm{sc}}}
\newcommand{\MP}{\sigma_{\mathrm{MP}}}
\newcommand{\MPa}{\sigma_{\mathrm{MP}(\alpha)}}
\newcommand{\bfR}{\mathbf{R}}
\newcommand{\calW}{{\mathcal W}}
\newcommand{\bfB}{{\mathbf B}}
\newcommand{\bfP}{{\mathbf P}}
\newcommand{\bfK}{{\mathbf K}}
\newcommand{\Ai}{\operatorname{Ai}}
\newcommand{\erfc}{\operatorname{erfc}}
\newcommand{\fii}{{\varphi}}
\newcommand{\eps}{{\varepsilon}}
\newcommand{\re}{\operatorname{Re}}
\newcommand{\im}{\operatorname{Im}}
\newcommand{\Ham}{H}
\newcommand{\Prob}{{\mathbf{P}}}
\renewcommand{\d}{{\partial}}
\newcommand{\dbar}{\bar{\partial}}
\newcommand{\1}{\mathbf{1}}
\newcommand{\dist}{\operatorname{dist}}
\newcommand{\supp}{\operatorname{supp}}
\newcommand{\Lap}{\Delta}
\theoremstyle{plain}
\newtheorem*{thm*}{Theorem}
\newtheorem{thm}{Theorem}[section]
\newtheorem{lem}[thm]{Lemma}
\theoremstyle{definition}
\newtheorem*{egs*}{Examples}
\newtheorem*{def*}{Definition}
\theoremstyle{remark}
\newtheorem*{eg*}{Example}
\newtheorem*{not*}{Notation}
\newtheorem*{rmk*}{Remark}
\newtheorem*{rmks*}{Remarks}
\numberwithin{equation}{section}
\begin{document}
\title[The bandlimited Coulomb gas]{Almost-Hermitian random matrices and bandlimited point processes}

\author{Yacin Ameur}

\address{Yacin Ameur\\
Department of Mathematics\\
Faculty of Science\\
Lund University\\
P.O. BOX 118\\
221 00 Lund\\
Sweden}
\email{Yacin.Ameur@math.lu.se}

\author{Sung-Soo Byun}
  \address{Center for Mathematical Challenges, Korea Institute for Advanced Study, 85 Hoegiro, Dongdaemun-gu, Seoul 02455, Republic of Korea}


  \email{sungsoobyun@kias.re.kr}


\keywords{Almost-Hermitian GUE/LUE, bandlimited Coulomb gas, cross-section convergence, 
Ward's equation, translation invariance}

\subjclass[2020]{Primary 60B20;
	Secondary 33C45,
}
\thanks{Sung-Soo Byun was supported by Samsung Science and Technology Foundation (SSTF-BA1401-51), by a KIAS Individual Grant (SP083201) via the Center for Mathematical Challenges at Korea Institute for Advanced Study, by the National Research Foundation of Korea (NRF-2019R1A5A1028324), and by the POSCO TJ Park Foundation (POSCO Science Fellowship).}

\begin{abstract} We study the distribution of eigenvalues of almost-Hermitian random matrices associated with the classical Gaussian and Laguerre unitary ensembles. In the almost-Hermitian setting, which was pioneered by Fyodorov, Khoruzhenko and Sommers in the case of GUE, the eigenvalues are not confined to the real axis, but instead have imaginary parts which vary within a narrow ``band'' about the real line, of height proportional to $\tfrac 1 N$, where $N$ denotes the size of the matrices.

We study vertical cross-sections of the 1-point density as well as microscopic scaling limits, and we compare with other results which have appeared in the literature in recent years. Our approach uses Ward's equation and a property which we call ``cross-section convergence'', which relates the large-$N$ limit of the cross-sections of the density of eigenvalues with the equilibrium density for the corresponding Hermitian ensemble: the semi-circle law for GUE and the Marchenko-Pastur law for LUE. As an application of our approach, we prove the bulk universality of the almost-circular ensembles.
\end{abstract}

\maketitle

\section{Introduction} \label{intro}

This note is the result of investigations of eigenvalue ensembles in the almost-Hermitian regime. In particular we study almost-Hermitian counterparts of the Gaussian Unitary Ensemble and of the Laguerre Unitary Ensemble (in the singular case), which we call
\emph{Almost-Hermitian GUE (AGUE)} and \emph{Almost-Hermitian LUE (ALUE)}, respectively.

Almost-Hermitian (or ``weakly non-Hermitian'') random matrices were introduced by Fyodorov, Khoruzhenko, and Sommers in the papers \cite{FKS,FKS2,FKS3}. Their work concerns eigenvalues of elliptic Ginibre matrices where the droplets collapse to the interval $[-2,2]$ at a suitable rate. We take this as our model
for almost-Hermitian GUE. The emergent
structure is physically interesting, and has been subject of several investigations in recent years, see in particular \cite{ACV} and the references there.

As is to be expected, the AGUE tends to follow Wigner's semi-circle law, and the ALUE tends to follow the Marchenko-Pastur law.
However, in the almost-Hermitian regime,
the particles are not confined to the real axis,
but instead vary randomly within a thin band about $\R$ of height proportional to $\tfrac 1 N$, where $N$ denotes the size of the matrices. See Figure \ref{fig0}.

In both cases, the droplets are similar, highly eccentric elliptic discs.
Yet, the particle distributions are quite different, with heavy clustering  going on
near the left edge for ALUE, in contrast with the much more evenly spread configurations for AGUE (compare Figure \ref{Fig_ANWB}). This reflects the singular behaviour of the critical Marchenko-Pastur density.

On the other hand, we shall prove that there is a universal microscopic behaviour in the bulk, given by a kernel found in \cite{FKS,FKS3}, which interpolates between
the sine-kernel and the Ginibre kernel.

We shall introduce a class of bandlimited ensembles in the interface between dimensions 1 and 2, which we use to formulate and prove certain theorems of a general character. However, ultimately, we shall resort to asymptotic properties of the respective orthogonal polynomials (Hermite and Laguerre) in order to deduce full universality
results.

Our microscopic analysis uses the theory for Ward's equation from the paper \cite{AKM}. In particular, we shall find a close connection between the scaling limits in \cite{ACV,FKS,FKS3} and the translation invariant solutions to Ward's equation, which are characterized in \cite{AKM}. (We also refer to a recent work \cite{ABK2} for an implementation of Ward's equation in the study of translation invariant scaling limits of planar symplectic ensembles.)

\begin{figure}[h]
		\begin{center}
			\includegraphics[width=4.5in,height=2.415in]{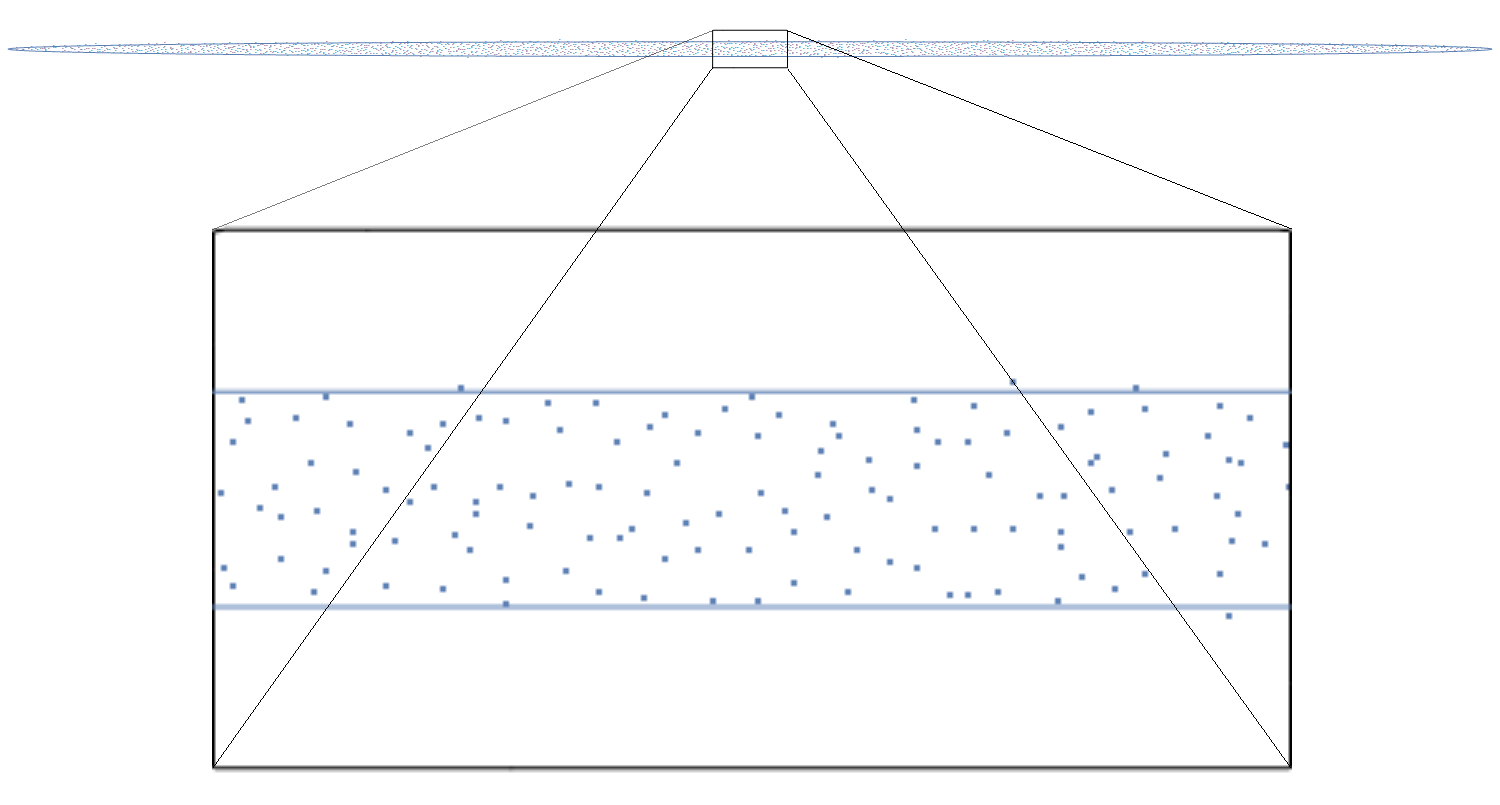}	
		\end{center}
		\caption{A sample from the Almost-Hermitian GUE.}
\label{fig0}
\end{figure}

\subsubsection*{Notational conventions.}
A typical point in the complex plane $\C$ is denoted $\zeta=\xi+i\eta$.

We write $\d=\tfrac 12(\tfrac \d {\d \xi}-i\tfrac \d {\d \eta})$ and $\dbar=\tfrac 1 2 (\tfrac \d
{\d\xi}+i\tfrac \d {\d\eta})$ for the complex derivatives and
$\Delta=\d\dbar=\tfrac 1 4(\tfrac {\d^2}{\d \xi^2}+\tfrac {\d^2}{\d \eta^2})$ for $\tfrac 1 4$ times the usual Laplacian.

We write $dA(\zeta)=\tfrac 1 \pi\, d^{\,2}\,\zeta=\tfrac 1 \pi\,d\xi\,d\eta$ for $\tfrac 1 \pi$ times Lebesgue measure on $\C$ and
$dA_N=(dA)^{\otimes N}$ for the corresponding volume measure in $\C^N$.
When $\mu$ is a measure and $f$ is a function, we write $\mu(f)$ for $\int f\, d\mu$.

\subsection{Purpose and aims} The theory of almost-Hermitian ensembles consists at this point largely of a number of important model cases, which are typically studied using properties of specific orthogonal polynomials.
One could say that the present work grew out of a desire to classify and provide a ``unifying structure'' to this theory.

The first question is how to suitably define
a general class of ``almost-Hermitian ensembles'' which has desired potential theoretic properties. This question does not depend on a determinantal structure, and thus is more natural to study
in a context of $\beta$-ensembles. We shall isolate a suitable class of potentials and establish several new results concerning convergence to the equilibrium. This convergence forms the starting
point for our further analysis of almost-Hermitian ensembles. (This approach is analogous with established practices in dimensions 1 and 2, and could in that context be viewed as a first step towards a Gaussian field theory in the almost-Hermitian case).

We also study (and survey) various types of microscopic limits in the almost-Hermitian regime, which typically interpolate between well-known limits in dimensions 1 and 2. In our approach using the
loop equation and compactness, questions about particular scaling limits are reduced to easier questions, such as establishing their apriori translation invariance.

That being said, with the exception of the so-called ``almost-circular ensembles'' below, establishing translation invariance is in general a subtle matter. While one might expect that translation invariance should hold quite generally in the bulk, we will not resolve that issue here. (The problem that arises concerns uniqueness of solution to Ward's equation given certain apriori conditions, and is closely related with the one discussed in \cite{AKM}.)

Instead of striving for largest possible generality, we take a detailed look at the model cases, where the analysis is already non-trivial and offers new angles. Apart from leading to new proofs of results which were rigorously established only fairly recently, our
analysis has some new ingredients that have led to spin-offs in for example \cite{ABK2}.

For comparison, the universality problem in the normal matrix setting was settled only recently in a fairly general setting in \cite{HW}; 
the techniques used there break down in the almost-Hermitian setting. (For example, the conformal mappings of the exterior of a thin droplet to the exterior disc have large derivatives at the end-points: the derivatives tend to infinity as the droplets collapse to an interval.)

\subsection{General setup} \label{Setup}

Given an $N$-point configuration  $\{\zeta_j\}_1^N\subset \C$ and a suitable function (external potential) $Q_N:\C\to\R\cup\{+\infty\}$, we associate the total energy
\begin{equation}
\label{hamn}\Ham_N=\sum_{j\ne k}\log\frac 1 {|\zeta_j-\zeta_k|}+N\sum_{j=1}^N Q_N(\zeta_j).
\end{equation}

We next fix a positive parameter $\beta$ and form
the canonical Gibbs measure
\begin{align}\label{prob}d\Prob_N^\beta=\tfrac 1 {Z_N^\beta}e^{-\beta \Ham_N}\, dA_N, \qquad (Z_N^\beta=\int_{\C^N}e^{-\beta\cdot \Ham_N}\, dA_N).
\end{align}

We assume that
the potentials $Q_N$ increase monotonically, as $N\to\infty$, to an external potential $V$ which obeys
$V(\zeta)=+\infty$ when $\zeta\not\in \R$.

Here and throughout, ``external potential'' means a lower semicontinuous function $W:\C\to\R\cup\{+\infty\}$ which is finite on some set of positive capacity and satisfies
\begin{equation}\label{uniform}\liminf_{\zeta\to\infty}\tfrac {W(\zeta)}{2\log|\zeta|}\ge k\end{equation}
where $k>1$ is a fixed constant. We shall assume that this holds, with the same $k$, for all $W=Q_N$ and for $W=V$.
Then, to a compactly supported Borel probability measure $\mu$ on $\C$, we associate the logarithmic $W$-energy
\begin{equation}\label{Ilog}I_W[\mu]=\iint_{\C^2}(\log\tfrac 1 {|\zeta-\eta|})\, d\mu(\zeta)\, d\mu(\eta)+\mu(W).\end{equation}
In addition to \eqref{uniform}, we shall sometimes assume the extra growth condition
\begin{equation} \label{extra growth}
	W(\zeta) \ge (1+\delta) \log (1+|\zeta|^2) -C \qquad \zeta \in \C
\end{equation}
for some small $\delta >0 $ and $C \in \R$.

By \cite[Theorem I.1.3]{ST}, there exists a unique equilibrium measure $\sigma_W$, i.e., a compactly supported Borel probability measure which minimizes the
energy \eqref{Ilog}. The support $S_W=\supp\sigma_W$ is called the \textit{droplet} in external potential $W$.

We shall always assume that each $Q_N$ is smooth in a neighbourhood of the droplet $S_{Q_N}$, except possibly for some singular points at which $\Delta Q_N=+\infty$. The singularities are
 assumed to be benign in the sense that the basic structure theorem for equilibrium measures in \cite[Theorem II.1.3]{ST} applies. Namely, the equilibrium measure $\sigma_{Q_N}$ is absolutely continuous with respect to $dA$ and
$d\sigma_{Q_N}=\Delta Q_N\cdot \1_{S_{Q_N}}\, dA.$

Finally, we assume that the equilibrium measure $\sigma_V$, which is supported in $\R$ (and equals to the weak limit of the measures $\sigma_{Q_N}$, see Subsection \ref{tld}), is
 absolutely continuous with respect to Lebesgue measure on $\R$. By slight of abuse of notation, we denote its linear density by $\sigma_V(\xi)$ (so $d\sigma_V(\xi)=\sigma_V(\xi)\, d\xi$).
The probability density $\sigma_V(\xi)$ is assumed to be continuous on $\R$, again with the possible exception of finitely many singular points.

\subsection{Limiting droplet and cross-sections}
Denote by $\{\zeta_j\}_1^N$ a random sample with respect to the Boltzmann-Gibbs measure \eqref{prob}, and write the 1-point function as
\begin{equation*}\bfR_N^{\,\beta}(\zeta)=\lim_{\eps\to 0}\frac {\bfE_N^{\,\beta}(\# (\{\zeta_j\}_1^N\cap D(\zeta,\eps)))}{\eps^2}.\end{equation*}
($D(\zeta,\eps)$ is the open disc with center $\zeta$ and radius $\eps$.)

Now fix an arbitrary zooming-point $p\in\R$ and define the blow-up map
\begin{equation}\label{blowup}\Gamma_{N,p}:\zeta\mapsto z,\qquad z=\Gamma_{N,p}(\zeta)={\scriptstyle \sqrt{N\Delta Q_N(p)}}\cdot (\zeta-p).\end{equation}

Let $\{z_j\}_1^N$ be the rescaled sample, $z_j=\Gamma_{N,p}(\zeta_j)$.

The potentials $Q_N$ used in this note have the property that the Laplacian $\Delta Q_N(p)$ is proportional to $N$.
More precisely, we shall assume throughout that the limit
\begin{equation}\label{rhop}\rho(p)=\lim_{N\to\infty}\sqrt{\tfrac N {\Delta Q_N(p)}}\end{equation}
is well-defined and finite for each $p\in S_V$.
We next form the function
\begin{equation}\label{ap}a(p)=\tfrac \pi 2 \cdot\rho(p)\cdot \sigma_V(p),\qquad p\in\R\end{equation}
with the understanding that $a(p)=0$ if $p\not\in S_V$.
The function \eqref{ap} has the following
geometric interpretation: let
\begin{equation}\label{nuber}
\gamma_N(p)=\Gamma_{N,p}(S_{Q_N})\cap (i\R)
\end{equation}
be the cross-section of the rescaled droplet with the imaginary axis. If we impose the symmetry $Q_N(\bar{\zeta})=Q_N(\zeta)$, and some other natural conditions (Subsection \ref{tld}), the number $a(p)$ represents the \emph{limiting height of the rescaled cross-section}, i.e.,
\begin{equation}\label{aplength}a(p)=\tfrac 1 2 \lim_{N\to\infty}|\gamma_N(p)|, \qquad p\in\R.\end{equation}

We shall also use the \textit{statistical} cross-sections $c_N^\beta$ of the ensemble,
\begin{equation}\label{decs}c_N^\beta(p)=\tfrac 1 N\int_\R\bfR_N^\beta(p+i\eta)\, d\eta,\qquad p\in\R.\end{equation}
A fairly general convergence result, asserting that (under some additional assumptions) $c_N^\beta\to \pi\sigma_V$ in the sense of measures on $\R$, is given in Theorem \ref{joh1} below.

The $1$-point function of the rescaled system $\{z_j\}_1^N$ is denoted by
\begin{equation}\label{rescaled}R_N^{\,\beta}(z)= \tfrac 1 {N\Delta Q_N(p)}\bfR_N^{\,\beta}(\zeta),\qquad z=\Gamma_N(\zeta).\end{equation}

We are interested in describing all possible limits $R^{\,\beta}=\lim_{N\to\infty}R_N^{\, \beta}$, and we shall here restrict to the
determinantal case when $\beta=1$. We shall therefore in the sequel drop the superscript, writing $R_N$ in place of $R_N^{\, 1}$, etc.

We recall the following fact from the theory of determinantal Coulomb gas processes (see e.g.~\cite[Lemma 1]{AKMW} and the references there).

\begin{lem}\label{Lem_infinite DPP}
 If $R_N\to R$ in $L^1_{\mathrm{loc}}$ (along some subsequence) then $\{z_j\}_1^N$ converges (along the same subsequence) in the sense of point fields to a unique infinite determinantal
point-field $\{z_j\}_1^\infty$ with $1$-point function $R$.
\end{lem}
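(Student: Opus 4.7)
My plan is to exploit the determinantal structure of the $\beta=1$ ensemble. Since the original process $\{\zeta_j\}_1^N$ is determinantal with Hermitian correlation kernel $\bfK_N$ (the reproducing kernel for weighted polynomials of degree below $N$), the blow-up \eqref{blowup} produces a determinantal point field $\{z_j\}_1^N$ with rescaled correlation kernel
$$K_N(z,w) = \tfrac{1}{N\Delta Q_N(p)}\, \bfK_N\bigl(\Gamma_{N,p}^{-1}(z),\, \Gamma_{N,p}^{-1}(w)\bigr).$$
By construction $R_N(z) = K_N(z,z)$, and the $k$-point correlation functions of $\{z_j\}_1^N$ are $R_{N,k}(z_1,\ldots,z_k) = \det[K_N(z_i,z_j)]_{i,j=1}^k$.

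The key tool is the Cauchy--Schwarz inequality for positive-definite Hermitian kernels, $|K_N(z,w)|^2 \le K_N(z,z)\,K_N(w,w) = R_N(z)\,R_N(w)$. Combined with the hypothesized convergence $R_N \to R$ in $L^1_{\mathrm{loc}}$, this furnishes a local $L^2$-bound on the kernels $K_N$. After a gauge/cocycle adjustment (multiplying $K_N$ by unimodular factors of the form $c(z)\overline{c(w)}$, which leaves the law of the point field invariant), the $K_N$ form a normal family on compact subsets of $\C\times\C$; extracting a further subsequence, I get $K_N \to K$ locally uniformly, for some Hermitian positive-semidefinite limit kernel $K$ satisfying $K(z,z) = R(z)$.

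Passing to the limit in the determinantal formula yields $R_k(z_1,\ldots,z_k) = \det[K(z_i,z_j)]_{i,j=1}^k$ for every $k$. These $R_k$ satisfy Lenard's positivity/boundedness conditions, as pointwise limits of bona fide correlation functions, so they are the correlation functions of a unique point field $\{z_j\}_1^\infty$, which is precisely the DPP with kernel $K$ and 1-point function $R$. Convergence $\{z_j\}_1^N \to \{z_j\}_1^\infty$ in the sense of point fields then follows from convergence of all correlation functions on compact sets (via Hadamard's inequality, dominated by $\prod_i R_N(z_i)$, which is uniformly integrable on compacts by the $L^1_{\mathrm{loc}}$ assumption).

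The main obstacle is the gauge choice needed to secure convergence of the kernels themselves rather than only of their moduli, together with the associated normal-families argument. This is a well-known technique in the random-normal-matrix literature (see \cite{AKMW} and references therein), and I would invoke it rather than reconstruct it from scratch.
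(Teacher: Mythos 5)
Your proposal is correct and follows essentially the same route as the reference the paper cites for this statement (\cite[Lemma 1]{AKMW}): Cauchy--Schwarz for positive kernels, a gauge/cocycle adjustment so that the kernels become Hermitian-analytic, a normal-families argument giving locally uniform subsequential convergence $K_N\to K$, and then passage to the limit in the determinantal formula plus Lenard's theorem. One point you should make explicit rather than leave implicit: after extracting a sub-subsequence with $K_N\to K$ you must argue that (modulo cocycle) $K$ is uniquely determined by $R$ — indeed $K=G\cdot L$ with $L$ Hermitian-entire and $L(z,z)=R(z)$, so $L$ is fixed by polarization — and hence every sub-subsequence produces the same determinantal field, which upgrades the convergence from a further subsequence to the original one and justifies the word ``unique'' in the statement.
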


With these preliminaries out of the way, we turn to our main objects of study.

\subsection{Almost-Hermitian GUE} \label{AGUEdef}

Fix a positive parameter $c$ and consider the sequence of potentials
\begin{equation}\label{ellipse}Q_N(\zeta)=\tfrac 1 2 \xi^{\,2}+\tfrac 1 2 \tfrac{N}{c^{2}}\eta^{\,2},\qquad (\zeta=\xi+i\eta).\end{equation}

The droplet in potential \eqref{ellipse} can be found by means of the following useful lemma from \cite{Girko,SCSS}. (The lemma has an alternative proof by solving
an obstacle problem, which may be left for the interested reader.)

\begin{lem} \label{dropl} The droplet $S_Q$ in potential $Q=a\xi^2+b\eta^2$ is the elliptic disc with equation
$\tfrac {a^2+ab}{2b}\,\xi^{\,2}+\tfrac {ab+b^2}{2a}\,\eta^2\le 1.$
\end{lem}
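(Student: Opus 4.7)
The plan is to identify the droplet via Frostman's variational characterization of the equilibrium measure. By the structure theorem invoked in the setup, $d\sigma_{Q} = \Delta Q\cdot\mathbf{1}_{S_{Q}}\,dA$; for $Q = a\xi^{2}+b\eta^{2}$ the Laplacian $\Delta Q = (a+b)/2$ is constant, so $\sigma_{Q}$ is uniform on $S_{Q}$ and total mass $1$ forces $S_{Q}$ to have $dA$-area $2/(a+b)$. The symmetries $(\xi,\eta)\mapsto(\pm\xi,\pm\eta)$ of $Q$, together with uniqueness of the minimizer of $I_{Q}$, make it natural to guess that $S_{Q}$ is an axis-aligned elliptic disc $E_{A,B}=\{\xi^{2}/A^{2}+\eta^{2}/B^{2}\le 1\}$ with $AB=2/(a+b)$.

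To pin down the ratio $A:B$ I would use the Euler--Lagrange condition $2\partial U^{\sigma_{Q}}+\partial Q=0$ on $S_{Q}$, which in terms of the Cauchy transform $C^{\mu}(\zeta)=\int(\zeta-w)^{-1}\,d\mu(w)=-2\partial U^{\mu}(\zeta)$ reads
\begin{equation*}
C^{\sigma_{Q}}(\zeta) \;=\; \partial Q(\zeta) \;=\; a\xi - ib\eta \qquad (\zeta=\xi+i\eta\in E_{A,B}).
\end{equation*}
The Cauchy transform of the uniform probability measure on $E_{A,B}$ is affine inside the ellipse: writing $C^{\sigma_{Q}}(\zeta)=\alpha\xi+i\beta\eta$, the distributional identity $\partial_{\bar\zeta}C^{\sigma_{Q}}=(AB)^{-1}\mathbf{1}_{E_{A,B}}$ gives $\alpha-\beta=2/(AB)$, while continuity across $\partial E_{A,B}$ to the holomorphic exterior Cauchy transform --- obtained by pulling back via the Joukowski map $w\mapsto\tfrac{A+B}{2}w+\tfrac{A-B}{2w}$ from $\{|w|>1\}$ onto the exterior of $E_{A,B}$ and demanding only non-positive Fourier modes on $|w|=1$ --- forces $\alpha A+\beta B=0$. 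Solving, $\alpha=2/[A(A+B)]$ and $\beta=-2/[B(A+B)]$; matching with $a\xi-ib\eta$ gives $A(A+B)=2/a$ and $B(A+B)=2/b$, hence $A^{2}=2b/[a(a+b)]$ and $B^{2}=2a/[b(a+b)]$, which is precisely the ellipse in the statement.

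Finally, to promote this candidate to the genuine droplet I would verify the complementary Frostman inequality $2U^{\sigma_{Q}}+Q\ge F$ on $\C\setminus E_{A,B}$. The function $g:=2U^{\sigma_{Q}}+Q-F$ is continuous on $\C$, vanishes on $\partial E_{A,B}$, tends to $+\infty$ at infinity, and is strictly subharmonic off $E_{A,B}$ (since $U^{\sigma_{Q}}$ is harmonic there and the usual Laplacian of $Q$ equals $2(a+b)>0$); any negative interior minimum would contradict this strict subharmonicity, so $g\ge 0$ outside. Frostman uniqueness then identifies $\sigma_{Q}$ with the uniform probability measure on $E_{A,B}$, and the droplet is as claimed. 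The only delicate point is the Cauchy-transform calculation on the ellipse, but the Joukowski map reduces it to matching Fourier coefficients on the unit circle; everything else is bookkeeping.
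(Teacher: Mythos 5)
The paper does not actually prove this lemma: it cites \cite{Girko,SCSS} and remarks only that an alternative proof by solving an obstacle problem can be left to the reader. Your proposal is exactly such an obstacle-problem/Frostman argument, so it cannot be compared against a proof in the paper; I will therefore judge it on its own terms.

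The identification of the candidate ellipse is correct and cleanly executed. The area constraint $AB=2/(a+b)$, the reduction of the Euler--Lagrange equation to $C^{\sigma_Q}=\d Q=a\xi-ib\eta$, the distributional identity $\alpha-\beta=2/(AB)$, the Joukowski-pullback matching giving $\alpha A+\beta B=0$, and the resulting formulas $A^2=2b/[a(a+b)]$, $B^2=2a/[b(a+b)]$ are all right and reproduce the stated ellipse. This is the substantive calculation and it is fine.

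The gap is in the last paragraph, in the verification of Frostman's inequality $g:=2U^{\sigma_Q}+Q-F\ge 0$ off the ellipse. You argue that ``any negative interior minimum would contradict this strict subharmonicity.'' That is backwards: a strictly subharmonic function ($\Delta g>0$) satisfies a strong \emph{maximum} principle, not a minimum principle, and it can perfectly well have interior (indeed strict local) minima --- $g(z)=|z|^2$ is the basic example. So from $g=0$ on $\d E_{A,B}$, $g\to+\infty$, and $g$ strictly subharmonic outside, one cannot conclude $g\ge 0$; nothing in that reasoning excludes a bounded open pocket $\{g<0\}$ away from the ellipse. (The second-order vanishing of $g$ on $\d E_{A,B}$ together with $\d^2_n g>0$ there does give $g>0$ in a collar, but that still leaves the global question open.) To close the argument you would need to actually compute $g$ in the exterior --- e.g.\ integrate the explicit exterior Cauchy transform $C^{\sigma_Q}(\phi(w))=\tfrac{2}{(A+B)w}$ along the Joukowski coordinate and check monotonicity of the resulting expression in $|w|$ --- rather than appeal to an abstract subharmonicity principle. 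As written, the final step does not prove what it claims, even though the conclusion is true.
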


Using the lemma, we find that the droplet in potential \eqref{ellipse} is given by
\begin{equation}\label{potth}S_{Q_N}=\{\,\xi+i\eta\,;\,(1+\tfrac{c^2}{N})\xi^2+\tfrac{N}{c^2}(1+\tfrac {N} {c^2})\eta^{2}\le 4\,\}.\end{equation}

Note that $Q_N\nearrow V$ where $V$ is the Gaussian potential
\begin{equation}\label{GP}V(\xi)=\tfrac 1 2 \xi^{\,2},\qquad (\xi\in\R),\end{equation}
with the understanding that $V=+\infty$ on $\C\setminus \R$. The equilibrium measure in potential $V$ is Wigner's semi-circle law (\cite[Section 1.4]{F}),
\begin{equation}\label{SC}\SC(\xi)=\tfrac 1 {2\pi}\sqrt{4-\xi^{\,2}}\cdot \1_{[-2,2]}(\xi).\end{equation}

Let $c_N(\xi)=\tfrac 1 N\int_\R \bfR_N(\xi+i\eta)\, d\eta$ be the $N$:th cross-section (with $\beta=1$). The
following global result can be found in \cite[Theorem 3 (a)]{ACV} (cf.~\cite{FKS2,L}) and will here be reproved by different methods.

\begin{thm} \label{mth0} (``\emph{Pointwise cross-section convergence for AGUE}'') $c_N(\xi)\to\pi\cdot \SC(\xi)$ as $N\to\infty$ for each $\xi$ with $-2<\xi<2$.
\end{thm}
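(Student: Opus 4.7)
My plan is to exploit the quadratic structure of $Q_N$, thanks to which AGUE is precisely the elliptic Ginibre ensemble at weak non-Hermiticity parameter $\tau_N = (N-c^2)/(N+c^2) = 1 - 2c^2/N + O(N^{-2})$, and admits a correlation kernel expressible in closed form via Hermite polynomials. The strategy is to compute the cross-section $c_N(\xi)$ directly from this explicit kernel, identify the $\eta$-integrated result (to leading order) with the classical GUE one-point function at $\xi$, and then invoke the Wigner semi-circle law.

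Concretely, I would write $\bfR_N(\zeta) = \sum_{n=0}^{N-1}|p_n(\zeta)|^2 e^{-NQ_N(\zeta)}/h_n$, where the $p_n$ are rescaled Hermite polynomials orthogonalising the Gaussian weight $e^{-NQ_N}\,dA$ (standard elliptic Ginibre identification). Using the Hermite addition formula $H_n(\xi+i\eta)=\sum_{k=0}^n\binom{n}{k}(2i\eta)^{n-k}H_k(\xi)$ and the explicit Gaussian moments $\int_\R \eta^{2j}e^{-N^2\eta^2/(2c^2)}\,d\eta = O(N^{-2j-1})$, the $\eta$-integration collapses, to leading order, onto the $k=n$ (i.e.\ $\eta^0$) diagonal term, giving $\tfrac{1}{h_n}\int_\R |p_n(\xi+i\eta)|^2 e^{-NQ_N(\xi+i\eta)}\,d\eta$ equal to an explicit prefactor of size $1/N$ times $H_n(\xi)^2 e^{-\xi^2/2}/(2^n n!)$. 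Summing over $n<N$ and tracking normalisations yields
\[
Nc_N(\xi) = \pi\,\rho_N^{\mathrm{GUE}}(\xi)\bigl(1+o(1)\bigr),
\]
where $\rho_N^{\mathrm{GUE}}$ is the one-point function of the standard GUE with weight $e^{-N\xi^2/2}$. The classical bulk convergence $\rho_N^{\mathrm{GUE}}(\xi)/N \to \SC(\xi)$ on $(-2,2)$ (Plancherel--Rotach asymptotics for Hermite polynomials) then gives $c_N(\xi)\to \pi\SC(\xi)$. The constant $\pi$ is forced independently of $c$ by the mass conservation $\int c_N\,d\xi = \pi$ built into the normalisation of $\bfR_N$.

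The main obstacle is controlling the off-diagonal ($k<n$) contributions of the Hermite expansion uniformly in $n\le N-1$: each such term picks up a Gaussian factor of order $N^{-2(n-k)}$, but is amplified by binomial coefficients and by the ratio $H_k(\xi)/H_n(\xi)$, and the margin becomes tight near the spectral edge $n\sim N$. A cleaner alternative is to perform the $\eta$-integration in closed form --- completing the square converts $|H_n(\xi+i\eta)|^2 e^{-a\eta^2}$ into a Gaussian transform of a Hermite product, summable via a Mehler / generating-function identity into a single explicit Laguerre-type expression --- which reduces the problem to the asymptotic analysis of a single formula uniformly in $n$. Once the cross-section is in either way reduced to the GUE density, the bulk semi-circle convergence is classical and the proof concludes.
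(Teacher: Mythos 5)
Your proposal is correct in spirit but genuinely different from the paper's proof. The paper does \emph{not} compute the $\eta$-integral directly. Instead it proves a telescoping identity (Lemma~\ref{Lem_Hermite}) for $\partial F_N/\partial x$ built on the Hermite three-term recursion: after cancellation against the derivative of the Gaussian prefactor, $\tfrac 1N\tfrac{\partial\bfR_N}{\partial\xi}$ reduces to a single boundary term proportional to $\Re\{H_{N-1}(\cdot)H_N(\cdot)\}$. The paper then bounds this term using Plancherel--Rotach asymptotics for \emph{only two} Hermite polynomials of degree near $N$, which gives equicontinuity of the cross-sections $c_N$ on compacts of $(-2,2)\setminus\{0\}$. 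Combined with uniform boundedness (Lemma~\ref{bl00}), Arzel\`a--Ascoli yields subsequential uniform convergence, and the limit is identified as $\pi\SC$ by invoking the soft ``weak cross-section convergence'' Theorem~\ref{joh1}, which in turn rests on the large-deviations / admissibility estimate~\eqref{e1} for the partition function. The point $\xi=0$ is handled separately via Mehler--Heine.

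Your route --- direct $\eta$-integration to reduce $c_N$ to the GUE one-point density, then classical bulk semi-circle --- is a valid alternative, and indeed is essentially what the paper does for ALUE in Subsection~\ref{WEAK}: there they use a Laguerre summation identity to collapse the $\eta$-integral in closed form (your ``Mehler / generating-function'' idea), and the cross-section emerges as $\pi$ times the classical LUE density, which is then sent to Marchenko--Pastur by Plancherel--Rotach. For Hermite weights the analogous closed-form reduction should indeed circumvent the uniformity-in-$n$ worry you flag (the term-by-term addition-formula expansion is indeed delicate as $n\uparrow N$, so your instinct to avoid it is sound). The trade-offs are: your approach is self-contained (no need for the partition-function lower bound or Theorem~\ref{joh1}), but it requires implementing the closed-form $\eta$-integration and still ends up needing full classical GUE bulk asymptotics; the paper's approach needs only local P--R estimates for $H_{N-1},H_N$ and is very economical in that respect, but presupposes the machinery of Section~\ref{BCG} to identify the limit. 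Minor note: mass conservation $\int c_N\,d\xi=\pi$ alone does not ``force'' the constant $\pi$ pointwise; you get it correctly instead from the exact normalisation in your computed prefactor, so that sentence should be demoted to a sanity check rather than an argument.
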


\begin{rmk*}
We shall use Theorem \ref{mth0} as a basic tool for our microscopic investigations. In fact the pointwise convergence $\tfrac 1 \pi c_N(\xi)\to\SC(\xi)$ for $-2<\xi<2$ is precisely what we need to
uniquely fix a translation invariant bulk scaling limit using our method below. This contrasts with the strategy in \cite{ACV}, where convergence of cross-sections is obtained as a consequence
of microscopic investigations.
\end{rmk*}

Following the terminology in \cite{AKM}, we put
$\gamma(z)=\tfrac 1 {\sqrt{2\pi}}e^{-\frac 1 2 z^2}$.
Given a Borel subset (or ``window'') $E\subset\R$ we then consider the  entire function
\begin{equation}\label{convolution}\gamma *\1_E(z)=\tfrac 1 {\sqrt{2\pi}}\int_Ee^{-\frac 1 2(z-t)^2}\, dt.\end{equation}

Now fix a point $p_*\in S_V$ in the bulk, i.e., $-2< p_*< 2$, and let $R_N$ be the $1$-point function of the rescaled process about $p_*$.

Note that for the potential \eqref{ellipse}, the numbers $\rho(p_*)$ and $a(p_*)$ in \eqref{rhop}, \eqref{aplength} reduce to
\begin{equation}\label{appl}\rho(p_*)=\sqrt{\tfrac N {\Delta Q_N}}=2c,\qquad a(p_*)=\pi\cdot c\cdot \SC(p_*).\end{equation}

The following theorem is equivalent with the bulk scaling limits found in \cite{ACV,FKS,L}.

\begin{thm} \label{mainth1} (``\emph{Bulk scaling limit for AGUE}'') Under the above assumptions, $R_N$ converges locally uniformly to the limit $R(z)=F(2\im z)$ where
$F(z)=\gamma*\1_{(-2a,2a)}(z)$ and $a=a(p_*)$ is given by \eqref{appl}.
\end{thm}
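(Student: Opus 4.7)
The plan is to follow the Ward-equation strategy from \cite{AKM}: extract a subsequential limit of $R_N$, show that any such limit is translation invariant along the real axis, reduce Ward's equation to its translation-invariant form, and then invoke the pointwise cross-section convergence of Theorem \ref{mth0} to pin down the limit uniquely.

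First I would establish standard a priori bounds for $R_N$ on compact subsets of $\C$, so that along a subsequence $R_N\to R$ in $L^1_{\mathrm{loc}}$; by Lemma \ref{Lem_infinite DPP} this produces a limiting determinantal point field with intensity $R$. Horizontal translation invariance of $R$ is built into AGUE: by \eqref{ellipse}, $\Delta Q_N$ is the constant $\tfrac14(1+N/c^{2})$, so for any two bulk points $p,p'\in(-2,2)$ the rescaled ensembles near $p$ and $p'$ differ only by a translation of order $\sqrt{N\Delta Q_N}\,(p-p')$. Combined with the continuity of $\SC$ on the bulk, this forces $R(z+t)=R(z)$ for every $t\in\R$.

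Next I would write down Ward's equation for the limit $R$ as derived in \cite{AKM}, and combine it with horizontal translation invariance. This reduces Ward's equation to a one-dimensional problem in $\eta=\im z$, whose translation-invariant solutions are classified in \cite{AKM}: every such $R$ has the form $R(z)=F(2\im z)$ with $F=\gamma*\mathbf{1}_E$ for some Borel set $E\subset\R$. The remaining task is to identify $E$.

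Theorem \ref{mth0} supplies the needed constraint. Changing variables under the rescaling \eqref{blowup} in the definition \eqref{decs} of the cross-section and using \eqref{appl} gives
\[
\int_\R R(\xi+i\eta)\,d\eta \;=\; \rho(p_*)\cdot\pi\,\SC(p_*) \;=\; 2a(p_*),
\]
so $|E|=4a$. The symmetry $Q_N(\bar\zeta)=Q_N(\zeta)$ is inherited by $R$, forcing $F$ to be even and $E$ symmetric about the origin. The main obstacle I foresee is identifying $E$ as the interval $(-2a,2a)$ rather than some other symmetric Borel set of measure $4a$: this step requires an additional rigidity input, likely coming from the structure of determinantal scaling limits (a positivity/analyticity property of the limiting correlation kernel) or from Hermite polynomial asymptotics, as anticipated in the introduction, in order to rule out disconnected configurations.
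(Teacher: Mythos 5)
Your overall roadmap — extract a subsequential limit, prove horizontal translation invariance, invoke the Ward-equation classification, and then fix the free parameter by cross-section convergence — is indeed the paper's strategy. But the pivotal step, translation invariance, is not proved by the soft argument you give, and this is a genuine gap.

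You assert that because $\Delta Q_N=\tfrac14(1+N/c^2)$ is constant, the rescaled processes at nearby bulk points differ only by a translation, and that combined with continuity of $\SC$ this ``forces'' $R(z+t)=R(z)$. Unpacked, this argument compares the limit at $p_*$ with the limit at a moving point $p_*'=p_*+t/\sqrt{N\Delta Q_N}\to p_*$. To conclude $R(z+t)=R(z)$ one would need to know that subsequential limits vary continuously, or uniformly, in the base point along that moving sequence — but a priori one only has subsequential limits, and different base points may produce different subsequences. Continuity of $\SC$ controls the cross-section, not the full 1-point function. The paper closes this gap by hard analysis: it uses the three-term recursion for Hermite polynomials (Lemma \ref{Lem_Hermite}) to obtain a closed expression for $\partial R_N/\partial x$ as a product of two Hermite polynomials (equation \eqref{ego}), and then Plancherel--Rotach asymptotics (Lemma \ref{herman}) to show $|\partial R_N/\partial x(z)|=O(N^{-1})$ uniformly on compacts, hence $\partial R/\partial x=0$ in the limit. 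This computation cannot be avoided with the soft argument you sketch.

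Separately, your concern about identifying $E$ as the interval $(-2a,2a)$ rather than an arbitrary symmetric Borel set of measure $4a$ is misdirected. The classification of translation-invariant Ward-equation solutions used via Theorem \ref{tithm} (i.e.\ \cite[Theorem 1.6]{AKM}) already delivers the interval form $F=\gamma*\1_{(-2a,2a)}$; only the parameter $a$ remains to be determined, and that is exactly what Lemma \ref{LPF2} together with Theorem \ref{mth0} supplies. So no extra ``rigidity input'' is needed for that step. You correctly intuited that Hermite-polynomial asymptotics are needed somewhere, but their role in the paper is to prove translation invariance, not to rule out disconnected windows.
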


\begin{figure}[h!]
	\begin{subfigure}[h]{0.32\textwidth}
		\begin{center}
			\includegraphics[width=1.69in,height=1.36in]{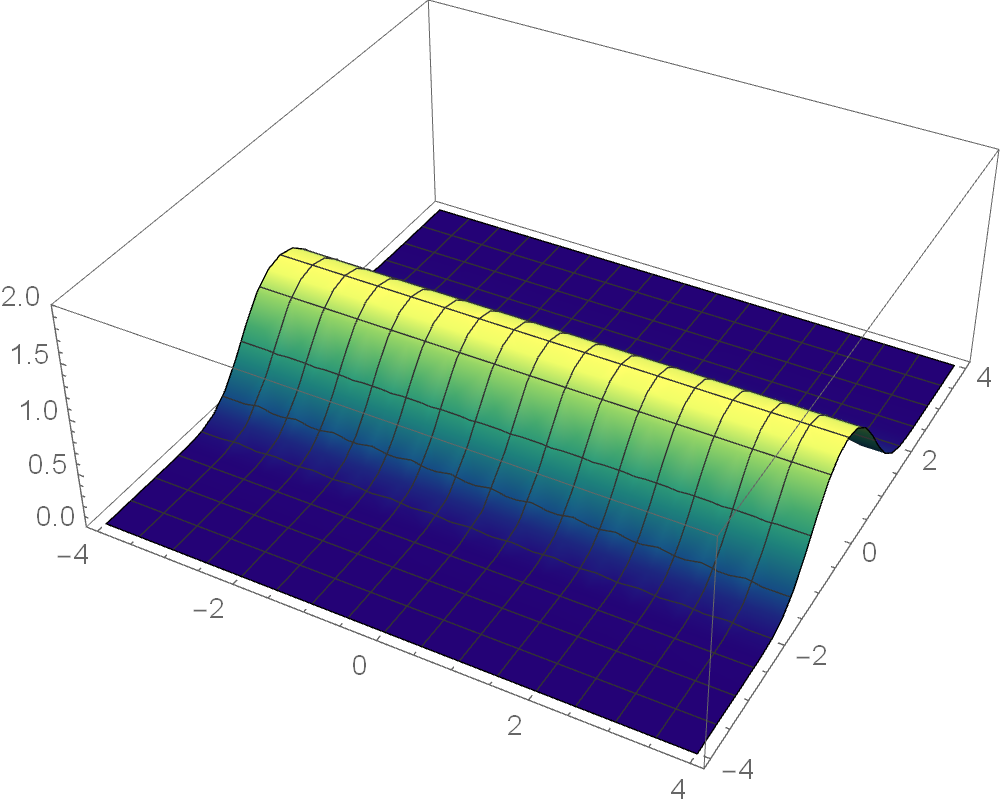}	
		\end{center}
		\caption{$c=1$}
	\end{subfigure}
	\begin{subfigure}[h]{0.32\textwidth}
		\begin{center}
			\includegraphics[width=1.69in,height=1.36in]{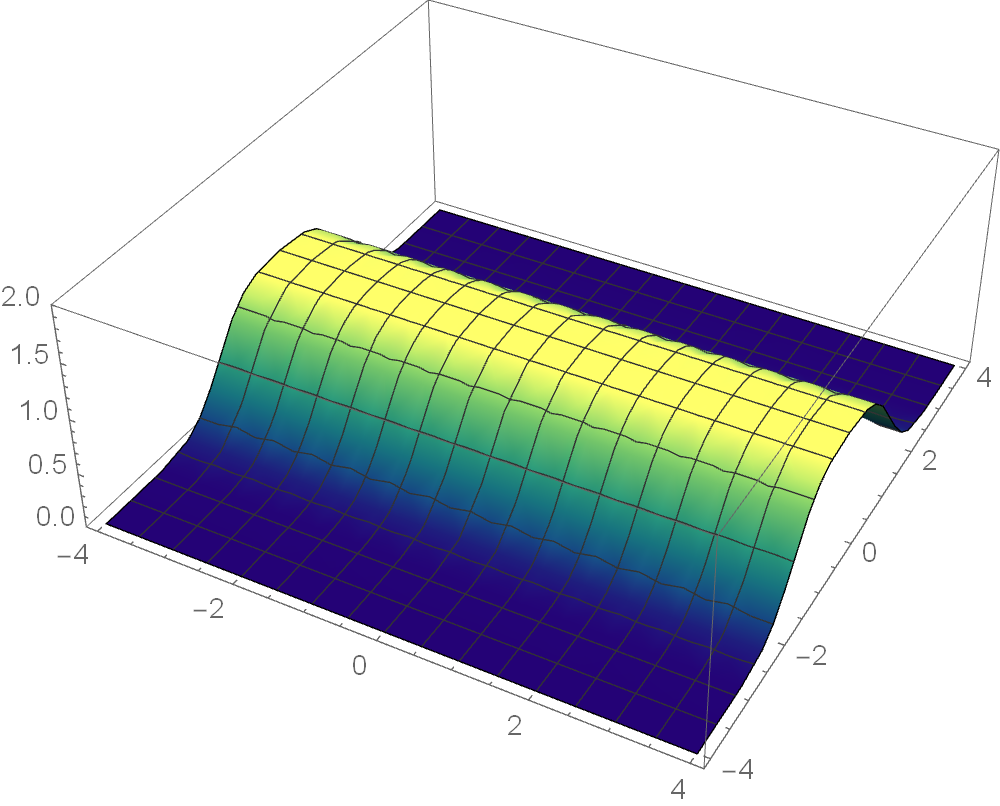}
		\end{center}
		\caption{$c=1.5$}
	\end{subfigure}	
	\begin{subfigure}[h]{0.32\textwidth}
		\begin{center}
			\includegraphics[width=1.69in,height=1.36in]{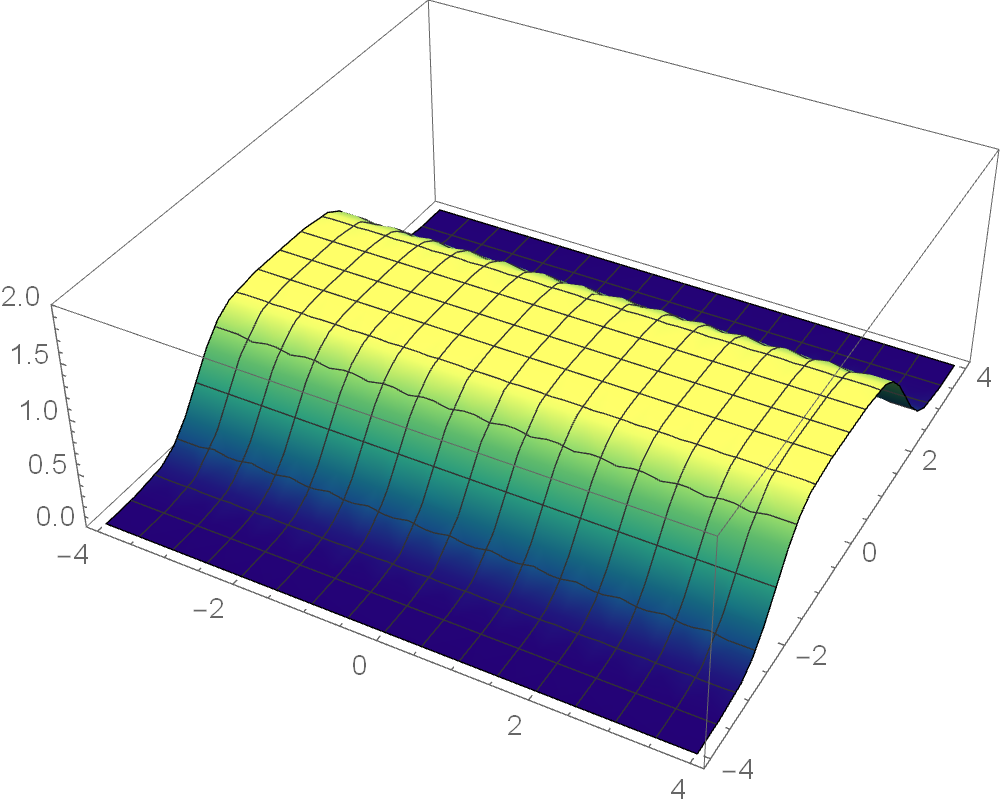}
		\end{center}
		\caption{$c=2$}
	\end{subfigure}	
	\caption{The limiting rescaled $1$-point density $R(z)$ about $p_*=0$. }
\label{Fig_FKS R}
\end{figure}

We now discuss how the scaling limits in Theorem \ref{mainth1} interpolate between the sine-kernel and the Ginibre kernel.

We recall (e.g.~\cite{AKM}) that the Ginibre kernel is the function
\begin{equation}\label{gin0}G(z,w)=e^{z\bar{w}-\frac 1 2|z|^2-\frac 1 2|w|^2},\end{equation}
and that its associated 1-point function $R(z)=G(z,z)=1$ is characteristic for the infinite Ginibre ensemble.

Now let $F=\gamma*\1_{(-2a,2a)}$ be the entire function appearing in Theorem \ref{mainth1}
and put
\begin{equation}\label{ulg2}K(z,w)=G(z,w)\cdot F(\tfrac {z-\bar{w}} i)=G(z,w)\cdot \tfrac 1 {\sqrt{2\pi}}\int_{-2a}^{\,2a}e^{\frac 1 2 (z-\bar{w}-it)^2}\, dt.\end{equation}

The 1-point function $R$ appearing in Theorem \ref{mainth1} is then $R(z)=F(2\im z)=K(z,z),$
and $K$ is a correlation kernel for the limiting point field $\{z_j\}_1^\infty$.

If in \eqref{ulg2} we pass to the limit $c\to\infty$, we find that the kernel $K$ converges to the Ginibre kernel.
Similarly, the limit $c\to 0$ gives the well-known sine-kernel process on $\R$, but to see this we need a further rescaling.
Given the limiting point-field $\{z_j\}_1^\infty$ we form a new one $\{\tilde{z}_j\}_1^\infty$ by
$\tilde{z}_j=\alpha\cdot z_j$, where $\alpha=\tfrac 2 \pi a(p_*)
.$ The correlation kernel of
$\{\tilde{z}_j\}_1^\infty$ is
\begin{align*}\nonumber \tilde{K}(\tilde{z},\tilde{w})&=\tfrac 1 {\alpha^2}K(z,w),\qquad\qquad [\tilde{z}=\alpha\cdot z,\, \tilde{w}=\alpha\cdot w]\\
&=\tfrac 1 {\sqrt{2\pi}\alpha}e^{-\frac {(\im\tilde{z})^2+(\im\tilde{w})^2}{\alpha^2}}\int_{-\pi}^\pi
e^{-\frac {\alpha^2u^2}2+iu(\tilde{z}-\bar{\tilde{w}})}\, du.
\end{align*}
Sending $c\to 0$ (i.e.~$\alpha\to0$) we see readily that $\tilde{K}$ converges to $\pi$ times the sine-kernel
$$K^\mathrm{sin}(x,y)=\tfrac 1 \pi \tfrac {\sin(\pi x-\pi y)}{x-y},\qquad (x,y\in\R).$$
This well-known convergence follows from the Gaussian approximation of the Dirac delta:
$$
\frac{1}{|a|\sqrt{\pi}} e^{ -(x/a)^2 } \to \delta(x), \qquad \text{as } a \to 0,
$$
see e.g. \cite[Remark 4.(c)]{ACV} and \cite[Section 2.3]{AP2}.

 It could be said that the form of the kernel $\tilde{K}$ is more natural from a one-dimensional perspective,
whereas the form of $K$ in \eqref{ulg2} is more natural from a two-dimensional one. The kernel $\tilde{K}$ is of the form appearing in \cite{ACV}, whereas $K$ appears in the
context of planar ensembles in the papers \cite{AKM,AKMW}.

\smallskip

We now turn to edge scaling limits for the almost-Hermitian GUE. A limiting point field was found by Bender \cite{B} and was further investigated in the paper \cite{AB}.
For our purposes, it is advantageous to modify the ellipse potential $Q_N$ in \eqref{ellipse} so that the droplet becomes thicker, of height roughly $N^{-\frac 1 3}$ rather than $N^{-1}$. The advantage is that the form of rescaling in \eqref{blowup} remains correct, which facilitates when applying our method with Ward's equation.

We are thus led to introduce the \emph{modified Almost-Hermitian GUE} by redefining $Q_N$ as
\begin{equation}\label{flat}Q_N(\zeta)=
\tfrac 1 2\xi^{\,2}+\tfrac 1 {2c^2}N^{\frac 1 3}\eta^{\,2}.\end{equation}

\begin{figure}[h!]
	\begin{center}
		\includegraphics[width=4.5in,height=2.415in]{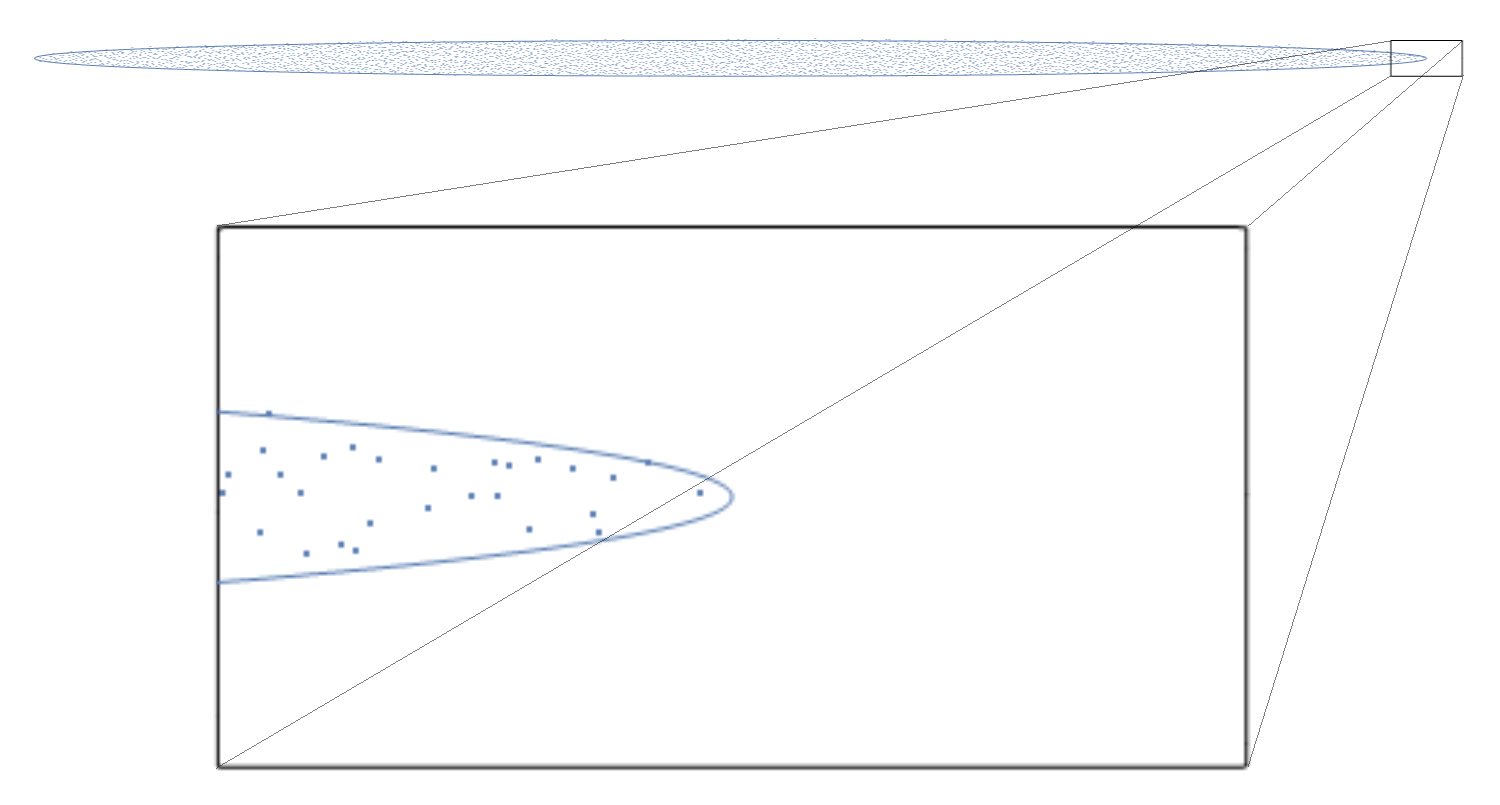}	
	\end{center}
	\caption{A sample from modified AGUE rescaled about the right endpoint $p_N$.} \label{Fig_AHEB}
\end{figure}

Using Lemma \ref{dropl}, we see that the right end-point of the droplet $S_{Q_N}$ is located
at
$p_N=2(1+c^2N^{-\frac 1 3})^{-\frac 1 2}.$ We rescale about this point using the magnification map in \eqref{blowup} with the modified potential \eqref{flat}, i.e., we put
$$\zeta\mapsto z,\qquad \Gamma_N(\zeta)={\scriptstyle \sqrt{N\Delta Q_N}}\cdot (\zeta-p_N)=\tfrac {N^{\frac 2 3}} {2c}(1+c^2N^{-\frac 1 3})^{\frac 1 2}\cdot (\zeta-p_N),$$
and we write $\{z_j\}_1^N$ for the rescaled ensemble, $z_j=\Gamma_N(\zeta_j)$, see Figure \ref{Fig_AHEB}.
As usual we denote by $R_N$ the 1-point function of $\{z_j\}_1^N$.

We can now restate the main result from \cite{B} in the following way.

\begin{thm} \label{Thm_Bender} (``\emph{Boundary scaling limit for modified AGUE}'') $R_N$ converges locally uniformly to the limit
\begin{align} \label{R bender}
\begin{split}
R(z)&=R_{(c)}(z)=\sqrt{2\pi} \, 4c^2 \,  e^{ \frac{4}{3}c^6-2(\im z)^2 }
\int_0^\infty e^{  4c^3(u+\re z) }  |\Ai( 2c(z+u)+c^4 )|^2   \, du.
\end{split}
\end{align}
\end{thm}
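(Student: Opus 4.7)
My plan is to deduce the theorem from Bender's asymptotic formula \cite{B} for the edge of the weakly non-Hermitian elliptic Ginibre ensemble, via an explicit parameter matching and a careful change of variables dictated by the rescaling \eqref{blowup}.

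The first step is to recognise that with the modified potential \eqref{flat}, the Gibbs measure \eqref{prob} at $\beta=1$ defines a determinantal point process which, up to an affine change of eigenvalue coordinates, is an elliptic Ginibre ensemble with a specific non-Hermiticity parameter $\tau_N$. Indeed, writing $Q_N = \tfrac{1}{4}(1+N^{1/3}/c^2)|\zeta|^2 + \tfrac{1}{4}(1-N^{1/3}/c^2)\re(\zeta^2)$ and matching with the standard elliptic Ginibre potential $\tfrac{1}{1-\tau^2}(|\zeta|^2 - \tau\re(\zeta^2))$, one finds $\tau_N = (N^{1/3}/c^2 - 1)/(N^{1/3}/c^2 + 1)$, hence $1-\tau_N^2 \sim 4c^2 N^{-1/3}$ as $N\to\infty$. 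This is exactly the weakly non-Hermitian critical edge regime in which Bender performs Plancherel--Rotach asymptotics of the Hermite polynomials for degrees $k = N - O(N^{2/3})$.

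The second step is to reconcile the intrinsic rescaling $\Gamma_N$ of \eqref{blowup} with Bender's own edge coordinates. By Lemma \ref{dropl} the right endpoint of the droplet is $p_N = 2(1+c^2 N^{-1/3})^{-1/2}$, and a direct computation yields $\Delta Q_N(p_N) = \tfrac{1}{4}(1+N^{1/3}/c^2)$, so $\sqrt{N\Delta Q_N(p_N)} = \tfrac{N^{2/3}}{2c}(1+c^2 N^{-1/3})^{1/2}$, as stated before Figure \ref{Fig_AHEB}. I would then push Bender's kernel asymptotic through the change of variables \eqref{rescaled}, absorbing the Jacobian $1/(N\Delta Q_N(p_N))$ and all affine factors coming from Bender's internal scaling. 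The diagonal of the resulting kernel is the claimed $R_{(c)}(z)$.

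The third step is to verify each ingredient of \eqref{R bender}. The weight $e^{-NQ_N(\zeta)}$ factorises as $e^{-N\xi^2/2}\cdot e^{-N^{4/3}\eta^2/(2c^2)}$, and the second factor at coincident points $\zeta = p_N + \Gamma_N^{-1}(z)$ produces exactly $e^{-2(\im z)^2}$. The integration variable $u\in[0,\infty)$ arises in the limiting Riemann sum over Hermite indices $k = N-m$ with $m \ge 0$ and $u$ proportional to $m/N^{2/3}$. The cubic correction in the steepest-descent expansion at the Hermite turning point, together with the small mismatch between $p_N$ and the nominal endpoint $2$ of the semicircle, generates the shifts $c^4$ inside the Airy functions, the linear factor $4c^3(u+\re z)$ in the exponent, and the constant $e^{\frac{4}{3}c^6}$; the overall prefactor $\sqrt{2\pi}\,4c^2$ packages the Hermite normalisations together with the Jacobian. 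The main obstacle will be precisely this bookkeeping of constants, since the single parameter $c$ enters through $p_N$, through $\Delta Q_N(p_N)$, and through the cubic correction in the Hermite asymptotics, and all three contributions must combine cleanly. Two sanity checks I would run: the formal limit $c\to\infty$ should collapse \eqref{R bender} to the two-dimensional Ginibre soft-edge profile $\tfrac{1}{2}\erfc(\sqrt{2}\,\re z)$, and $c\to 0$ (after an additional linear rescaling of the type discussed following Theorem \ref{mainth1}) should reduce the integral in \eqref{R bender}, via Laplace's method, to the classical GUE Airy density $\Ai'(x)^2 - x\,\Ai(x)^2$.
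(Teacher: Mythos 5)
Your parameter matching is correct --- the identification $\tau_N=(N^{1/3}/c^2-1)/(N^{1/3}/c^2+1)$ with $1-\tau_N^2\sim 4c^2N^{-1/3}$, the values of $p_N$, $\Delta Q_N(p_N)=\tfrac14(1+N^{1/3}/c^2)$, and $\sqrt{N\Delta Q_N(p_N)}$, and both sanity checks all check out --- so the proposal is viable, but it takes a genuinely different route from the paper's. You work with the one-point function itself, the sum $\sum_{j<N}|H_j|^2\,e^{-NQ_N}$, and propose either to import Bender's limiting kernel verbatim and translate coordinates, or to reconstruct Bender's argument via a Riemann sum over the Hermite index together with steepest descent at the turning point; either way you lean on the full machinery of \cite{B}. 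The paper instead differentiates first: the three-term recurrence for Hermite polynomials (Lemma \ref{Lem_Hermite}) makes $\partial R_N/\partial x$ telescope to a \emph{single} boundary term, proportional to $\re\{H_{N-1}(w)\,H_N(\bar w)\}$ with $w$ the rescaled eigenvalue and with no remaining sum over indices. One critical Plancherel--Rotach estimate at the turning point, \cite[Theorem 8.22.9(c)]{S}, then yields $\partial R_N/\partial x \to -\sqrt{2\pi}\,4c^2\, e^{\frac43 c^6-2(\im z)^2+4c^3\re z}\,|\Ai(2cz+c^4)|^2$ uniformly on compacts. The decay $R(x+iy)\to 0$ as $x\to+\infty$ --- obtained from Lemma \ref{spike3} adapted to $\Delta Q_N\asymp N^{1/3}$ together with \eqref{beckom} --- fixes the constant of integration, and rewriting the right-hand side as $-\partial_x$ of the $u$-integral via the Airy asymptotic \eqref{Airy asym} produces exactly \eqref{R bender}. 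Your approach buys economy by delegating the hard asymptotics to \cite{B}; the paper's approach buys a short, self-contained argument that bypasses Bender's double contour integral and the index-sum-to-Riemann-sum passage entirely, replacing them with one differentiation, one Hermite asymptotic, and one integration.
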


We now discuss how \eqref{R bender} interpolates between the linear Airy process and the planar $\erfc$-process.
We start with the planar case, which corresponds to letting $c\to\infty$ in \eqref{R bender}. For this, we fix a point $z$ in $\C\setminus (-\infty,0]$
and apply the well-known asymptotic formula
\begin{equation} \label{Airy asym}
\Ai z=\tfrac{1}{2\sqrt{\pi}z^{1/4}}e^{ -\frac{2}{3}z^{3/2} }\cdot (1+O(\tfrac 1z)),\qquad (z\to\infty).
\end{equation}

Inserting \eqref{Airy asym} in \eqref{R bender} and letting $c\to\infty$, one finds
\begin{align*}R_{(c)}(z)&=\sqrt{\tfrac{2}{\pi}} c^2e^{ \frac{4}{3}c^6-2(\im z)^2 }\\
&\times\int_0^\infty |2c(z+u)+c^4|^{-\frac 12} e^{4c^3(u+\re z)}e^{-\frac 4 3 \re((2c(z+u)+c^4)^{\frac 3 2})}\, du\cdot (1+O(c^{-1}))\\
&\to\sqrt{\tfrac{2}{\pi}}\int_0^\infty e^{-2(u+\re z)^2}\, du\cdot (1+O(c^{-1}))=\int_{-\infty}^0 \gamma(z+\bar{z}-t)\, dt.
\end{align*}
Here, we have used
$$
\tfrac{4}{3}c^6-2(\im z)^2+ 4c^3(u+\re z) -\tfrac 4 3 \re((2c(z+u)+c^4)^{\frac 3 2}) = -2(u+\re z)^2 + O(c^{-1}).
$$
Hence as $c\to\infty$, $R_{(c)}$ converges to the limit
$$R_{(c=\infty)}(z)=\gamma*\1_{(-\infty,0)}(z+\bar{z})=\tfrac 1 2 \erfc(\tfrac {z+\bar{z}} {\sqrt{2}}).$$ This is the 1-point function which appears in random normal matrix theory after the process of rescaling about a regular boundary point of the droplet (see \cite{AKM} and references).

We next consider the limit of \eqref{R bender}  as $c\to 0$.
A standard polarization argument shows that the 1-point function $R_{(c)}$ in \eqref{R bender} corresponds to the correlation kernel
\begin{align*}
\begin{split}
K(z,w)&=\sqrt{2\pi} \, 4c^2 \, e^{ \frac{4}{3}c^6-(\im z)^2-(\im w)^2 }
\\
&\times \int_0^\infty e^{ 2c^3(2u+z+\bar{w}) }   \Ai( 2c\,(z+u)+c^4 ) \text{Ai}( 2c\,(\bar{w}+u)+c^4 )  \, du.
\end{split}
\end{align*}

Now put
$\tilde{z}=\alpha\cdot z$, $\tilde{w}=\alpha\cdot w$, and $\tilde{z}_j=\alpha\cdot z_j$, where  $\alpha=c\sqrt{2}.$
The rescaled point-field $\{\tilde{z}_j\}_1^\infty$, then has correlation kernel
\begin{align*}
\begin{split}
\tilde{K}(\tilde{z},\tilde{w})&= \alpha^2 K( \alpha z ,\alpha w )
\\
&= \tfrac{\sqrt{\pi}}{\alpha} e^{ \frac{\alpha^6}{6} -\frac{  (\Im\,\tilde{z})^2+(\Im\,\tilde{w})^2 }{2\alpha^2} }
\int_0^\infty e^{  \frac{\alpha^2}{2} (2u+\tilde{z}+\bar{\tilde{w}}) } \Ai ( \tilde{z}+u+\tfrac{\alpha^4}{4} )  \Ai( \bar{\tilde{w}}+u+\tfrac{\alpha^4}{4} ) \, du.
\end{split}
\end{align*}
This kernel is found in the paper \cite{AB}, and is in turn equivalent with a double integral formula found in \cite{B}. We shall
write $\tilde{R}_{(c)}(z)=\tilde{K}(z,z)$.

Letting $c \to 0$ (i.e. $\alpha\to 0$), it is now straightforward to check (cf.~\cite{AB}) that $\tilde{K}$ converges to $\pi K^{\text{Ai}}$ where $K^{\text{Ai}}$ is the Airy kernel
$$
K^{\Ai}(x,y)=\frac{ \Ai x\, \Ai' y-\Ai' x\, \Ai y  }{x-y}=\int_{0}^{\infty} \Ai(x+u)\Ai(y+u) \, du,\qquad (x,y\in\R).
$$

\begin{figure}[h!]
    	\begin{subfigure}[h]{0.32\textwidth}
    	\begin{center}
    		\includegraphics[width=1.69in,height=1.36in]{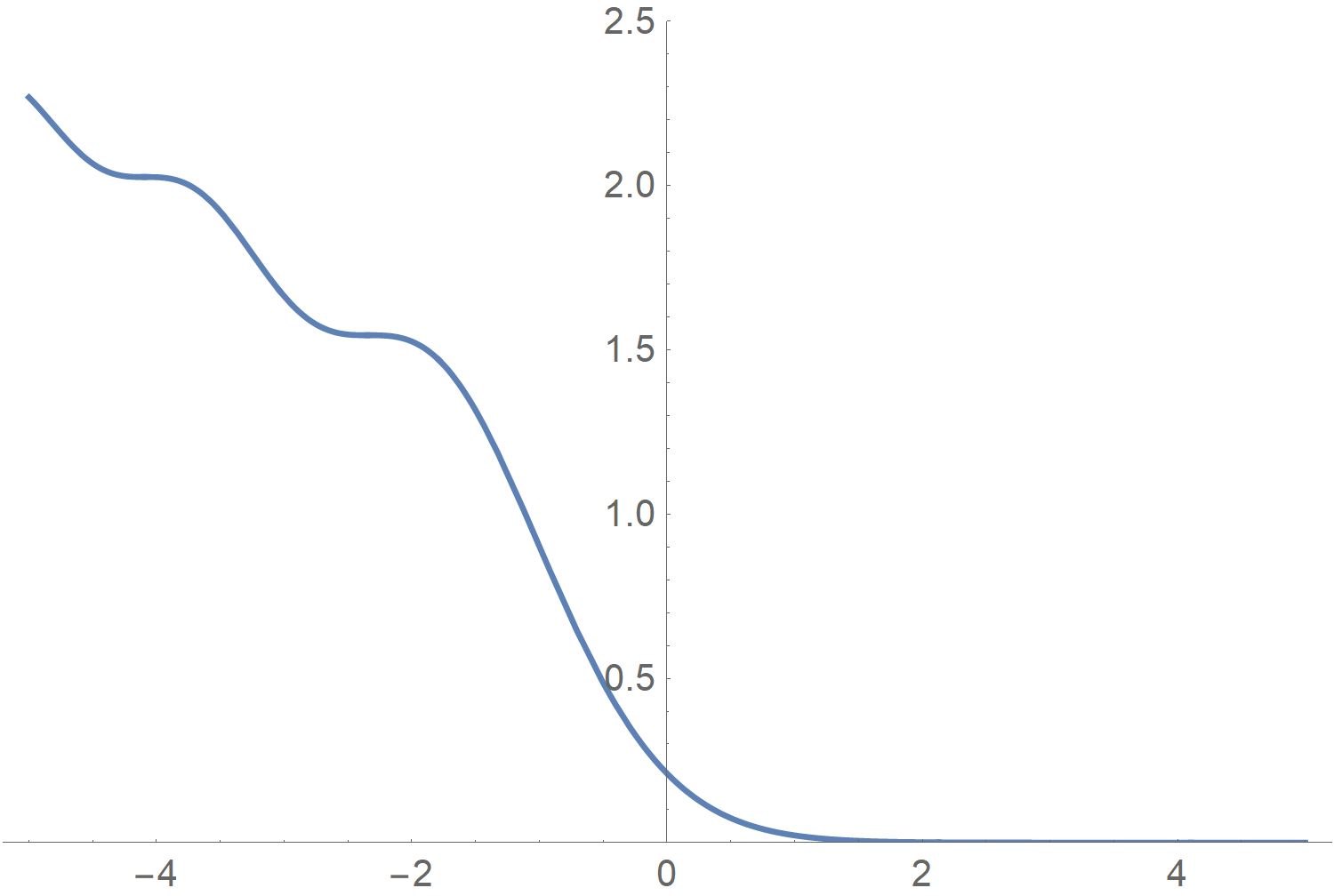}	
    	\end{center}
    	\caption{$c=0$}
    \end{subfigure}
    \begin{subfigure}[h]{0.32\textwidth}
    	\begin{center}
    		\includegraphics[width=1.69in,height=1.36in]{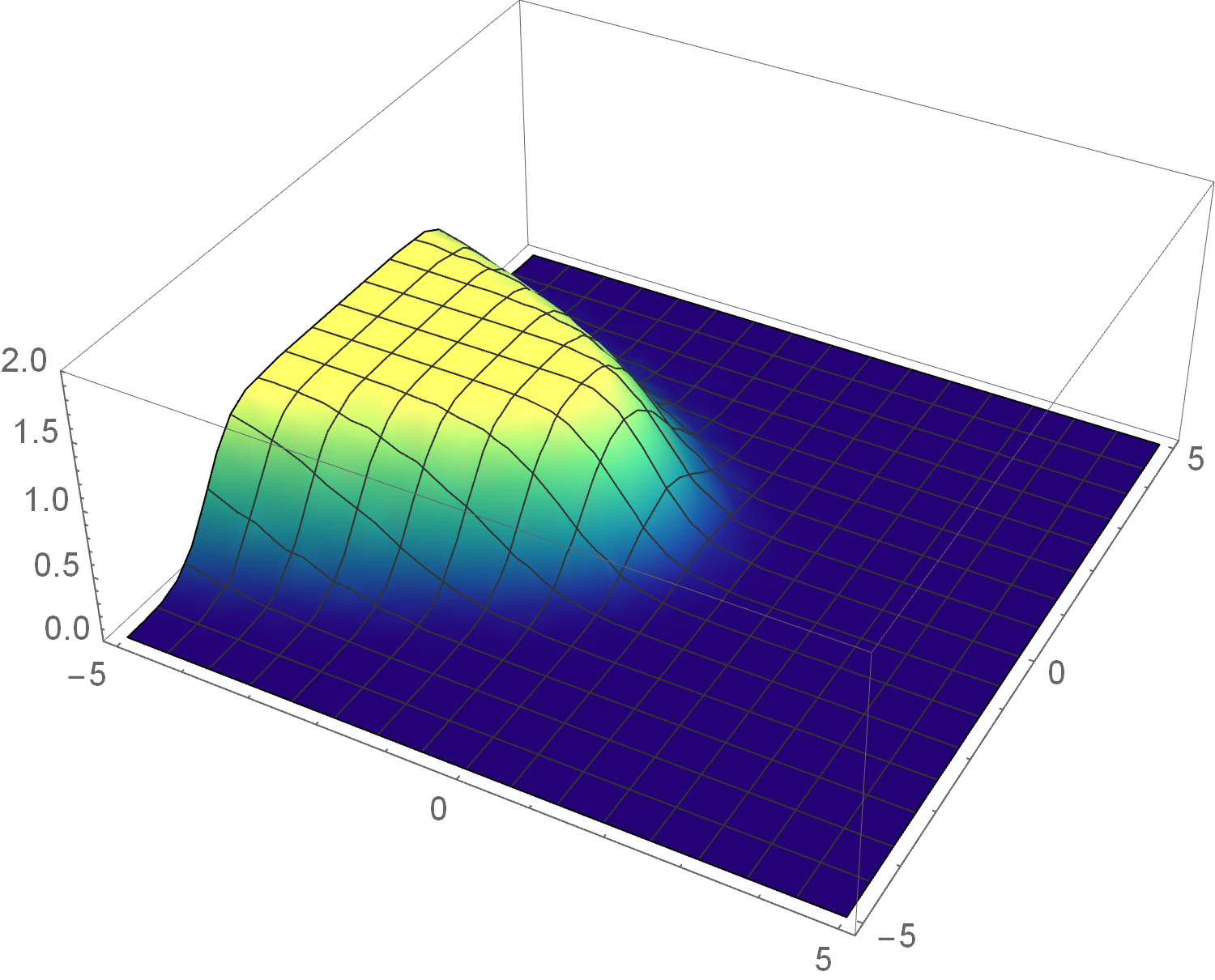}
    	\end{center}
    	\caption{$c=1$}
    \end{subfigure}	
    \begin{subfigure}[h]{0.32\textwidth}
    	\begin{center}
    		\includegraphics[width=1.69in,height=1.36in]{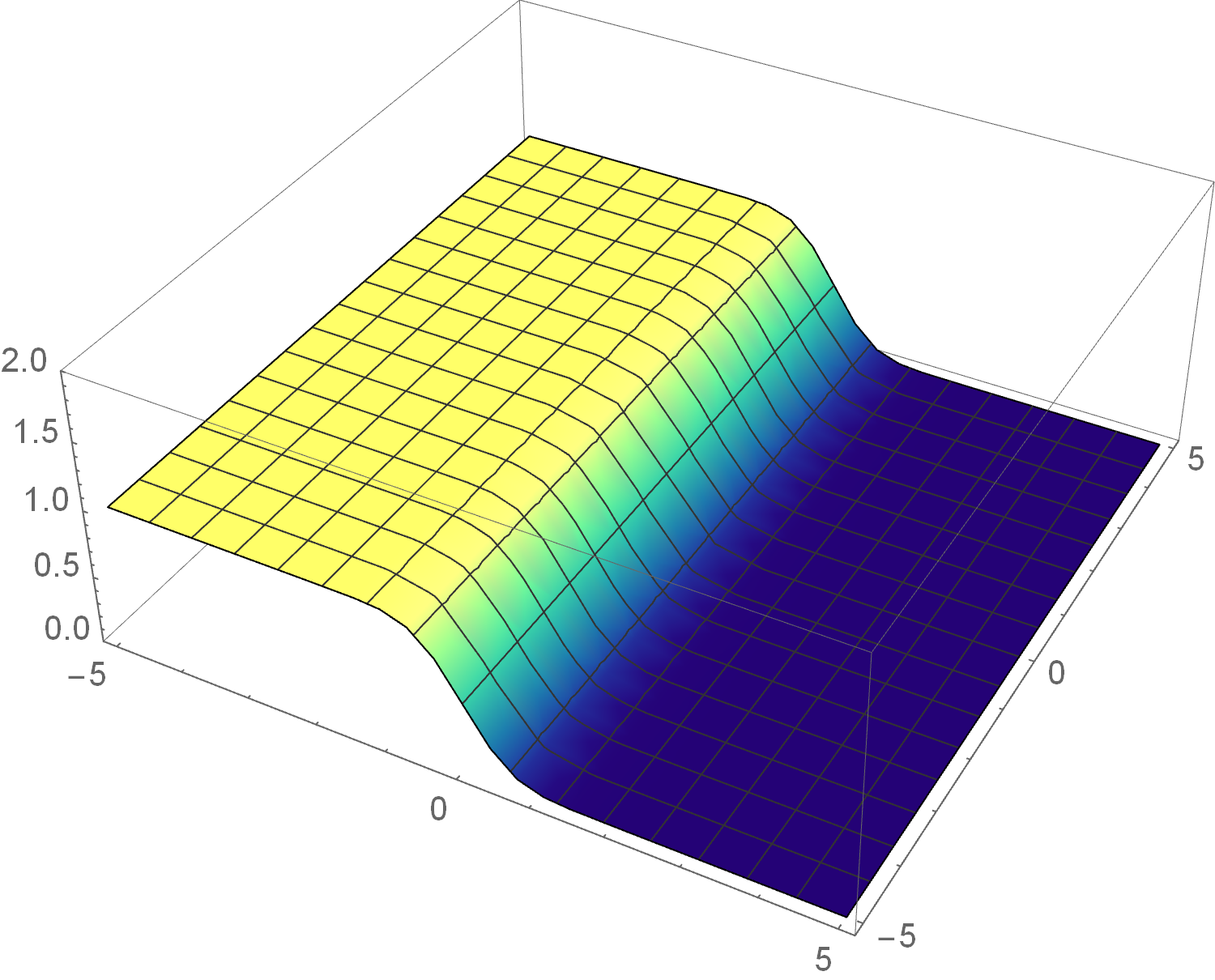}
    	\end{center}
    	\caption{$c=\infty$}
    \end{subfigure}	
	\caption{Graphs of the limiting one-point functions $\tilde{R}_{(0)}(x)$ for $x\in\R$ and of $R_{(1)}(z)$ and $R_{(\infty)}(z)$ for $z\in\C$.} \label{Fig_Bender R}
\end{figure}

Non-translation invariant solutions to Ward's equation are discussed in \cite[Section 8.3]{AKM}, but no example is given there. It is interesting to note that the
1-point functions in \eqref{R bender} give rise to such solutions.
See the case $c=1$ of Figure \ref{Fig_Bender R}.

\subsection{Almost-Hermitian LUE} \label{ssal} We now fix two parameters $c>0$ and $\nu>-1$. We shall denote by $K_\nu(z)$ the modified
Bessel function of the second kind (see \cite[p. 78]{W}).

By definition, the ALUE with parameters $c$ and $\nu$ (and $\beta=1$) is a random sample $\{\zeta_j\}_1^N$ picked from the Gibbs measure \eqref{prob}
associated with the potential
\begin{equation}\label{wish}
Q_N(\zeta)=-\tfrac 1 N\log \left[K_{\nu}(\tfrac {N^2|\zeta|}{c^2})\cdot |\zeta|^{\nu}\right]-\left(\tfrac N{c^{2}}-1\right)\cdot \re\zeta.
\end{equation}

We remark that $Q_N$ is continuous, but
$\Delta Q_N$ has a singularity at the origin.

\smallskip

When $\nu$ is an integer, a random sample $\{\zeta_j\}_1^N$ with respect to \eqref{wish} can be identified with eigenvalues of products of two rectangular random matrices with suitable Gaussian entries, see Subsection \ref{agen}.
Moreover (for general $\nu>-1$) we obtain a LUE-type analogue of a chiral ensemble studied by Akemann \cite{Ake} and Osborn \cite{O} in the context of quantum chromodynamics.
More about this will be said in Section \ref{QCD}.

It follows from computations in \cite[Theorem 1]{ABK} that the droplet in potential $Q_N$ is the elliptic disc (containing the origin)
\begin{align}\label{wishdrop}S_{Q_N}&=\{\xi+i\eta\, ;\,\tfrac {(\xi-t_N)^2}{a_N^2}+\tfrac {\eta^2}{b_N^2}\le 1\,\},\\
\nonumber \scriptstyle (a_N&\scriptstyle=2-c^2N^{-1}+\cdots,\, b_N=2c^2N^{-1}+\cdots, \, t_N=2-c^2N^{-1}+\cdots).\end{align}

We now recall the following asymptotic formula for the Bessel function $K_\nu$ (found in \cite[Section 7$\cdot$3]{W})
\begin{equation}\label{Bessel_asym}K_\nu(x)=\sqrt{\tfrac \pi {2x}}e^{-x}(1+O(x^{-1})),\qquad  (x\to+\infty),\end{equation}
and deduce that for each fixed $\zeta\ne 0$, as $N\to\infty$,
\begin{equation}\label{besselA1}
Q_N(\zeta) = \tfrac N{c^{2}}\cdot |\zeta|-(\tfrac N{c^{2}}-1)\cdot \re\zeta+\tfrac{1}{N}\log N+O(\tfrac 1N),\qquad (N\to\infty).
\end{equation}

Thus $Q_N$ converges pointwise to the limit
\begin{equation}\label{mppot}V(\zeta)={\scriptstyle{\begin{cases}\,\zeta & \text{if}\quad \zeta\in \R\quad \text{and}\quad \zeta\ge 0,\cr
+\infty & \text{otherwise}.\cr \end{cases}}}
\end{equation}

It is well-known that the equilibrium density in potential \eqref{mppot} is the Marchenko-Pastur law (\cite[Section 7.2]{F}),
\begin{equation}\label{MPlaw}\MP(\xi)=\tfrac 1 {2\pi\xi}\sqrt{(4-\xi)\xi}\cdot\1_{[0,4]}(\xi).\end{equation}

As usual, we write $\bfR_N$ for the 1-point function with respect to \eqref{wish} and $c_N$ for the corresponding cross-sections in \eqref{decs}.

\begin{thm}\label{mthMP} (``\emph{Pointwise cross-section convergence for ALUE}'') $c_N(\xi)\to\pi\cdot \MP(\xi)$ as $N\to\infty$ for each $\xi$
with $0<\xi<4$.
\end{thm}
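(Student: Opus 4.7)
The plan is to adapt the argument behind Theorem \ref{mth0}: use the determinantal structure at $\beta=1$ to recognize the cross-section $c_N(\xi)$ as a constant multiple of the diagonal of the classical LUE Christoffel--Darboux kernel, and then appeal to bulk Plancherel--Rotach asymptotics of Laguerre polynomials.

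First I would record the explicit orthogonal-polynomial representation of the ALUE correlation kernel. Writing $\alpha_N = N^2/c^2$ and $w_N(\zeta) = e^{-NQ_N(\zeta)} = K_{\nu}(\alpha_N|\zeta|)\,|\zeta|^{\nu}\,e^{(\alpha_N-N)\re\zeta}$, an analysis of the type carried out in \cite{ABK} shows that $w_N\,dA$ admits (shifted, rescaled) Laguerre polynomials as orthogonal polynomials, so that
\begin{equation*}
\bfR_N(\zeta) = w_N(\zeta)\,\sum_{k=0}^{N-1}\frac{L_{k}^{\nu}(\alpha_N\zeta)\,L_{k}^{\nu}(\alpha_N\bar\zeta)}{h_{k,N}}
\end{equation*}
for explicit normalization constants $h_{k,N}$. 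Next comes the cross-section collapse: interchanging $\int_{\R}d\eta$ with the finite sum reduces matters to evaluating
\begin{equation*}
I_{k}(\xi) = \int_{\R}K_{\nu}(\alpha_N|\xi+i\eta|)\,|\xi+i\eta|^{\nu}\,L_k^{\nu}(\alpha_N(\xi+i\eta))\,L_k^{\nu}(\alpha_N(\xi-i\eta))\,d\eta.
\end{equation*}
The key identity is that for $\xi>0$ this integral evaluates to an explicit multiple of $e^{-\alpha_N\xi}(\alpha_N\xi)^{\nu}\,[L_{k}^{\nu}(\alpha_N\xi)]^{2}$. This is seen by writing the Bessel factor as a Laplace transform of a Gaussian in $|\zeta|$ via $K_{\nu}(x)=\tfrac{1}{2}(x/2)^{-\nu}\int_{0}^{\infty}t^{\nu-1}e^{-t-x^{2}/(4t)}\,dt$; the resulting Gaussian factorizes along the $\xi$- and $\eta$-directions, and the $\eta$-integration collapses the two-dimensional chiral weight to the one-dimensional Laguerre weight on $\R_{+}$.

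Combining the two steps and cancelling the surviving exponential factors, $c_N(\xi)$ reduces, up to $o(1)$, to $\pi$ times the normalized diagonal Christoffel--Darboux kernel of the classical $\mathrm{LUE}$ at scale $N\xi$, and the classical Plancherel--Rotach asymptotics for Laguerre polynomials (e.g.~\cite[Section 7.2]{F}) yield $\tfrac{1}{N}\bfK_{N}^{\mathrm{LUE}}(N\xi,N\xi)\to\MP(\xi)$ locally uniformly on $(0,4)$. The principal obstacle I anticipate is the cross-section collapse: although it parallels the Hermite calculation behind Theorem \ref{mth0}, the bookkeeping of constants is genuinely more delicate here, mostly because $\alpha_N = N^2/c^2$ grows faster than $N$, so one must verify that after the $\eta$-integration and the cancellation with the $e^{(\alpha_N-N)\xi}$ factor the remaining Laguerre polynomial sum is effectively evaluated at the scale $N\xi$ at which the Marchenko--Pastur bulk $(0,4)$ appears, rather than at the much larger $\alpha_N\xi$.
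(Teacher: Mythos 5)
The broad strategy — use the determinantal structure and the Laguerre-type orthogonal polynomials, integrate out $\eta$ to land on a one-dimensional LUE Christoffel--Darboux kernel, then apply Plancherel--Rotach asymptotics — is indeed what the paper does. But your kernel representation is at the wrong scale, and this is not a bookkeeping issue that can be verified away. The orthogonal polynomial for the weight $e^{-NQ_N/2}$ is (cf.~\eqref{ONP W}) $q_j(\zeta)=C_N\,\tau^j\sqrt{j!/(j+\nu)!}\,L_j^\nu\bigl(\tfrac{a^2-b^2}{2b}N\,\zeta\bigr)$ with $a=N/c^2$, $b=a-1$, so the Laguerre argument is $\tfrac{a^2-b^2}{2b}N\zeta = N\zeta\cdot(1+O(1/N))$, \emph{not} $\alpha_N\zeta=(N^2/c^2)\zeta$ as you wrote. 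Your $\alpha_N$ is correct inside the Bessel factor $K_\nu(\alpha_N|\zeta|)$, but not inside the Laguerre polynomials. This matters: if the Laguerre argument really were $\alpha_N\xi\sim N^2\xi/c^2$ with degree $\le N-1$, then $j/(\alpha_N\xi)\to 0$ and you would be in the small-ratio (near-origin) regime, not the bulk $(0,4)$ where Marchenko--Pastur lives, and the Plancherel--Rotach step you invoke would not apply. The ``principal obstacle'' you flag is really a symptom of this mis-scaling; once the representation is written with argument $\approx N\zeta$, the Laguerre sum is already at scale $N\xi$ and nothing needs to be ``rescued'' by the $\eta$-integration.

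Your proposed cross-section collapse via the integral representation $K_\nu(x)=\tfrac12(x/2)^{-\nu}\int_0^\infty t^{\nu-1}e^{-t-x^2/(4t)}\,dt$ is also stated without derivation, and as written it would produce the wrong object. Even granting the representation, factoring the Gaussian in $\eta$ out of $K_\nu(\alpha_N|\zeta|)$ still leaves a product of Laguerre polynomials with $\eta$-dependent arguments to integrate, and the claim that this evaluates \emph{exactly} to a constant multiple of $e^{-\alpha_N\xi}(\alpha_N\xi)^\nu[L_k^\nu(\alpha_N\xi)]^2$ is neither obvious nor correctly scaled. The paper's actual route sidesteps this: it applies the summation identity $\tfrac{j!}{(j+\nu)!}|L_j^\nu(z)|^2=\sum_{k=0}^j\tfrac{|z|^{2k}}{k!(k+\nu)!}L_{j-k}^{\nu+2k}(z+\bar z)$ (see \eqref{wrath}) so that the Laguerre polynomials under the $\eta$-integral have real argument $\approx 2N\xi$ and the $\eta$-dependence is isolated in the factor $(1+(y/(N\xi))^2)^k$; the resulting integral $J(N,k)$ is then computed in closed form via a confluent hypergeometric identity (Lemma \ref{cs integral}) and shown to be $c\sqrt{2\pi\xi}\,(1+O(N^{-2}))$, after which the identity is applied in reverse to recognize the classical one-dimensional LUE Christoffel--Darboux kernel at $N\xi$. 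You would need either to prove your claimed collapse identity at the corrected scale, or to follow the paper's route with \eqref{wrath} and Lemma \ref{cs integral}.
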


We shall now consider scaling limits; the following remark will come in handy.

\begin{rmk*}
For fixed $\zeta\ne 0$ we have the following approximation for the Laplacian $\Delta Q_N(\zeta)$,
\begin{equation}\label{BA2}\Delta Q_N(\zeta)=\tfrac N {4c^2|\zeta|}+o(\tfrac 1N).\end{equation}
The formula \eqref{BA2} is easily deduced
using well-known asymptotics for the Bessel function $K_\nu$, see e.g.~\cite{ABK} (or \cite{W}) for details.
\end{rmk*}

Now fix a zooming-point $p_*\in S_V$ in the bulk, i.e.,  $0< p_*< 4$.
Using \eqref{BA2}, the quantities $\rho(p_*)$ and $a(p_*)$ in \eqref{rhop} and \eqref{ap} are readily computed as
\begin{equation}\label{apMP}\rho(p_*)=\lim_{N\to\infty}\sqrt{\tfrac N {\Delta Q_N(p_*)}}=2c\sqrt{p_*},\qquad a(p_*)=\pi c\sqrt{p_*}\cdot \MP(p_*).\end{equation}
We rescale a random sample $\{\zeta_j\}_1^N$ about $p_*$ using the map \eqref{blowup} and let $R_N$ denote the 1-point function of the rescaled system $\{z_j\}_1^N$.

\begin{thm}\label{mthWish} (``\emph{Bulk scaling limit for ALUE}'') If $\nu>-1$ is an integer then $R_N\to R$ locally uniformly where $R(z)=F(2\im z)$ and $F(z)=\gamma*\1_{(-2a,2a)}(z)$, where $a=a(p_*)$ is given by \eqref{apMP}.
\end{thm}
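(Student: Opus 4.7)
The plan is to adapt the strategy used for Theorem \ref{mainth1} to the ALUE setting, with Theorem \ref{mthMP} playing the role that Theorem \ref{mth0} played for AGUE, and with Laguerre polynomial asymptotics replacing Hermite asymptotics where needed.

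First I would establish precompactness of $\{R_N\}$ and regularity of any locally uniform subsequential limit $R$. The integer-$\nu$ hypothesis enters here: for non-negative integer $\nu$ the ensemble \eqref{wish} admits a matrix-model realization (Subsection \ref{agen}), and the planar orthogonal polynomials for the weight $e^{-NQ_N}$ can be written in closed form in terms of Laguerre polynomials $L_k^{(\nu)}$. Plancherel--Rotach type asymptotics for $L_k^{(\nu)}$ then give uniform bounds on $\bfR_N$ in a neighbourhood of any bulk point $p_*\in(0,4)$, from which a locally uniformly convergent subsequence $R_{N_k}\to R$ can be extracted.

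Next, by Lemma \ref{Lem_infinite DPP} any such limit $R$ is the $1$-point function of an infinite determinantal point field, and by the theory developed in \cite{AKM} it satisfies Ward's equation. I would then argue that $R$ is translation invariant in the horizontal direction $z\mapsto z+t$, $t\in\R$: shifting the zoom point slightly along $\R$ within the bulk perturbs $Q_N$ only by a negligible amount relative to the rescaling in \eqref{blowup}, so horizontal translates of the scaling limit agree. Combined with the symmetry $Q_N(\bar\zeta)=Q_N(\zeta)$, which forces $R(z)=R(\bar z)$, the classification of translation invariant solutions to Ward's equation in \cite{AKM} yields $R(z) = F(2\im z)$ with $F = \gamma * \1_E$ for some Borel set $E\subset\R$ symmetric about the origin.

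Finally, Theorem \ref{mthMP} identifies $E$. Integrating $R_N$ over a vertical line through $p_*$ and taking the limit, the rescaling \eqref{rescaled} together with \eqref{decs} relates $\int_\R F(2\eta)\,d\eta$ to $\pi\cdot\MP(p_*)$; combined with $\rho(p_*)=2c\sqrt{p_*}$ from \eqref{apMP} this forces $|E|=4a(p_*)$, and by symmetry $E=(-2a,2a)$. The main obstacle will be the first step: the Laguerre asymptotics for $L_k^{(\nu)}$ must be applied uniformly in the rescaled variable, and care is needed because $Q_N$ has a genuine singularity at $\zeta=0$; the bulk assumption $0<p_*<4$ is used throughout to stay away from this singularity, and the integer-$\nu$ hypothesis is what makes the matrix-model representation --- and hence the clean Laguerre structure of the kernel --- available.
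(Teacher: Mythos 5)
Your overall architecture is the right one, and it matches the paper's: show translation invariance, invoke the classification of translation-invariant solutions to Ward's equation (Theorem~\ref{tithm}), and use cross-section convergence (Theorem~\ref{mthMP}) to fix $a$. But the heart of the proof is exactly the translation-invariance step, and there your argument has a genuine gap. ``Shifting the zoom point slightly perturbs $Q_N$ only by a negligible amount'' is a heuristic, not a proof: moving the centre $p_*$ to $p_*+t/\sqrt{N\Delta Q_N(p_*)}$ relates $R_N(z+t)$ to a rescaled $1$-point function about an $N$-dependent base point, and you would then need a stability/continuity statement guaranteeing that the two sequences of rescaled intensities converge to the same limit. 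Establishing that uniformity is essentially equivalent to the hard computation you are trying to avoid. The paper instead computes $\partial R_N/\partial x$ explicitly: starting from \eqref{obt}, it passes to the double-contour-integral representation \eqref{tre}--\eqref{U_N} of $G_N$ (which requires $\nu$ to be an integer; this is [AB, Lemma~2]), differentiates under the integral sign (Lemma~\ref{U_N pax}) to express $\partial_x U_N$ as a product of two single Laguerre polynomials, and then kills this term with the Plancherel--Rotach asymptotics of Lemma~\ref{BAAL} and the $e^{-NQ_N}$ decay, giving $\partial R_N/\partial x = O(1/N)$ locally uniformly.

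Two smaller points. First, you locate the role of the integer-$\nu$ hypothesis in the orthogonal-polynomial structure \eqref{ONP W}, but that formula holds for every $\nu>-1$; what actually needs integer $\nu$ is the contour-integral identity for $G_N$ used to differentiate. Second, you do not need Laguerre asymptotics to get precompactness and regularity of subsequential limits: Lemma~\ref{LPF} supplies this via the soft pointwise-$L^2$ bound of Lemma~\ref{spike}, which holds under the general hypotheses of Section~\ref{Sec3}, and the determination of $R$ from $R=L(z,z)$ by polarization; the Laguerre machinery is needed only for the derivative estimate, not for compactness.
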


(The assumption that $\nu$ be an integer is made mainly for technical convenience; we do not think that it should be necessary.)

While Theorem \ref{mthWish} looks similar to Theorem \ref{mainth1}, the result is new; the shape of the droplet was calculated only recently in \cite{ABK}, and the singular behaviour
of the equilibrium measure renders our discussion more delicate in the present case.

The similarity with AGUE breaks down when we consider edge scaling limits,
and more precisely when we zoom appropriately about the singularity at the origin. We now turn to this interesting scaling limit.

In keeping with a two-dimensional perspective, we rescale $\{\zeta_j\}_1^N$ about the origin by blowing up with the factor
$(\tfrac N c)^2$, i.e., we put
	$$z_j=(\tfrac N c)^2\zeta_j,\qquad z=(\tfrac N c)^2\zeta, \qquad R_N(z)=(\tfrac c N)^4\, \bfR_N(\zeta).$$

\begin{figure}[h!]
	\begin{center}
		\includegraphics[width=4.5in,height=2.415in]{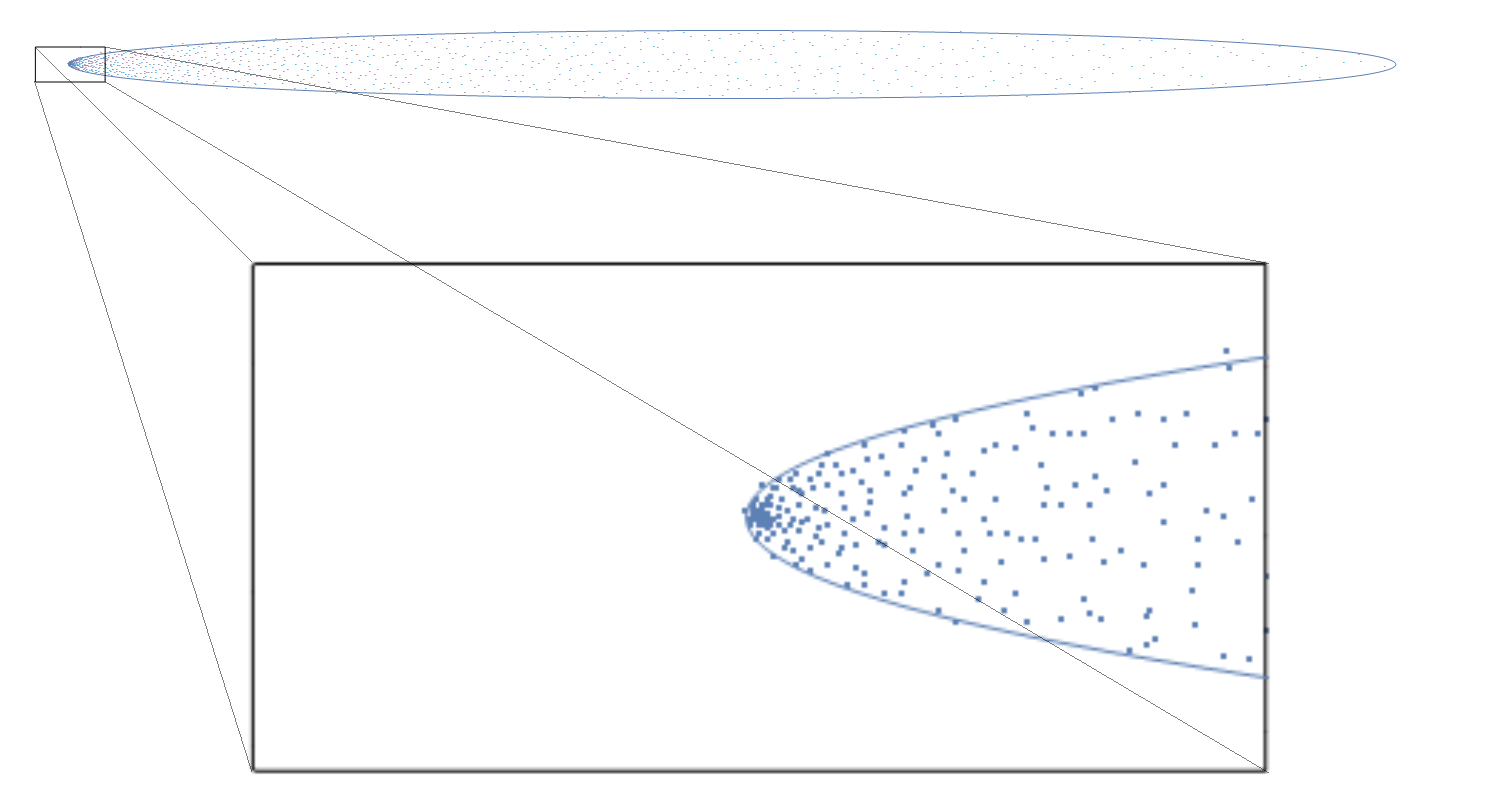}	
	\end{center}
	\caption{A sample from ALUE rescaled about the singular point $0$. Here $N=1000$ and $c$ is approximately equal to $7$.} \label{Fig_ANWB}
\end{figure}

Using a slight modification of computations due to Osborn in \cite{O}, we will prove the following result.

\begin{thm}\label{Thm_Osborn} (``\emph{Edge scaling limit for ALUE}'')
We have $R_N \to R$ uniformly on compact subsets of $\C\setminus (-\infty,0]$, where
\begin{equation}\label{RO}
R(z)=R_{(c)}(z)=\tfrac{1}{2} \, K_\nu(|z|)\, e^{\,\re z} \int_{0}^{2c} s\, e^{-\frac 12 s^2}  |J_\nu(s \, z^{\frac 12})|^2 \, ds.
\end{equation}
\end{thm}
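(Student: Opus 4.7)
The plan is to exploit the determinantal structure of the ALUE. Writing $w_N(\zeta)=K_\nu(N^2|\zeta|/c^2)|\zeta|^\nu e^{(N^2/c^2-N)\re\zeta}$ for the weight in the joint density, the $1$-point function factors as $\bfR_N(\zeta)=w_N(\zeta)\sum_{k=0}^{N-1}|\varphi_{N,k}(\zeta)|^2$ with $\{\varphi_{N,k}\}$ orthonormal polynomials in $L^2(w_N\,dA)$. Under the rescaling $z=(N/c)^2\zeta$, we have $R_N(z)=(c/N)^4\bfR_N(\zeta)$; crucially, the substitutions $N^2|\zeta|/c^2=|z|$ and $(N^2/c^2)\re\zeta=\re z$ are exact (we rescale \emph{at} the singular point, so the Bessel asymptotic \eqref{Bessel_asym} must not be invoked), whence
\begin{equation*}
w_N(\zeta)=K_\nu(|z|)\cdot(c/N)^{2\nu}|z|^\nu\cdot e^{\re z-N\re\zeta},\qquad N\re\zeta=(c^2/N)\re z\to 0
\end{equation*}
locally uniformly. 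This already identifies the prefactor $K_\nu(|z|)e^{\re z}$ of the claimed limit; the core task is a tractable closed form for the kernel sum.

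Following Osborn \cite{O} with only normalization changes, I would linearize the Bessel factor through the Laplace-type identity
\begin{equation*}
K_\nu(2\sqrt{uv})\,(v/u)^{\nu/2}=\tfrac{1}{2}\int_0^\infty t^{-\nu-1}e^{-u/t-vt}\,dt,
\end{equation*}
with $u,v$ chosen so that $2\sqrt{uv}=N^2|\zeta|/c^2$ and $(v/u)^{\nu/2}=|\zeta|^{-\nu}$. Inserting this into $w_N$ expresses the weight as a superposition (in an auxiliary parameter $t$) of genuine Gaussian weights in $\zeta$; for each slice the orthogonal polynomials are monomials $\zeta^k$ with explicit Gamma-function norms. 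Aggregating the slices, $\sum_{k=0}^{N-1}|\varphi_{N,k}(\zeta)|^2$ acquires a closed form as an integral in $t$ of a truncated power series in $|\zeta|^{2k}$ with ratios of factorials as coefficients.

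Passing to the limit, I would substitute $s=2c\sqrt{k/N}$, turning the sum $\sum_{k=0}^{N-1}$ into a Riemann sum over $s\in(0,2c)$ with the natural weight $s\,ds$. Stirling's formula on the factorial coefficients produces the Gaussian factor $e^{-s^2/2}$, while the identity $\sum_{m\ge 0}(-x)^m/(m!(m+\nu)!)=x^{-\nu/2}J_\nu(2\sqrt{x})$ combined with the $t$-integral (whose expansion of $e^{-vt}$ supplies the crucial sign alternation that turns $I_\nu$ into $J_\nu$) collapses the monomial series and the auxiliary integration into $|J_\nu(sz^{1/2})|^2$. The $\tfrac{1}{2}$ in front of the claimed $R(z)$ comes from the $\tfrac{1}{2}$ in the Laplace representation, and the upper cutoff $2c$ reflects the truncation $k\le N-1$.

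The main obstacle will be justifying all of these manipulations uniformly on compact subsets of $\C\setminus(-\infty,0]$: the Mehler--Heine-type passage to the Bessel function, the Riemann-sum convergence, and the interchange of the two integrations (in $t$ and in $s$). The branch cut along the negative axis is intrinsic, because $J_\nu(sz^{1/2})$ forces a choice of branch of $\sqrt{z}$; for the principal branch, $|J_\nu(sz^{1/2})|^2=J_\nu(sz^{1/2})J_\nu(s\bar z^{1/2})$ is well defined exactly off $(-\infty,0]$. A dominated-convergence argument, underpinned by Plancherel--Rotach-type bounds for the truncated monomial series (to handle the endpoint $k\sim N$) and by Stirling error estimates (to handle the endpoint $k=0$ near the singularity), completes the proof.
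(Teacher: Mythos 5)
Your prefactor computation (the exact substitution $N^2|\zeta|/c^2=|z|$, $(N^2/c^2)\re\zeta=\re z$, $N\re\zeta=(c^2/N)\re z\to 0$) is correct and matches the paper, and your endpoint -- a Riemann sum with $s=2c\sqrt{j/N}$ producing $\tfrac12\int_0^{2c}s\,e^{-s^2/2}|J_\nu(sz^{1/2})|^2\,ds$ -- is likewise the paper's endpoint. But the middle of your argument has a genuine gap: the claim that ``for each slice the orthogonal polynomials are monomials $\zeta^k$'' and that aggregating slices yields $\sum_{k=0}^{N-1}|\varphi_{N,k}(\zeta)|^2$. Each slice of the weight, after inserting the Laplace representation for $K_\nu$, still carries the factor $e^{(N^2/c^2-N)\re\zeta}$; completing the square turns it into a Gaussian centred at a $t$-dependent point, so the slice-level orthogonal polynomials are shifted monomials $(\zeta-\gamma(t))^k$, not $\zeta^k$. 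More fundamentally, orthogonality is not additive in the weight: the orthogonal polynomials for $w_N=\int w_t\,dt$ are not obtained by superposing the orthogonal polynomials of each $w_t$, so the ``aggregation'' step does not produce the kernel you need. What is actually true -- and what the paper uses, citing the prior literature -- is that the orthonormal polynomials for the full weight $w_N$ are scaled Laguerre polynomials, as in formula \eqref{ONP W}; this is established by verifying the orthogonality relations, not by slicing.

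Given \eqref{ONP W}, the paper's proof writes $\bfR_N$ through \eqref{obt} in terms of $G_N(z)=\sum_{j=0}^{N-1}\tfrac{\tau^{2j}j!}{(j+\nu)!}|L_j^\nu(z)|^2$ and applies the Mehler--Heine asymptotic $\lim_{j\to\infty}j^{-\nu}L_j^\nu(w/j)=w^{-\nu/2}J_\nu(2\sqrt{w})$, together with $\tau^{2j}\to e^{-2c^2 t}$ and $j!/(j+\nu)!\sim j^{-\nu}$, then identifies the Riemann sum over $t=j/N$. This also pinpoints a misdiagnosis in your sketch: the alternating signs that produce $J_\nu$ rather than $I_\nu$ live in the Laguerre polynomials themselves (equivalently in the Mehler--Heine limit), not in the $t$-integral from the Laplace representation of $K_\nu$. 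To repair the proposal, replace the slicing heuristic with the explicit identification of the orthogonal polynomials as Laguerre polynomials (either by citing it or by verifying orthogonality via interchange of the $t$-integral with the $\zeta$-integral), after which your Riemann-sum and Bessel-function steps go through as in the paper.
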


(The square-root $z^{\frac 12}$ is defined as the principal branch, with a branch-cut along the negative real axis.)

Again the density \eqref{RO} has an interesting interpolating property, this time between a linear Bessel process and a relatively lesser known  planar Bessel-process (see e.g., \cite{APS1}).

We start by passing to the planar limit as $c\to\infty$ in \eqref{RO}, which is
\begin{equation}R_{(c=\infty)}(z)=\tfrac{1}{2} \, K_\nu(|z|)\,  e^{\,\re z} \int_{0}^{\infty} s\, e^{-\frac 1 2s^2}  |J_\nu(s \, z^{\frac 1 2})|^2 \, ds.
\end{equation}
The integral here can be evaluated by means of a formula in \cite[(10.22.67)]{OLBC}, giving
 \begin{equation}\label{volm}R_{(c=\infty)}(z)=\tfrac 1 2K_\nu(|z|)\,I_\nu(|z|),\qquad z\in\C,\end{equation}
where $I_\nu$ is the modified Bessel function of the first kind. There is a unique determinantal point-field $\{z_j\}_1^\infty$ having the one-point function \eqref{volm},
which we call a ``planar Bessel-process''.

We rescale once more in order to obtain a 1-dimensional perspective: given a random sample $\{z_j\}_1^\infty$ associated with \eqref{RO}, we form
 $\{\tilde{z}_j\}_1^\infty$ where
 $\tilde{z}_j=c^2z_j.$ The point field $\{\tilde{z}_j\}_1^\infty$ has $1$-point function $\tilde{R}_{(c)}(z)=c^{-4}R_{(c)}(c^{-2}z).$
Writing $\tilde{R}_{(0)}$ for the limit of $\tilde{R}_{(c)}$ as $c\to 0$, we readily see that
 $\tilde{R}_{(0)}=0$ outside of the ray $[0,\infty)$ while for $x>0$,
\begin{align*}
	\tilde{R}_{(0)}(x)&=\tfrac{\pi}{2} \int_{0}^{1} t  |J_\nu(t  \sqrt{x})|^2 \, dt=\tfrac{\pi}{4} (J_\nu(\sqrt{x})^2 -J_{\nu+1}(\sqrt{x})J_{\nu-1}(\sqrt{x}) ).
\end{align*}

We recognize the last expression as $\pi$ times the restriction to the diagonal of the well-known Bessel kernel, given for $x,y>0$ by
\begin{align}
\begin{split}
\label{besker}K^{\mathrm{Bes},\nu}(x,y)&=\tfrac12 \int_{0}^{1} t  J_\nu(t \sqrt{x}) J_\nu(t \sqrt{y})  \, dt
=\tfrac {J_\nu(\sqrt{x})\sqrt{y}J_\nu'(\sqrt{y})-\sqrt{x}J_\nu'(\sqrt{x})J_\nu(\sqrt{y})}{2(x-y)}.
\end{split}
\end{align}

\begin{figure}[h]
	\begin{subfigure}[h]{0.32\textwidth}
		\begin{center}
			\includegraphics[width=1.69in,height=1.36in]{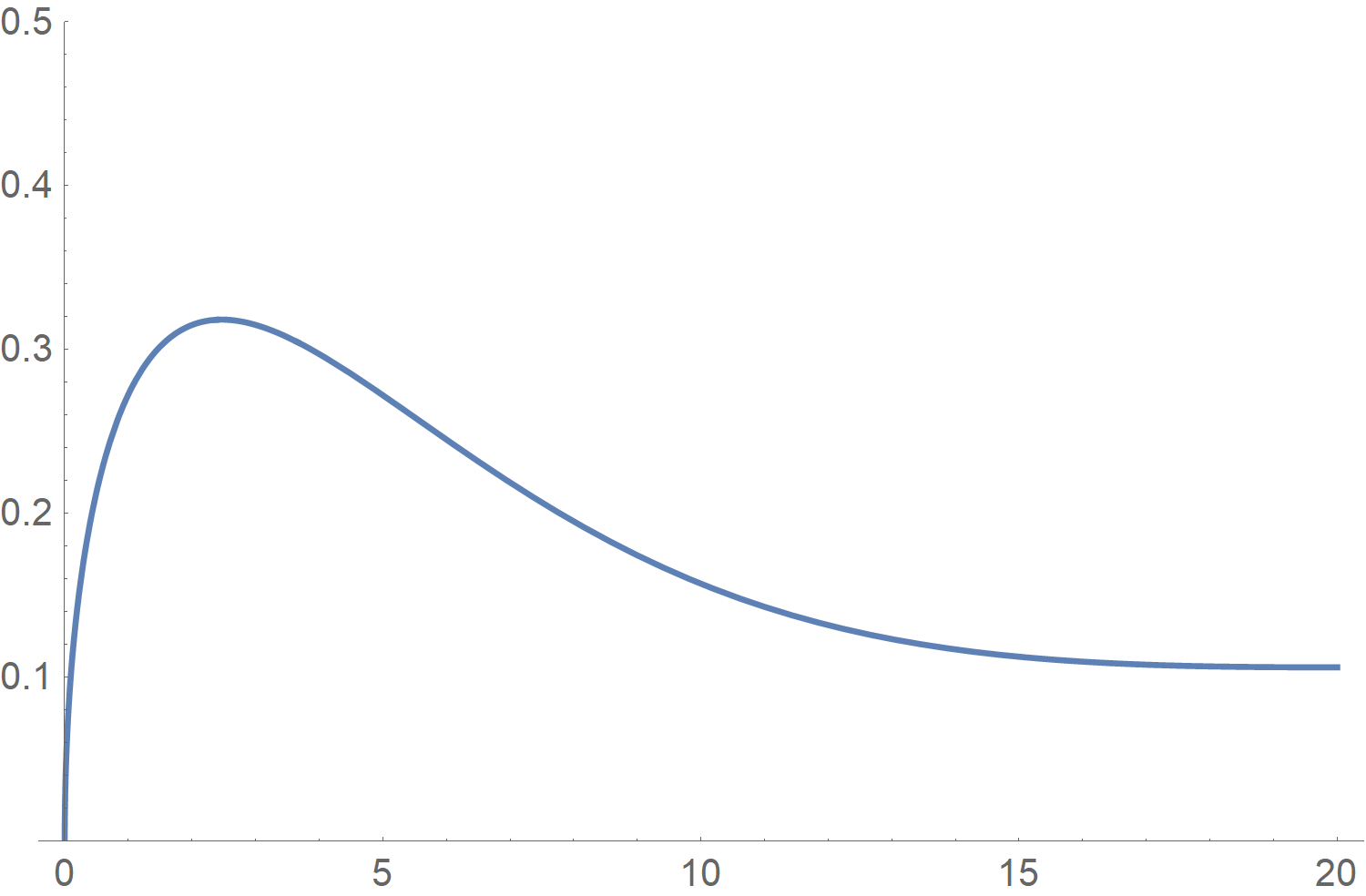}	
		\end{center}
		\caption{$c=0$, $\nu=\frac 1 2$}
	\end{subfigure}
	\begin{subfigure}[h]{0.32\textwidth}
		\begin{center}
			\includegraphics[width=1.69in,height=1.36in]{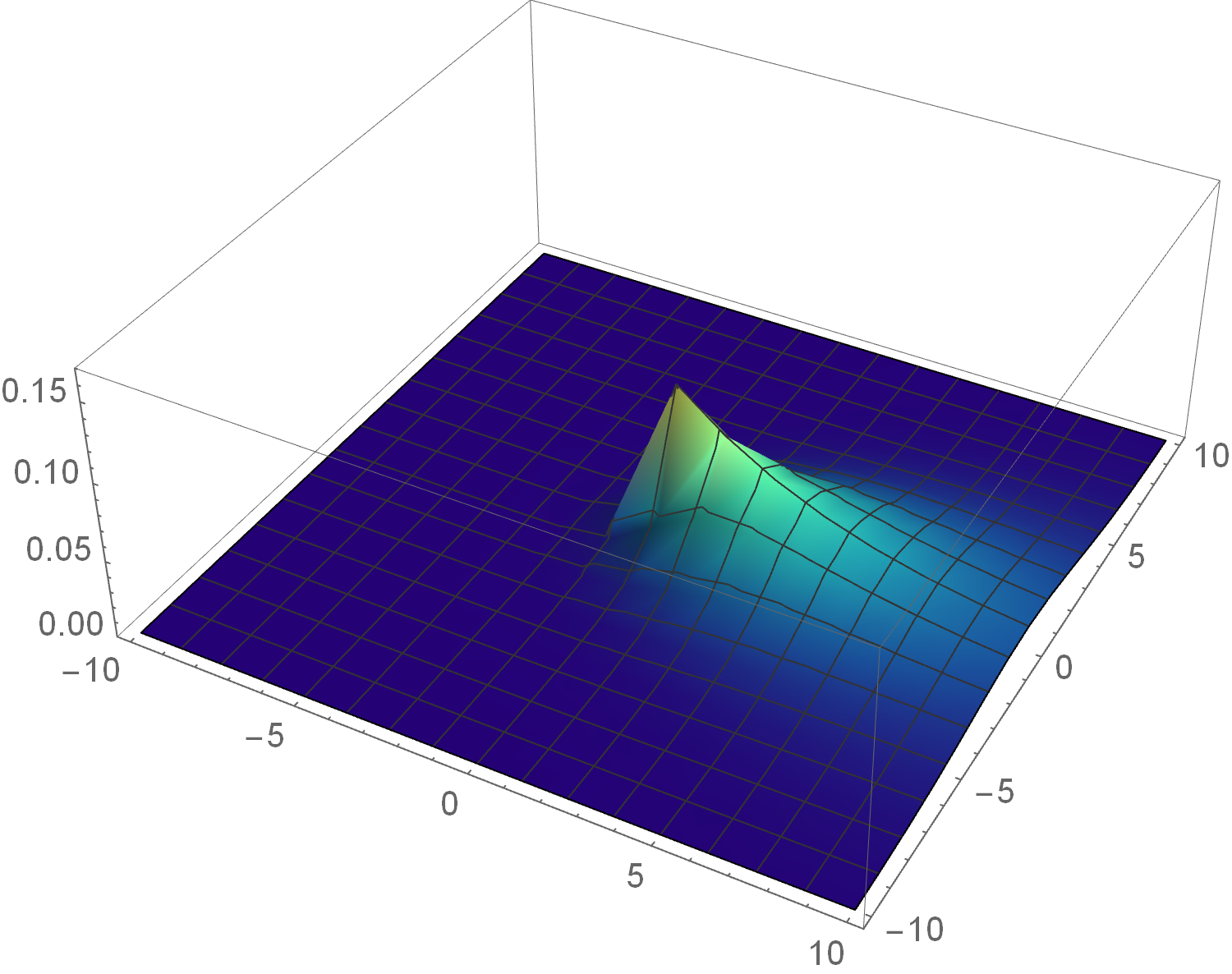}
		\end{center}
		\caption{$c=1$, $\nu=\frac 1 2$}
	\end{subfigure}	
	\begin{subfigure}[h]{0.32\textwidth}
		\begin{center}
			\includegraphics[width=1.69in,height=1.36in]{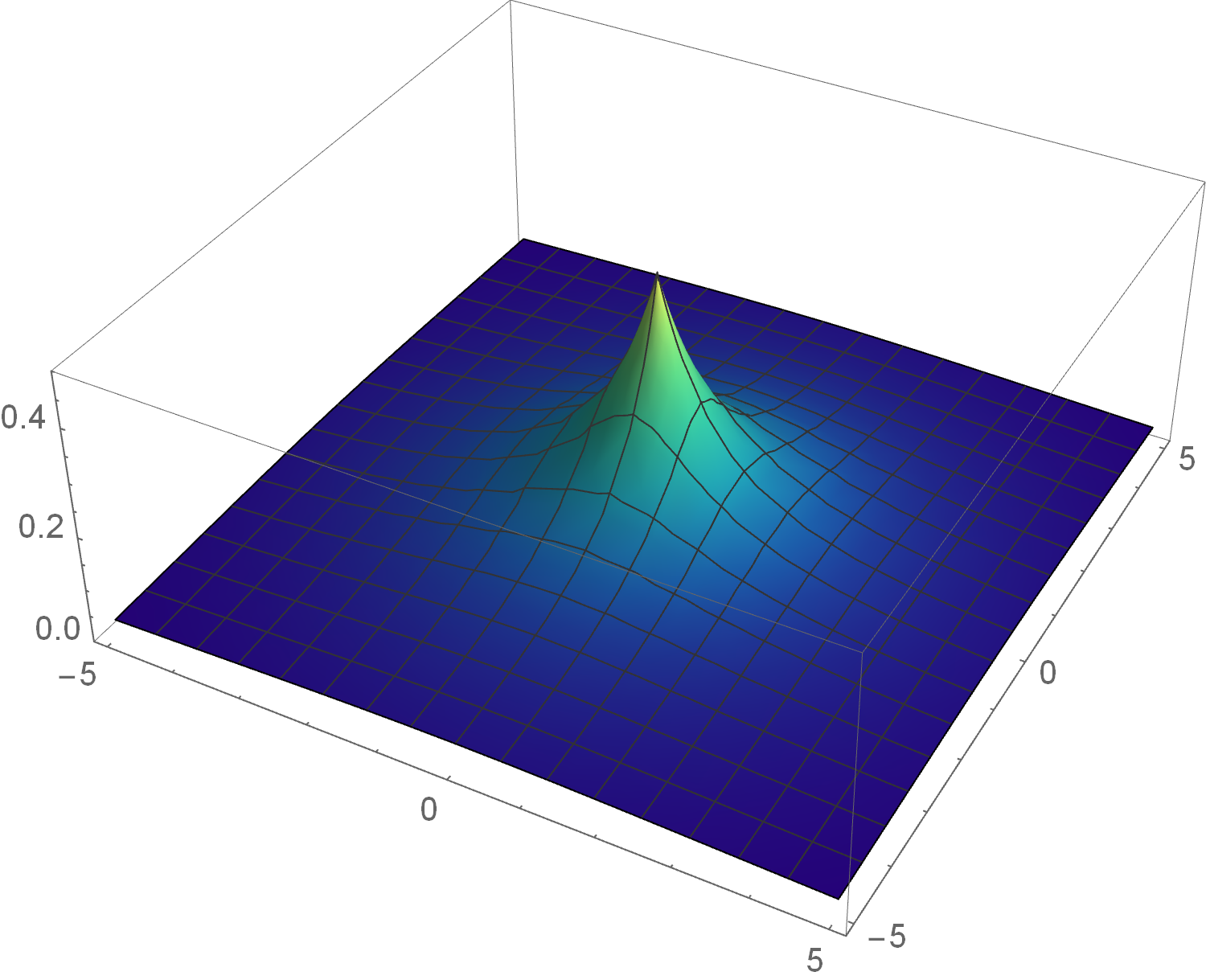}
		\end{center}
		\caption{$c=\infty$, $\nu=\frac 1 2$}
	\end{subfigure}	
	\caption{Graphs of $\tilde{R}_{(0)}(x)$ and $R_{(1)}(z)$, $R_{(\infty)}(z)$ for $\nu=\tfrac 1 2$. } \label{Fig_Osborn R}
\end{figure}

We conclude that the rescaled ensemble interpolates between the
linear Bessel-process with kernel \eqref{besker} (when $c=0$) and the planar Bessel process with 1-point function \eqref{RO} (when $c=\infty$).
See Figure \ref{Fig_Osborn R}.

\subsection{Almost-Circular ensembles}

We now discuss the almost-circular ensembles, a family of random normal matrices whose eigenvalues tend to be distributed within a narrow band around the unit circle of width proportional to $\frac1N$.
The terminology ``almost-circular'' comes from the fact that in the one-dimensional limit, this ensemble tends to the circular unitary ensemble distributed on the unit circle, cf. \cite[Chapter 2]{F}. (See also \cite{FW} for a recent work on circular $\beta$-ensemble.)

The almost-circular ensemble we consider is associated with a radially symmetric potential
\begin{equation}
	Q_N(\zeta)=g_N(r), \qquad r=|\zeta|, \qquad g_N: \R_+ \to \R,
\end{equation}
where $g_N$ is differentiable on $\R_+$ with absolutely continuous derivative.
Without loss of generality, we shall assume that
$g_N(1)=0$ and $g_N'(1)=2.$
We also assume that $Q_N$ is subharmonic in $\C$ and strictly subharmonic in a neighborhood of the unit circle.
Then the associated droplet $S_{Q_N}$ is given by an annulus
\begin{equation}
	S_{Q_N}:=\{ \zeta \in \C \, |  \, r_N \le |\zeta| \le 1 \},
\end{equation}
where $r_N$ is a unique constant satisfying $r_N g_N'(r_N)=0,$ see e.g. \cite{ST}.
A typical example of such a model is the \emph{induced Ginibre ensemble} \cite{FBKS}, an extension of the Ginibre ensemble to include zero eigenvalues

As in previous subsections, we shall assume that the limit
\begin{equation} \label{rho}
	\rho:=\lim_{N \to \infty} \sqrt{\tfrac{N}{\Lap Q_N(1)}} > 0.
\end{equation}
exists. This condition plays an important role in defining the almost-circular ensemble since
\begin{equation} \label{rN ACE}
	r_N=1-\tfrac{\rho^2}{2N}+o(\tfrac{1}{N}),\quad N\to \infty.
\end{equation}

Let $p_N:=(r_N+1)/2$. For an arbitrary zooming-point $ p_{N,\theta}:=p_N \cdot e^{i \theta} $ ($\theta \in [0,2\pi]$), we define the rescaled sample
\begin{equation}
	z_j:=-i e^{i\theta}  \sqrt{N\Delta Q_N(p_{N,\theta})} \cdot (\zeta-p_{N,\theta}),
\end{equation}
see Figure~\ref{fig_inducedG}.

\begin{figure}[h]
	\begin{center}
		\includegraphics[width=0.8\textwidth]{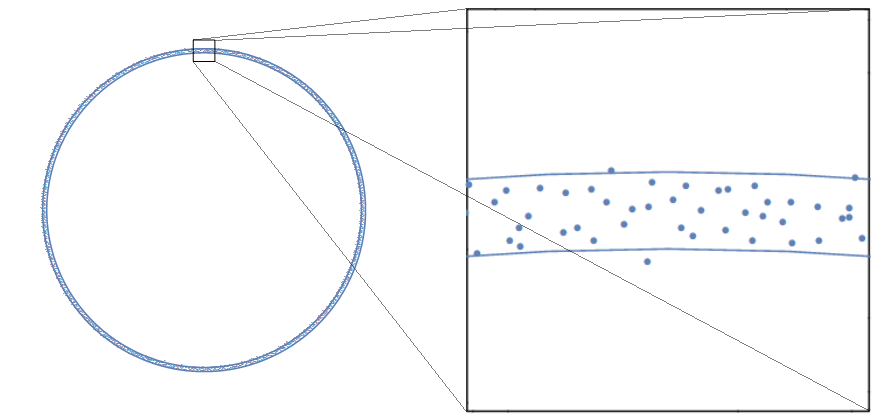}	
	\end{center}
	\caption{A sample from the almost-circular induced Ginibre ensemble.}
	\label{fig_inducedG}
\end{figure}	
	
As an immediate application of Theorem~\ref{tithm} we obtain the following result, which establishes the bulk universality for the almost-circular ensembles.

\begin{thm} \label{Thm_AUE}
	(``\emph{Bulk scaling limit for Almost-Circular ensembles}'') Under the above assumptions, $R_N$ converges locally uniformly to the limit $R(z)=F(2\im z)$ where
	$F(z)=\gamma*\1_{(-2a,2a)}(z)$ and $a=\rho/4$.
\end{thm}

We remark that the value $a$ again is of the form \eqref{appl} and \eqref{apMP} as it can reads as $a=\tfrac{\pi}{2}\cdot \rho \cdot \tfrac{1}{2\pi}=\tfrac{1}{4} \cdot \rho,$ where $\tfrac{1}{2\pi}$ corresponds to the uniform density on the unit circle.
We also refer to \cite{BS} for extensions of Theorem~\ref{Thm_AUE} under several boundary conditions.

\subsection{Plan of this paper and further results.} In Section \ref{BCG} we study a fairly general class of bandlimited Coulomb gas ensembles in the interface between dimensions 1 and 2.

In Section \ref{Sec3} we specialize to the determinantal case $\beta=1$. We adapt to the almost-Hermitian setting the basic results from \cite{AKM} concerning structure of limiting kernels, zero-one law, Ward's equation, and so on. As an application, we prove Theorem~\ref{Thm_AUE}.

In Section \ref{Sec4} we prove Theorem \ref{mth0} and Theorem \ref{mthMP} about cross-section convergence for almost-Hermitian GUE and LUE, respectively.

In Section \ref{BSL} we prove Theorem \ref{mainth1} and Theorem \ref{mthWish} about bulk scaling limits for AGUE and ALUE, respectively.

In Section \ref{ESL} we consider edge scaling limits for AGUE and ALUE. We include a new proof for
Theorem \ref{Thm_Bender}, as well as a proof of Theorem \ref{Thm_Osborn}.

In Section \ref{QCD} we compare our results for ALUE with earlier results about a chiral ensemble in \cite{Ake,O}. We also relate our results with some other point fields appearing in \cite{AKS,F,K,M}, for example.

In Section \ref{crem}, we provide concluding remarks and relate to other works. In particular, we state and prove generalizations of the cross-section convergence
and bulk scaling limits for the general ALUE with a rectangular parameter $\alpha\ge 0$. We also discuss results about a generalization of AGUE, which is obtained by inserting a point charge at the origin, and we discuss almost-Hermitian ensembles with a hard edge.

In the appendices, we collect some asymptotic estimates for Hermite and Laguerre polynomials which were adapted from the papers \cite{LR,Wa} and \cite{V}, respectively.

\section{Generalities about the bandlimited Coulomb gas} \label{BCG}

In this section, we introduce and study a fairly general setup for dealing with almost-Hermitian structures. We begin by studying limiting droplets in the bandlimited regime.
After that we study statistical cross-sections by using some basic large deviation estimates.
This prompts us to impose yet more restrictive conditions on the potentials $Q_N$.

\subsection{The limiting droplet} \label{tld} Let $Q_N$ and $V$ be potentials satisfying the conditions in Subsection \ref{Setup}. We now impose some standing assumptions.

First of all, we will assume that each $Q_N$ is continuous in some fixed neighbourhood of $S_V$. We assume also that $Q_N$ is real-analytic in this neighbourhood
with the possible exception of one (or several) singular points
at which $\Delta Q_N=+\infty$ for each $N$ (so that we include the ALUE in our setting, for example). With another very minor restriction, we assume that each droplet $S_{Q_N}$ equals to the corresponding coincidence set $S_{Q_N}^*$ for the obstacle problem (cf.~\cite{ST} or Subsection \ref{BCS} below).

In addition, we will assume  that
$Q_N(\zeta)=Q_N(\bar{\zeta})$ and $Q_N(\zeta)\ge Q_N(\re\zeta)$ for all $\zeta\in\C$,
and that $\Delta Q_N\ge mN$ on $S_{Q_N}$ for some constant $m>0$.

Under these assumptions, Sakai's regularity theorem (see \cite{AKMW} and references) implies that the boundaries $\d S_{Q_N}$ consist of finitely many Jordan curves, which are real analytic with the possible
exception of finitely many singular points. We shall assume that there are no singular points and that there are continuous functions $h_N:\R\to[0,\infty)$ such that
$$S_{Q_N}=\{\xi+i\eta\,;\, -h_N(\xi)\le\eta\le h_N(\xi)\}.$$

Since the $Q_N$ increase monotonically to $V$, we can apply a general convergence result in \cite[Theorem I.6.2]{ST} to conclude the weak convergence in the sense of measures
$\sigma_{Q_N}\to\sigma_V$ as $N\to\infty$.
Moreover, since $\sigma_{Q_N}=\Delta Q_N\cdot \1_{S_{Q_N}}\, dA$ we have,
for each bounded continuous function $f$, that
$$\tfrac 1 \pi \lim_{N\to\infty}\int_\R\, d\xi\int_{-h_N(\xi)}^{h_N(\xi)}f(\xi+i\eta)\Delta Q_N(\xi+i\eta)\, d\eta=\int_\R f(\xi)\sigma_V(\xi)\, d\xi.$$

We next impose the condition that the limit
$$\rho(\xi)=\lim_{N\to\infty}\sqrt{\tfrac N {\Delta Q_N(\xi)}}$$ exists as a finite
and strictly positive number whenever $\sigma_V(\xi)>0$.
Using this assumption and the mean-value theorem, we obtain for all $\xi$ with $\sigma_V(\xi)>0$ that
\begin{align}\label{star}\pi\cdot \sigma_V(\xi)&=\lim_{N\to\infty}\int_{-h_N(\xi)}^{h_N(\xi)}\Delta Q_N(\xi+i\eta)\, d\eta=\tfrac 2 {\rho(\xi)}
\lim_{N\to\infty}h_N(\xi){\scriptstyle \sqrt{N\Delta Q_N(\xi)}}.
\end{align}
Let us particularly mention that $h_N(\xi) \to 0$ as $N \to \infty$.

Let $\Gamma_{N,p}(\zeta)={\scriptstyle \sqrt{N\Delta Q_N(p)}}\cdot (\zeta-p)$ be the magnification map about a point $p\in\R$, and consider the vertical segment $\gamma_N(p)=\Gamma_{N,p}(S_{Q_N})\cap (i\R)$. Writing
$\gamma_N(p)=[-a_N(p)i,a_N(p)i]$ we then have
$a_N(p)={\scriptstyle \sqrt{N\Delta Q_N(p)}}\cdot h_N(p).$

In view of \eqref{star}, we have shown the following theorem.

\begin{thm}\label{aplem} (``\emph{Asymptotic shape of the droplet}'') If $Q_N$ satisfies the above conditions, and if $\gamma_N(p)=[-a_N(p)i,a_N(p)i]$ then
$a_N(p)\to a(p)$ as $N\to\infty$ where
$a(p)=\tfrac \pi 2\cdot\rho(p)\cdot \sigma_V(p).$
\end{thm}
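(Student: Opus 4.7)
The theorem is essentially a bookkeeping consequence of equation \eqref{star}, so my plan is to make the mean-value step rigorous and then extract the claimed limit. More concretely, starting from the weak convergence $\sigma_{Q_N}\to\sigma_V$ (inherited from \cite[Theorem I.6.2]{ST} under our monotone increase hypothesis), I would first argue that, because $d\sigma_{Q_N}=\Delta Q_N\cdot\mathbf 1_{S_{Q_N}}\,dA$ and the droplets $S_{Q_N}$ live in a shrinking horizontal band $\{|\eta|\le h_N(\xi)\}$ with $h_N(\xi)=O(N^{-1/2})$ (a consequence of the lower bound $\Delta Q_N\ge mN$ together with the area estimate $|S_{Q_N}|=O(1/N)$), testing against a bump centered at $\xi$ yields the pointwise identity
\[
\pi\,\sigma_V(\xi)=\lim_{N\to\infty}\int_{-h_N(\xi)}^{h_N(\xi)}\Delta Q_N(\xi+i\eta)\,d\eta
\]
at every continuity point $\xi$ of $\sigma_V$ with $\sigma_V(\xi)>0$.

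Next I would control the integrand on the right by a mean-value argument. By real-analyticity of $Q_N$ in a neighbourhood of $S_V$ (away from the singular set) and the standing assumption $Q_N(\zeta)\ge Q_N(\re\zeta)$, the symmetry $Q_N(\bar\zeta)=Q_N(\zeta)$ forces $\d_\eta\Delta Q_N(\xi+i\eta)$ to vanish at $\eta=0$, so on the interval $[-h_N(\xi),h_N(\xi)]$ the function $\eta\mapsto\Delta Q_N(\xi+i\eta)$ differs from $\Delta Q_N(\xi)$ by $O(h_N(\xi)^2\cdot \|\d_\eta^2\Delta Q_N\|_\infty)$. Since $h_N(\xi)\to 0$ and $\Delta Q_N(\xi)\sim N/\rho(\xi)^2$, the relative error is negligible, and the integral reduces to
\[
2\,h_N(\xi)\,\Delta Q_N(\xi)\,(1+o(1))\;\longrightarrow\;\pi\,\sigma_V(\xi).
\]

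Finally I would combine this with the definition $a_N(p)=\sqrt{N\Delta Q_N(p)}\cdot h_N(p)$ and the hypothesis $\sqrt{N/\Delta Q_N(p)}\to\rho(p)$. Dividing the displayed limit by $2\sqrt{N\Delta Q_N(p)}/\sqrt N$ and using $\sqrt{\Delta Q_N(p)/N}\to 1/\rho(p)$ gives
\[
a_N(p)=h_N(p)\sqrt{N\Delta Q_N(p)}=\tfrac{\rho(p)}{2}\int_{-h_N(p)}^{h_N(p)}\Delta Q_N(p+i\eta)\,d\eta\cdot(1+o(1))\;\longrightarrow\;\tfrac{\pi}{2}\rho(p)\sigma_V(p)=a(p),
\]
as claimed. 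I expect the main (minor) obstacle to be the passage from weak convergence $\sigma_{Q_N}\to\sigma_V$ to the \emph{pointwise} identity for the vertical integral, which is where the regularity hypotheses on $Q_N$ and the continuity of $\sigma_V$ at $p$ are used; every other step is a direct substitution.
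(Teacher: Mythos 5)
Your argument is essentially the paper's own proof: derive $\sigma_{Q_N}\to\sigma_V$ weakly, pass to the pointwise identity $\pi\sigma_V(\xi)=\lim_N\int_{-h_N(\xi)}^{h_N(\xi)}\Delta Q_N(\xi+i\eta)\,d\eta$, approximate the inner integral by $2h_N(\xi)\Delta Q_N(\xi)(1+o(1))$ via a mean-value argument, and then substitute $a_N(p)=h_N(p)\sqrt{N\Delta Q_N(p)}$ together with $\sqrt{N/\Delta Q_N(p)}\to\rho(p)$. Your observation that the symmetry $Q_N(\bar\zeta)=Q_N(\zeta)$ forces $\partial_\eta\Delta Q_N|_{\eta=0}=0$, so the mean-value error is second order in $h_N$, is a small but legitimate sharpening of the paper's terse ``using \ldots the mean-value theorem.'' One imprecision: $h_N(\xi)=O(N^{-1/2})$ does not follow from the area estimate $|S_{Q_N}|=O(1/N)$; that estimate only controls $h_N$ in an averaged sense, and the actual pointwise rate $h_N(p)\sim 1/N$ is really a consequence of the theorem. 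Fortunately, the argument only uses $h_N(\xi)\to 0$, so nothing breaks. You also correctly identify that both you and the paper leave the upgrade from the weak (tested) identity to the pointwise one at fixed $\xi$ informal, relying implicitly on the regularity hypotheses and continuity of $\sigma_V$.
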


In particular, the droplets $S_{Q_N}$ are \textit{bandlimited} in the sense that there is a constant $A$ such that $|\im\zeta|\le \tfrac A N$ for each $\zeta\in S_{Q_N}$.

\subsection{Statistical cross-sections} \label{stand} Recall from potential theory that the \emph{Robin's constant} $\gamma(V)$ in external potential $V$ is defined by
$\gamma(V)=I_V[\sigma_V]$ where $I_V$ is the energy-functional \eqref{Ilog}.
We say that a sequence $Q_N$ of potentials obeying the conditions in Subsection \ref{tld} is
an \textit{admissible sequence} if the following lower bound for the partition function holds:

\begin{equation}
\label{e1}\liminf_{N\to\infty}\tfrac 1 {N^2}\log Z_N^\beta\ge-\beta\cdot \gamma(V).\end{equation}

\begin{thm} \label{joh1} (``\emph{Weak cross-section convergence}'') Suppose that $(Q_N)$ is an admissible sequence and that the restriction $V|_\R$ is continuous in a
$\R$-neighbourhood of the $V$-droplet $S_V$.
Then $\tfrac 1 \pi c_{N}^\beta\to  \sigma_V$ in the weak sense of measures on $\R$.
\end{thm}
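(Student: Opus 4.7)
The plan is to first reduce Theorem \ref{joh1} to the two-dimensional weak convergence $\bfE_N^\beta[\mu_N]\to\sigma_V$ of intensities on $\C$, where $\mu_N=\tfrac 1 N\sum_{j=1}^N\delta_{\zeta_j}$ is the empirical measure (so its intensity is $\tfrac 1 N\bfR_N^\beta\,dA$). By Fubini and the definition \eqref{decs}, the measure $\tfrac 1\pi c_N^\beta(\xi)\,d\xi$ on $\R$ is exactly the pushforward of $\bfE_N^\beta[\mu_N]$ under $\zeta\mapsto\re\zeta$. Since $\sigma_V$ is supported on $\R$, its own pushforward under $\re$ coincides with $\sigma_V$, so the 1D statement follows immediately from the 2D one.

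To prove the 2D convergence, I would use a standard large-deviations argument for Coulomb gases. Tightness of $\{\bfE_N^\beta[\mu_N]\}$ comes from the uniform growth \eqref{uniform}: a push-inward comparison shows that every $\zeta_j$ lies in a fixed disk $D(0,R)$ with probability $1-O(e^{-cN^2})$. To identify any weak limit as $\sigma_V$, I would smooth each $\delta_{\zeta_j}$ by a bump of radius $\eps_N=N^{-s}$ to obtain a regularized measure $\mu_N^{\eps_N}$, and establish the energy bound
\begin{equation*}
\tfrac 1{N^2}\Ham_N\ge I_V[\mu_N^{\eps_N}]-o(1)\qquad(N\to\infty),
\end{equation*}
using $Q_N\nearrow V$ together with the continuity of $V|_\R$ in a neighbourhood of $S_V$ to replace $Q_N$ by $V$ on the (asymptotically near-$\R$) support of $\mu_N^{\eps_N}$. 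Inserting this bound into the Gibbs density and invoking the admissibility hypothesis \eqref{e1} then produces a quantitative concentration estimate
\begin{equation*}
\Prob_N^\beta\bigl(\dist(\mu_N,\sigma_V)>\delta\bigr)\le e^{-c_\delta N^2}\qquad(N\ge N_0(\delta))
\end{equation*}
for any metric $\dist$ metrizing weak convergence on the space of probability measures in $D(0,R)$, where $c_\delta>0$ reflects strict convexity of $I_V$ and uniqueness of the equilibrium measure (so that $I_V[\nu]\ge\gamma(V)+c_\delta$ strictly for $\nu$ at $\dist$-distance $\ge\delta$ from $\sigma_V$).

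Concentration of $\mu_N$ together with tightness then yields $\bfE_N^\beta[\mu_N]\to\sigma_V$ weakly on $\C$, from which the theorem follows after pushforward under $\re$. The main obstacle is the energy bound in the second paragraph: the diagonal singularity of the logarithmic interaction has to be absorbed by the smoothing $\eps_N=N^{-s}$ without destroying the match with the admissibility lower bound, and simultaneously the potentials $Q_N$ (possibly singular, as in ALUE) must be replaced by $V$ on the support of $\mu_N^{\eps_N}$ in a quantitatively controlled way. The continuity assumption on $V|_\R$ appearing in the theorem is precisely what makes this last interchange possible; the remainder is routine Coulomb-gas machinery along the lines of \cite{AKMW}.
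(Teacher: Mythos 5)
Your overall strategy---reducing to the two-dimensional convergence $\bfE_N^\beta[\mu_N]\to\sigma_V$ on $\C$ via pushforward under $\zeta\mapsto\re\zeta$, then running a large-deviations argument anchored on the admissibility lower bound \eqref{e1}---matches the paper's route, and the pushforward reduction is performed in the same way at the very end of the paper's proof. However, the central energy bound you propose is not available as stated. Since $V=+\infty$ on $\C\setminus\R$ and the smoothed empirical measure $\mu_N^{\eps_N}$ necessarily has two-dimensional support (the particles $\zeta_j$ are not on $\R$ to begin with, and the bump spreads each atom over a planar disc), one has $\mu_N^{\eps_N}(V)=+\infty$ and hence $I_V[\mu_N^{\eps_N}]=+\infty$, so the inequality $\tfrac 1{N^2}\Ham_N\ge I_V[\mu_N^{\eps_N}]-o(1)$ is vacuous. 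There is also a direction problem with ``replacing $Q_N$ by $V$'': the monotonicity $Q_N\nearrow V$ means $Q_N\le V$, so substituting $V$ for $Q_N$ \emph{increases} the confinement term $N\sum_j Q_N(\zeta_j)$, which is the wrong sign for obtaining a lower bound on $\Ham_N$.

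The paper circumvents both issues through the auxiliary potential $Q_N^*(\zeta):=Q_N(\re\zeta)$, which, by the standing assumption $Q_N(\zeta)\ge Q_N(\re\zeta)$, satisfies $Q_N^*\le Q_N$ and hence $I_{Q_N^*}^\sharp[\mu_N]\le I_{Q_N}^\sharp[\mu_N]$ \eqref{monotone}. The quantitative LDP (Lemma \ref{ldp}) works purely with $Q_N$ and the growth bound \eqref{uniform} to control the Gibbs weight of the event $\{I_{Q_N}^\sharp[\mu_N]>\gamma(V)+\eps\}$; the comparison with $V$ happens only afterwards, in a qualitative subsequential compactness step (Lemma \ref{ua}). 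There, after localization (Lemma \ref{local}) forces the configurations into an $O(\log N/N)$-strip around $S_{Q_N}$, any weak$^*$ limit $\mu$ of $\mu_{N_k}$ is automatically supported on $\R$, and Dini's theorem applied to the monotone sequence $Q_N^*\nearrow V^*:=V(\re\cdot)$---uniformly convergent because $V|_\R$ is continuous near $S_V$, which is where the hypothesis enters---gives $\mu_{N_k}(Q_{N_k}^*)\to\mu(V)$. The logarithmic part is handled by truncating the kernel at level $M$ rather than smoothing; this avoids the question of how the diagonal correction competes with \eqref{e1}. The conclusion $I_V[\mu]\le\gamma(V)+\eps$ then forces $\mu=\sigma_V$ by unicity of the equilibrium measure, with no need for a quantitative rate $c_\delta$ or a strict-convexity input: the argument is compactness-plus-uniqueness, which is both simpler and immune to the $2$D/$1$D mismatch that blocks your smoothed-energy inequality.
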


Note that the continuity assumption on $V|_\R$ is satisfied for AGUE but not for ALUE.

Our proof of Theorem \ref{joh1} in the following section uses a modification
of arguments from \cite{J} (as well as the planar version in \cite[Appendix A]{A}).

\subsection{Proof of Theorem \ref{joh1}}  Given an external potential $W$ we write $L_W$ for the kernel
$$L_W(\zeta,\eta)=\log\tfrac 1 {|\zeta-\eta|}+\tfrac 1 2(W(\zeta)+W(\eta)).$$

To a random configuration $\{\zeta_j\}_1^N$ we associate  the corresponding empirical measure
$$\mu_N=\tfrac 1 N\sum_1^N\delta_{\zeta_j}.$$ The \textit{discrete $W$-energy}
of this measure is defined to be
\begin{equation}I_W^\sharp[\mu_N]=\tfrac 1 {N(N-1)}\sum_{j\ne k}L_W(\zeta_j,\zeta_k)=\tfrac 1 {N(N-1)}\sum_{j\ne k}\log\tfrac 1 {|\zeta_j-\zeta_k|}+\mu_N(W).\end{equation}
This energy is closely related to the Hamiltonian $H_N$ in external potential $W$, since
$$H_N=\sum_{j\ne k}\log\tfrac 1 {|\zeta_j-\zeta_k|}+N\sum_{j=1}^N W(\zeta_j)=N(N-1)I_W^\sharp[\mu_N]+\sum_{j=1}^N W(\zeta_j).$$

We now come to a simple but useful observation. Write $Q_N^*(\zeta)=Q_N(\re\zeta)$. Recalling that $Q_N^*\le Q_N$ on $\C$ we obtain
\begin{equation}\label{monotone}I_{Q_N^*}^\sharp[\mu_N]\le I_{Q_N}^\sharp[\mu_N].\end{equation}

Following \cite{A,J} we next fix a small $\epsilon>0$ and potentials $Q_N$ and $V$ obeying the conditions in Subsection \ref{tld}.
We then consider the event
$$A(N,\epsilon)=\{I_{Q_N}^\sharp(\mu_N)\le \gamma(V)+\epsilon\}.$$

\begin{lem} \label{ldp} Fix $a\ge 0$ and suppose that the lower bound condition \eqref{e1} is satisfied.
Then there is a positive integer $N_0$ depending on $\epsilon$ but not on $a$ such that if $N\ge N_0$,
$$\bfP_N^\beta(\{\mu_N\not\in A(N,\epsilon+a)\})\le e^{-\frac 1 2 \beta aN^2}.$$

\end{lem}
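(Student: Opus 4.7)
The plan is to bound the numerator in
\begin{equation*}
\bfP_N^\beta(\mu_N\not\in A(N,\epsilon+a)) = \tfrac{1}{Z_N^\beta}\int_{\{\mu_N\not\in A(N,\epsilon+a)\}} e^{-\beta H_N}\, dA_N
\end{equation*}
pointwise on the event, and then divide by the lower bound on $Z_N^\beta$ supplied by the admissibility hypothesis \eqref{e1}.

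For the numerator, I would use the identity $H_N = N(N-1) I_{Q_N}^\sharp[\mu_N] + \sum_j Q_N(\zeta_j)$ noted just above the lemma. On the event $\{\mu_N\notin A(N,\epsilon+a)\}$ we have $I_{Q_N}^\sharp[\mu_N] > \gamma(V)+\epsilon+a$ by definition, so pointwise
\begin{equation*}
e^{-\beta H_N} \leq e^{-\beta N(N-1)(\gamma(V)+\epsilon+a)}\prod_{j=1}^N e^{-\beta Q_N(\zeta_j)}.
\end{equation*}
Integrating over $\C^N$ and using Fubini yields
\begin{equation*}
\int_{\{\mu_N\not\in A(N,\epsilon+a)\}} e^{-\beta H_N}\, dA_N \leq e^{-\beta N(N-1)(\gamma(V)+\epsilon+a)}\, M^N,
\end{equation*}
where $M := \sup_N \int_\C e^{-\beta Q_N}\, dA$ is finite by the uniform growth hypothesis \eqref{uniform}.

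By \eqref{e1}, for any $\delta>0$ there exists $N_1(\delta)$ such that $Z_N^\beta \geq \exp(-\beta\gamma(V)N^2 - \delta N^2)$ whenever $N\geq N_1(\delta)$. Combining the two bounds and collecting the $N^2$, $N$, and constant contributions, I get
\begin{equation*}
\bfP_N^\beta(\mu_N\not\in A(N,\epsilon+a)) \leq \exp\bigl(-\beta(\epsilon+a)N^2 + \delta N^2 + C N\bigr)
\end{equation*}
for some constant $C$ depending on $\beta$, $\gamma(V)$, $\epsilon$, and $\log M$, but \emph{not} on $a$.

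Finally, I would fix $\delta := \beta\epsilon/4$ (depending only on $\epsilon$); the exponent simplifies to $-\beta a N^2 - \tfrac{3}{4}\beta\epsilon N^2 + CN$. Since $a\geq 0$, to conclude it suffices to absorb $CN$ into $\tfrac{3}{4}\beta\epsilon N^2$, which holds for all $N\geq N_0(\epsilon) := \max\bigl(N_1(\beta\epsilon/4),\, 4C/(3\beta\epsilon)\bigr)$, giving the bound $\exp(-\tfrac12\beta a N^2)$. The only subtle point in the argument is isolating the $a$-dependence into a single non-positive term so that $N_0$ does not degrade as $a$ grows; letting $\delta$ depend on $\epsilon$ alone (rather than on $\epsilon+a$) is precisely what achieves this uniformity.
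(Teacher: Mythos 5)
Your overall plan---bound the numerator pointwise on the bad event via the Hamiltonian decomposition $H_N = N(N-1)\,I_{Q_N}^\sharp[\mu_N] + \sum_j Q_N(\zeta_j)$, then divide by the lower bound on $Z_N^\beta$ from \eqref{e1}---is structurally the same as the paper's, and the final bookkeeping with $\delta=\beta\epsilon/4$ is essentially the same closing move. However, there are two issues.

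The substantive gap is the assertion that $M:=\sup_N\int_\C e^{-\beta Q_N}\,dA$ is finite ``by the uniform growth hypothesis \eqref{uniform}.'' Condition \eqref{uniform} gives only $Q_N(\zeta)\gtrsim 2k\log|\zeta|$ with $k>1$; hence $e^{-\beta Q_N(\zeta)}\lesssim|\zeta|^{-2k\beta}$, and $\int_\C e^{-\beta Q_N}\,dA$ converges only if $k\beta>1$, which is not assumed (the parameter $\beta>0$ is arbitrary). After discarding the $N(N-1)I^\sharp$ term you keep a single $\sum_j Q_N(\zeta_j)$, whose confinement is insufficient at this level of generality. The paper avoids this by first deriving from \eqref{uniform} the unconditional bound $I^\sharp_{Q_N}[\mu_N]\ge\tfrac{c_1}{N}\sum_j Q_N(\zeta_j)-c_2$ (with $c_1=1-1/k>0$), and then taking a $\theta$-convex combination of this with the event inequality $I^\sharp\ge\gamma(V)+\epsilon+a$. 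That retains an $O(N)$ multiple of $\sum_j Q_N(\zeta_j)$ in the lower bound for $H_N$, so after exponentiating and integrating one gets a per-coordinate weight of the form $(1+|\zeta|^2)^{-k\beta(\theta(N-1)c_1+1)}$, which is integrable (in fact $\le 1$) for all sufficiently large $N$ regardless of how small $\beta$ is. For the concrete AGUE/ALUE potentials, which grow polynomially, your $M$ is indeed finite and your argument is fine; but the lemma is proved in the abstract setting where only \eqref{uniform} is available.

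A smaller point: you claim the $CN$ correction is independent of $a$, but expanding $-\beta N(N-1)(\gamma(V)+\epsilon+a)=-\beta(\gamma(V)+\epsilon+a)N^2+\beta(\gamma(V)+\epsilon+a)N$ produces an $a$-dependent linear term $\beta aN$. The conclusion survives because $-\beta aN^2+\beta aN=-\beta aN(N-1)\le -\tfrac12\beta aN^2$ for all $N\ge 2$, so the $a$-dependent part can be absorbed separately and independently of $N_0$; but as written the ``$C$ does not depend on $a$'' step is incorrect, and this absorption needs to be made explicit rather than folded into the $\epsilon$-term.
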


\begin{proof} If $\mu_N\not\in A(N,\epsilon+a)$, then
\begin{equation}\label{con1}I_{Q_N}^\sharp[\mu_N]\ge \gamma(V)+\epsilon+a.\end{equation}

By \eqref{extra growth} and the elementary inequality $|\zeta-\eta|^2\le (1+|\zeta|^2)(1+|\eta|^2)$
we find that
$L_{Q_N}(\zeta,\eta)\ge \tfrac {c_1} 2(Q_N(\zeta)+Q_N(\eta))-c_2,$
where $c_1=1-1/k>0$ and $c_2=C/k.$ This gives
\begin{equation}\label{con2}I_{Q_N}^\sharp[\mu_N]\ge\tfrac {c_1}N\sum_{j=1}^N Q_N(\zeta_j)-c_2.\end{equation}

Now fix a small number $\theta>0$ and take a convex combination of the inequalities \eqref{con1} and \eqref{con2}. We obtain
that if $\mu_N\not\in A(N,\epsilon+a)$,
\begin{equation}\label{above}
I_{Q_N}^\sharp[\mu_N]\ge (1-\theta) (\gamma(V)+\epsilon+a)+\theta(\tfrac {c_1}N\sum_{j=1}^N Q_N(\zeta_j)- c_2),
\end{equation}
which implies that the Hamiltonian \eqref{hamn} satisfies
\begin{align*}
\begin{split}
\Ham_N
 \label{key}
&\ge   N(N-1)(1-\theta)(\gamma(V)+\epsilon+a)\\
 &+k((N-1)\theta c_1+1)\sum_{j=1}^N \log(1+|\zeta_j|^2)-N(\theta c_3N+c_4).
\end{split}
\end{align*}
Consequently,
\begin{align*}
\int_{\C^N\setminus A(N,\epsilon+a)}e^{-\beta \Ham_N}\, dA_N&\le e^{-\beta N(N-1)(1-\theta)(\gamma(V)+\epsilon+a)+\beta N(\theta c_3N+c_4)}\\
&\qquad \times [\int_\C(1+|\zeta|^2)^{-k\beta((N-1)\theta c_1+1)}\, dA(\zeta)]^N.
\end{align*}
Since
$\int_\C(1+|\zeta|^2)^{-\alpha}\, dA(\zeta)=\tfrac 1 {\alpha-1}$ for  $\alpha>1$, the integral in brackets is no larger than $1$, so
we obtain that for all large enough $N$,
\begin{align*}\bfP_N^\beta(\{\mu_N\not\in A(N,\epsilon+a)\})&\le \tfrac 1 {Z_N}e^{-\beta N(N-1)(1-\theta)(\gamma(V)+\epsilon+a)+\beta N(\theta c_3N+c_4)}.
\end{align*}

Next we use the lower bound
$Z_N\ge \exp(-N^2(\beta\gamma(V)+o(1)))$ from assumption \eqref{e1}.
This implies that
$$
\bfP_N^\beta(\{\mu_N\not\in  A(N,\epsilon+a)\}) \le e^{  N(N-1)[ \beta(\theta(\gamma(V)+c_3+\epsilon)-(1-\theta)a)+o(1)  ]   }.
$$
Finally we fix $\theta$ with $0<\theta<\tfrac 18$ and $N_0$ such that for all $N\ge N_0$,
$\theta(\gamma(V)+c_3+\epsilon)+o(1)<\tfrac a8.$
Then for large enough $N$,
$$
\bfP_N^\beta(\{\mu_N\not\in  A(N,\epsilon+a)\}) \le e^{ N(N-1)[ \beta(\frac a 8-(1-\theta)a)  ]   }\le e^{-\frac 1 2 \beta aN^2},
$$
which completes the proof.
\end{proof}

Now fix a constant $C>0$ and write $S_{Q_N}^{\,+}=\{\zeta\in\C\,;\, \dist(\zeta,S_{Q_N})\le C\tfrac {\log N}N\}.$

\begin{lem} \label{ua}
Suppose that for each $N$ we have $\mu_N\in A(N,\epsilon)$ and $\supp \mu_N\subset S_{Q_N}^{\,+}$.
Suppose also
that $V|_\R$ is continuous in an $\R$-neighbourhood of $S_V$.
Then each subsequence of the measures $\mu_N$ has a further subsequence converging weak* to a probability measure $\mu$ supported on $\R$
such that $I_V[\mu]\le\gamma(V)+\epsilon$.
\end{lem}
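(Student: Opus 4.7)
The plan has three stages: extract a subsequential weak* limit, show it is supported on $\R$, and then pass the energy inequality to the limit via a truncation of the logarithmic kernel.

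First I would check tightness. The growth condition \eqref{uniform} is assumed to hold uniformly for all $Q_N$, so the droplets $S_{Q_N}$ are contained in a single compact set $K_0\subset\C$; since $S_{Q_N}^{\,+}$ is a $C\log N/N$--neighborhood of $S_{Q_N}$, all the supports of the $\mu_N$ lie in a common compact $K\supset K_0$. Together with the bandlimitedness of Theorem \ref{aplem}, this yields $\dist(S_{Q_N}^{\,+},\R)\to 0$. Hence the sequence is tight, the given subsequence has a further weak* convergent subsequence $\mu_{N_k}\to\mu$, and $\supp\mu\subset\R$.

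Second, I would show that $\liminf_k\mu_{N_k}(Q_{N_k})\ge \mu(V)$, which is the point where the $N$--dependence of the potential has to be handled carefully. The key is the monotonicity $Q_N\nearrow V$: for a fixed index $M$, one has $Q_{N_k}\ge Q_M$ whenever $N_k\ge M$, and $Q_M$ is lower semicontinuous (continuous in a fixed $\R$-neighborhood of $S_V$ by the standing assumptions), so
\begin{equation*}
\liminf_k\mu_{N_k}(Q_{N_k})\ge \liminf_k\mu_{N_k}(Q_M)\ge \mu(Q_M).
\end{equation*}
Sending $M\to\infty$ and using that $\mu$ is supported on $\R$ together with the continuity of $V|_\R$, monotone convergence gives $\mu(Q_M)\nearrow\mu(V)$ as $M\to\infty$, proving the desired inequality.

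Third, for the logarithmic part I would truncate. For $\delta>0$ set $L_\delta(\zeta,\eta)=\log(1/\max(|\zeta-\eta|,\delta))$; this is a bounded continuous symmetric function on $\C\times\C$, so weak* convergence of $\mu_{N_k}\otimes\mu_{N_k}$ yields
\begin{equation*}
\iint L_\delta\,d\mu\otimes d\mu=\lim_k\iint L_\delta\,d\mu_{N_k}\otimes d\mu_{N_k}.
\end{equation*}
Splitting the empirical double sum into diagonal (which contributes $\log(1/\delta)/N_k\to 0$) and off-diagonal parts, and using $L_\delta\le\log(1/|\cdot|)$ off the diagonal together with $\mu_{N_k}\in A(N_k,\epsilon)$, one gets
\begin{equation*}
\iint L_\delta\,d\mu_{N_k}\otimes d\mu_{N_k}\le \frac{\log(1/\delta)}{N_k}+\frac{N_k-1}{N_k}\bigl(I_{Q_{N_k}}^\sharp[\mu_{N_k}]-\mu_{N_k}(Q_{N_k})\bigr).
\end{equation*}
Taking $k\to\infty$ and then $\delta\to 0$, together with the second step and monotone convergence on the left, produces $\iint\log(1/|\zeta-\eta|)\,d\mu\otimes d\mu\le \gamma(V)+\epsilon-\mu(V)$, which rearranges to $I_V[\mu]\le\gamma(V)+\epsilon$.

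The main obstacle is the moving potential: standard weak* lower semicontinuity applies only to a fixed lsc integrand, not to a family $Q_N$. The monotone majorization $Q_{N_k}\ge Q_M$ is the crucial device, and its effectiveness relies decisively on the continuity hypothesis on $V|_\R$, which guarantees that $\mu(Q_M)\to \mu(V)$ for probability measures $\mu$ concentrated on $\R$.
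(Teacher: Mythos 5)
Your proof is correct, and it follows the same overall skeleton as the paper's (extract a weak* limit supported on $\R$, truncate the logarithmic kernel, pass the energy bound to the limit), but the way you handle the potential term is genuinely different. The paper reflects the potentials to the real axis by setting $Q_N^*(\zeta)=Q_N(\re\zeta)$, observes $I_{Q_N^*}^\sharp\le I_{Q_N}^\sharp$, and then uses Dini's theorem (on a compact set $K$ containing the supports, where $V^*(\zeta)=V(\re\zeta)$ is continuous by the hypothesis on $V|_\R$) to obtain uniform convergence $Q_{N_k}^*\to V^*$, hence the \emph{actual} limit $\mu_{N_k}(Q_{N_k}^*)\to\mu(V)$; this limit is then combined with the $\limsup$ of the discrete log energy. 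You instead work with $Q_N$ directly and exploit the monotonicity $Q_{N_k}\ge Q_M$ (for $N_k\ge M$) together with lower semicontinuity and then monotone convergence of $\mu(Q_M)\nearrow\mu(V)$, which yields only the one-sided bound $\liminf_k\mu_{N_k}(Q_{N_k})\ge\mu(V)$. That weaker conclusion is sufficient because after rearranging $I_{Q_{N_k}}^\sharp[\mu_{N_k}]\le\gamma(V)+\epsilon$ the potential term appears with a minus sign in the bound on the log energy. So you avoid both the $Q^*$ reflection trick and Dini's theorem, at the modest cost of one extra double-limit passage; the continuity hypothesis on $V|_\R$ is still used, but only to ensure $\mu(Q_M)\to\mu(V)$ rather than to invoke Dini. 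Both arguments are sound; yours is marginally more elementary, the paper's gives slightly more (an actual convergence of the potential term rather than a liminf).
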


\begin{proof} By hypothesis, the measures $\mu_N$ are all supported in some large compact set.
Pick a subsequence $\mu_{N_k}$ converging weak* to a probability measure $\mu$ which is necessarily supported on $\R$.
Also pick $\eps>0$ and write $Q_N^*(\zeta)=Q_N(\re\zeta)$. We also write $V^*(\zeta)=V(\re\zeta)$. By \eqref{monotone} we know that
$I_{Q_N^*}^\sharp[\mu_N]\le \gamma(V)+\epsilon$.

 Now recall that the $Q_N^*$ increase monotonically to $V^*$ and that $V^*$ is continuous on some compact set $K$ which contains the sets $S_{Q_N}^{\,+}$ for all large $N$.
 Hence the convergence is uniform on $K$,
by Dini's theorem. In particular
we can find an integer $n$
such that $\sup\{|Q_N^*(\zeta)-V^*(\zeta)|\,;\,\zeta\in K\}<\eps$ when $N\ge n$.
Then for $N_k\ge n$,
$|\mu_{N_k}(Q_{N_k}^*-V^*)|<\eps$.
Choosing $n$ larger if necessary, we can arrange that $|\mu_{N_k}(V^*)-\mu(V)|<\eps$, since $V^*$ is continuous on  $K$ and $\mu(V)=\mu(V^*)$.
Hence $|\mu_{N_k}(Q_{N_k}^*)-\mu(V)|<2\eps$ when $N_k\ge n$.
We have shown that
$\mu_{N_k}(Q_{N_k}^*)\to\mu(V)$ as $k\to\infty$.

From here on, we follow a routine argument which can be found in \cite[p. 146]{ST} for example. For a large real number $M$, we put
$L_M(\zeta,\eta)=\min\{M,\log\tfrac 1 {|\zeta-\eta|}\}.$

Since $L_M$ is continuous while $\mu_{N_k}\to\mu$ weak*,
\begin{align*}\iint\log \frac 1 {|\zeta-\eta|}\, d\mu(\zeta) \, d\mu(\eta)&=
\lim_{M\to\infty}\lim_{k\to\infty}\iint L_M(\zeta,\eta)\, d\mu_{N_k}(\zeta) \, d\mu_{N_k}(\eta)
\\
&\le\lim_{M\to\infty}\limsup_{k\to\infty}\{\tfrac 1 {N_k^2}\sum_{j\ne l}\log\tfrac 1 {|\zeta_j-\zeta_l|}+\tfrac M {N_k}\}
\\
&=\limsup_{k\to\infty}\tfrac 1 {N_k(N_k-1)}\sum_{j\ne l}\log\tfrac 1 {|\zeta_j-\zeta_l|}.
\end{align*}
Since $I_{Q_{N_k}^*}^\sharp[\mu_{N_k}]\le\gamma(V)+\epsilon$ it follows that
\begin{align*}I_V[\mu]&=\lim_{M\to\infty}\lim_{k\to\infty}(\iint L_M(\zeta,\eta)\, d\mu_{N_k}(\zeta)\, d\mu_{N_k}(\eta)+\mu_{N_k}(Q_{N_k}^*))
\\
&\le \limsup_{k\to\infty} I_{Q_{N_k}^*}^\sharp[\mu_{N_k}]\le \gamma(V)+\epsilon.
\end{align*}
The proof is complete.
\end{proof}

\begin{lem} \label{local} Let $d_N(\zeta)=\dist(\zeta,S_{Q_N})$ and put
$D_N=\max_{1\le j\le N}\{d_N(\zeta_j)\}.$
Then for each $q>0$ there exists $C>0$ and $N_0$ so that for $N\ge N_0$ we have
$$\bfP_N^\beta(\{D_N>C\tfrac {\log N} N\})\le \tfrac 1 {N^{q}}.$$
\end{lem}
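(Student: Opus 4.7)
My plan is to reduce the statement to a pointwise bound on the one-point function $\bfR_N^\beta$ via a union bound, and then exploit the obstacle-problem description of the droplet $S_{Q_N}$ together with the quantitative growth of $Q_N$ transverse to the droplet. By exchangeability of the particles under $\Prob_N^\beta$,
\[
\bfP_N^\beta(D_N > \eps) \le \sum_{j=1}^N \bfP_N^\beta(d_N(\zeta_j) > \eps) = \int_{\{\zeta \in \C \,:\, d_N(\zeta) > \eps\}} \bfR_N^\beta(\zeta) \, dA(\zeta),
\]
with $\eps = C \log N / N$, so it suffices to control this integral by $N^{-q}$.

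The key technical input is a pointwise estimate of the form $\bfR_N^\beta(\zeta) \le C_1 N^{a}\, \exp\bigl(-\alpha \beta N (Q_N - \check{Q}_N)(\zeta)\bigr)$, where $\check{Q}_N$ is the obstacle function associated with $Q_N$ (equal to $Q_N$ on the droplet, subharmonic on $\C$, and $\sim k \log|\zeta|^2$ at infinity). For $\beta = 1$ this is a Christoffel--Darboux-type bound in the spirit of \cite{AKM}; for general $\beta$ one can derive an analogous inequality by partition function comparison, using the admissibility lower bound \eqref{e1} on $Z_N^\beta$ together with a direct energy bound on the numerator for configurations with a prescribed outlier.

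To extract decay one needs the transverse growth estimate $(Q_N - \check{Q}_N)(\zeta) \ge c_2 N \, d_N(\zeta)^2$ for $\zeta$ in a fixed neighborhood of $S_{Q_N}$. This follows from the assumption $\Delta Q_N \ge m N$ on the droplet combined with the standing hypotheses $Q_N(\bar\zeta) = Q_N(\zeta)$ and $Q_N(\zeta) \ge Q_N(\re\zeta)$: a Taylor expansion in the $\eta$-variable gives the quadratic behavior, and the extra factor of $N$ reflects the almost-Hermitian scaling of $\Delta Q_N$. Outside a fixed neighborhood of $S_{Q_N}$, the growth hypothesis \eqref{uniform} supplies an $O(\log|\zeta|)$ lower bound that makes the integral converge. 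Inserting $\eps = C \log N/N$, the resulting estimate is of order $N^{a+1} \exp(-c_3 \beta C^2 \log^2 N)$, which is smaller than $N^{-q}$ once $C$ is chosen large enough in terms of $q$, $\beta$, and $c_3$.

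The main obstacle is establishing the pointwise bound in Step~2 uniformly in $N$ and, crucially, in the presence of a (mild) singularity of $\Delta Q_N$ such as the one at the origin in the ALUE potential \eqref{wish}: one must verify that the obstacle function remains well-behaved near the singularity and that the partition function comparison goes through despite it. A secondary subtlety is making the constant $c_2$ in the quadratic growth bound uniform in $N$, since the droplet itself has thickness of order $1/N$; this requires using the bandlimited structure supplied by Theorem~\ref{aplem} to cleanly separate the transverse and longitudinal scales of $Q_N$.
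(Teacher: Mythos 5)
Your reduction via Markov's inequality to
\[
\bfP_N^\beta(D_N>\eps)\le \int_{\{d_N>\eps\}}\bfR_N^\beta\,dA
\]
is correct, and the quadratic transverse growth $(Q_N-\check Q_N)(\zeta)\ge c_2 N\, d_N(\zeta)^2$ near the droplet (together with \eqref{uniform} far away) is exactly the right geometric input; with $\eps=C\log N/N$ this would give a bound of order $\exp(-c\,C^2(\log N)^2)$, which is amply smaller than $N^{-q}$. Your strategy is, in outline, the same as the one behind the result the paper actually invokes.

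However, the proposal does not establish its ``key technical input,'' the pointwise estimate $\bfR_N^\beta(\zeta)\lesssim N^a\exp(-\alpha\beta N(Q_N-\check Q_N)(\zeta))$ for general $\beta>0$. For $\beta=1$ this is Lemma~\ref{spike3} (a determinantal/normal-families argument), but Lemma~\ref{spike3} is not available for general $\beta$, and in fact the paper's Section 2 is precisely the non-determinantal setting. Your suggested ``partition function comparison'' is the right idea, but the admissibility lower bound \eqref{e1} alone is far from sufficient: that bound controls $\tfrac1{N^2}\log Z_N^\beta$ only at leading order, whereas a pointwise one-point function estimate requires controlling $Z_N^\beta/Z_{N-1}^\beta$-type ratios and a sharp energy lower bound for configurations with one particle pinned at $\zeta$. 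Carrying this out in the almost-Hermitian regime, uniformly in $N$, with $\Delta Q_N\asymp N$ on a droplet of vertical thickness $O(1/N)$, and with a possible singularity of $\Delta Q_N$ (the ALUE case), is nontrivial; it is essentially the full content of \cite[Theorem~2]{A}. The paper's proof sidesteps all of this by applying \cite[Theorem~2]{A} directly, taking the parameter $c=c_1N$ there (reflecting $\Delta Q_N\ge mN$) and $t=A\log N$, which immediately yields the claimed $N^{-q}$ bound. So the route you propose is consistent with the literature, but the central step remains a gap: you would need to either prove the uniform pointwise one-point bound for general $\beta$ from scratch, or, as the paper does, import it from \cite{A}.
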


\begin{proof} This follows from \cite[Theorem 2]{A}. More precisely, in the estimate \cite[(1.11)]{A} (with $Q=Q_N$),
we take $c=c_1N$ where $c_1>0$ is independent of $N$. Then taking $t=A\log N$ with a large enough $A$ and defining $C$ accordingly,
we finish the proof.
\end{proof}

We are now ready to finish our proof of Theorem \ref{joh1}.

For a fixed small $\epsilon>0$, we pick $N_0(\epsilon)$ so that if $N\ge N_0$ then, with large probability,
$\supp\mu_N\subset S_{Q_N}^+$ and $\mu_N\in A(N,2\epsilon)$. Indeed, by lemmas \ref{ldp} and \ref{local}, we can arrange that the probability of the complementary event is at most $N^{-q}+e^{-\frac 12\epsilon\beta N^2}$ where $q>0$ may be chosen as large as we please (by adjusting the value of $C$).

Now let $\epsilon=\epsilon_n\to 0$ through a suitable sequence, and
for each $n$, use Lemma \ref{ua} to pick a weak* subsequential limit $\mu^{\epsilon_n}$ of the measures $\mu_N$, with $I_V[\mu^{\epsilon_n}]\le \gamma(V)+2\epsilon_n$ and $\mu^{\epsilon_n}$ supported
in some fixed compact subset of $\R$.

Letting $n\to \infty$, the corresponding measures $\mu^{\epsilon_n}$ will converge
weak* along a subsequence to a measure $\mu$ with $I_V[\mu]\le \gamma(V)$. This implies that $\mu=\sigma_V$ by unicity of the equilibrium measure (cf.~\cite[Theorem I.1.3]{ST}).  We have shown that with probability $1-o(1)$, every subsequence of the measures $\mu_N$ has a further subsequence converging weakly to $\sigma_V$, which implies that the full sequence $\mu_N\to\sigma_V$ weakly as measures.

Now let $f$ be a bounded continuous function on $\R$ and define $f^*$ on $\C$ by $f^*(\zeta)=f(\re\zeta)$.
Then the uniformly bounded random variables $\mu_N(f^*)$ converge to $\sigma_V(f^*)=\sigma_V(f)$ in probability as $N\to\infty$.
Taking expectations we obtain $\bfE_N^\beta[\mu_N(f^*)]\to \sigma_V(f)$ as $N\to\infty$. However, by Fubini's theorem,
\begin{align*}\bfE_N^\beta[\mu_N(f^*)]&=\tfrac 1 N\bfE_N^\beta[f(\re\zeta_1)+\ldots+f(\re \zeta_N)]\\
&=\tfrac 1 \pi \tfrac 1 N\int_\R f(\xi)\, d\xi\int_\R \bfR_N^\beta(\xi+i\eta)\, d\eta\\
&=\tfrac 1 \pi \int_\R f(\xi)\, c_N^\beta(\xi)\, d\xi,\end{align*}
so we obtain the desired convergence $c_N^\beta\to \pi\sigma_V$ in the sense of measures on $\R$. $\qed$

\subsection{An entropy estimate} The lower bound on the partition function in \eqref{e1} can be checked directly for our model cases of AGUE and ALUE.
However, the stronger conditions in the following lemma are usually easier to check (e.g.~in the case of AGUE).

\begin{lem} \label{lem4} Suppose that the equilibrium measures $\sigma_{Q_N}$ obey the following energy and entropy limits:
\begin{equation}\label{eel}\lim_{N\to\infty}\tfrac 1 N\sigma_{Q_N}(Q_N)=0,\qquad \lim_{N\to\infty}\tfrac 1 N\sigma_{Q_N}(\,\log\Delta Q_N)=0.\end{equation}
Then the lower bound \eqref{e1} holds, i.e.,
$\liminf_{N\to\infty}\tfrac 1 {N^2}\log Z_N^\beta\ge -\beta\gamma(V)
$.
\end{lem}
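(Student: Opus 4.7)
The plan is to derive the lower bound from Jensen's inequality applied to the integral defining $Z_N^\beta$, using the trial probability measure $d\nu=d\sigma_{Q_N}^{\otimes N}$ on $\C^N$. Since $\sigma_{Q_N}$ has bounded density $\Delta Q_N\cdot \1_{S_{Q_N}}$ with respect to $dA$, the measure $\nu$ is absolutely continuous with respect to $dA_N$, and writing
\begin{equation*}
Z_N^\beta=\int_{\C^N}e^{-\beta H_N-\log(d\nu/dA_N)}\,d\nu,
\end{equation*}
the convexity of $x\mapsto e^{-x}$ (equivalently, the Gibbs variational principle) yields
\begin{equation*}
\log Z_N^\beta \ge -\beta\int H_N\,d\nu-\int\log\tfrac{d\nu}{dA_N}\,d\nu.
\end{equation*}

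Next I would evaluate the two terms explicitly. The entropy term reduces to $N\sigma_{Q_N}(\log \Delta Q_N)$, which is $o(N^2)$ by the second limit in \eqref{eel}. For the energy term, the identity $I_{Q_N}[\sigma_{Q_N}]=\gamma(Q_N)$ allows one to write
\begin{equation*}
\iint\log\tfrac 1{|\zeta-\eta|}\,d\sigma_{Q_N}(\zeta)\,d\sigma_{Q_N}(\eta)=\gamma(Q_N)-\sigma_{Q_N}(Q_N),
\end{equation*}
and then a direct count of terms in $H_N=\sum_{j\ne k}\log\tfrac 1{|\zeta_j-\zeta_k|}+N\sum_j Q_N(\zeta_j)$ gives
\begin{equation*}
\int H_N\,d\nu=N(N-1)\gamma(Q_N)+N\sigma_{Q_N}(Q_N).
\end{equation*}
Dividing by $N^2$ and invoking both hypotheses of \eqref{eel}, one obtains
\begin{equation*}
\liminf_{N\to\infty}\tfrac 1{N^2}\log Z_N^\beta\ge -\beta\limsup_{N\to\infty}\gamma(Q_N).
\end{equation*}

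To finish I would observe that $\limsup_N\gamma(Q_N)\le\gamma(V)$ is immediate from the pointwise bound $Q_N\le V$ (guaranteed by $Q_N\nearrow V$): for every probability measure $\mu$, $I_{Q_N}[\mu]\le I_V[\mu]$, and therefore $\gamma(Q_N)=\inf_\mu I_{Q_N}[\mu]\le I_V[\sigma_V]=\gamma(V)$. The only delicate point in this plan is justifying the use of Jensen's inequality, i.e.~checking that the logarithmic pair interaction is integrable against $\sigma_{Q_N}^{\otimes 2}$; this is standard because $\sigma_{Q_N}$ has a compactly supported bounded density and $\log|z|$ is locally integrable on $\R^2$. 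Beyond that, the argument is pure bookkeeping, and I do not anticipate any serious obstacle.
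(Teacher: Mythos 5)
Your proof is correct and lands in the same place as the paper's, and it uses the same central device (Jensen's inequality for the partition function with a product trial measure). The difference is that the paper first applies Jensen with an arbitrary continuous compactly supported density $\varphi$, then specializes to the mollification $\varphi_{\delta,N}=\chi*\sigma_{Q_N}$ and lets $\delta\to 0$ to recover $I_{Q_N}[\sigma_{Q_N}]=\gamma(Q_N)$ and the two terms in \eqref{eel}; you instead take $\nu=\sigma_{Q_N}^{\otimes N}$ directly and read off exactly the same three quantities in one step. Your shortcut works because the hypotheses \eqref{eel} already guarantee that the two integrals $\sigma_{Q_N}(Q_N)$ and $\sigma_{Q_N}(\log\Delta Q_N)$ are finite, and $\gamma(Q_N)=I_{Q_N}[\sigma_{Q_N}]$ is finite by definition of the Robin constant, so the expectation under $\nu$ of the exponent is a bona fide finite number and Jensen applies without any mollification. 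You also derive $\limsup_N\gamma(Q_N)\le\gamma(V)$ elementarily from $Q_N\le V$ (via $\gamma(Q_N)\le I_{Q_N}[\sigma_V]\le I_V[\sigma_V]=\gamma(V)$) rather than citing the monotone convergence $\gamma(Q_N)\uparrow\gamma(V)$ from \cite[Theorem I.6.2]{ST}; this is entirely adequate since only the upper half of that convergence is used.

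One small slip: the display
\begin{equation*}
Z_N^\beta=\int_{\C^N}e^{-\beta H_N-\log(d\nu/dA_N)}\,d\nu
\end{equation*}
is an inequality, not an equality, because $\nu$ is supported on $S_{Q_N}^N\subsetneq\C^N$ while $e^{-\beta H_N}$ is strictly positive off that set. You should write $Z_N^\beta\ge\int e^{-\beta H_N-\log(d\nu/dA_N)}\,d\nu$; since you are only after a lower bound this is harmless for the conclusion, but the equality as stated is false.
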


\begin{proof} We start by fixing a continuous compactly supported function $\fii$ with
$\int_{\C} \varphi \, dA=1$ and noting that
\begin{align*}
Z_N=\int_{\C^N} \exp\{ -\beta  \sum_{j\ne k}L_{Q_N}(\zeta_j,\zeta_k)- \sum_{j=1}^N (\beta Q_N(\zeta_j)+\log \fii(\zeta_j)) \} \prod_{j=1}^N \, \varphi(\zeta_j)\, dA(\zeta_j).
\end{align*}

By Jensen's inequality,
\begin{align*}
\log Z_N &\ge \int_{\C^N} \{ -\beta  \sum_{j\ne k}L_{Q_N}(\zeta_j,\zeta_k)- \sum_{j=1}^N (\beta Q_N+\log \fii)(\zeta_j) \} \prod_{j=1}^N \, \varphi(\zeta_j)\, dA(\zeta_j)
\\
&= -\beta N(N-1)I_{Q_N}[\fii]- N \int_{\C} ( \beta \, Q_N+\log \varphi ) \, \varphi\, dA,
\end{align*}
with the understanding that $0\log 0=0$, and where we write $I_{Q_N}[\fii]$ in place of $I_{Q_N}[\fii\, dA]$.
This leads to
\begin{align}\label{babbel}
\tfrac 1 {N^2}\log Z_N
&\ge -\beta(1-\tfrac 1 N) I_{Q_N}[\fii]-\tfrac \beta N\int Q_N\varphi\, dA-\tfrac 1 N\int\varphi\log\varphi\, dA.
\end{align}

For small $\delta>0$ we let $\chi(\zeta)=\delta^{-2}\1_{D(0,\delta)}(\zeta)$ and define
$$\fii_{\delta,N}(\zeta)=\chi*\sigma_{Q_N}(\zeta)=\tfrac{\sigma_{Q_N}(D(\zeta,\delta))}{\delta^2}.
$$
As $\delta\to 0$ we have that $I_{Q_N}[\fii_{\delta,N}]\to I_{Q_N}[\sigma_{Q_N}]=\gamma(Q_N)$ (see remark in \cite[Appendix A]{A}).

Thus letting $\delta\to 0$ the right hand side in \eqref{babbel} converges to
\begin{align}\label{dough}-\beta(1-\tfrac 1 N) \gamma(Q_N)
-\tfrac 1 N\int_{S_{Q_N}} Q_N\Delta Q_N\, dA-\tfrac 1 N \int_{S_{Q_N}}\Delta Q_N\, \log \Delta Q_N\, dA.
\end{align}
Since $\gamma(Q_N)\uparrow\gamma(V)$ as $N\to\infty$ (cf. \cite[Theorem I.6.2]{ST}), we see that \eqref{dough} can be estimated from below by $-\beta \gamma(V)+o(1)$, due to the assumptions \eqref{eel}.
\end{proof}

\section{Generalities about the determinantal case} \label{Sec3}

Let $(Q_N)$ be any sequence of potentials satisfying the conditions in Subsection \ref{tld}, and set $\beta=1$.

We shall adapt to the present setting some basic techniques from the paper \cite{AKM} pertaining to weighted polynomials, Ward's equation, and so forth. The adaptations are straightforward, and we shall be correspondingly brief.

Recall that the $\beta=1$ process is determinantal, i.e., the intensity $k$-point functions of $\{\zeta_j\}_1^N$ can be represented as determinants
$\bfR_{N,k}(\eta_1,\ldots,\eta_k)=\det(\bfK_N(\eta_i,\eta_j))_{k\times k},$
where $\bfK_N$ is a correlation kernel.

Indeed, as is well-known, we can take $\bfK_N$ to be the reproducing kernel of the subspace $\calW_N$ of $L^2=L^2(\C,dA)$
consisting of all weighted polynomials $f=q\cdot e^{-NQ_N/2}$, where $q$ is a holomorphic polynomial of degree at most $N-1$. (See e.g. \cite[Section IV.7]{ST}.)
This \textit{canonical correlation kernel}
is used without exception below; note in particular that
$\bfR_N(\zeta)=\bfK_N(\zeta,\zeta).$

\subsection{Auxiliary estimates} \label{BCS} We now record a few basic estimates for weighted polynomials, which are proved using standard techniques, e.g.~in \cite[Section 3]{AKM}.
We begin with the following pointwise-$L^2$ estimate.

\begin{lem} \label{spike} Let $f=q\cdot e^{-NQ_N/2}\in\calW_N$ and fix a point $\zeta_0$ at which $\Delta Q_N(\zeta_0)>0$. Suppose that there exists a constant $C_0=C_0(\zeta_0)$ such that
\begin{equation}\label{jupp}\Delta Q_N(\zeta)\le C_0\Delta Q_N(\zeta_0)\qquad \text{when}\qquad |\zeta-\zeta_0|\le\tfrac 1 {\sqrt{N\Delta Q_N(\zeta_0)}}.\end{equation}
Then there exists a constant $C=C(C_0)$ such that
\begin{equation}\label{ching}|f(\zeta_0)|^2\le CN\Delta Q_N(\zeta_0)\int_{D(\zeta_0,\frac 1 {\sqrt{N\Delta Q_N(\zeta_0)}})}|f|^2\, dA.\end{equation}
Moreover (with the same $C$)
$\bfR_N(\zeta_0)\le CN\Delta Q_N(\zeta_0).$
\end{lem}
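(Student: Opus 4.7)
The plan is to deduce \eqref{ching} from the classical sub-mean-value inequality by means of a standard ``holomorphic peak function'' trick. The weight $e^{-NQ_N/2}$ is not holomorphic, but its second-order Taylor behaviour at $\zeta_0$ can be matched by $e^{-N\phi}$ for the entire polynomial
$$\phi(\zeta)=\tfrac 12 Q_N(\zeta_0)+\d Q_N(\zeta_0)\cdot (\zeta-\zeta_0)+\tfrac 12\,\d^2 Q_N(\zeta_0)\cdot(\zeta-\zeta_0)^2.$$
Since the $|\zeta-\zeta_0|^2$-coefficient in the full Taylor expansion of the real function $Q_N$ equals $\d\dbar Q_N(\zeta_0)=\Delta Q_N(\zeta_0)$,
$$Q_N(\zeta)-2\re\phi(\zeta)=\Delta Q_N(\zeta_0)\,|\zeta-\zeta_0|^2+O(|\zeta-\zeta_0|^3),\qquad \zeta\to\zeta_0.$$
Writing $r_N=1/\sqrt{N\Delta Q_N(\zeta_0)}$ and $D_N=D(\zeta_0,r_N)$, the quadratic term produces the bound $N\Delta Q_N(\zeta_0)\cdot r_N^2=1$ on $D_N$, while the doubling hypothesis \eqref{jupp} together with the standing smoothness of $Q_N$ from Subsection \ref{tld} keeps the $N$-multiplied cubic remainder bounded. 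Together these yield a constant $c=c(C_0)$ with $|N(Q_N-2\re\phi)|\le c$ on $D_N$.

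Next I would apply the sub-mean-value inequality to the entire function $F=q\cdot e^{-N\phi}$. Since $|F|^2$ is subharmonic and $dA=d^2\zeta/\pi$,
$$|F(\zeta_0)|^2\le \frac{1}{r_N^2}\int_{D_N}|F|^2\,dA.$$
By construction $|F(\zeta_0)|^2=|q(\zeta_0)|^2 e^{-NQ_N(\zeta_0)}=|f(\zeta_0)|^2$, while on $D_N$ the identity $|F|^2=|f|^2 e^{N(Q_N-2\re\phi)}\le e^{c}|f|^2$ follows from the first paragraph. Combining,
$$|f(\zeta_0)|^2\le \frac{e^{c}}{r_N^2}\int_{D_N}|f|^2\,dA=e^{c}\cdot N\Delta Q_N(\zeta_0)\int_{D_N}|f|^2\,dA,$$
which is \eqref{ching} with $C=e^{c}$.

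For the bound on $\bfR_N(\zeta_0)$ I would invoke the extremal characterization of the reproducing kernel,
$$\bfR_N(\zeta_0)=\bfK_N(\zeta_0,\zeta_0)=\sup\{\,|f(\zeta_0)|^2 \,:\, f\in\calW_N,\ \|f\|_{L^2}\le 1\,\}.$$
Inserting any such $f$ into \eqref{ching} and using $\int_{D_N}|f|^2\,dA\le\|f\|_{L^2}^2\le 1$ immediately gives $\bfR_N(\zeta_0)\le CN\Delta Q_N(\zeta_0)$ with the same constant $C$.

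The main subtlety is that \eqref{jupp} controls $\Delta Q_N$ itself but not the higher derivatives of $Q_N$, yet the cubic remainder in the expansion of $Q_N-2\re\phi$ must be absorbed into a constant depending only on $C_0$. This is supplied by the standing real-analyticity of $Q_N$ in a fixed neighbourhood of $S_V$: in all regimes of interest (where $\Delta Q_N\sim N$) one has $N r_N^3=O(N^{-1/2})$, so the cubic correction is in fact $o(1)$ and is absorbed harmlessly into $e^c$.
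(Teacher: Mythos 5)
Your overall plan — compare $|f|^2$ against something subharmonic and apply the sub-mean-value inequality, then use the reproducing/extremal characterization of $\bfK_N$ — is the right one and is also what the paper does. But your choice of comparison function has a real defect, and the patch you offer does not actually close it.

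The paper's argument multiplies $|f|^2$ by the \emph{real-valued} Gaussian weight $e^{NC_0\Delta Q_N(\zeta_0)|\zeta-\zeta_0|^2}$. Computing
$\Delta\log\bigl(|f|^2 e^{NC_0\Delta Q_N(\zeta_0)|\zeta-\zeta_0|^2}\bigr)=\Delta\log|q|^2-N\Delta Q_N(\zeta)+NC_0\Delta Q_N(\zeta_0)$,
the hypothesis \eqref{jupp} gives nonnegativity on the whole disc $D_N$, so this product is log-subharmonic there, and the sub-mean inequality yields \eqref{ching} with $C=e^{C_0}$. Crucially, only \eqref{jupp} is used, so the constant really is a function of $C_0$ alone, uniformly in $N$ and $\zeta_0$.

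Your argument instead matches $e^{-NQ_N/2}$ by the holomorphic quadratic $e^{-N\phi}$ and controls $N(Q_N-2\re\phi)$ on $D_N$. The quadratic term is indeed bounded by $N\Delta Q_N(\zeta_0)\cdot r_N^2=1$ via \eqref{jupp} (you need a supremum of $\Delta Q_N$ over $D_N$ here, which \eqref{jupp} supplies, but also in principle values of $\Delta Q_N$ between $\zeta_0$ and $\zeta$ via Taylor's theorem; still, this is fine). The genuine gap is the cubic remainder: its coefficient involves \emph{third} derivatives of $Q_N$, which \eqref{jupp} does not control. The lemma asserts $C=C(C_0)$, independent of $N$, of $\zeta_0$, and of any higher-derivative information, so you cannot absorb a quantity depending on $\sup_{D_N}|\partial^3 Q_N|$ into $C$ without destroying the stated uniformity. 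Your appeal to ``$Nr_N^3=O(N^{-1/2})$'' is also arithmetically wrong in the regime you cite: if $\Delta Q_N(\zeta_0)\sim N$ then $r_N\sim N^{-1}$ and the prefactor you multiply is $N$, so $Nr_N^3\sim N^{-2}$; but in any case the bound you actually need is $N\cdot(\sup|\partial^3 Q_N|)\cdot r_N^3$, and there is no a priori control on $\sup|\partial^3 Q_N|$ (for ALUE near the singular point it is not even bounded). The fix is to drop the Taylor matching altogether and use the paper's real Gaussian tilt, which makes the estimate exactly second-order and uses nothing beyond \eqref{jupp}.

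Your treatment of the second assertion, via the extremal characterization $\bfR_N(\zeta_0)=\sup\{|f(\zeta_0)|^2:\|f\|_{L^2}\le1\}$, is correct and is equivalent to the paper's choice $f=\bfK_N(\cdot,\zeta_0)/\sqrt{\bfK_N(\zeta_0,\zeta_0)}$.
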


\begin{proof} Consider the function
$F(\zeta)=|f(\zeta)|^2e^{NC_0\Delta Q_N(\zeta_0)|\zeta-\zeta_0|^2}.$
By hypothesis, $F$ is logarithmically subharmonic in the disc $D(\zeta_0,\tfrac 1 {\sqrt{N\Delta Q_N(\zeta_0)}})$ since
$$
\Lap \log F(\zeta) = \Lap |f(\zeta)|^2+ NC_0\Delta Q_N(\zeta_0) = - N ( \Lap Q_N(\zeta) - C_0\Delta Q_N(\zeta_0) ) \ge 0 .
$$
An application of the sub-mean inequality gives
\begin{align*}|f(\zeta_0)|^2&\le N\Delta Q_N(\zeta_0)\int_{D(\zeta_0,\frac 1 {\sqrt{N\Delta Q_N(\zeta_0)}})}F\, dA\\
&\le e^{C_0}N\Delta Q_N(\zeta_0)\int_{D(\zeta_0,\frac 1 {\sqrt{N\Delta Q_N(\zeta_0)}})}|f|^2\, dA,
\end{align*}
which proves \eqref{ching} with $C=e^{C_0}$.

To verify the last statement, we consider the weighted polynomial
$f(\zeta)=\tfrac {\bfK_N(\zeta,\zeta_0)}{\sqrt{\bfK_N(\zeta_0,\zeta_0)}}$, which satisfies $|f(\zeta_0)|^2=\bfR_N(\zeta_0)$. Applying \eqref{ching} to this $f$, observing that
$\int_\C|f|^2=1$, we immediately
obtain the inequality $\bfR_N(\zeta_0)\le CN\Delta Q_N(\zeta_0)$.
\end{proof}

It is convenient to prove a stronger form of Lemma \ref{spike}, which incorporates the decay of $\bfR_N$ outside of the droplet $S_{Q_N}$. For this, we recall some standard potential theoretic
notions.

 Given an external potential $W$, we define the \textit{obstacle function} $\check{W}(\zeta)$ as the supremum of $s(\zeta)$ where $s$ runs through the set of subharmonic functions
on $\C$ which satisfy $s\le W$ everywhere and $s(\zeta)\le \log|\zeta|^2+O(1)$ as $\zeta\to\infty$. The coincidence set $S_W^*=\{W=\check{W}\}$ contains the droplet $S_W$, but may in general be slightly larger (see \cite[p.144]{ST}). The obstacle function $\check{W}$ is harmonic in the complement of the droplet $S_W$ and is $C^{1,1}$-smooth on $\C$, see \cite{ST}.

\begin{lem} \label{spike2} If $f\in\calW_N$ and $|f|\le C$ on $S_{Q_N}$, then $|f|\le Ce^{-N(Q_N-\check{Q}_N)/2}$ on $\C$.
\end{lem}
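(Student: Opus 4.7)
The plan is to deduce the estimate from the maximum principle applied to an appropriate subharmonic function off the droplet, after using the hypothesis on $S_{Q_N}$ to provide the boundary values.

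Write $f = q\cdot e^{-NQ_N/2}$ with $q$ a holomorphic polynomial of degree at most $N-1$. The hypothesis $|f|\le C$ on $S_{Q_N}$ translates to $|q|^2 e^{-NQ_N}\le C^2$ on $S_{Q_N}$, or equivalently
\[
s(\zeta) := \tfrac{1}{N}\log|q(\zeta)|^2\le Q_N(\zeta)+\tfrac{2\log C}{N}, \qquad \zeta\in S_{Q_N}.
\]
Since, by our standing assumption in Subsection \ref{tld}, $S_{Q_N}=S_{Q_N}^*$, the obstacle function satisfies $\check{Q}_N=Q_N$ on $S_{Q_N}$, so this can be rewritten as
\[
v(\zeta) := s(\zeta)-\check{Q}_N(\zeta)\le \tfrac{2\log C}{N}, \qquad \zeta\in S_{Q_N}.
\]

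The core of the argument is to extend this estimate to $\C\setminus S_{Q_N}$ by a maximum-principle/Phragm\'en--Lindel\"of type argument. On the complement $\C\setminus S_{Q_N}^*$, the obstacle function $\check{Q}_N$ is harmonic, and $s$ is subharmonic everywhere (being $\tfrac{1}{N}$ times the logarithm of $|q|^2$), so $v$ is subharmonic there. Moreover $\deg q\le N-1$ gives $s(\zeta)\le \tfrac{2(N-1)}{N}\log|\zeta|+O(\tfrac{1}{N})$ at infinity, while the standard growth of the obstacle function yields $\check{Q}_N(\zeta)=2\log|\zeta|+O(1)$ at infinity; hence $v(\zeta)\to-\infty$ as $\zeta\to\infty$.

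By the maximum principle for subharmonic functions on the unbounded domain $\C\setminus S_{Q_N}^*$, the supremum of $v$ is attained on $\partial S_{Q_N}^*$ where $v\le \tfrac{2\log C}{N}$ by the previous paragraph (using continuity of $Q_N$ near the droplet, which is part of our standing assumptions). Therefore $v\le \tfrac{2\log C}{N}$ on all of $\C$, i.e.\ $|q|^2\le C^2 e^{N\check{Q}_N}$ globally, which is equivalent to the claimed pointwise bound $|f|\le C e^{-N(Q_N-\check{Q}_N)/2}$.

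The only potential obstacle is the verification that $v$ extends as an upper-semicontinuous subharmonic function to the boundary $\partial S_{Q_N}^*$ with the correct boundary values so that the maximum principle genuinely applies; this is handled by the continuity of both $Q_N$ and $\check{Q}_N$ (the latter is even $C^{1,1}$), together with the identification $S_{Q_N}=S_{Q_N}^*$ from Subsection \ref{tld}. The growth exponent $\tfrac{N-1}{N}<1$ is what forces $v\to-\infty$ at infinity and rules out any non-trivial contribution from the point at infinity.
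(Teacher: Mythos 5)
Your proof is correct and follows the same strategy as the paper's: reduce to showing $s=\tfrac{1}{N}\log|q|^2\le\check{Q}_N$, then conclude by a maximum-principle argument for the subharmonic function $s-\check{Q}_N$ on the complement of the droplet, using that $\deg q\le N-1$ forces the difference to tend to $-\infty$ at infinity. You spell out the details that the paper compresses into "a suitable version of the maximum principle," and you implicitly correct a small slip in the paper's write-up (the line ``$|f|\le 1$ on $\C$'' there should read ``$|f|\le 1$ on $S_{Q_N}$''---if the bound held on all of $\C$ no maximum principle would be needed, since $s$ would then be a competitor in the supremum defining $\check{Q}_N$).
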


\begin{proof} This is a standard fact (e.g.~\cite{ST}) but it is easy enough to recall a proof. We can assume that $C=1$. Writing $f=q\cdot e^{-NQ_N/2}$ where $q$ has degree at most $N-1$,
we consider the subharmonic function $s=\tfrac 1 N \log|q(\zeta)|^2$ which satisfies $s(\zeta)\le \log|\zeta|^2+O(1)$ as $\zeta\to\infty$. Since $|f|\le 1$ on $\C$ we see that
$s(\zeta)=\tfrac 1 N\log|f(\zeta)|^2+Q_N(\zeta)\le Q_N(\zeta)$ on $\C$. A suitable version of the maximum principle shows that $s\le \check{Q}_N$ on $\C$, finishing the proof of the lemma.
\end{proof}

\begin{lem} \label{spike3} Suppose in addition to \eqref{jupp} that there is a constant $C_1$ such that
$\Delta Q_N\le C_1N$ on $S_{Q_N}$. Then there is a constant $C$ such that, for all $\zeta$ in a neighbourhood of the droplet,
$\bfR_N(\zeta)\le CN^2e^{-N(Q_N-\check{Q}_N)(\zeta)}.$
\end{lem}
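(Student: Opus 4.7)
The plan is to combine Lemma \ref{spike} with Lemma \ref{spike2} via the reproducing property of $\bfK_N$. Fix a point $\zeta$ in the relevant neighbourhood of the droplet, and consider the weighted polynomial
\begin{equation*}
h(w) = \frac{\bfK_N(w,\zeta)}{\sqrt{\bfR_N(\zeta)}} \in \calW_N.
\end{equation*}
Since $\bfK_N$ is the reproducing kernel of $\calW_N$, this $h$ has $L^2$-norm equal to $1$, while $|h(\zeta)|^2 = \bfR_N(\zeta)$. The idea is thus to bound $|h|$ uniformly on $S_{Q_N}$ and then push the bound to all of $\C$ using the obstacle-function estimate.

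For the bound on the droplet, I would apply the standard Cauchy--Schwarz inequality for reproducing kernels, $|\bfK_N(w,\zeta)|^2 \le \bfR_N(w)\bfR_N(\zeta)$, which gives $|h(w)|^2 \le \bfR_N(w)$. Combining this with the pointwise bound $\bfR_N(w) \le CN\Delta Q_N(w)$ from Lemma \ref{spike} and the new hypothesis $\Delta Q_N \le C_1 N$ on $S_{Q_N}$ yields $|h(w)|\le C'N$ on $S_{Q_N}$ with $C' = \sqrt{CC_1}$.

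Finally, I would apply Lemma \ref{spike2} to the normalized weighted polynomial $h/(C'N)$, which has modulus at most $1$ on $S_{Q_N}$. This propagates the bound to all of $\C$ in the form
\begin{equation*}
|h(w)| \le C'N\, e^{-N(Q_N - \check{Q}_N)(w)/2}, \qquad w\in\C.
\end{equation*}
Setting $w = \zeta$ and squaring gives $\bfR_N(\zeta) = |h(\zeta)|^2 \le (C')^2 N^2\, e^{-N(Q_N - \check{Q}_N)(\zeta)}$, which is the desired estimate with $C = CC_1$. The conceptual step is the Cauchy--Schwarz reduction from bounding $\bfR_N(\zeta)$ to controlling $h$ inside the droplet; after that, the previous two lemmas do all the work, so I do not anticipate any real obstacle beyond assembling the pieces correctly.
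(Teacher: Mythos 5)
Your proof is correct and follows essentially the same route as the paper: take the normalized section $h(w)=\bfK_N(w,\zeta)/\sqrt{\bfR_N(\zeta)}$ of the reproducing kernel, bound it by $CN$ on the droplet using Lemma \ref{spike} and the hypothesis $\Delta Q_N\le C_1N$, propagate with Lemma \ref{spike2}, and evaluate at $w=\zeta$. The only cosmetic difference is that you reach the interior bound via the reproducing-kernel Cauchy--Schwarz inequality $|\bfK_N(w,\zeta)|^2\le\bfR_N(w)\bfR_N(\zeta)$ followed by the pointwise bound on $\bfR_N$, whereas the paper applies the sub-mean inequality \eqref{ching} to $h$ directly together with $\|h\|_{L^2}=1$; these are equivalent, since the pointwise bound on $\bfR_N$ in Lemma \ref{spike} is itself derived from \eqref{ching} in exactly that way.
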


\begin{proof} Fix a point $\zeta_1$ in a neighbourhood of the droplet, and take $f(\zeta)=\tfrac {\bfK_N(\zeta,\zeta_1)}{\sqrt{\bfK_N(\zeta_1,\zeta_1)}}$. Next fix $\zeta_0\in S_{Q_N}$ and use \eqref{ching} to conclude
that
$|f(\zeta_0)|^2\le CC_1N^2\int_\C |f|^2\, dA=CC_1N^2.$
Since $\zeta_0\in S_{Q_N}$ was arbitrary, Lemma \ref{spike2} shows that
$|f(\zeta)|^2\le CC_1N^2e^{-N(Q_N-\check{Q}_N)(\zeta)}$.

Taking $\zeta=\zeta_1$ we find that $\bfR_N(\zeta)=\bfK_N(\zeta,\zeta)\le CC_1N^2e^{-N(Q_N-\check{Q}_N)(\zeta)}$, as desired.
\end{proof}

\begin{rmk*} We stress that the conditions of Lemma \ref{spike3} holds for AGUE (since $\Delta Q_N$ is then essentially a constant multiple of $N$), but not for ALUE.
\end{rmk*}

\begin{lem} \label{bl00} Suppose that the conditions of Lemma \ref{spike3} are satisfied. Then the cross-sections
$c_N(p)=\tfrac 1 N\int_\R \bfR_N(p+i\eta)\, d\eta$ are uniformly bounded on $\R$.
\end{lem}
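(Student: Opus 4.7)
The plan is to use the pointwise bounds of Lemmas~\ref{spike} and~\ref{spike3} together with the bandlimitedness $h_N(p)\le A/N$ from Theorem~\ref{aplem} to control the $\eta$-integral in three regions. Writing $h=h_N(p)$, I would split
\begin{equation*}
c_N(p)=\tfrac{1}{N}\Bigl(\int_{|\eta|\le h}+\int_{h<|\eta|\le h+\delta_0}+\int_{|\eta|>h+\delta_0}\Bigr)\bfR_N(p+i\eta)\,d\eta
\end{equation*}
for a small fixed $\delta_0>0$ independent of $N$ and $p$.

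On the droplet, Lemma~\ref{spike} gives $\bfR_N\le CN\Delta Q_N\le CC_1N^2$, so the first piece contributes at most $2CC_1A$. For the second piece, use $\bfR_N\le CN^2 e^{-N(Q_N-\check{Q}_N)}$ from Lemma~\ref{spike3} together with the key estimate
\begin{equation*}
(Q_N-\check{Q}_N)(p+i\eta)\ge c_0\, N\,(|\eta|-h)^{2},\qquad h<|\eta|\le h+\delta_0,
\end{equation*}
with $c_0>0$ independent of $N$ and $p$. Substituting $|\eta|-h=s/N$, the near-boundary integral becomes a multiple of $\int_0^{N\delta_0}e^{-c_0 s^{2}}\,ds\le \sqrt{\pi/c_0}$, yielding an $O(1)$ contribution after dividing by $N$. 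For the tail, the growth condition~\eqref{uniform} forces $\check{Q}_N(\zeta)\le 2\log|\zeta|+O(1)$ everywhere, while $Q_N(p+i\eta)\to V(p+i\eta)=+\infty$ at least linearly in $N$ on compact $\eta$-sets bounded away from $\R$; this makes $e^{-N(Q_N-\check{Q}_N)(p+i\eta)}$ decay fast enough that this tail is $o(1)$.

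The hard part is the uniform quadratic lower bound on $u=Q_N-\check{Q}_N$ near the boundary. Heuristically it is forced by a Taylor expansion in the direction normal to $\d S_{Q_N}$: since $u$ is $C^{1,1}$, vanishes on the droplet, and satisfies $\d\dbar u=\Delta Q_N$ in the exterior, both $u$ and its normal derivative vanish on $\d S_{Q_N}$, while the second normal derivative equals $4\Delta Q_N\asymp N$ there. The delicate point is uniformity in $N$: the droplet has vertical extent $\sim 1/N$ and its boundary curvature grows with $N$, so the tube on which the standard expansion is a priori valid shrinks with $N$. However, the integration region of interest has width $\sim 1/N$, which is precisely the natural scale, so this can be verified by a direct calculation along the vertical line through $p$, using the boundary conditions $u(p+ih)=0$ and $\partial_\eta u(p+ih)=0$ together with $\partial_\eta^{\,2} Q_N(p+ih)\asymp N$ and the harmonicity of $\check{Q}_N$ off the droplet.
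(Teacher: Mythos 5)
Your overall strategy is the same as the paper's: use the decay estimate $\bfR_N \le CN^2 e^{-N(Q_N-\check{Q}_N)}$ from Lemma~\ref{spike3}, combine it with a quadratic lower bound on $Q_N-\check{Q}_N$ of size $\gtrsim N\cdot(\text{dist})^2$, and observe that the bandlimitedness of the droplet makes the resulting Gaussian integral $O(1/N)$, which cancels the outer factor of $N$. The difference is how the quadratic bound is formulated and justified. The paper uses the two-dimensional estimate $Q_N-\check{Q}_N\ge M_N\, d_N(\zeta)^2$ with $d_N$ the distance to the droplet and $M_N=\min_{S_{Q_N}}\Delta Q_N\ge mN$, which is a known fact cited from \cite[Section 2]{A}; combined with the uniform strip inclusion $|\im\zeta|\le A/N$ from Theorem~\ref{aplem}, this yields $\bfR_N(p+i\eta)\le CN^2 e^{-mN^2(|\eta|-A/N)_+^2}$ with no further case distinctions, and integrating in $\eta$ finishes the proof. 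You instead propose a one-dimensional Taylor expansion along the vertical line $p+i\R$, starting from $h=h_N(p)$, and you correctly identify the uniformity of the constant $c_0$ as the delicate point.

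That delicate point is a genuine gap in the sketch as written. Your argument hinges on $\partial_\eta^2(Q_N-\check{Q}_N)\gtrsim N$ along the vertical segment $(h,h+\delta_0)$. While $\partial_\eta^2 Q_N\asymp N$ is plausible for the potentials under consideration, the term $\partial_\eta^2\check{Q}_N$ needs to be shown to be $O(1)$ uniformly in $N$, $p$, and the position along the segment. Since $\check Q_N$ is harmonic in the exterior, $\partial_\eta^2\check{Q}_N=-\partial_\xi^2\check{Q}_N$ there; the boundary values of $\partial_\xi^2\check{Q}_N$ depend on matching second-order jets across $\d S_{Q_N}$, and near the endpoints of the droplet the boundary curvature blows up like $N$, so a genuinely uniform control of $\partial_\eta^2\check Q_N$ on the whole exterior neighbourhood requires extra work that the sketch doesn't supply. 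There is also a secondary issue: you fix $h=h_N(p)$, but for $p$ where the vertical line misses the droplet (so $h_N(p)=0$) your Taylor argument would have to be anchored at the $\xi$-axis rather than at a point of $\d S_{Q_N}$, and the boundary conditions $u=\partial_\eta u=0$ are then not available. The paper's route via $d_N(\zeta)$ and the strip inclusion sidesteps both problems, which is why it is the cleaner way to make the argument uniform in $p$.
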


\begin{proof}Write $d_N(\zeta)=\dist(\zeta,S_{Q_N})$.
By Taylor's formula,
\begin{equation}\label{beckom}Q_N(\zeta)-\check{Q}_N(\zeta)\ge M_N\cdot d_N(\zeta)^2,\qquad M_N=\min\{ \Delta Q_N(\zeta)\,;\, \zeta\in S_{Q_N}\}\end{equation}
for all $\zeta$ close enough to the droplet, see e.g. \cite[Section 2]{A}. By assumption we have $M_N\ge mN$ for some $m>0$, so
there is a constant $C$ such that
\begin{equation}\label{beg}c_N(p)\le CN\int_{-\infty}^{+\infty}e^{-mN^2d_N(p+i\eta)^2}\, d\eta.\end{equation}
By Theorem \ref{aplem}, the droplet $S_{Q_N}$ is contained in a strip $|\eta|\le \tfrac AN$, so the right hand side in \eqref{beg} can be estimated by
$C(2A+2\int_0^\infty e^{-my^2}\, dy)<\infty.$
\end{proof}

\subsection{Rescaling in the bulk} \label{RIB} Now fix a point $p_*\in\R$ with $\sigma_V(p_*)>0$, where as always $V=\lim Q_N$.

We admit potentials such that $\Delta Q_N$ has singularities (such as for ALUE),
but we assume that each $\Delta Q_N$ is finite at the point $p_*$
and that there are positive constants $c_1$ and $c_2$ (with $c_2$ possibly depending on $p_*$)
such that $c_1N\le \Delta Q_N(\zeta)\le c_2 N$ for all $\zeta\in D(p_*,\tfrac 1 {\sqrt{N\Delta Q_N(p_*)}})$ and all large $N$.

We then rescale by the blow-up map $\Gamma_N$ in \eqref{blowup}, i.e.,
we rescale the system $\{\zeta_j\}_1^N$ by magnifying distances by the factor ${\scriptstyle \sqrt{N\Delta Q_N(p_*)}}$, about the origin $p_*$. The resulting process
is denoted $\{z_j\}_1^N$ and we write $R_N$ for its $1$-point intensity.

It is easy to see that the rescaled process $\{z_j\}_1^N$ is determinantal as well, with (canonical) correlation kernel
\begin{equation*}K_N(z,w)=\tfrac 1 {N\Delta Q_N(p_*)}\bfK_N(\zeta,\eta),\qquad z=\Gamma_N(\zeta),\quad w=\Gamma_N(\eta).\end{equation*}

The following lemma is a slight modification of \cite[Theorem 1.1]{AKM}. A proof can be accomplished by arguments from \cite[Section 3]{AKM} (with ``$Q_N$'' in lieu of ``$Q$''). We omit repeating those details here, but merely point out that the local boundedness in Lemma \ref{spike} is sufficient for the normal families argument in \cite[Section 3]{AKM} to carry through.

\begin{lem} \label{LPF} (``\emph{Structure of limiting kernels}'') Under the above conditions, there exists a sequence $c_N$ of cocycles such that each subsequence
of the kernels $c_NK_N$ has a further subsequence which converges locally uniformly to a Hermitian kernel $K$ of the form
$K(z,w)=G(z,w)\cdot L(z,w),$
where $L$ is a Hermitian-entire function and $G$ is the Ginibre kernel \eqref{gin0}.
\end{lem}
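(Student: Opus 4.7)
My plan is to follow the strategy of \cite[Section 3]{AKM}, with the modifications dictated by the almost-Hermitian rescaling. First, I would use that the canonical correlation kernel admits the representation
\begin{equation*}
\bfK_N(\zeta,\eta) = \bfk_N(\zeta,\bar\eta)\,e^{-N(Q_N(\zeta)+Q_N(\eta))/2},
\end{equation*}
where $\bfk_N$ is entire in $(\zeta,\bar\eta)$, since an orthonormal basis for $\calW_N$ has the form $\{p_j(\zeta)e^{-NQ_N(\zeta)/2}\}$ with polynomials $p_j$. Setting $\epsilon_N=1/\sqrt{N\Delta Q_N(p_*)}$ and $\zeta=p_*+\epsilon_N z$, $\eta=p_*+\epsilon_N w$, I would Taylor expand $NQ_N$ around $p_*$; using $N\epsilon_N^2\Delta Q_N(p_*)=1$ and the real-analyticity assumption on $Q_N$, one obtains, uniformly for $z$ in compacts,
\begin{equation*}
N Q_N(p_*+\epsilon_N z) = NQ_N(p_*) + 2\operatorname{Re}[\alpha_N z] + |z|^2 + \operatorname{Re}[\beta_N z^2] + o(1),
\end{equation*}
with $\alpha_N=\sqrt{N/\Delta Q_N(p_*)}\,\partial Q_N(p_*)$ and $\beta_N=\partial^2 Q_N(p_*)/\Delta Q_N(p_*)$; the $o(1)$ remainder is controlled by the two-sided bound $c_1 N\le \Delta Q_N\le c_2 N$ near $p_*$.

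Next, I would absorb the affine term and the holomorphic/antiholomorphic parts of $\operatorname{Re}[\beta_N z^2]$ into a cocycle $c_N(z,w)=g_N(z)\overline{g_N(w)}$, with $g_N$ a suitably chosen unimodular entire function; such a cocycle preserves the determinantal structure. After this absorption, only the symmetric Gaussian factor $e^{-(|z|^2+|w|^2)/2}$ survives from the weight, and combining it with the Hermitian-entire part coming from $\bfk_N$ yields a factorization
\begin{equation*}
c_N(z,w)\,K_N(z,w) = L_N(z,w)\cdot G(z,w),
\end{equation*}
where $L_N$ is Hermitian-entire in $(z,\bar w)$.

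To extract subsequential limits I would invoke Lemma \ref{spike}. The hypothesis $\Delta Q_N\le c_2 N$ near $p_*$ yields $\bfR_N(\zeta)\le CN\Delta Q_N(\zeta)$, hence $R_N(z)=K_N(z,z)$ is uniformly bounded on compact subsets of $\C$. The Cauchy--Schwarz inequality for reproducing kernels gives $|K_N(z,w)|^2\le R_N(z)R_N(w)$, and since $|G(z,w)|^2=e^{-|z-w|^2}$, this translates to $|L_N(z,w)|^2\le C\,e^{|z-w|^2}$ on compact subsets of $\C^2$. Thus $\{L_N\}$ is a locally uniformly bounded family of Hermitian-entire functions; Montel's theorem applied in each variable separately then produces, along some subsequence, a locally uniform limit $L$, which is itself Hermitian-entire, and the limiting kernel $K=G\cdot L$ has the desired form.

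The hardest part will be the cocycle bookkeeping: one must verify that $g_N$ can be chosen to cancel exactly the non-symmetric contributions arising from the affine and quadratic pieces of the Taylor expansion of $NQ_N$, so that what remains is genuinely of the form $G\cdot(\text{Hermitian-entire})$. Once this is in place, the normal-families argument of \cite[Section 3]{AKM} goes through essentially verbatim; the new ingredient in the almost-Hermitian setting is that the uniform Laplacian lower bound $\Delta Q_N\ge c_1 N$ ensures the correct choice of scale $\epsilon_N$ for the Ginibre factor to emerge, and the uniform upper bound $\Delta Q_N\le c_2 N$ supplies via Lemma \ref{spike} the pointwise estimates that drive the compactness argument.
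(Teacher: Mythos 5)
Your proposal is correct and takes essentially the same route the paper does: the paper simply cites the normal-families argument of \cite[Section 3]{AKM} (with $Q_N$ replacing $Q$) and points out that Lemma~\ref{spike} supplies the requisite local boundedness, and your write-up is a faithful expansion of exactly that argument — Taylor expansion of $NQ_N$ at $p_*$ to peel off the Gaussian factor, a cocycle to absorb the harmonic/affine parts, Lemma~\ref{spike} plus Cauchy--Schwarz for the local bound on $L_N$, and Montel for the subsequential limit.
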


Here we have used the following notation (cf.~\cite{AKM}).
A continuous function $f(z,w)$ is called \textit{Hermitian} if $f(z,w)=\overline{f(w,z)}$. If $f$ is also analytic
(or entire) in $z$ and $\bar{w}$, then $f$ is called \textit{Hermitian-analytic} (or \textit{Hermitian-entire}). A Hermitian function of the form
$f(z,w)=g(z)\overline{g(w)}$, where $g$ is continuous and unimodular, is said to be a cocycle.

By the convergence in Lemma \ref{LPF} it follows that each limiting kernel $K$ appearing in Lemma \ref{LPF} is the correlation kernel of a unique determinantal point field $\{z_j\}_1^\infty$.
In order to determine $K$ it clearly suffices to determine the \textit{limiting 1-point function} $R(z)=L(z,z)$, for then $L$ is determined by $R$ via polarization, and so
$K=LG$ is also determined.

In what follows the subsequential limits $R=\lim R_{N_k}$ play a fundamental role. We refer to such limits as limiting 1-point functions about $p_*$.

Now recall our assumption that the limit $\rho(p_*)$ in \eqref{rhop} exists, i.e., we assume that
$\tfrac 1 {\sqrt{N\Delta Q_N(p_*)}}=\sqrt{\tfrac N {\Delta Q_N(p_*)}}\cdot \tfrac 1 N=\tfrac {\rho(p_*)} N\cdot (1+o(1))$ as $N\to\infty$.

We immediately obtain the following rescaled version of the pointwise cross-section convergence.

\begin{lem} \label{LPF2} If the cross-section $c_N(p_*)$ converges to $\pi\cdot \sigma_V(p_*)$, then
for each limiting $1$-point function $R$ at $p_*$ and each $x\in\R$,
\begin{equation*}\int_{\R}R(x+iy)\, dy=\pi\cdot \rho(p_*)\cdot \sigma_V(p_*).\end{equation*}
\end{lem}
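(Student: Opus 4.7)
The plan is a direct change of variables between the vertical slice $\{x+iy:y\in\R\}$ in the rescaled plane and the corresponding slice in the original plane, followed by passage to the limit along the subsequence $N_k$ for which $R_{N_k}\to R$. Fix $x\in\R$ and set $p_N := p_* + x/\sqrt{N\Delta Q_N(p_*)}$, so $p_N\to p_*$. By the definition \eqref{blowup} of $\Gamma_{N,p_*}$, the slice $\{x+iy:y\in\R\}$ pulls back to $\{p_N+i\eta:\eta\in\R\}$ via $y=\sqrt{N\Delta Q_N(p_*)}\,\eta$. Combining \eqref{rescaled} with the Jacobian $dy=\sqrt{N\Delta Q_N(p_*)}\,d\eta$ and the definition \eqref{decs} of $c_N$ yields the finite-$N$ identity
\begin{equation*}
\int_\R R_N(x+iy)\,dy \;=\; \tfrac{1}{\sqrt{N\Delta Q_N(p_*)}}\int_\R \bfR_N(p_N+i\eta)\,d\eta \;=\; \sqrt{\tfrac{N}{\Delta Q_N(p_*)}}\, c_N(p_N).
\end{equation*}

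Now send $N=N_k\to\infty$. On the right side the prefactor $\sqrt{N/\Delta Q_N(p_*)} \to \rho(p_*)$ by assumption \eqref{rhop}. For $x=0$, the hypothesis gives $c_N(p_*)\to\pi\sigma_V(p_*)$ directly. For general $x$ the perturbation $p_N\to p_*$ must be absorbed: this is handled using the continuity of $\sigma_V$ at $p_*$ together with the uniform bound of Lemma \ref{bl00} on $c_N$. In fact, in all applications of the lemma in this paper (see Theorems \ref{mth0} and \ref{mthMP}), the cross-section convergence is pointwise on a full $\R$-neighbourhood of $p_*$, and the passage $c_N(p_N)\to\pi\sigma_V(p_*)$ is then immediate. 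Thus the right side of the identity tends to $\pi\rho(p_*)\sigma_V(p_*)$.

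It remains to transfer the limit to the left side, i.e.\ to prove $\int_\R R_{N_k}(x+iy)\,dy \to \int_\R R(x+iy)\,dy$, which requires more than locally uniform convergence. The essential input is a uniform integrable majorant. Lemma \ref{spike3} gives $\bfR_N(\zeta)\le CN^2 e^{-N(Q_N-\check Q_N)(\zeta)}$, and Taylor expansion \eqref{beckom} together with the standing bound $\Delta Q_N\ge mN$ on $S_{Q_N}$ yields $(Q_N-\check Q_N)(\zeta)\ge mN\,d_N(\zeta)^2$ for $\zeta$ near the droplet, where $d_N(\zeta)=\dist(\zeta,S_{Q_N})$. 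By Theorem \ref{aplem}, $S_{Q_N}$ is contained in the band $|\im\zeta|\le A/N$, so for $\zeta=p_N+i\eta$ with $|\eta|$ large we have $d_N(\zeta)\ge|\eta|-A/N$. Since $\sqrt{N\Delta Q_N(p_*)}\sim N$, translating back to rescaled variables upgrades this to a Gaussian-tail bound
$$R_N(x+iy)\;\le\; C' e^{-m'(|y|-A)^2} \qquad (|y|\text{ large}),$$
uniform in $N$ and in $x$ on bounded sets. Dominated convergence now delivers the desired convergence of integrals, and combined with the first two paragraphs this yields the stated identity.

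The main obstacle is the third step: the soft change of variables and limit identification are routine, but upgrading locally uniform convergence $R_N\to R$ to convergence of the improper $y$-integral rests on quantitative off-droplet decay. It is precisely here that the standing assumption $\Delta Q_N\ge mN$ and the $O(1/N)$ bandwidth from Theorem \ref{aplem} play an essential role.
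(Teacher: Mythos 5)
Your change-of-variables identity
\[
\int_\R R_N(x+iy)\,dy \;=\; \sqrt{\tfrac{N}{\Delta Q_N(p_*)}}\;c_N(p_N), \qquad p_N = p_* + \tfrac{x}{\sqrt{N\Delta Q_N(p_*)}},
\]
is exactly the rescaling observation the paper treats as ``immediate,'' so you are following the same route; the value of your write-up is that it surfaces the two places where something nontrivial must actually be checked: (i) passing from $c_N(p_*)\to\pi\sigma_V(p_*)$ to $c_N(p_N)\to\pi\sigma_V(p_*)$ along $p_N\to p_*$, and (ii) the interchange of $\lim_N$ with $\int dy$. Your treatment of (i) contains a misstep. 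Pointwise convergence of $c_N$ at the single point $p_*$, together with continuity of $\sigma_V$ and the uniform bound of Lemma \ref{bl00}, do \emph{not} imply $c_N(p_N)\to\pi\sigma_V(p_*)$: a uniformly bounded sequence that converges at one point can behave arbitrarily along a moving sequence of nearby points, and continuity of the \emph{limit} gives no control on the prelimit $c_N$ there. What actually makes this work in the paper is that the cross-section convergence is established \emph{locally uniformly} near $p_*$ (equicontinuity plus Arzel\`a--Ascoli for AGUE in Subsection \ref{SCAGUE}, and asymptotics uniform on $I_\alpha$ for ALUE in Subsection \ref{WEAK}). Your parenthetical ``in all applications\dots'' is the real argument; the ``continuity plus uniform bound'' sentence preceding it should be replaced by a reference to locally uniform convergence of $c_N$.

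For (ii), the Gaussian-tail majorant is the right idea, but you invoke Lemmas \ref{spike3} and \ref{bl00}, whose hypothesis $\Delta Q_N\le C_1 N$ on all of $S_{Q_N}$ the paper explicitly remarks fails for ALUE. For a bulk $p_*$ the slice $p_N+i\R$ stays a fixed distance from the singularity at the origin, so one only needs $\bfR_N\le CN^2 e^{-N(Q_N-\check Q_N)}$ locally near that slice, where $\Delta Q_N\asymp N$; a localization of Lemmas \ref{spike}--\ref{spike3} to a fixed neighbourhood of $p_*$ supplies this. With these two corrections your proof is complete and coincides with the argument the paper leaves implicit.
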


Given a limiting kernel $K$, we define the \emph{Berezin kernel} $B$ via
$B(z,w)=\tfrac {|K(z,w)|^2}{R(z)}$ and we put
$C(z)=\int_\C\tfrac {B(z,w)}{z-w}\, dA(w).$

\begin{thm}\label{ecor} Suppose that $(Q_N)$ is an admissible sequence satisfying the cross-section convergence. Fix $p_*\in\R$ where $\sigma_V(p_*)>0$ and $\rho(p_*)>0$ and rescale as above. Then each limiting 1-point function $R=\lim R_{N_k}$ is nontrivial, i.e., $R>0$ everywhere on $\C$. Furthermore, $R$ gives rise to a solution to Ward's equation
$\dbar C(z)=R(z)-1-\Delta\log R(z).$
\end{thm}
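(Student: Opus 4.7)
The plan has two parts. For the nontriviality assertion $R > 0$, I would rely on the structural factorization from Lemma~\ref{LPF} and rule out the zero case using cross-section information. By Lemma~\ref{LPF}, each limiting kernel is of the form $K(z,w) = G(z,w) L(z,w)$ with $L$ Hermitian-entire and $G$ the Ginibre kernel \eqref{gin0}. As a limit of reproducing kernels, $K$ satisfies $|K(z,w)|^2 \le K(z,z) K(w,w)$; dividing by $G(z,z) G(w,w) = 1$ yields $|L(z,w)|^2 \le R(z) R(w)$. Hence if $R(z_0) = 0$, then $L(\,\cdot\,, z_0) \equiv 0$, and the Hermitian-entire property propagates this to $L \equiv 0$ on $\C^2$, forcing $R \equiv 0$. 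On the other hand, Lemma~\ref{LPF2}, applicable under the cross-section convergence hypothesis, yields
\[
\int_{\R} R(x + iy)\, dy = \pi\, \rho(p_*)\, \sigma_V(p_*) > 0
\]
for every $x \in \R$. Thus $R \not\equiv 0$, and so $R > 0$ everywhere, which in turn makes $\log R$ smooth.

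For Ward's equation, the plan is to derive a finite-$N$ identity via reparametrization invariance and then pass to the scaling limit, following the strategy of \cite[Section~4]{AKM}. A change of variables $\zeta_j \mapsto \zeta_j + \epsilon\, \psi(\zeta_j)$ in the Gibbs measure, applied to a smooth compactly supported $\psi$ and differentiated at $\epsilon = 0$, yields
\[
\bfE_N\Bigl[\,\sum_{j \ne k} \tfrac{\psi(\zeta_j)}{\zeta_j - \zeta_k} - N \sum_j \psi(\zeta_j)\, \partial Q_N(\zeta_j) + \sum_j \partial \psi(\zeta_j)\Bigr] = 0.
\]
Rewriting in terms of $\bfR_N$ and $\bfK_N$ via the determinantal identity $\bfR_{N,2}(\zeta, \eta) = \bfR_N(\zeta)\bfR_N(\eta) - |\bfK_N(\zeta, \eta)|^2$ produces, distributionally, the finite-$N$ Ward identity
\[
\bar\partial\, \mathbf{C}_N(\zeta) = \bfR_N(\zeta) - N \Delta Q_N(\zeta) - \Delta \log \bfR_N(\zeta),
\]
with $\bfB_N(\zeta, \eta) = |\bfK_N(\zeta, \eta)|^2/\bfR_N(\zeta)$ and $\mathbf{C}_N(\zeta) = \int \bfB_N(\zeta, \eta)/(\zeta - \eta)\, dA(\eta)$. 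Passing through the rescaling $z = \Gamma_N(\zeta)$ of \eqref{blowup} and using the transformation laws from Subsection~\ref{RIB}, a short computation converts this into
\[
\bar\partial C_N(z) = R_N(z) - \tfrac{\Delta Q_N(\zeta)}{\Delta Q_N(p_*)} - \Delta \log R_N(z),
\]
and the middle ratio tends to $1$ locally uniformly in $z$ by the smoothness assumption on $Q_N$ near $p_*$.

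It then remains to take $N = N_k \to \infty$ along the convergent subsequence of Lemma~\ref{LPF}. Locally uniform convergence $R_{N_k} \to R$ together with the just-established positivity $R > 0$ gives $\Delta \log R_{N_k} \to \Delta \log R$ by elliptic regularity applied to the logarithm. I expect the main obstacle to be the passage to the limit in the Cauchy-type integral $C_{N_k}$, which requires uniform-in-$N$ integrability of the Berezin kernels $B_N(z, \cdot)$. The singularity at $w = z$ is controlled by $B_N(z, w) \le R_N(w)$ combined with the local uniform convergence of $R_N$, while the tail at infinity is absorbed by the Gaussian factor $|G(z, w)|^2 = e^{-|z - w|^2}$ inherited from the factorization $K = G \cdot L$, together with polynomial control on $L$ furnished by Lemma~\ref{spike}. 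These uniform integrability estimates mirror those in \cite[Theorem~4.1]{AKM} and, once established, yield $\bar\partial C(z) = R(z) - 1 - \Delta \log R(z)$, which is the claimed Ward equation.
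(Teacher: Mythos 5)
The paper's own proof is a short citation: Lemma~\ref{LPF2} rules out $R\equiv 0$, and then the zero-one law and Ward's equation are both invoked directly from \cite[Theorem~1.3]{AKM}. Your plan reconstructs these ingredients from scratch, which is a legitimate (and more instructive) route, but there is a genuine gap in the nontriviality part. After correctly deducing from $R(z_0)=0$ and the reproducing-kernel inequality $|K(z,w)|^2\le K(z,z)K(w,w)$ that $L(\,\cdot\,,z_0)\equiv 0$, you assert that ``the Hermitian-entire property propagates this to $L\equiv 0$ on $\C^2$.'' That propagation is false as stated: a Hermitian-entire function can vanish identically on the slice $w=z_0$ without vanishing everywhere. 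For instance $L(z,w)=(z-z_0)(\bar w-\bar z_0)$ is Hermitian-entire, satisfies $L(z,z_0)\equiv 0$ and $|L(z,w)|^2\le R(z)R(w)$ with $R(z)=|z-z_0|^2\ge 0$, yet $L\not\equiv 0$. (Also, your intermediate bound $|L(z,w)|^2\le R(z)R(w)$ is off by the factor $e^{|z-w|^2}$, since $|K|^2=e^{-|z-w|^2}|L|^2$; this doesn't affect the slice-vanishing conclusion but is worth fixing.) The actual zero-one law of \cite[Theorem~1.3]{AKM} excludes such $L$ by exploiting more than Hermitian analyticity -- in particular the mass-one/Ward structure of limiting kernels -- and that is precisely what your sketch is missing. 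Either cite that theorem as the paper does, or supply the additional positivity and mass-one analysis needed to rule out isolated zeros of $R$.

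Your sketch of Ward's equation (finite-$N$ Ward identity via reparametrization invariance, rescaling, and a limit passage controlled by uniform integrability of $B_N(z,\cdot)$) is the right route and matches the machinery of \cite[Section~4 and Theorem~1.3]{AKM}; the paper simply cites it rather than rederiving it. The technical work you defer -- the uniform integrability of the rescaled Berezin kernels near the diagonal and at infinity -- is exactly the content of the cited theorem, so if you want a self-contained proof you would need to carry out those estimates (which require the already-established positivity of $R$, making the order of the two parts important).
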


\begin{proof} The proof again follows by appealing to general results from \cite{AKM}.
Indeed, it follows from Lemma \ref{LPF2} that a limiting 1-point function $R$ cannot vanish identically. Then in fact $R>0$ everywhere and Ward's equation
holds in view of the zero-one law in \cite[Theorem 1.3]{AKM}.
\end{proof}

\subsection{Characterization of translation invariant scaling limit} We now come to the important realization that the cross-section convergence uniquely
determines a limiting 1-point function $R$, provided that the latter can be shown to be translation invariant. Indeed, this now follows in a straightforward
way by appealing to the theory for Ward's equation in \cite{AKM}.

Similar as in \cite{AKM}, we shall say a limiting 1-point function $R$ in Lemma \ref{LPF} is \textit{(horizontal)
translation invariant} if $R(z+t)=R(z)$ for all $t\in\R$.

\begin{thm}\label{tithm} Keep the assumptions of Theorem \ref{ecor}. Then each translation invariant limiting 1-point function $R$ at $p_*$
is of the form
\begin{align}\label{rett}R(z)
=\gamma*\1_{(-2a,2a)}(2\im z)=
\tfrac 1 {\sqrt{2\pi}}\int_{-2a}^{\,2a}e^{-\frac 1 2 (2\im z-t)^2}\, dt,
\quad  a
 =\tfrac \pi 2\cdot \rho(p_*)\cdot\sigma_V(p_*).\end{align}
\end{thm}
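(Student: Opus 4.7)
The plan is to reduce the statement to the classification of translation-invariant solutions of Ward's equation worked out in \cite{AKM}. By Theorem \ref{ecor}, any translation-invariant limiting 1-point function $R$ at $p_*$ is strictly positive and satisfies Ward's equation; by Lemma \ref{LPF}, it arises as $L(z,z)$ for a Hermitian-entire kernel $L$. First, I would apply the classification from \cite{AKM}: a strictly positive, translation-invariant solution $R$ of Ward's equation of this structural type must take the form
\begin{equation*}
R(z)=F(2\im z),\qquad F=\gamma *\1_E,
\end{equation*}
for some Borel subset $E\subset\R$, and moreover (under the a priori bound on the cross-section available here) $E$ can be taken to be a single interval.

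Next, I would pin down the Lebesgue measure of $E$ via the cross-section convergence already established for $R$. Translation invariance together with Lemma \ref{LPF2} yields, for every $x\in\R$,
\begin{equation*}
\pi\cdot\rho(p_*)\cdot\sigma_V(p_*)=\int_\R R(x+iy)\,dy=\int_\R F(2y)\,dy=\tfrac 1 2\int_\R F(s)\,ds=\tfrac 1 2 |E|,
\end{equation*}
where the last equality uses that $\gamma$ is a probability density and $\int_\R \1_E=|E|$. Hence $|E|=4a$ with $a=\tfrac{\pi}{2}\rho(p_*)\sigma_V(p_*)$.

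To locate $E$, I would invoke the reflection symmetry $Q_N(\bar\zeta)=Q_N(\zeta)$ assumed in Subsection \ref{tld}, which is preserved under the rescaling map \eqref{blowup} about the real point $p_*$ and therefore descends to $R(\bar z)=R(z)$. Equivalently, $F$ is an even function, so $E$ is symmetric about the origin. Combined with the fact from the first step that $E$ is an interval, and the length computation $|E|=4a$ from the second step, this forces $E=(-2a,2a)$, and the asserted formula \eqref{rett} for $R$ follows.

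The principal difficulty lies in the first step, i.e., in extracting from \cite{AKM} the complete characterization of translation-invariant solutions of Ward's equation with the Hermitian-entire structure of Lemma \ref{LPF} as convolutions $\gamma *\1_E$ over an interval $E$. Once this is granted, the remaining two steps --- matching the total mass via cross-section convergence and fixing the interval by reflection symmetry --- are immediate and purely computational.
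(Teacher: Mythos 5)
Your proposal is correct and follows essentially the same route as the paper's two-line proof: both reduce the claim to the characterization of translation-invariant solutions of Ward's equation in \cite[Theorem 1.6]{AKM}, and both use Lemma \ref{LPF2} (the rescaled cross-section identity) to pin down the parameter $a$. The one difference is cosmetic: you split the citation into ``$F=\gamma*\1_E$ for some set, then an interval, then a \emph{symmetric} interval via $Q_N(\bar\zeta)=Q_N(\zeta)$,'' whereas the paper reads \cite[Theorem 1.6]{AKM} as already delivering the symmetric interval $(-2a,2a)$ directly, so the symmetry argument is a harmless redundancy rather than a needed extra ingredient.
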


\begin{proof} The formula \eqref{rett} (for some $a>0$) follows immediately from the characterization of translation invariant solutions to Ward's equation in \cite[Theorem 1.6]{AKM}.
Moreover, Lemma \ref{LPF2} fixes the value of $a$ in \eqref{rett}.
\end{proof}

\subsection{Universality of almost-circular ensembles} The theory mainly described in the almost-Hermitian setup so far applies to the almost-circular setup as well.

\begin{proof}[Proof of Theorem~\ref{Thm_AUE}]
Since $Q_N$ is radially symmetric, the limiting empirical distribution is uniform on the unit circle.
Furthermore, by \eqref{rho} and \eqref{rN ACE}, we have
	\begin{equation}
	\tfrac{1}{\pi}c_N = \tfrac{1}{\pi} (1-r_N) \Delta Q_N(1) \cdot (1+o(1)) =  \tfrac{1}{2\pi}  \cdot (1+o(1)), \qquad (N \to \infty).
\end{equation}
This in particular gives rises to the rescaled version of the pointwise cross-section convergence
	\begin{equation}
	\int_\R R(x+iy)\,dy=\tfrac{\rho}{2}.
\end{equation}
Moreover, since $Q_N$ is radially symmetric, it is easy to show the translation invariance, see e.g., \cite[Subsection 6.4]{AKM}.
Now Theorem~\ref{tithm} completes the proof.
\end{proof}

\section{Pointwise convergence of cross-sections} \label{Sec4}

In this section we prove Theorem \ref{mth0} and Theorem \ref{mthMP} about pointwise convergence of cross-sections to $\pi$ times the equilibrium measure $\sigma_V(p)$.

\subsection{Cross-sections for AGUE} \label{SCAGUE} Let
$$Q_N(\zeta)=\tfrac 1 2\xi^2+\tfrac 1 2\cdot \tfrac N {c^2}\eta^2$$
and fix numbers $\delta,\alpha$ with $0<\delta<\alpha<2$. Put $I_{\alpha,\delta}=\{\xi\in\R\,;\, \delta\le |\xi|\le \alpha\}$.

We shall prove in detail that $\tfrac 1 \pi c_N(p)\to \SC(p)$ uniformly for $p\in I_{\alpha,\delta}$. The pointwise convergence for $p=0$ may be
handled in a similar way, see a remark below.

We first recall that the 1-point function can be written
$\bfR_N(\zeta)=\sum_{j=0}^{N-1}|w_{j}(\zeta)|^2,$
where $w_{j}=q_{j}\cdot e^{-NQ_N/2}\in\calW_N$ is an orthonormal basis of the weighted polynomials subspace $\calW_N$ of $L^2$ (see Section \ref{Sec3}).

Consider now an arbitrary potential $Q$ of the form $Q(\xi+i\eta)=a\xi^2+b\eta^2$ where $a$ and $b$ are positive constants.

The following orthonormal polynomials in weight $e^{-NQ_N/2}$ are found in \cite{EM},
\begin{equation*}
q_j(\zeta)=(ab)^{\frac 1 4} \sqrt{\tfrac{N}{j!} } (\tfrac \tau 2)^{\frac j2} H_j( \sqrt{ \tfrac{Nab}{b-a} }\,\zeta), \quad \tau=\tfrac {b-a}{b+a},
\end{equation*}
where $H_j$ is the $j$:th Hermite polynomial,
$H_j(z)=(-1)^j e^{z^2} \tfrac{d^j}{dz^j}e^{-z^2}.$

In other words, the $1$-point function in potential $Q=a\xi^2+b\eta^2$ is given by
\begin{align}\label{Ge1pt} \begin{cases}\bfR_N(\zeta)=e^{-NQ(\zeta)}N\sqrt{ab}\cdot F_N(\sqrt{ \tfrac{Nab}{b-a} }\,\zeta),\cr
F_N(z)=\sum_{j=0}^{N-1}\tfrac {(\tau/2)^j}{j!}\left|H_j(z)\right|^{\,2}.\cr
\end{cases}
\end{align}

We have the following lemma. (See \cite[Proposition 2.3]{LR} for a related statement.)

\begin{lem}\label{Lem_Hermite}
With $x=\re z$ and $F_N$ as in \eqref{Ge1pt}, we have
\begin{align*}
\tfrac {\d F_N}{\d x}(z)=\tfrac{4\tau\,x}{1+\tau}F_N(z)-\tfrac{4(\tau/2)^N}{1+\tau} \tfrac{\Re[ H_{N-1}(z) H_N(\bar{z})]}{(N-1)!}.
\end{align*}
\end{lem}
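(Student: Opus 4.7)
My plan is to prove the identity by combining the derivative rule for Hermite polynomials with the three-term recurrence, yielding a telescoping relation that collapses all interior terms.

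\textbf{Step 1 (differentiation).} Since $H_j$ has real coefficients, $|H_j(z)|^2 = H_j(z) H_j(\bar z)$, and $\partial_x = \partial_z + \partial_{\bar z}$. Using $H_j'(z) = 2j H_{j-1}(z)$, I compute
\begin{equation*}
\tfrac{\d F_N}{\d x}(z) = \sum_{j=1}^{N-1} \tfrac{(\tau/2)^j}{j!}\cdot 2j \bigl(H_{j-1}(z) H_j(\bar z) + H_j(z) H_{j-1}(\bar z)\bigr) = 2\tau \sum_{j=1}^{N-1} \tfrac{(\tau/2)^{j-1}}{(j-1)!}\, A_j,
\end{equation*}
where $A_j := 2\Re[H_j(z) H_{j-1}(\bar z)]$. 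Denote the sum on the right by $T_N$, so that $\d F_N/\d x = 2\tau\, T_N$.

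\textbf{Step 2 (recurrence identity).} The three-term recurrence $H_{j+1}(z) = 2zH_j(z) - 2jH_{j-1}(z)$, multiplied by $H_j(\bar z)$ and combined with its conjugate, gives the crucial scalar identity
\begin{equation*}
4x\,|H_j(z)|^2 = A_{j+1} + 2j\,A_j, \qquad j\ge 0 \text{ (with $A_0 := 0$)}.
\end{equation*}
This is the engine of the proof: it converts the factor $x$ appearing in the target formula into shifts of the index $j$ in the $A_j$-sequence.

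\textbf{Step 3 (telescoping).} Inserting the identity of Step 2 into $4x F_N$ and splitting the sum according to the two terms on the right, I get
\begin{equation*}
4x F_N = \sum_{j=0}^{N-1} \tfrac{(\tau/2)^j}{j!} A_{j+1} + 2\sum_{j=1}^{N-1} \tfrac{(\tau/2)^j}{(j-1)!} A_j.
\end{equation*}
Reindexing the first sum as $k = j+1$ and pulling out the $k=N$ boundary term, the remaining portion reassembles exactly as $T_N$, while the second sum is $\tau\, T_N$. This yields
\begin{equation*}
4x F_N = (1+\tau)\, T_N + \tfrac{(\tau/2)^{N-1}}{(N-1)!}\, A_N.
\end{equation*}

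\textbf{Step 4 (assembly).} Solving for $T_N$ and multiplying by $2\tau$ gives
\begin{equation*}
\tfrac{\d F_N}{\d x}(z) = 2\tau\, T_N = \tfrac{4\tau x}{1+\tau}\, F_N - \tfrac{2\tau}{1+\tau}\cdot\tfrac{(\tau/2)^{N-1}}{(N-1)!}\,A_N,
\end{equation*}
which is precisely the claimed formula once $2\tau(\tau/2)^{N-1} = 4(\tau/2)^N$ and $A_N = 2\Re[H_{N-1}(z)H_N(\bar z)]$ are inserted. The main obstacle is essentially bookkeeping: identifying the correct telescoping structure so that only a single boundary term survives. The identity $4x|H_j|^2 = A_{j+1} + 2jA_j$ of Step 2 is what makes this possible, and once it is in hand the rest is a clean shift of summation index.
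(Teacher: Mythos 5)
Your proof uses the same two ingredients as the paper --- the derivative rule $H_j'=2jH_{j-1}$ and the three-term recurrence --- but organizes the computation in real quantities from the outset. The paper first derives the complex identity \eqref{th} relating $\partial_z F_N$ to $\partial_{\bar z}F_N$ with a boundary term, and then takes real parts; you instead reduce everything to the real auxiliary quantity $T_N$ via the scalar relation $4x\,|H_j(z)|^2=A_{j+1}+2jA_j$, which telescopes cleanly. This is a valid reorganization of essentially the same argument and is, if anything, slightly more transparent since the target is a real identity.

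One arithmetic slip needs fixing. In Step~1 the factor should be $\tau$, not $2\tau$: since $H_{j-1}(z)H_j(\bar z)+H_j(z)H_{j-1}(\bar z)=A_j$ (that sum \emph{is} $A_j$ by your definition, not $2A_j$), and $\tfrac{(\tau/2)^j}{j!}\cdot 2j=\tau\,\tfrac{(\tau/2)^{j-1}}{(j-1)!}$, you have $\partial_x F_N=\tau T_N$. Your Step~4 display is then internally inconsistent: the first term $\tfrac{4\tau x}{1+\tau}F_N$ is the correct expansion of $\tau T_N$, but the boundary term keeps the spurious $2\tau$, which would overcount the correction by a factor of two. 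With the corrected $\tau T_N=\tfrac{4\tau x}{1+\tau}F_N-\tfrac{\tau}{1+\tau}\,\tfrac{(\tau/2)^{N-1}}{(N-1)!}A_N$ and $\tau(\tau/2)^{N-1}A_N=2\tau(\tau/2)^{N-1}\Re[H_{N-1}(z)H_N(\bar z)]=4(\tau/2)^N\Re[H_{N-1}(z)H_N(\bar z)]$, you land exactly on the stated formula.
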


\begin{proof}
We apply two standard facts for Hermite polynomials, namely the three-term recursion $H_{j+1}(z)=2z H_j(z)-H'_j(z)$ and the
identity $H'_j(z)=2jH_{j-1}(z)$.
These lead to
\begin{align}\label{lto}
\begin{split}
\tfrac {\d F_N}{\d z}
&=\sum_{j=0}^{N-1} \tfrac{(\tau/2)^j}{j!}  \cdot 2j H_{j-1}(z)\cdot \left( 2\bar{z}H_{j-1}(\bar{z})-H'_{j-1}(\bar{z}) \right)
\\
&=2\tau \bar{z} \sum_{j=0}^{N-2} \tfrac{(\tau/2)^j}{j!} | H_j(z) |^2-\tau \sum_{j=0}^{N-2} \tfrac{(\tau/2)^j}{j!} H_j(z)H'_j(\bar{z}).
\end{split}
\end{align}
Using once more the three-term recursion, the last sum is recognized as
\begin{align*}
\tfrac {\d F_N}{\d\bar{z}}(z)-2 \bar{z}  \tfrac{(\tau/2)^{N-1}}{(N-1)!} |H_{N-1}(z)|^2
+ \tfrac{(\tau/2)^{N-1}}{(N-1)!}  H_{N-1}(z) H_N(\bar{z}).
\end{align*}
Inserting this in \eqref{lto}, we obtain after some straightforward manipulations,
\begin{align} \label{th}
\begin{split}
\tfrac {\d F_N}{\d z}(z)=2\tau \bar{z} F_N(z)-\tau \tfrac {\d F_N}{\d\bar{z}}(z)-2\tfrac{(\tau/2)^N}{(N-1)!} H_{N-1}(z) H_N(\bar{z}).
\end{split}
\end{align}
The lemma follows from \eqref{th} by taking real parts and rearranging.
\end{proof}

Now consider the cross-section $c_N$, which we write in the form
\begin{equation}\label{kross}c_N(\xi)= \tfrac 1 {N^2}\int_{\R} \bfR_N(\xi+i\tfrac y N)\, dy.\end{equation}

Fix a point $\xi\in I_{\alpha,\delta}$.

By \eqref{Ge1pt} with $a=\tfrac 1 2$ and $b=\tfrac 1 2 \tfrac N {c^2}$ we have
$$
\tfrac 1 N \bfR_{N}(\xi+i\tfrac y N) =\tfrac{\sqrt{N}}{2c}\, e^{-\frac N 2 \xi^2-\frac {1}{2c^2} y^2} \,  F_N(  \tfrac{N}{\sqrt{2(N-c^2)}}  (\xi+i \tfrac y N) ).
$$
Applying Lemma~\ref{Lem_Hermite}, we obtain after some straightforward computations
\begin{equation}\label{juice}
\begin{split}
\tfrac 1 N \tfrac {\d\bfR_N} {\d\xi} (\xi+i\tfrac y N)=&-\tfrac{1}{c\sqrt{2}} \tfrac{N+c^2}{(N-1)!} \sqrt{\tfrac{N}{N-c^2}} e^{-\frac N 2 \xi^2-\frac 1 {2 c^{2}} y^2}  ( \tfrac{N-c^2}{2(N+c^2)} )^N
\cr
&\times \Re \{  H_{N-1}( \tfrac{N}{\sqrt{2(N-c^2)}} (\xi+i\tfrac y N)  ) H_{N}( \tfrac{N}{\sqrt{2(N-c^2)}} (\xi-i\tfrac y N)  )  \}.\cr
\end{split}
\end{equation}

Thus we have asymptotically, as $N\to\infty$,
\begin{align}\label{PL_as}
\nonumber \tfrac 1 {N^2}\tfrac {\d\bfR_N} {\d \xi}(\xi+i\tfrac y N)=&-(1+o(1))\cdot \tfrac{1}{c\sqrt{2}}  e^{-\frac N 2 \xi^2-\frac 1 {2 c^{2}} y^2-2 c^2} \tfrac{ 1 }{2^N(N-1)!}
\\
&\times \Re \{  H_{N-1}(  \sqrt{\tfrac N 2}\xi+i\tfrac{y }{\sqrt{2N}}  ) H_{N}(  \sqrt{\tfrac N 2}\xi-i\tfrac{y }{\sqrt{2N}}  )  \}.
\end{align}

We now apply the inequality \eqref{f1p} in \eqref{PL_as}, which gives that there are constants $C=C(\alpha,\delta)$ and $k=k(\alpha,\delta)$ such that whenever $\xi\in I_{\alpha,\delta}$,
\begin{equation*}\tfrac 1 {N^2}\tfrac {\d\bfR_N} {\d \xi}(\xi+i\tfrac y N)\le Ce^{ -\frac 1 {2c^2} y^2 }\max\{1,\tfrac {y^{2N}}{N^N}\} \cosh^2(ky).\end{equation*}

Using this and differentiating with respect to $\xi$ in \eqref{kross} we find that the derivative $c_N'$ satisfies
\begin{equation*}|c_N'(\xi)|\le C\int_\R e^{ -\frac 1 {2c^2} y^2 }\max\{1,y^{2N}N^{-N}\}\cosh^2(ky)\, dy\le C_1,\end{equation*}
with a new constant $C_1=C_1(\alpha,\delta,c)$. This proves equicontinuity of the functions $c_N$ on $I_{\alpha,\delta}$.

Since the functions $c_N$ are also uniformly bounded (Lemma \ref{bl00}) we can apply the Arzela-Ascoli theorem and conclude that each subsequence of the functions $c_N$ has a further subsequence which converges uniformly on $I_{\alpha,\delta}$
to a limit $c(\xi)$. In view of Theorem \ref{joh1}, we must then have $c=\pi\cdot\sigma_V$ on the interval $I_{\alpha,\delta}$
where $\sigma_V$ is the equilibrium measure in potential $V(\xi)=\tfrac 1 2 \xi^2$, i.e., $\sigma_V=\SC$ is Wigner's semi-circle law. We have thus shown
locally uniform convergence $c_N(\xi)\to\pi\sigma_V(\xi)$ for $\xi\in (-2,2)\setminus\{0\}$. It is not hard to obtain convergence also at $\xi=0$, by using the Mehler-Heine formulas in \eqref{herna15}. This detail may be left to the reader.
Our proof of Theorem \ref{mth0} is thereby complete. $\qed$

\subsection{Cross-sections for ALUE} \label{WEAK} We now prove Theorem \ref{mthMP} on the cross-sections of the ALUE.
To this end we fix a real number $c>0$ and a non-negative integer $\nu$ and take
\begin{equation}\label{wdef}
Q_N(\zeta)=-\tfrac 1 N\log \left\{ K_{\nu}(aN|\zeta|)\cdot |\zeta|^{\nu}\right\}-b\re\zeta,
\end{equation}
where we abbreviate
\begin{equation}\label{anbn}
a=a_N=\tfrac 1 {c^2}N, \quad  b=b_N=\tfrac 1 {c^2}N-1.
\end{equation}

Let us fix a small $\alpha>0$ and put $I_\alpha=[\alpha,4-\alpha]$.
We must prove that the cross-sections $c_N(\xi)$ converge uniformly on $I_\alpha$ to $\pi\cdot \MP(\xi)=\tfrac 1 {2\xi}\sqrt{\xi(4-\xi)}$.

For this purpose, we first recall from \cite{AB,O,Ake05} that
the $j$:th orthonormal polynomial $q_j$ in weight $e^{-NQ_N/2}$
can be expressed in terms of the Laguerre polynomial $L_j^\nu(z)=\tfrac{z^{-\nu} e^z }{j!} \tfrac{d^j}{dz^j} ( e^{-z} z^{j+\nu} )$ via
\begin{equation} \label{ONP W}
q_j(\zeta)=  C_N\cdot
\tau^{j}\sqrt{  \tfrac{j!}{(j+\nu)!}  }L_j^\nu(  \tfrac{a^2-b^2}{2b}N \zeta ),\qquad (\tau=\tau_N=\tfrac ba=1-\tfrac {c^2}N),
\end{equation}
where
$C_N=\sqrt{a N}   ( \tfrac{a^2-b^2}{2a}N )^{\frac {\nu+1}2}$.

Hence if we introduce the function
\begin{equation} \label{F tau Lag}
G_N(z)=\sum_{j=0}^{N-1} \tfrac{\tau^{2j}j! }{(j+\nu)!} |L_j^\nu(z)|^2,
\end{equation}
then the 1-point function in external potential $Q_N$ becomes
\begin{equation}\label{obt}\bfR_N(\zeta)=C_N^{\,2}\cdot G_N(\tfrac{a^2-b^2}{2b}N \zeta )\cdot e^{-NQ_N(\zeta)}.\end{equation}

Now pick $\xi$ in the interval $I_\alpha=[\alpha,4-\alpha]$, and put
$$\zeta=\xi+i\tfrac y N,\qquad (y\in\R).$$
Using \eqref{Bessel_asym} and \eqref{wish}, it follows from \eqref{obt} that
\begin{align*}
	\tfrac 1 {N^2} \bfR_N(\zeta)
	&= \sqrt{\tfrac{\pi}{2}} \tfrac{1}{c} N^{\nu}  \cdot  \xi^{\nu-\frac12} e^{-N\xi-\frac{y^2}{2c^2\xi}} \cdot  G_N( N\zeta ) \cdot (1+o(1)),
\end{align*}
where we have used
$$
N(\tfrac N{c^{2}}\cdot |\zeta|-(\tfrac N{c^{2}}-1)\cdot \re\zeta)= \tfrac{N^2}{c^{2}}\cdot \sqrt{ \xi^2+y^2/N^2 }-(\tfrac {N^2}{c^{2}}-N)\xi= N \xi+\tfrac{y^2}{2c^2 \xi}+O(\tfrac{1}{N}).
$$

We next use a summation identity found in \cite[(10.12.42)]{EMOT} to write
\begin{equation}\label{wrath}\tfrac{j!}{(j+\nu)!} |L_j^\nu(z)|^2=\sum_{k=0}^j \tfrac{|z|^{2k}}{k! (k+\nu)!} L_{j-k}^{\nu+2k}(z+\bar{z}).\end{equation}
As a consequence, we can write, in turn:
\begin{align*}
	G_N(z)&=\sum_{j=0}^{N-1} \tau^{2j} \sum_{k=0}^j \tfrac{|z|^{2k}}{k!(k+\nu)!} L_{j-k}^{\nu+2k} (z+\bar{z}),\\
\tfrac 1 {N^2} \bfR_N(\zeta)&= \sqrt{\tfrac{\pi}{2}} \tfrac{1}{c} N^{\nu}  \cdot \xi^{\nu-\frac12} e^{-N\xi-\frac{y^2}{2c^2\xi}}  \cdot  \sum_{j=0}^{N-1} \tau^{2j} \sum_{k=0}^j \tfrac{|N\zeta|^{2k}}{k!(k+\nu)!} L_{j-k}^{\nu+2k} (2N\xi) (1+o(1)) .
\end{align*}

Observing that
$|N\zeta|^{2k}=(N\xi)^{2k} (1+(\tfrac{y}{N\xi})^2)^k$ and integrating in $y$, we find for $\xi\in I_\alpha$
\begin{equation}\label{babyl}\begin{split}
	c_N(\xi)
	&=\sqrt{\tfrac{\pi}{2}} \tfrac{1}{c} N^{\nu}  \cdot \xi^{\nu-\frac12} e^{-N\xi}
	\sum_{j=0}^{N-1} \tau^{2j}
	\sum_{k=0}^j \tfrac{(N\xi)^{2k}}{k!(k+\nu)!} L_{j-k}^{\nu+2k} (2N\xi)\cr
&\times J(N,k)\cdot (1+o(1)),\quad \text{where}\quad
J(N,k)=\int_\R e^{-\frac{y^2}{2c^2\xi}}(1+(\tfrac{y}{N\xi})^2)^k\,dy.\cr
\end{split}
\end{equation}

We pause to evaluate the integral $J(N,k)$.

\begin{lem} \label{cs integral} $J(N,k)=c\cdot \sqrt{2\pi\,\xi}\cdot k! \, (\tfrac{-2c^2}{N^2\xi})^k \, L_k^{-k-1/2}( \tfrac{N^2\xi}{2c^2} ).$
\end{lem}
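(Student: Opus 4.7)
The plan is to reduce $J(N,k)$ to a Gaussian integral against a polynomial weight, compute it term-by-term, and then recognize the resulting polynomial in the small parameter $\alpha:=c^{2}/(N^{2}\xi)$ as the explicit series expansion of $L_{k}^{-k-1/2}$.

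First, I would rescale by $u=y/(c\sqrt{\xi})$, which transforms the integral into
\begin{equation*}
J(N,k)=c\sqrt{\xi}\int_{\R}e^{-u^{2}/2}\bigl(1+\alpha u^{2}\bigr)^{k}\,du,\qquad \alpha=\tfrac{c^{2}}{N^{2}\xi}.
\end{equation*}
Expanding $(1+\alpha u^{2})^{k}$ by the binomial theorem and using the standard Gaussian moments $\int_{\R}u^{2j}e^{-u^{2}/2}\,du=\sqrt{2\pi}\,(2j)!/(2^{j}j!)$ gives the explicit polynomial identity
\begin{equation*}
J(N,k)=c\sqrt{2\pi\xi}\sum_{j=0}^{k}\binom{k}{j}\frac{(2j)!}{2^{j}j!}\,\alpha^{j}.
\end{equation*}

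Next I would identify this polynomial with the claimed Laguerre expression. Observing that $-2c^{2}/(N^{2}\xi)=-2\alpha$ and $N^{2}\xi/(2c^{2})=1/(2\alpha)$, the right-hand side of the lemma equals $c\sqrt{2\pi\xi}\cdot k!\,(-2\alpha)^{k}L_{k}^{-k-1/2}(1/(2\alpha))$. Using the explicit series $L_{k}^{\beta}(x)=\sum_{i=0}^{k}\binom{k+\beta}{k-i}(-x)^{i}/i!$ with $\beta=-k-\tfrac12$ together with the closed form $\binom{-1/2}{m}=(-1)^{m}(2m)!/(4^{m}(m!)^{2})$, one obtains
\begin{equation*}
L_{k}^{-k-1/2}(x)=(-1)^{k}\sum_{m=0}^{k}\frac{(2m)!}{4^{m}(m!)^{2}}\,\frac{x^{k-m}}{(k-m)!}.
\end{equation*}
Substituting $x=1/(2\alpha)$ and multiplying by $k!(-2\alpha)^{k}$ collapses the powers of $2$ and $(-1)^{k}$, leaving precisely $\sum_{m}\binom{k}{m}(2m)!/(2^{m}m!)\,\alpha^{m}$, which matches the direct calculation and establishes the identity.

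The only obstacle is bookkeeping: tracking signs and powers of $2$ while converting between the binomial coefficient $\binom{-1/2}{k-i}$, the central double factorial $(2m)!/(2^{m}m!)$ coming from the Gaussian moments, and the rescaling of the Laguerre argument by $1/(2\alpha)$. There is no analytic subtlety, and no issue with convergence since every sum is finite.
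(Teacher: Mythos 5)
Your proposal is correct, and the computation checks out line by line: the rescaling $u=y/(c\sqrt{\xi})$ gives $J(N,k)=c\sqrt{\xi}\int_{\R}e^{-u^2/2}(1+\alpha u^2)^k\,du$, the Gaussian moments yield the binomial sum $c\sqrt{2\pi\xi}\sum_{j}\binom{k}{j}\frac{(2j)!}{2^{j}j!}\alpha^{j}$, and expanding $L_k^{-k-1/2}(1/(2\alpha))$ with $\binom{-1/2}{m}=(-1)^m(2m)!/(4^m(m!)^2)$ and multiplying by $k!(-2\alpha)^k$ reproduces the same sum. However, your route differs genuinely from the paper's. The paper never touches Gaussian moments: it recognizes $\int_{\R}e^{-ay^2}(1+(by)^2)^k\,dy$ directly as $\frac{\sqrt\pi}{b}U(\tfrac12,k+\tfrac32,\tfrac{a}{b^2})$ via the integral representation of the Tricomi function $U(a,b,z)=\frac{1}{\Gamma(a)}\int_0^\infty e^{-zt}t^{a-1}(1+t)^{b-a-1}\,dt$, applies Kummer's transformation $U(a,b,z)=z^{1-b}U(a-b+1,2-b,z)$ to bring the first parameter to $-k$, and then uses the identity $U(-n,\nu+1,z)=(-1)^n\,n!\,L_n^\nu(z)$ to land on the Laguerre polynomial. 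Your approach is more elementary and self-contained (it needs only Gaussian moments and the explicit finite series for $L_k^\beta$), whereas the paper's is shorter if one already has the $U$-function toolkit at hand, and it explains more structurally why a Laguerre polynomial of that negative index appears. Either proof is fine; yours trades a bit of binomial bookkeeping for independence from special-function identities.
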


\begin{proof}
We shall use the following integral representation of the confluent Hypergeometric function $U(a,b,z)$ found in \cite[(13.4.4)]{OLBC}
$$
U(a,b,z)=\tfrac{1}{\Gamma(a)} \int_{0}^{\infty} e^{-zt} t^{a-1} (1+t)^{b-a-1}\,dt.
$$
By a change of variable and using $\Gamma(\tfrac12)=\sqrt{\pi}$, this leads to
$$
\int_{\R} e^{-ay^2}  (1+(by)^2)^k\,dy=\tfrac{\sqrt{\pi}}{b} U(\tfrac12,k+\tfrac32,\tfrac{a}{b^2})=\tfrac{\sqrt{\pi}}{b} (\tfrac{a}{b^2})^{-k-\tfrac12} U(-k,-k+\tfrac12,\tfrac{a}{b^2}).
$$
For the last identity, we also use the well-known (Kummer's) transformation
$$
U(a,b,z)=z^{1-b} \, U(a-b+1,2-b,z),
$$
see \cite[(13.2.29)]{OLBC}.
We finish the proof of the lemma by use of the functional relation $
U(-n,\nu+1,z)=(-1)^n \, n! \, L_n^\nu(z)
$ found in  \cite[(13.6.9)]{OLBC}.
\end{proof}

We next use the closed form of Laguerre polynomial in \cite[Chapter \RN{5}]{S},
$$
L_n^\nu(x)=\sum_{j=0}^n \tfrac{\Gamma(n+\nu+1)}{(n-j)!\Gamma(j+\nu+1)} \tfrac{(-x)^j}{j!}
$$
to infer that
$J(N,k)=c\cdot \sqrt{2\pi\,\xi}\cdot (1+O(\tfrac{1}{N^2})).$
Inserting this in \eqref{babyl}, we arrive at
\begin{equation}\label{csformula}\begin{split}
	c_N(\xi)&=\pi   (N\xi)^{\nu} \cdot e^{-N\xi} \cdot  \sum_{j=0}^{N-1} \tau^{2j} \sum_{k=0}^j \tfrac{(N\xi)^{2k}}{k!(k+\nu)!} L_{j-k}^{\nu+2k} (2N\xi) \cdot (1+o(1))\cr
&=\pi   (N\xi)^{\nu} \cdot e^{-N\xi} \cdot  \sum_{j=0}^{N-1} \tau^{2j}\tfrac {j!}{(j+\nu)!}[L_j^\nu(N\xi)]^2\cdot (1+o(1)),\cr
\end{split}
\end{equation}
where we used \eqref{wrath} with $z=N\xi$ to obtain the last equality.

On the other hand, we shall verify that
\begin{equation}\label{slarv}\lim_{N\to\infty}
(N\xi)^\nu e^{-N \xi} \sum_{j=0}^{N-1} \tfrac{j!}{(j+\nu)!} [L_j^\nu(N\xi)]^2 = \tfrac{1}{2\pi \xi} \sqrt{(4-\xi) \xi}.
\end{equation}

To prove \eqref{slarv}, we use the classical Christoffel-Darboux formula
$$
		\sum_{j=0}^{n-1} \tfrac{j! }{(j+\nu)!}  L_{j}^\nu (x) L_{j}^\nu (y) = \tfrac{ n! }{ (n-1+\nu)! } \tfrac{ 1 }{ x-y } (L_{n-1}^\nu(x)L_{n}^\nu(y)-L_{n-1}^\nu(y)L_{n}^\nu(x))
$$
together with differentiation rule $\tfrac d {dx} L_j^\nu(x)=-L_{j-1}^{\nu+1}(x)$ to obtain
\begin{align*}
(N\xi)^\nu e^{-N \xi} \sum_{j=0}^{N-1} \tfrac{j!}{(j+\nu)!} [L_j^\nu(N\xi)]^2&=(N\xi)^\nu e^{-N \xi} \tfrac{N!}{(N-1+\nu)!}
\\
&\times  ( L_{N-1}^\nu(N\xi)L_{N-1}^{\nu+1}(N\xi)-L_N^\nu(N\xi) L_{N-2}^{\nu+1}(N\xi) ).
\end{align*}

Next we apply the following Plancherel-Rotach type formula from \cite[Section \RN{3}]{FFG}:
for $x=4nX$ with $\frac{\ve}{n} \le X <1$ and fixed $m$,
\begin{equation}\label{PR Laguerre}
L_{n+m}^\nu(x)=x^{-\nu/2}e^{x/2} (-1)^{n+m} (2\pi \sqrt{X(1-X)})^{-1/2} n^{\nu/2-1/2} ( g_{n,m}^\nu(X)+O(\tfrac m n) ),
\end{equation}
where
$$
g_{n,m}^\nu(X)=\sin (2n(\sqrt{X(1-X)}-\arccos \sqrt{X})-(2m+\nu+1) \arccos\sqrt{X}+3\pi/4 ).
$$
Using \eqref{PR Laguerre}, we obtain that
\begin{align*}
&\quad L_{N-1}^\nu(N\xi)L_{N-1}^{\nu+1}(N\xi)-L_N^\nu(N\xi) L_{N-2}^{\nu+1}(N\xi)
\\
&=(N\xi)^{-\nu} e^{N\xi} N^{\nu-1} \cdot \tfrac{2}{\pi} \tfrac{1}{\xi \sqrt{4-\xi}} ( g_{N,-1}^\nu(X) g_{N,-1}^{\nu+1}(X)-g_{N,0}^\nu(X) g_{N,-2}^{\nu+1}(X) ) \cdot (1+o(1)),
\end{align*}
where $X=\frac{\xi}{4}$. Hence, using an elementary trigonometric reduction formula, we infer that
\begin{align*}
g_{N,-1}^\nu(X) g_{N,-1}^{\nu+1}(X)-g_{N,0}^\nu(X) g_{N,-2}^{\nu+1}(X)&=\tfrac12 ( \sqrt{X}-\cos( 3\arccos \sqrt{X} ) )
=\tfrac{1}{4} \sqrt{\xi} (4-\xi).
\end{align*}
This gives that the convergence \eqref{slarv} holds locally uniformly for $\xi \in (0,4)$.
(We refer to \cite[Propoistion 1]{GFF} or \cite[Subsection 7.2.3]{F} for further expansion of \eqref{slarv} up to order $O(1/N^2).$)

Combining \eqref{csformula} and \eqref{slarv}, using
that $\tau=a/b \to 1$ as $N \to \infty$, it follows that
$
c_N(\xi) \to \pi \cdot \MP(\xi)$ for all $\xi\in I_\alpha.$ This finishes our proof for Theorem \ref{mthMP}. $\qed$

\section{Bulk scaling limits} \label{BSL}
In this section we obtain bulk scaling limits for the almost-Hermitian GUE and LUE, i.e., we prove Theorem \ref{mainth1} and Theorem \ref{mthWish}, respectively.

\subsection{Bulk scaling limits for AGUE} \label{BAGUE} We now prove Theorem \ref{mainth1}.
To this end, recall that $Q_N(\zeta)=\tfrac 1 2 \xi^2+\tfrac 1 2\tfrac N {c^2}\eta^2$. We also fix a small parameter $\alpha$, $0<\alpha<2$ and
write $I_\alpha=[-2+\alpha,2-\alpha]$. Finally we fix a point $p_*\in I_\alpha$.

Let $R=\lim R_{N_k}$ be a limiting rescaled 1-point function about $p_*$ (guaranteed to exist by Lemma \ref{LPF}). By Theorem \ref{tithm} it suffices to prove that
$R$ is horizontal translation invariant, for then $R$ will be given by the explicit form in \eqref{rett}.

Next recall that $R_N(z)=\tfrac 1 {N\Delta Q_N}\bfR_N(\zeta)$, where $z={\scriptstyle \sqrt{N\Delta Q_N}}\cdot (\zeta-p_*)$, i.e.,
$$\zeta=p_*+ \tfrac {2c} N \, z\cdot (1+O(N^{-\frac 1 2})).$$
Thus using \eqref{Ge1pt} and Lemma~\ref{Lem_Hermite}
(with $a=\tfrac 1 2$, $b=\tfrac 1 2 \tfrac N {c^2}$, $\tau=\tfrac {b-a}{b+a}$)
we obtain after some elementary manipulations,
\begin{align}\label{RN pay}
\tfrac {\d R_N}{\d x}(z)&= -e^{-NQ_N(\zeta)} \tfrac{4a}{\sqrt{(b-a)\Delta Q_N}} \tfrac{( \tau/2 )^N} {(N-1)!}
\cdot  \re\{  H_{N-1}( \sqrt{ \tfrac{Nab}{b-a}  } \zeta )  H_N ( \sqrt{ \tfrac{Nab}{b-a}  } \bar{\zeta} ) \}\\
 &\label{ego}=-e^{-NQ_N(\zeta)}(1+o(1))\tfrac {4\sqrt{2}c^2 e^{-2c^2}}{2^NN!}\cdot \re\{  H_{N-1}( \sqrt{\tfrac N 2} \zeta_1 )  H_N ( \sqrt{\tfrac N 2 } \bar{\zeta_1} ) \},
\end{align}
where we write
$$\zeta_1= p_*+\tfrac {cz} {\sqrt{N/2}}\cdot (1+O(N^{-\frac 1 2}))$$
Here, the second line follows from $\tau^N=e^{-2c^2}+O(N^{-1})$.

Now fix a large number $M$.
Applying the Plancherel-Rotach asymptotic in Lemma \ref{herman}, we obtain
for all $w$ with $|w|\le M$ and all $p_*\in I_\alpha$ that
	\begin{align}
	H_N(\sqrt{\tfrac N 2}\cdot p_*+\tfrac w {\sqrt{N}})
=
e^{\frac N 4 p_*^2} 2^{\frac N 2} \sqrt{N!} \cdot N^{-\frac 1 4}\cdot O(1),\qquad (N\to\infty),
	\end{align}
where the $O(1)$ constant depends on $M$ and $\alpha$. Inserting this in \eqref{ego} we obtain, for $z$ in a given compact subset of $\C$, that
$|\tfrac {\d R_N}{\d x}(z)|= O(N^{-1})$ as $N\to\infty$.
To be more precise, it follows from
$$
e^{-NQ_N(\zeta)+ \frac{N}{2} p_*^2 }=O(1), \qquad O(\tfrac{1}{N!}\sqrt{N!(N-1)!} N^{-1/2} )=O(N^{-1}).
$$
Letting $N_k\to\infty$  we conclude the desired horizontal translation invariance.  $\qed$

\subsection{Bulk scaling limit for ALUE} \label{BALUE} We now prove Theorem \ref{mthWish}, and begin by
recalling that $$Q_N=-\tfrac 1 N\log(K_\nu(\tfrac {N^2|\zeta|}{c^2})|\zeta|^\nu)-(\tfrac N {c^2}-1)\re\zeta.$$

Again we fix a small $\alpha>0$ and consider zooming points $p_*$ in the interval $I_\alpha=[\alpha,4-\alpha]$. It suffices (by Theorem \ref{tithm})
to show that each limiting rescaled 1-point function $R$ about $p_*$ is horizontal translation invariant, i.e., $\tfrac {\d R}{\d x}=0$.

Recall that $R=\lim R_{N_k}$ where
\begin{equation}\label{ivm}R_N(z)=\tfrac 1 {N\Delta Q_N(p_*)}\bfR_N(\zeta),\qquad z={\scriptstyle\sqrt{N\Delta Q_N(p_*)}}\cdot (\zeta-p_*).\end{equation}

By the asymptotic formula in \eqref{BA2}, $z$ and $\zeta$ in \eqref{ivm} are related as
\begin{equation}\label{mappa}\zeta=\zeta(z)=p_*+\tfrac z {\sqrt{N\Delta Q_N(p_*)}}=p_*+\tfrac{2c\sqrt{p_*}}{N} z+o(\tfrac 1 N).\end{equation}

Moreover, when $z=x+iy$ remains in a fixed compact set, while $p_*\in [\alpha,4-\alpha]$ for some small $\alpha>0$, we infer from \eqref{besselA1} that
\begin{equation}\label{ett}
e^{-NQ_N(\zeta)} = \sqrt{\tfrac{\pi}{2}} \tfrac{c}{N} {p_*}^{\,\nu-\frac12}  e^{-Np_*-2c\sqrt{p_*}x-2y^2} \cdot (1+o(1)).
\end{equation}
Next recall the relation \eqref{obt},
\begin{equation}\label{tva}\bfR_N(\zeta)=C_N^2\cdot G_N(\tfrac {a^2-b^2}{2b}N\zeta)\cdot e^{-NQ_N(\zeta)},\quad \scriptstyle (a=\tfrac N {c^2}\,\quad b=\tfrac N {c^2}-1, \quad C_N=\sqrt{aN}(\tfrac {a^2-b^2}{2a} N)^{\frac {\nu+1}2}).\end{equation}

We will invoke an identity found in \cite[Lemma 2]{AB}, which asserts that if $\nu$ is an integer, then
\begin{align}\label{tre}G_N(z)&=\tfrac{\tau^{2N} e^{ \bar{z} }  }{4\pi^2 z^\nu} U_N(z),\qquad \scriptstyle (\tau=1-\tfrac{c^2} N),\\
\label{U_N}
U_N(z)&=\oint_{\gamma_1} du \oint_{\gamma_2}dv \left(  \frac{v(u-1)}{(v-1)u} \right)^\nu \left( \frac{v}{u} \right)^N \frac{e^{z \frac{u}{u-1}  -\bar{z} \frac{v}{v-1} }}{ (\tau^2v-u) (v-1)(u-1) },
\end{align}
where $\gamma_1$ is a simple closed contour encircling $u=0$ but not $u=1$, while $\gamma_2$ encircles both the point $v=1$ and the entire $\gamma_1$ in such a way that $\tau^2 v-u \not= 0$ on $\gamma_2$.

\begin{lem} \label{U_N pax} With $x=\re z$ and $U_N$ as in \eqref{U_N}, we have
\begin{equation*}
\tfrac  {\d U_N}{\d x}(z) = (2\pi i)^2 (-1)^\nu  e^{-\bar{z}} L_{ N+\nu-1 }^{ 1-\nu }(z) L_{N-1}^{\nu+1}(\bar{z})\cdot (1+o(1)),\qquad (N\to \infty).
\end{equation*}
\end{lem}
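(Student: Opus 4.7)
The plan is to differentiate $U_N$ under the double contour integral and reduce the result to a product of two single contour integrals whose values are standard integral representations of Laguerre polynomials. The only $x$-dependence in the integrand of $U_N$ sits inside the exponential $e^{zu/(u-1)-\bar z v/(v-1)}$; since $\partial_x=\partial_z+\partial_{\bar z}$, differentiation brings down the factor
\[
\frac{u}{u-1}-\frac{v}{v-1}=\frac{v-u}{(u-1)(v-1)}.
\]
The denominator $(\tau^2 v-u)$ still couples the two integrations, but this can be broken using the simple algebraic identity $v-u=(\tau^2 v-u)+(1-\tau^2)v$, which splits $\partial_x U_N=T_{\text{main}}+T_{\text{err}}$, with $T_{\text{err}}$ carrying the tiny prefactor $1-\tau^2=2c^2/N+O(N^{-2})$.

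In the main term the coupling denominator has cancelled, so $T_{\text{main}}$ factors as
\[
T_{\text{main}}=\Bigl(\oint_{\gamma_1}\frac{(u-1)^{\nu-2}}{u^{N+\nu}}e^{zu/(u-1)}\,du\Bigr)\cdot\Bigl(\oint_{\gamma_2}\frac{v^{N+\nu}}{(v-1)^{\nu+2}}e^{-\bar z v/(v-1)}\,dv\Bigr).
\]
For the $u$-integral I would use the generating-function contour representation
$L_n^\alpha(w)=\frac{1}{2\pi i}\oint t^{-n-1}(1-t)^{-\alpha-1}e^{-wt/(1-t)}\,dt$ with $t=u$, $n=N+\nu-1$, $\alpha=1-\nu$, yielding $2\pi i(-1)^\nu L_{N+\nu-1}^{1-\nu}(z)$. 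For the $v$-integral the substitution $v=1+w$ peels off the constant $e^{-\bar z}$ and produces $\oint(1+w)^{N+\nu}w^{-\nu-2}e^{-\bar z/w}\,dw$; expanding $(1+w)^{N+\nu}$ and $e^{-\bar z/w}$ and extracting the residue at $w=0$ matches coefficients to give $\sum_{k=0}^{N-1}\binom{N+\nu}{N-1-k}(-\bar z)^k/k!=L_{N-1}^{\nu+1}(\bar z)$. Multiplying the two factors reproduces exactly the right-hand side of the lemma (without the $(1+o(1))$).

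The main obstacle is showing that $T_{\text{err}}=o(T_{\text{main}})$. My approach would be to expand $v/(\tau^2 v-u)=\tau^{-2}\sum_{k\ge0}\tau^{-2k}(u/v)^k$ on contours with $|u|<\tau^2|v|$, which reduces $T_{\text{err}}$ to a sum of decoupled double integrals, each identifiable by the same mechanism with the shifted Laguerre product $L_{N+\nu-1-k}^{1-\nu}(z)L_{N-1-k}^{\nu+1}(\bar z)$. The geometric damping from $\tau^{-2k}$ survives because $\tau^{2N}\to e^{-2c^2}$, and the overall $(1-\tau^2)=O(1/N)$ prefactor must absorb the resulting series. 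Controlling it comes down to a uniform-in-$k$ comparison of the ratios $L_{N-k+\nu-1}^{1-\nu}(z)/L_{N+\nu-1}^{1-\nu}(z)$ and $L_{N-k-1}^{\nu+1}(\bar z)/L_{N-1}^{\nu+1}(\bar z)$, which can be obtained from Plancherel--Rotach type asymptotics for Laguerre polynomials with fixed complex argument; alternatively one may bound $T_{\text{err}}$ directly by estimating the integrand at its maximum on small circles $\gamma_1,\gamma_2$ and showing the resulting bound is at most $O(N)\cdot|T_{\text{main}}|$, so that the $O(1/N)$ prefactor yields an overall $o(1)$ correction.
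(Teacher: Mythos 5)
Your argument reproduces the paper's proof almost exactly: differentiate under the double contour integral (only the exponential carries the $x$-dependence, producing the factor $\frac{v-u}{(u-1)(v-1)}$), split the coupling denominator via $v-u=(\tau^2 v-u)+(1-\tau^2)v$ so the main term decouples, and identify the two resulting single contour integrals via the Laguerre contour representation. Your substitution $v = 1+w$ for the second integral is an equivalent alternative to the paper's $v=1/w$; both peel off $e^{-\bar z}$ and reduce to the same residue at the origin, so that step is fine. The one place where your write-up is actually more careful than the paper's is the control of $T_{\mathrm{err}}$: the paper asserts the $(1+o(1))$ pointwise on the contours and implicitly assumes this propagates through the integration, which is not automatic given the oscillatory nature of the Laguerre-type integrands. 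Your two suggested routes (geometric expansion of $v/(\tau^2 v-u)$ into shifted Laguerre products, or a direct maximum-modulus bound on optimally chosen contours) are sensible, but note that $\tau^{-2k}$ is geometric \emph{growth}, not damping --- what rescues the argument is that $k\le N$ and $\tau^{-2N}\to e^{2c^2}$, so the factor stays $O(1)$. To make either route fully rigorous one would need uniform control of the ratios $L_{n-k}^{\alpha}/L_{n}^{\alpha}$ over the whole range $0\le k\le n$, which a single Plancherel--Rotach estimate does not supply; this loose end is present in the paper's one-line justification as well, so your proposal is not at a disadvantage.
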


\begin{proof}
By a straightforward computation,
\begin{align*}
{\scriptstyle \frac \d {\d x} \Big[\frac{e^{z \frac{u}{u-1}  -\bar{z} \tfrac{v}{v-1} }}{ \tau^2v-u }\Big]}
= {\scriptstyle  \frac{e^{z \frac{u}{u-1}  -\bar{z} \tfrac{v}{v-1} }}{(u-1)(v-1)}}\cdot (1+o(1)).
\end{align*}

Differentiating under the integral sign, we obtain that
$$
\tfrac {\d U_N}{\d x}(z) = \oint_{\gamma_1}  \tfrac{(u-1)^{\nu-2} }{u^{N+\nu}}  e^{z \frac{u}{u-1}  }\, du \oint_{\gamma_2} \tfrac{v^{N+\nu}}{(v-1)^{\nu+2} }   e^{ -\bar{z} \frac{v}{v-1} }\, dv\cdot (1+o(1)).
$$

We now recall an integral representation for Laguerre polynomials, found for example in \cite[Section \RN{3}]{AB},
\begin{equation}\label{flax}
L_j^\nu(z)=\tfrac{1}{2\pi i} \oint_{\gamma} \tfrac{  e^{ -z\frac{s}{1-s}  } }{(1-s)^{\nu+1} s^{j+1} }\,ds,
\end{equation}
where the contour $\gamma$ encircles the origin $s = 0$ but not the essential singularity at $s=1$. The identity \eqref{flax} immediately implies
$$
\tfrac{1}{2\pi i} \oint_{\gamma_1}  \tfrac{(u-1)^{\nu-2} }{u^{N+\nu}}  e^{z \frac{u}{u-1}  }\, du =(-1)^\nu L_{ N+\nu-1 }^{ 1-\nu }(z).
$$
On the other hand the change of variable $v=1/w$ gives
\begin{align*}
\tfrac{1}{2\pi i}\oint_{\gamma_2} \tfrac{v^{N+\nu}}{(v-1)^{\nu+2} }   e^{ -\bar{z} \frac{v}{v-1} }\, dv&=e^{-\bar{z}} \tfrac{1}{2\pi i}\oint_{\gamma} \tfrac{ e^{ -\bar{z} \frac{w}{1-w} } }{(1-w)^{\nu+2} w^{N} }  \, dw =e^{-\bar{z}} L_{N-1}^{\nu+1}(\bar{z}).
\end{align*}
Combining all of the above, we obtain the lemma.
\end{proof}

Combining the identities \eqref{mappa} through \eqref{tre} and using \eqref{BA2}, we see that
\begin{align} \label{R AWishart}
\begin{split}
R_N(z)
&=\tfrac {a} {\Delta Q_N(p_*)} e^{-NQ_N(\zeta)}  ( \tfrac{a^2-b^2}{2a}N )^{\nu+1} G_N( \tfrac{a^2-b^2}{2b}N\,\zeta(z) )
\\
&=\tfrac {a} {\Delta Q_N(p_*)} e^{-NQ_N(\zeta)}  ( \tfrac{a^2-b^2}{2a}N )^{\nu+1} \tfrac{\tau^{2N} }{ 4\pi^2 } (\tfrac{a^2-b^2}{2b}N\,\zeta(z))^{-\nu}  e^{ \tfrac{a^2-b^2}{2b}N\,\bar{\zeta}(z) }  U_N( \tfrac{a^2-b^2}{2b}N\,\zeta(z) )
\\
&=\tfrac {aN} {\Delta Q_N(p_*)}  \tfrac{a^2-b^2}{2a}   \tfrac{\tau^{2N+\nu} }{ 4\pi^2 } \zeta(z)^{-\nu}  e^{-NQ_N(\zeta)} e^{ \tfrac{a^2-b^2}{2b}N\,\bar{\zeta}(z) }  U_N( \tfrac{a^2-b^2}{2b}N\,\zeta(z) ) \cdot (1+o(1))
\\
&= \tfrac{c\sqrt{p_*}}{ \sqrt{2} \pi^{3/2} }
e^{-2y^2-2c\sqrt{p_*} y i-2c^2 }  U_N( \tfrac{a^2-b^2}{2b}N\,\zeta(z) ) \cdot (1+o(1)).
\end{split}
\end{align}
For the last equality, we also used $\tau^{2N}=e^{-2c^2} (1+o(1))$ and \eqref{ett}.

Therefore it remains to show that
$\tfrac \d {\d x} U_N( \tfrac{a^2-b^2}{2b}N\,\zeta(x+iy) )\to 0$ as $N\to\infty$, where $\zeta(z)$ is the map \eqref{mappa}.

However, by Lemma \ref{U_N pax} (and since $\tfrac {a^2-b^2}{2b}=1+O(N^{-1})$), we have
\begin{equation}\label{spil}\begin{split}
\tfrac \d {\d x}  U_N&( \tfrac{a^2-b^2}{2b}N\,\zeta(z) ) =C
e^{-\frac{a^2-b^2}{2b}N \bar{\zeta}}  \cr
&\times L_{ N+\nu-1 }^{ 1-\nu }(\tfrac{a^2-b^2}{2b}N\,\zeta )\cdot L_{N-1}^{\nu+1}(\tfrac{a^2-b^2}{2b}N\,\bar{\zeta} )\cdot (1+o(1)),\cr
\end{split}
\end{equation}
where the constant $C=(2\pi)^2(-1)^{\nu+1}2c\sqrt{p_*}$ is independent of $N$. By \eqref{mappa}, we have for $p_*\in[\alpha,4-\alpha]$
$$
\tfrac{a^2-b^2}{2b}N \zeta =Np_*+\tfrac{c^2}{2}p_*+2c\sqrt{p_*}\,z+O(N^{-1}).
$$

When $p_*\in[\alpha,4-\alpha]$ and $z$ remains in a compact subset of $\C$, we now apply the Plancherel-Rotach asymptotic in Lemma \ref{BAAL}
to conclude that as $N\to\infty$
\begin{align*}
L_{ N+\nu-1 }^{ 1-\nu }(\tfrac{a^2-b^2}{2b}N\,\zeta(z) ) L_{N-1}^{\nu+1}(\tfrac{a^2-b^2}{2b}N\,\bar{\zeta}(z) ) = \tfrac{1}{N}\, e^{\frac{a^2-b^2}{2b}N \frac{\zeta(z)+ \bar{\zeta}(z)}{2}  }\cdot O(1),
\end{align*}
which together with \eqref{spil} shows that
\begin{align*}\tfrac \d {\d x} U_N( \tfrac{a^2-b^2}{2b}N\,\zeta(z) )&=C  \tfrac{1}{N}\, e^{\frac{a^2-b^2}{2b}N \frac{\zeta(z)- \bar{\zeta}(z)}{2}  }\cdot O(1)\\
&= C\tfrac{1}{N} e^{2c\sqrt{p_*} i\im z} \cdot O(1)=  O(\tfrac 1 N).
\end{align*}

We have shown that each subsequential limit $R=\lim R_{N_k}$ obeys $\tfrac {\d R}{\d x}=0$. $\qed$

\section{Edge scaling limits} \label{ESL} In this section we study edge scaling limits for the almost-Hermitian GUE and LUE, and we prove Theorem \ref{Thm_Bender} and Theorem \ref{Thm_Osborn}, respectively.

\subsection{Edge scaling limit for AGUE} \label{Subsection_Bender} We now prove Theorem \ref{Thm_Bender}.
We shall now give a somewhat simplified proof (with respect to earlier proofs in \cite{AB,B}) by using the summation formula in \eqref{RN pay}.

Consider the modified ellipse potential
$Q_N=\tfrac 1 2\xi^2+\tfrac 1 2\tfrac {N^{\frac 1 3}}{c^2}\eta^2.$ Recall
that the right end-point $p_N$ of the droplet $S_{Q_N}$ is
\begin{equation}\label{pna}p_N=2(1+\tfrac {c^2}{N^{\frac 1 3}})^{-\frac 1 2}=2-\tfrac {c^2}{N^{\frac 1 3}}+\tfrac 3 4\tfrac {c^4}{N^{\frac 2 3}}+O(\tfrac 1 N).\end{equation}

We rescale about $p_N$ by setting
\begin{equation*}R_N(z)=\tfrac 1 {N\Delta Q_N} \bfR_N(\zeta),\qquad z={\scriptstyle \sqrt{N\Delta Q_N}}\cdot(\zeta-p_N),\end{equation*}
i.e.,
\begin{equation}\label{resc2}\zeta=\zeta(z)=p_N+\tfrac z {\sqrt{N\Delta Q_N}}=p_N+\tfrac {2c}{N^{\frac 23}}z+o(N^{-1}).\end{equation}

Making use of the formula \eqref{Ge1pt} we see that
\begin{equation*}
R_N(z)=
e^{-NQ_N(\zeta)} \tfrac{2\sqrt{ab}}{a+b}\sum_{j=0}^{N-1} \tfrac{1}{j!} (\tfrac \tau 2)^{j} |  H_j( \sqrt{ \tfrac{Nab}{b-a} }\,\zeta ) |^2,
\end{equation*}
where
\begin{equation}\label{parameters edge}
	a=\tfrac 1 2,\quad b=\tfrac 1 2\tfrac {N^{\frac 1 3}}{c^2},\quad \tau=\tfrac {b-a}{b+a}=1-\tfrac {2c^2}{N^{\frac 1 3}}+O(N^{-\frac 2 3}).
\end{equation}

By \eqref{resc2} and \eqref{pna} and an elementary computation,
\begin{equation}\label{zeta edge}
\sqrt{ \tfrac{Nab}{b-a} }\,\zeta=\sqrt{2N}+\tfrac{c^4+2cz}{\sqrt{2} N^{1/6} }  +O(N^{-\frac56}).
\end{equation}
Write $z=x+iy$. Then again by \eqref{resc2} and \eqref{pna}, we have
\begin{align} \label{Q_N edge}
	\begin{split}
		NQ_N(\zeta) &=\tfrac{N}{2}(p_N+\tfrac{x}{\sqrt{N \Delta Q_N}})^2+\tfrac 1 2\tfrac {N^{\frac 1 3}}{c^2} \tfrac{y^2}{\Delta Q_N } +O(N^{-\frac 1 3})
		\\
		&= 2N-2c^2 N^{2/3} +2( c^4 + 2 c \,x) N^{\frac 1 3}-2( c^6  -  y^2+ 2 c^3 x)+O(N^{-\frac 1 3}).
	\end{split}
\end{align}

Inserting \eqref{parameters edge} into the identity \eqref{RN pay}, we have
\begin{equation} \label{RN pax}
	\tfrac {\d R_N}{\d x}(z)= - \tfrac{4\sqrt{2}\,e^{-NQ_N(\zeta)}}{\sqrt{N^{2/3}c^{-4}-1}}  \tfrac{( \tau/2 )^N} {(N-1)!}
	\cdot  \re\{  H_{N-1}( \sqrt{ \tfrac{Nab}{b-a}  } \zeta )  H_N ( \sqrt{ \tfrac{Nab}{b-a}  } \bar{\zeta} ) \}.
\end{equation}
Note that by \eqref{Q_N edge}, we have
\begin{align} \label{RN pax 1}
	\tfrac{e^{-NQ_N(\zeta)} \tau^N}{\sqrt{N^{2/3}c^{-4}-1}} =\frac{c^2} {N^{1/3}} e^{-2N -2 ( c^4 + 2 c \,x) N^{1/3}    } e^{  \frac43 c^6-2y^2+4c^3x   } \cdot (1+o(1)),
\end{align}
where we used that
$\log \tau^N = -2 c^2 N^{\frac23} - \tfrac23 c^6+O(N^{-\frac23}).$

We now invoke the critical Plancherel-Rotach estimate in \cite[Theorem 8.22.9 (c)]{S}:
\begin{align*}
	\begin{split}
		H_N(\sqrt{2N}+\tfrac{z}{\sqrt{2} N^{1/6} }  )&= (2\pi)^{\frac14} 2^{ \frac{N}{2} } \sqrt{N!} N^{-\frac{1}{12}  }
	 e^{ \frac12 (\sqrt{2N}+\frac{z}{\sqrt{2}N^{1/6} })^2 } (\Ai z) \cdot (1+o(1)).
	\end{split}
\end{align*}
Applying \eqref{zeta edge}, we now obtain
\begin{align} \label{RN pax 2}
	\begin{split}
		&\quad \re\{  H_{N-1}( \sqrt{ \tfrac{Nab}{b-a}  } \zeta )  H_N ( \sqrt{ \tfrac{Nab}{b-a}  } \bar{\zeta} ) \}
		\\
		&=\re\{  H_{N-1}(\sqrt{2N}+\tfrac{c^4+2cz}{\sqrt{2}  N^{1/6} } )  H_N ( \sqrt{2N}+\tfrac{c^4+2c\bar{z} }{\sqrt{2}  N^{1/6} } ) \}\cdot (1+o(1))
		\\
		& =   \sqrt{\pi} \, \tfrac{2^{N} N!}{ N^{\frac23} } e^{  2 N + 2( c^4 + 2 c \, x) N^{\frac 13} }  |\Ai(2cz+c^4)|^2 \cdot (1+o(1)).
	\end{split}
\end{align}
For the last equality we used that
$$
\re \{( \sqrt{2N}+\tfrac{2c z+c^4}{\sqrt{2}N^{1/6}} )^2\} = 2 N + 2( c^4 + 2 c \, x) N^{\frac 13}+O(N^{-\frac 16}).
$$
Combining \eqref{RN pax}, \eqref{RN pax 1} and \eqref{RN pax 2}, we obtain that
\begin{equation} \label{RN pax cr}
	\tfrac {\d R_N}{\d x}(z)= -\sqrt{2\pi}\,4c^2 e^{ \frac{4}{3}c^6-2 y^2+4c^3 x } |\textup{Ai}(2cz+c^4)|^2\cdot (1+o(1)),
\end{equation}
where $o(1)$-term holds uniformly on compact subsets of $\C$.

Let $R=\lim R_{N_k}$ be a limiting $1$-point function. We shall prove that for all $y\in\R$,
\begin{equation}\label{limita}\lim\limits_{x\to +\infty} R(x+iy)= 0.\end{equation}
To verify this, we note that the modified potential $Q_N$ satisfies $\Delta Q_N\asymp N^{\frac 1 3}$. The
modified counterpart to Lemma \ref{spike3} is thus that $\bfR_N(\zeta)\le CN^{\frac 4 3}e^{-N(Q_N-\check{Q}_N)(\zeta)}$.
Combining this with the estimate \eqref{beckom} with $M_N\asymp N^{\frac 1 3}$ in the present case, we obtain \eqref{limita}.

Using the asymptotic formula \eqref{Airy asym}, we obtain
\begin{align*}
e^{  4c^3x } |\Ai (2cz+c^4)|^2 &=- \int_0^\infty \tfrac \d {\d u} \{e^{  4c^3(u+x) }  |\Ai ( 2c(z+u)+c^4 )|^2\}   \, du
\\
&=-\tfrac \d {\d x} \int_0^\infty e^{  4c^3(u+x) }  |\Ai ( 2c(z+u)+c^4 )|^2 \, du.
\end{align*}
Therefore, using \eqref{RN pax cr} and \eqref{limita}, an integration with respect to $x$ gives that
$$
R(x+iy)=\sqrt{2\pi} \, 4c^2 \,  e^{ \frac{4}{3}c^6-2y^2 }
\int_0^\infty e^{  4c^3(u+x) }  |\textup{Ai}( 2c(x+iy+u)+c^4 )|^2   \, du.
$$

Our proof of Theorem \ref{Thm_Bender} is complete. $\qed$

\subsection{Edge scaling limit for ALUE} \label{EALUE} We now prove Theorem \ref{Thm_Osborn} and
recall that
$$Q_N(\zeta)=-\tfrac 1 N\log(K_\nu(\tfrac {N^2|\zeta|}{c^2})|\zeta|^\nu)-(\tfrac N {c^2}-1)\re\zeta.$$
Also let $V=\lim Q_N$ be the Marchenko-Pastur potential in \eqref{mppot}.

We now rescale about the origin.
It follows from the formula \eqref{obt} that
the rescaled $1$-point function
$$R_N(z)=(\tfrac c N)^4\bfR_N(\zeta),\qquad z=(\tfrac N c)^2\zeta$$ is given by
\begin{equation} \label{RO c0}
R_N(z)=e^{-NQ_N(\zeta)}\,(\tfrac c N)^2  ( \tfrac{a^2-b^2}{2a}N )^{\nu+1}\cdot \sum_{j=0}^{N-1} \tfrac{\tau^{2j}j! }{(j+\nu)!} |L_j^\nu(\tfrac{a^2-b^2}{2b}N \zeta )|^2 ,
\end{equation}
where we remind that
$$
a=\tfrac 1 {c^2}N, \quad  b=\tfrac 1 {c^2}N-1, \quad \tau=\tfrac ba=1-\tfrac {c^2}N.
$$

Note that since
$
e^{-NQ_N(\zeta)}=K_\nu(|z|)\cdot (\tfrac cN)^{2\nu}\,|z|^\nu e^{ (1-\frac{c^2}{N})\,\re z },
$
we have
\begin{equation}\label{RO c1}
e^{-NQ_N(\zeta)}\,(\tfrac c N)^2  \left( \tfrac{a^2-b^2}{2a}N \right)^{\nu+1}=K_\nu(|z|)\,|z|^\nu\, e^{\, \re z }\cdot (\tfrac {c^2}N)^{\nu+1}  \cdot (1+o(1)).
\end{equation}

For a fixed parameter $t \in (0,1)$, we will now estimate the contribution of terms
$$|L_j^\nu(\tfrac{a^2-b^2}{2b}N \zeta )|^2=|L_j^\nu(\tfrac{c^2}{N} z )|^2 (1+O(N^{-2}))$$
as $j,N\to\infty$ with $\tfrac jN=t$.
For this we note that
$$
\log \tau^{2j}=2j \log ( 1-\tfrac{c^2}{N} ) = -2c^2t \cdot (1+O(N^{-1})), \qquad \tfrac{j!}{(j+\nu)!}=j^{-\nu}(1+O(N^{-1})).
$$
Therefore we obtain that, on replacing $\tfrac jN$ by $t$,
\begin{equation} \label{RO c2}
\sum_{j=0}^{N-1} \tfrac{\tau^{2j}j! }{(j+\nu)!} |L_j^\nu(\tfrac{a^2-b^2}{2b}N \zeta )|^2=   (1+o(1))  \sum_{j=0}^{N-1} \tfrac{e^{-2c^2t}}{j^\nu}   |L_j^\nu( \tfrac{c^2 t}{j} z)|^2.
\end{equation}
Inserting \eqref{RO c1},\eqref{RO c2} into \eqref{RO c0}, we have
\begin{equation}
R_N(z)=  (1+o(1))  K_\nu(|z|)\,|z|^\nu\, e^{ \re z }\cdot \tfrac{c^2}{N} \sum_{j=0}^{N-1} (c^2t)^\nu \tfrac{e^{-2c^2t}}{j^{2\nu}}   |L_j^\nu( \tfrac{c^2 t}{j} z)|^2.
\end{equation}

We now apply the formula
\begin{equation*}
	\lim_{j\to\infty} \tfrac 1 {j^\nu} L_j^\nu(\tfrac z j)
	=z^{-\tfrac \nu2}J_\nu (2\sqrt{z}),
\end{equation*}
found in \cite[Theorem 8.1.3]{S}.
This leads to
\begin{equation}
	R_N(z)=   (1+o(1))  K_\nu(|z|)\, e^{\, \re z }\cdot \tfrac{c^2}{N} \sum_{j=0}^{N-1} e^{-2c^2t}  |J_\nu(2c\sqrt{t} \, z^{1/2})|^2.
\end{equation}
Recognizing the right hand side a Riemann sum, we conclude that
\begin{align*}
	R_N(z)&=   (1+o(1)) \, K_\nu(|z|)\, e^{\, \re z }\cdot c^2 \int_0^1  e^{-2c^2t}  |J_\nu(2c\sqrt{t} \, z^{\frac 12})|^2\,dt
	\\
	&=   (1+o(1)) \, \tfrac12 K_\nu(|z|)\, e^{\, \re z } \int_0^{2c}  s\, e^{-s^2/2}  |J_\nu(s \, z^{\frac 12})|^2\,ds,
\end{align*}
which completes our proof of Theorem \ref{Thm_Osborn}. $\qed$

\begin{rmk*} Consider the $1$-point function $R$ in Theorem \ref{Thm_Osborn}, namely
\begin{equation}
R(z)=\tfrac12 K_\nu(|z|)\, e^{\, \re z } \int_0^{\,2c}  s\, e^{-\frac 1 2 s^2}  |J_\nu(s \, z^{\frac 12})|^2\,ds.
\end{equation}

Arguing as in \cite[Section 3]{AKS}, one can prove that $R$ satisfies a Ward equation of the form
\begin{equation}\label{Ward Bessel}
	\dbar C= R-\Lap Q_0-\Lap \log R,\qquad Q_0(z)=-\log(K_\nu(|z|)\cdot|z|^\nu).
\end{equation}

Following the method in \cite{AKS}, it can be shown that in the limit as $c\to\infty$, i.e., the function
$R_{(c=\infty)}(z)=\tfrac 1 2 K_\nu(|z|)I_\nu(|z|)$
is the unique radially symmetric solution to \eqref{Ward Bessel}, which also satisfies the mass-one condition $\int_\C B(z,w)\,dA(w)=1$. We skip giving a
detailed proof, but we remark that in the special cases when $\nu \in \{ \pm \tfrac{1}{2}\}$, this result is shown in \cite{AKS}, and that the general case can be treated in a similar way.
\end{rmk*}

\section{Chiral ensembles} \label{QCD}

In this section, we compare our results for the ALUE to related ensembles which arise for example in connection with quantum chromodynamics  with a chemical potential (QCD).

\subsection{Ensembles with $d$-interaction}
Let us fix an integer $d\ge 1$ which we call the interaction  parameter.
We associate to a configuration $\{\zeta_j\}_1^N$ the \textit{$d$-energy} with respect to an external potential $Q_N$ as
\begin{equation}\label{d-Ham}\Ham_{N,d}=\sum_{j\ne k}\log \tfrac 1 {|\zeta_j^d-\zeta_k^d|}+N\sum_{j=1}^NQ_N(\zeta_j).\end{equation}

Given an inverse temperature $\beta>0$, we define a corresponding Boltzmann-Gibbs measure by
\begin{equation}\label{d-BG}d\bfP_{N,d}^\beta=\tfrac 1 {Z_{N,d}^\beta}\, e^{-\beta H_{N,d}}\, dA_N.\end{equation}

Let $\{\zeta_j\}_1^N$ denote a random sample with respect to \eqref{d-BG} where $\beta=1$. It is easy to see that
the process is determinantal, and that a correlation kernel $\bfK_N$ may be taken as the reproducing kernel for the subspace $\calW_{N,d}$ of $L^2=L^2(dA)$
spanned by the weighted monomials
$$\zeta^{jd}\cdot e^{-\frac 1 2 NQ_N(\zeta)},\qquad j=0,\ldots,N-1.$$
Below we write
$\bfR_N(\zeta)=\bfK_N(\zeta,\zeta)$
for the 1-point function of $\{\zeta_j\}_1^N$.

Note that due to the form of the logarithmic interaction in \eqref{d-Ham} a particle $\zeta_j$ will simultaneously repel $d$ symmetrically distributed positions
$\zeta_j \cdot e^{\frac {2\pi ik}d}$ for $k=0,1,\ldots,d-1.$

\begin{figure}[h!]
	\begin{subfigure}[h]{0.32\textwidth}
		\begin{center}
			\includegraphics[width=1.69in,height=1.36in]{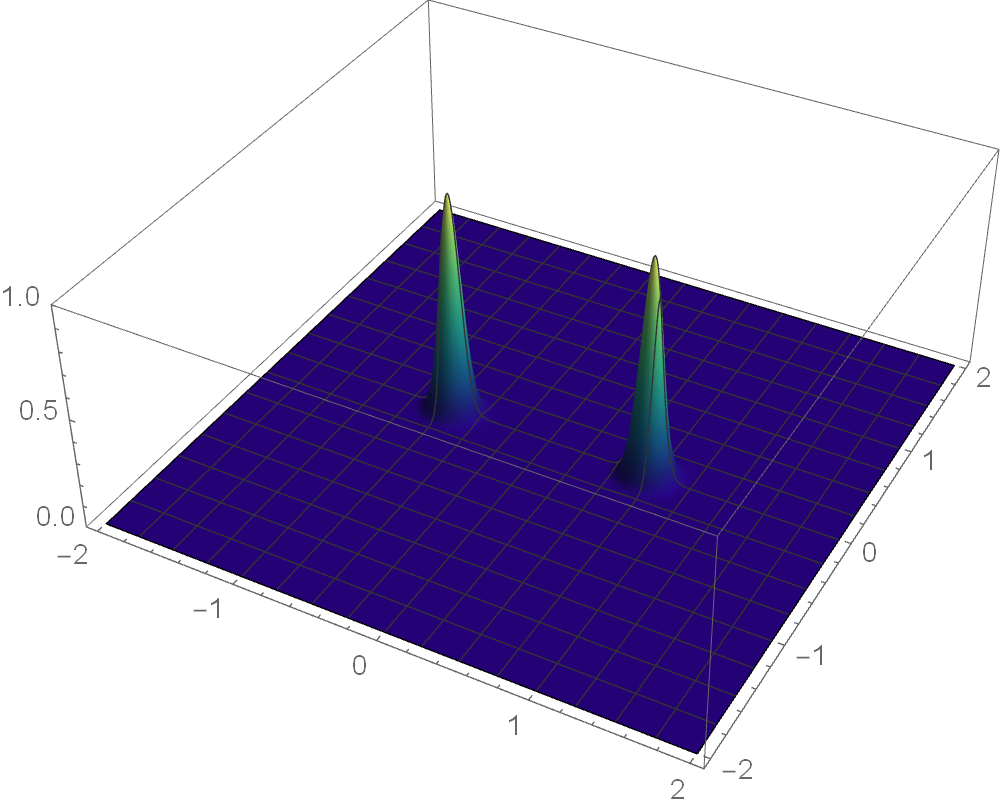}	
		\end{center}
		\caption{$\zeta=1/\sqrt{2} $}
	\end{subfigure}
	\begin{subfigure}[h]{0.32\textwidth}
		\begin{center}
			\includegraphics[width=1.69in,height=1.36in]{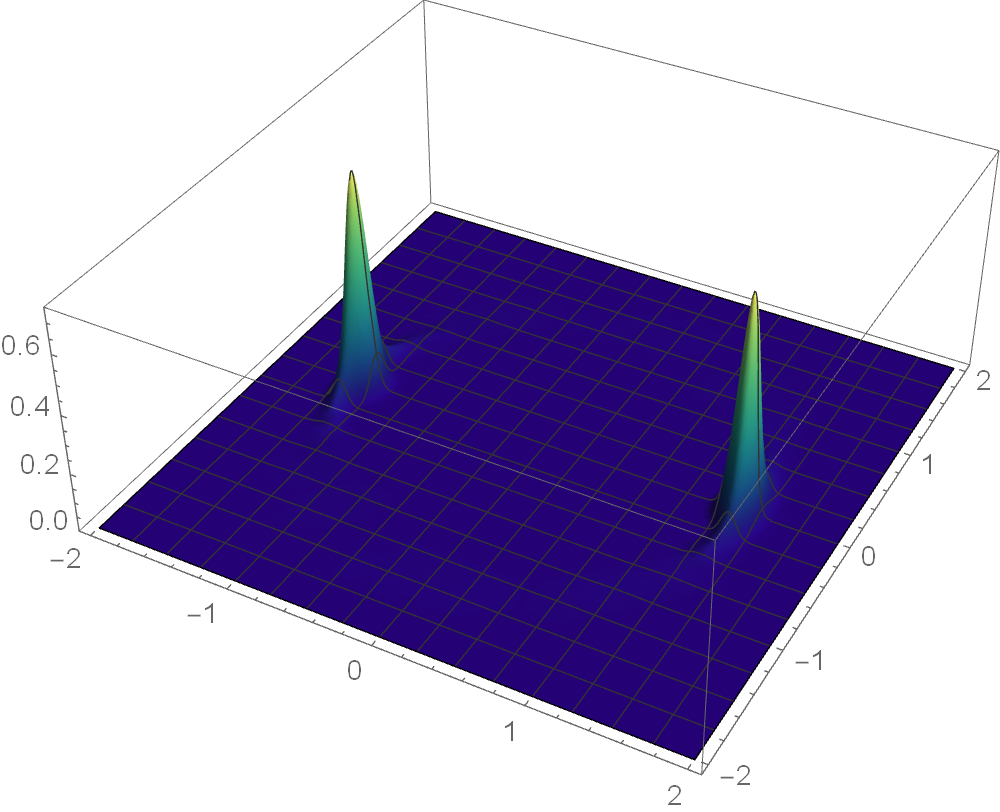}
		\end{center}
		\caption{$\zeta=\sqrt{2}$}
	\end{subfigure}	
	\begin{subfigure}[h]{0.32\textwidth}
		\begin{center}
			\includegraphics[width=1.69in,height=1.36in]{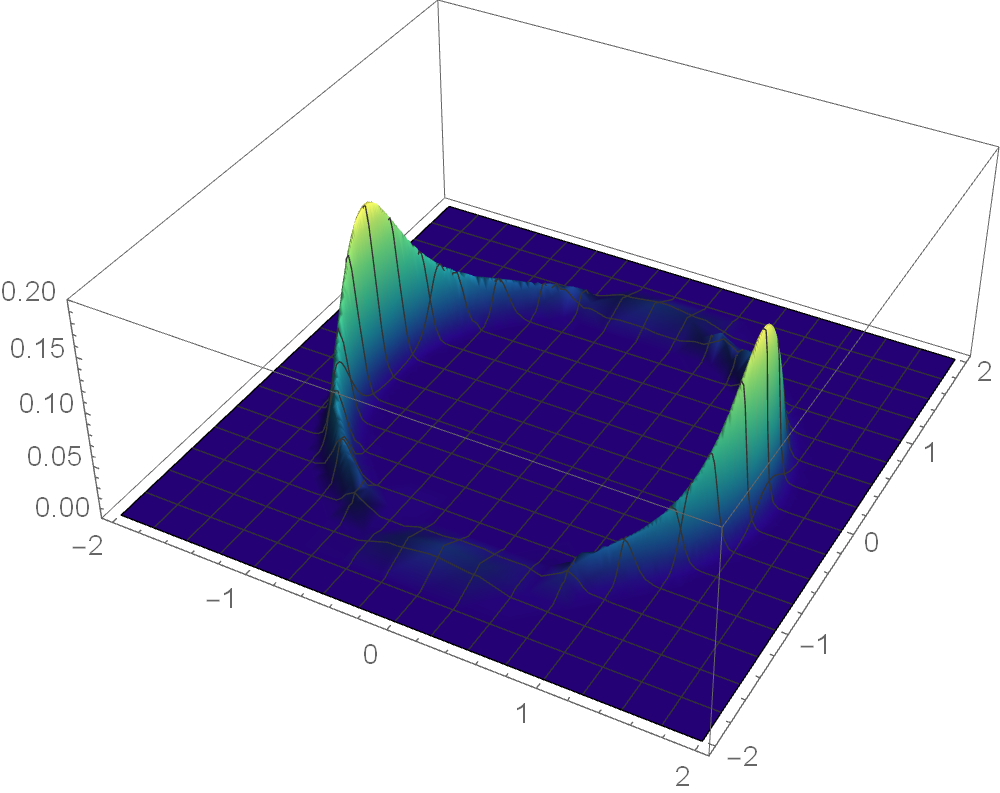}
		\end{center}
		\caption{$\zeta=2$}
	\end{subfigure}	
	\caption{The Berezin density $
\eta\mapsto \bfB_N(\zeta,\eta)$ where $d=2$, $Q(\zeta)=|\zeta|^2$,
$N=100.$ }
	\label{Fig_Berezin}
\end{figure}

\begin{eg*} (``$d$-Ginibre ensemble'') Suppose that $Q_N(\zeta)=|\zeta|^2$ for all $N$. It is easy to see that the process $\{\zeta_j\}_1^N$ is determinantal with a correlation kernel of the form
$$\bfK_N(\zeta,\eta)=N\sum_{j=0}^{N-1}\tfrac {(N\zeta\bar{\eta})^{dj}}{(dj)!} e^{-\frac 1 2 N|\zeta|^2-\frac 1 2 N|\eta|^2}.$$

To illustrate the $\mathbb{Z}_d$-symmetry of this ensemble, we consider the Berezin kernel
$\bfB_N(\zeta,\eta)=\tfrac {|\bfK_N(\zeta,\eta)|^2}{\bfK_N(\zeta,\zeta)}.$

This kernel measures the repelling effect of insertion of a point charge at $\zeta$, more precisely we have
$\bfB_N(\zeta,\eta)=\bfR_{N}(\eta)-\bfR_{N-1}^{(\zeta)}(\eta),$
where $\bfR_{N-1}^{(\zeta)}$ is the $1$-point function for the $(N-1)$-point process which is ``$\{\zeta_j\}_1^N$ conditioned on the event $\zeta\in \{\zeta_j\}_1^N$'', see \cite[Subsection 7.6]{AHM}.

Figure \ref{Fig_Berezin} illustrates the effect of insertion of a charge at various points $\zeta$ in the chiral case $d=2$.

\end{eg*}

\subsection{Chiral almost-Hermitian LUE-type ensembles}
Now fix an integer $d\ge 1$ and consider the $d$-interacting ensemble $\{\zeta_j\}_1^N$ associated with the potential
\begin{equation} \label{Q NWishart d}
	Q_{N}(\zeta)=Q_{N,d}(\zeta)=\tfrac{1}{N} \log \tfrac{1}{  K_\nu( N^2c^{-2}|\zeta|^d) |\zeta|^{d (\nu+2)-2 }   }-(\tfrac N {c^{2}}-1)\re \zeta^d.
\end{equation}

I.e., we let $\{\zeta_j\}_1^N$ be random sample from the measure \eqref{d-BG}. Note that when $d=1$, we recover the eigenvalue statistics of the almost-Hermitian LUE discussed earlier. For $d=2$ we recover the eigenvalue statistics of Dirac matrices, which is relevant for QCD theory. See \cite{O} as well as \cite[Section 15.11]{F} with references.

In general,
using the evident relation between $Q_{N,d}(\zeta)$ and $Q_{N,1}(\zeta^d)$,
it is not hard to see that the system $\{\zeta_j\}_1^N$ will tend to occupy the $d$-droplet $S_d$ where $S_d=\{\zeta\, ;\, \zeta^d\in S\}$, $S$ being the droplet associated with $d=1$, given in \eqref{wishdrop}. Figure \ref{Fig_d123} shows such droplets for $d\in\{1,2,3\}$.

\begin{figure}[h!]
	\begin{subfigure}[h]{0.32\textwidth}
		\begin{center}
			\includegraphics[width=1.69in,height=1.12in]{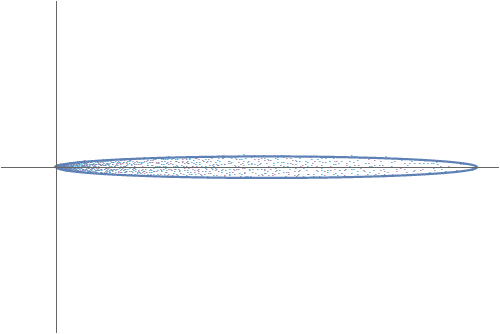}	
		\end{center}
		\caption{$S_1$}
	\end{subfigure}
	\begin{subfigure}[h]{0.32\textwidth}
		\begin{center}
			\includegraphics[width=1.69in,height=1.12in]{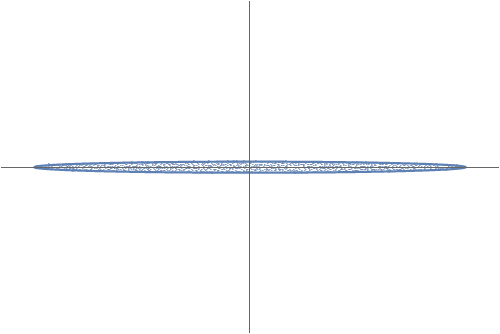}	
		\end{center}
		\caption{$S_2$}
	\end{subfigure}
	\begin{subfigure}[h]{0.32\textwidth}
		\begin{center}
			\includegraphics[width=1.69in,height=1.12in]{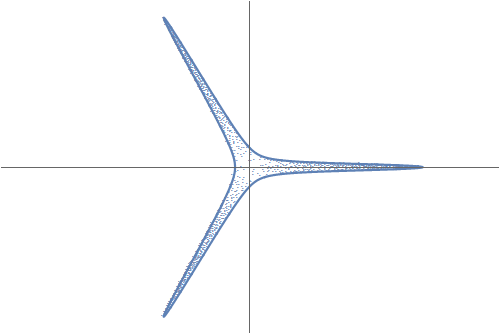}	
		\end{center}
		\caption{$S_3$}
	\end{subfigure}
	\caption{$d$-droplets for a few values of $d$.} \label{Fig_d123}
\end{figure}

We rescale the process $\{\zeta_j\}_1^N$ about  the origin via
$$z_j=(\tfrac N c)^{\frac 2 d}\zeta_j,\qquad j=1,\ldots,N$$
and denote by $R_N(\zeta)=(\tfrac c N)^{\frac 4 d}\bfR_N(\zeta)$ the $1$-point function of the system $\{z_j\}_1^N$, where $z=(\tfrac N c)^{\frac 2 d}\zeta$.

(The rescaling order $N^{\frac 2d}$ is chosen according to the mean sample spacing, which follows the density $d^{-1}\Delta Q_N$ near the origin.)

With a slight modification of our proof of Theorem~\ref{Thm_Osborn} (i.e., by a similar argument as used in \cite{O}) one can obtain the following result.

\begin{thm}
	
	We have $R_N \to R$ uniformly on compact subsets of $\C$, where
\begin{equation} \label{RO d}
	R(z)=\tfrac{d}{2} \, |z|^{2d-2}\, K_\nu( |z|^d)   e^{\,\re z^d}\int_{0}^{2c} s\, e^{\,-s^2/2}  |J_\nu(s \, z^{d/2})|^2 \, ds.
\end{equation}
\end{thm}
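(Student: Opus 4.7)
The plan is to reduce the $d$-interacting ensemble to the $d=1$ case via the change of variables $w=\zeta^d$, and then to invoke Theorem \ref{Thm_Osborn} directly. The key observation is that under the substitution $w=\zeta^d$, the Jacobian factor combines with the ``$d$-dependent'' piece $|\zeta|^{d(\nu+2)-2}$ of the potential in just the right way to recover the $d=1$ potential $Q_{N,1}$.

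First I would establish the following isometry. Let $g(w)$ be a polynomial of degree at most $N-1$. Using polar coordinates $\zeta = re^{i\theta}$, substituting $u=r^d$ and $\phi=d\theta$, and exploiting the fact that $|g(\zeta^d)|^2$ is invariant under $\theta\mapsto\theta+2\pi/d$, a direct computation yields
\begin{equation*}
\int_\C |g(\zeta^d)|^2\, e^{-N Q_{N,d}(\zeta)}\, dA(\zeta)=\tfrac 1 d \int_\C |g(w)|^2\, e^{-N Q_{N,1}(w)}\, dA(w).
\end{equation*}
(The exponent of $|\zeta|$ works out as $d(\nu+2)-2+(2-d)=d\nu+d$, which after $u=r^d$ produces exactly $u^{\nu+1}$, matching $Q_{N,1}$.) It follows that the map $U:\calW_{N,1}\to \calW_{N,d}$ defined by $U[q(w)\,e^{-NQ_{N,1}(w)/2}](\zeta)=\sqrt{d}\,q(\zeta^d)\,e^{-NQ_{N,d}(\zeta)/2}$ is a unitary isomorphism, since its image spans precisely the weighted monomials $\zeta^{jd}\,e^{-NQ_{N,d}(\zeta)/2}$ that generate $\calW_{N,d}$.

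Next I would compute the two potentials: a direct comparison shows
\begin{equation*}
N\bigl[Q_{N,1}(\zeta^d)-Q_{N,d}(\zeta)\bigr]=(2d-2)\log|\zeta|,
\end{equation*}
so that transferring an orthonormal basis via $U$ and summing squared moduli gives the identity
\begin{equation*}
\bfR_{N,d}(\zeta)=d\cdot |\zeta|^{2d-2}\cdot \bfR_{N,1}(\zeta^d).
\end{equation*}
After the rescaling $z=(N/c)^{2/d}\zeta$ (so that $\zeta^d=(c/N)^2 z^d$ and $|\zeta|^{2d-2}=(c/N)^{4-4/d}|z|^{2d-2}$), combining the powers of $c/N$ produces
\begin{equation*}
R_N(z)=(\tfrac c N)^{4/d}\bfR_{N,d}(\zeta)=d\,|z|^{2d-2}\cdot\bigl[(\tfrac c N)^4\bfR_{N,1}((\tfrac c N)^2 z^d)\bigr],
\end{equation*}
and the bracketed quantity is precisely the ALUE rescaled $1$-point function from Section \ref{EALUE} evaluated at the point $z^d$.

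Applying Theorem \ref{Thm_Osborn} to the bracketed expression then yields
\begin{equation*}
R_N(z)\ \to\ d\,|z|^{2d-2}\cdot \tfrac 12 K_\nu(|z^d|)\,e^{\re z^d}\int_0^{2c} s\,e^{-s^2/2}\,|J_\nu(s(z^d)^{1/2})|^2\,ds,
\end{equation*}
which simplifies to the claimed formula \eqref{RO d} upon writing $(z^d)^{1/2}=z^{d/2}$ and $|z^d|=|z|^d$. The main obstacle I anticipate is the assertion that the convergence is uniform on compact subsets of all of $\C$, whereas Theorem \ref{Thm_Osborn} only gives uniform convergence on compact subsets of $\C\setminus(-\infty,0]$: under $z\mapsto z^d$ the cut pulls back to $d$ rays in the $z$-plane. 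This is harmless for even $d$ (where $z^{d/2}$ is a genuine polynomial) but for odd $d$ one must either observe that $|J_\nu(sz^{d/2})|^2$ is single-valued on $\C\setminus\{0\}$ through its power-series representation and extend continuously, or carry out the Plancherel-Rotach/Bessel asymptotics along these rays separately; the remaining routine algebra (the polar-coordinate identity and the Jacobian bookkeeping) is straightforward.
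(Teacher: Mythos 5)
Your proof is correct in its essentials and takes a genuinely different route from the paper. The paper offers only a one-line sketch, pointing to a direct repetition of the asymptotic computation in the proof of Theorem~\ref{Thm_Osborn} (the Osborn-style argument with Laguerre polynomials and the Mehler--Heine limit). You instead exploit the algebraic structure: the change of variable $w=\zeta^d$ produces an explicit unitary isomorphism $U:\calW_{N,1}\to\calW_{N,d}$, from which the kernel identity $\bfR_{N,d}(\zeta)=d\,|\zeta|^{2d-2}\,\bfR_{N,1}(\zeta^d)$ falls out, and the $d$-case then reduces to Theorem~\ref{Thm_Osborn} by inspection of the rescaling. Your Jacobian and potential bookkeeping ($N[Q_{N,1}(\zeta^d)-Q_{N,d}(\zeta)]=(2d-2)\log|\zeta|$, and $r^{d(\nu+2)-2}\cdot r^{2-d}=u^{\nu+1}$ under $u=r^d$) are both correct. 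The reduction has a real advantage: it proves the identity at finite $N$ and isolates exactly where the $|z|^{2d-2}$ prefactor comes from, rather than re-deriving it asymptotically; it also shows the result for all $d$ simultaneously once the $d=1$ case is known.

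On the domain issue you flag: since Theorem~\ref{Thm_Osborn} is stated on $\C\setminus(-\infty,0]$, your reduction a priori yields convergence only off the $d$ rays $\arg z=(2k+1)\pi/d$. This gap is real but fillable without extra asymptotics, and you essentially identify the fix. The quantity $|J_\nu(s\,w^{1/2})|^2$ depends only on $w^2$ and $|w|$ (write $J_\nu(\zeta)=(\zeta/2)^\nu\,g(\zeta^2)$ with $g$ entire), so the limit function in \eqref{RO} is single-valued and continuous on $\C\setminus\{0\}$; and the ingredients in the paper's proof of Theorem~\ref{Thm_Osborn} --- the exact formula $e^{-NQ_{N,1}(\zeta)}=K_\nu(|w|)(\tfrac cN)^{2\nu}|w|^\nu e^{(1-c^2/N)\re w}$ and Szeg\H{o}'s limit $j^{-\nu}L_j^\nu(z/j)\to z^{-\nu/2}J_\nu(2\sqrt z)$, which holds uniformly on compact subsets of $\C$ since $z^{-\nu/2}J_\nu(2\sqrt z)$ is entire --- are actually valid on compact subsets of $\C\setminus\{0\}$, not just away from the negative axis. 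So Theorem~\ref{Thm_Osborn}'s conclusion in fact extends to $\C\setminus\{0\}$, the exclusion in its statement being a matter of fixing a branch in the notation rather than a genuine obstruction. Pulling back through $z\mapsto z^d$ then gives convergence on compact subsets of $\C\setminus\{0\}$, and the factor $|z|^{2d-2}$ (for $d\ge 1$) together with the boundedness of $K_\nu(|z|^d)|J_\nu(sz^{d/2})|^2$ near $z=0$ lets you close up at the origin. With that observation made explicit, your argument is complete and, in my view, cleaner than the route the paper gestures at.
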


 We now show that for $d=2$ and $\nu\in\{\pm\tfrac 1 2\}$, the process in \eqref{RO d} interpolates between known point fields,
namely between (1-dimensional) anti-symmetric sine-processes and (2-dimensional) Mittag-Leffler fields.

To this end, we first note that passing to the limit as $c \to \infty$ in \eqref{RO d}, we obtain the limiting 1-point function
\begin{equation} \label{sjoman}
	R_{(c=\infty)} (z)=\tfrac{d}{2}\,|z|^{2d-2} \,K_\nu( |z|^d)  I_\nu( |z|^d).
\end{equation}	

To obtain a 1-dimensional perspective, we rescale once more. Given the limiting point-field $\{z_j\}_1^\infty$, we pass
to the system $\{\tilde{z}_j\}_{j=1}^\infty$ where $\tilde{z}_j=c^2 z_j$. The corresponding 1-point functions are related via
 $\tilde{R}(\tilde{z})=c^{-4} R(c^{-2} \tilde{z})$.

In the limit as $c\to 0$, the functions $\tilde{R}_{(c)}$ converges to a limit $\tilde{R}_{(c=0)}$ which vanishes on $\C \setminus ( \cup_{k=0}^{d-1}  \,e^{\frac {2\pi i k}d} \cdot \R_+)$ and is for any integer $k$ given on the ray $e^{\frac {2\pi i k}d} \cdot \R_+$ by
\begin{align}\label{vilsjo}
	\begin{split}
		\tilde{R}_{(c=0)}(e^{\frac {2\pi ik}d}\cdot x)
		&=\tfrac{\pi d}{4}\,x^{d-1} (J_\nu(x^{\frac d2})^2 -J_{\nu+1}(x^{\frac d2})J_{\nu-1}(x^{\frac d2}) )
		,\quad (x>0).
	\end{split}
\end{align}

In the special cases when when $d=2$ and $\nu \in \{ \pm 1/2\}$, the 1-point function in \eqref{sjoman} becomes
\begin{equation}\label{vilgot}
	R_{(c=\infty)} (z)=
	\begin{cases}
		e^{-|z|^2}\sinh(|z|^2) &\text{if } \nu=+\tfrac 1 2,
		\\
	  	e^{-|z|^2}  \cosh(|z|^2) &\text{if } \nu=-\tfrac 1 2.
	\end{cases}
\end{equation}

The $1$-point functions in \eqref{vilgot} correspond precisely to the $1$-point functions of the degenerate elliptic determinantal point processes with root system C or D found by Katori in \cite[Theorem 3.5]{K}, and are special cases of Mittag-Leffler fields in \cite{AKS}.

On the other hand, for $d=2$ and $\nu\in\{\pm \tfrac 1 2\}$, the expression \eqref{vilsjo} reduces to
\begin{equation}
\tilde{R}_{(c=0)}(x)=1 \mp \tfrac{\sin(2x)}{2x}, \quad \text{if } \nu=\pm \tfrac 1 2,\qquad (x\in\R)
\end{equation}
which corresponds to the anti-symmetric sine point processes in \cite[Section 13.1]{M}.

\section{Further results and concluding remarks} \label{crem}

In this section, we give further results and provide some concluding remarks. We shall consider variants of our main model ensembles
(generalized ALUE, induced AGUE, hard edge ensembles) and compare with other related works.

\subsection{Almost-Hermitian LUE with rectangular parameter} \label{agen}
For a non-negative integer $\nu$ (and $\beta=1$), a random sample $\{\zeta_j\}_1^N$ picked with respect to the ALUE-potential $Q_N$ in \eqref{wish} can be realized as the eigenvalues of the product matrix $X=X_1X_2^*$
where
\begin{equation}\label{ppp}
X_1=\sqrt{1+\tau}\, P+\sqrt{1-\tau} \, Q, \quad X_2=\sqrt{1+\tau} \, P-\sqrt{1-\tau} \, Q, \quad \tau=1-\tfrac{c^2}{N}.
\end{equation}
Here $P$ and $Q$ are rectangular matrices of size $N \times (N + \nu)$, with independent, centered, complex Gaussian entries of variance $\tfrac 1{4N}$. (See e.g., \cite{ABK,KS})

We now fix $\alpha\ge 0$ and consider the case when $\nu=\nu_N$ (not necessarily an integer) varies with $N$ as
$$\nu_N=\alpha N\cdot (1+o(1))$$ where $o(1)\to 0$ as $N\to\infty$.
Thus we consider the potential
\begin{equation}\label{nun}
	Q_N(\zeta)=\tfrac 1 N\log\left[K_{\nu_N}(\tfrac {N^2|\zeta|}{c^2})\cdot|\zeta|^{\nu_N}\right]-(\tfrac N {c^2}-1)\cdot \re\zeta.\end{equation}

 We shall use the code-notation ALUE$(\alpha)$ to denote a corresponding random sample $\{\zeta_j\}_1^N$ (and various limits as $N\to\infty$).

By \cite[Theorem 1]{ABK}, the droplet $S_{Q_N}$ is approximately given by the equation
$$(\xi-\alpha-2)^2+(N\eta/c^2)^2 \le 4(\alpha+1). $$
Note in particular that for $\alpha>0$, the origin is outside of the droplet for large $N$ since $(\alpha+2)^2 > 4(\alpha+1).$ This makes the analysis somewhat easier, with more tools being available.

Also note that $Q_N$ converges pointwise to the limit
\begin{equation}\label{apo}
V(\xi)=\xi-\alpha \log \xi,\qquad \xi>0,\end{equation}
and $V=+\infty$ off of $[0,\infty)$. Cf.~\cite[Subsection 3.1]{ABK}.

It is well known (again \cite{ABK} and references) that the equilibrium density in potential \eqref{apo} is the Marchenko-Pastur law $\MPa$ given by
\begin{equation}
\MPa(\xi)=\tfrac{1}{2\pi\,\xi} \sqrt{(\lambda_+-\xi)(\xi-\lambda_-)}\cdot \1_{[\lambda_-,\lambda_+]}(\xi), \qquad \lambda_\pm=(\sqrt{\alpha+1}\pm 1)^2.
\end{equation}

Now write $c_N(\xi)=\tfrac 1 N\int_{\R}\bfR_N(\xi+i\eta)\,d\eta$ for the usual cross-section. We have the following result, which generalizes Theorem~\ref{mthMP}.

\begin{thm} \label{ThMP2} (``\emph{Cross-section convergence for ALUE$(\alpha)$}'') We have the convergence $\tfrac 1 \pi c_N \to\MPa$ in the weak sense of measures, as $N\to\infty$.
\end{thm}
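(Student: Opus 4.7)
The plan is to reduce to Theorem~\ref{joh1} in the non-degenerate case $\alpha>0$ and to upgrade Theorem~\ref{mthMP} from pointwise to weak convergence when $\alpha=0$. The key observation enabling the first reduction is that for $\alpha>0$, the left endpoint $\lambda_-=(\sqrt{\alpha+1}-1)^2$ of the Marchenko--Pastur support is strictly positive, so the singular point $\zeta=0$ of the potential $Q_N$ lies strictly outside the droplet. Consequently $V|_\R=\xi-\alpha\log\xi$ is smooth on a neighbourhood of $S_V=[\lambda_-,\lambda_+]$, precisely curing the failure of the continuity hypothesis in Theorem~\ref{joh1} that blocks the $\alpha=0$ case.

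For $\alpha>0$, I would first verify that the potential \eqref{nun} fits the standing framework of Subsection~\ref{tld}. The symmetry $Q_N(\bar\zeta)=Q_N(\zeta)$ and the majorization $Q_N(\zeta)\ge Q_N(\re\zeta)$ are immediate from the form of $Q_N$; the elliptic shape of $S_{Q_N}$, its separation from $0$, and its real-analytic Jordan-curve boundary follow from \cite[Theorem 1]{ABK}; and the monotone convergence $Q_N\nearrow V$, the existence of $\rho(p_*)=2c\sqrt{p_*}$, and the lower bound $\Delta Q_N\ge mN$ on $S_{Q_N}$ all follow from Debye-type large-order asymptotics of $K_{\nu_N}$ (extending \eqref{besselA1} and \eqref{BA2} to $\nu_N=\alpha N+o(N)$). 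With the setup in place, I would establish admissibility using Lemma~\ref{lem4}: the approximations $Q_N=V+O(\log N/N)$ and $\Delta Q_N=\Theta(N)$, valid uniformly on the bandlimited droplet, give $\sigma_{Q_N}(Q_N)=O(1)$ and $\sigma_{Q_N}(\log\Delta Q_N)=\log N+O(1)$, so both limits in \eqref{eel} vanish. Theorem~\ref{joh1} then delivers $\tfrac{1}{\pi} c_N\to\MPa$ weakly on $\R$.

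For $\alpha=0$, Theorem~\ref{mthMP} already provides pointwise convergence $c_N(\xi)\to\pi\MP(\xi)$ on $(0,4)$. Since $\int_\R c_N\,d\xi=\pi$ by Fubini (as $\int_\C\bfR_N\,dA=N$ and $dA=\pi^{-1}d\xi\,d\eta$), and $\int_\R\pi\MP\,d\xi=\pi$ as well, Scheff\'e's lemma upgrades pointwise a.e.\ convergence to $L^1$ convergence, which entails weak convergence of measures. The main technical obstacle throughout is the uniform control of $Q_N$, $\Delta Q_N$, and the droplet shape in the large-order regime $\nu_N\sim\alpha N$; the required Debye-type asymptotics for $K_{\nu_N}$ are classical but delicate, and I would import the work already carried out in \cite{ABK} rather than redo them from scratch.
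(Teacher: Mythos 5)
Your proposal takes a genuinely different route from the paper's. The paper handles $\alpha=0$ and $\alpha>0$ uniformly by pushing through the explicit Laguerre-polynomial computation of Subsection~\ref{WEAK} with the orthonormal polynomials \eqref{labb}; it does not appeal to the abstract framework of Section~\ref{BCG} at all for the ALUE. You instead reduce the $\alpha>0$ case to Theorem~\ref{joh1}, exploiting the key observation that $\lambda_-=(\sqrt{\alpha+1}-1)^2>0$ pushes the singularity at the origin strictly outside $S_V$, so the continuity hypothesis on $V|_\R$ (which the paper explicitly notes fails at $\alpha=0$) is restored. This is cleaner and avoids the large-order Laguerre asymptotics. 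The price is that you must verify the standing hypotheses of Subsection~\ref{tld} (monotonicity $Q_N\nearrow V$, $Q_N(\zeta)\ge Q_N(\re\zeta)$, $\Delta Q_N\asymp N$ on the droplet) and the admissibility via Lemma~\ref{lem4}, all of which hinge on uniform Bessel asymptotics for $K_{\nu_N}$ with $\nu_N\sim\alpha N$. Note that the crude expansion \eqref{Bessel_asym} is not uniform in this regime: with argument $x=N^2|\zeta|/c^2$ the correction term $\tfrac{4\nu_N^2-1}{8x}$ is $O(1)$ rather than $o(1)$, so the Debye (uniform large-order) version is genuinely required, not just a formality; your deferral to \cite{ABK} for these is reasonable but that is where the real work lies.

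Your $\alpha=0$ sub-argument has a small gap as stated: Theorem~\ref{mthMP} gives $c_N(\xi)\to\pi\MP(\xi)$ only for $\xi\in(0,4)$, so you do not immediately have a.e.\ convergence on all of $\R$, which Scheff\'e nominally requires. This is easily repaired without proving decay of $c_N$ off the droplet: by Fatou on $(0,4)$ one has $\liminf\int_{(0,4)}c_N\ge\pi$, while $\int_\R c_N=\pi$ for every $N$, so $\int_{\R\setminus(0,4)}c_N\to 0$; applying Scheff\'e on $(0,4)$ then gives $\int_{(0,4)}|c_N-\pi\MP|\to 0$, and the two facts together yield $c_N\to\pi\MP$ in $L^1(\R)$ and hence weakly. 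With that patch, the $\alpha=0$ piece is complete.
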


\begin{proof}[Remark on the proof]
Our proof in the case $\alpha=0$ in Subsection \ref{WEAK} generalizes without difficulty to the case when $\alpha > 0$, and in fact we could have treated the general case $\alpha\ge 0$ at once. Indeed, the $j:$th orthonormal polynomial $q_j$ in weight $e^{-NQ_N/2}$ can be taken as
\begin{equation}\label{labb}
q_j(\zeta)=\tfrac{N}{c}   ( N- \tfrac{c^2}{2} )^{\frac {\nu_N+1}2}  (1-\tfrac {c^2}N)^{j}\sqrt{  \tfrac{j!}{(j+\nu_N)!}  }L_j^{\nu_N}(  \tfrac{N(2N-c^2)}{2(N-c^2)}\, \zeta ).
\end{equation}
With minor modifications, these polynomials can be analyzed using similar techniques as in the case $\alpha=0$. Details are omitted.
\end{proof}

Now fix a bulk point $p_*$ (i.e., $\lambda_-<p_*<\lambda_+$). We rescale the process $\{\zeta_j\}_1^N$ about $p_*$ as usual (see \eqref{rescaled}) where $Q_N$ is given by \eqref{nun}, and
denote by $R_N$ the 1-point function of the rescaled system $\{z_j\}_1^N$.
It is easy to see that
the limit \eqref{rhop} is given by $\rho(p_*)=2c\sqrt{p_*}$ for all $\alpha$
(cf.~\cite[Subsection 3.1]{ABK} for details).

We have the following generalization of Theorem~\ref{mthWish}.

\begin{thm} (``\emph{Bulk scaling limit for ALUE$(\alpha)$}'') For any $\alpha \ge 0$, if $\lambda_-<p_*<\lambda_+$ and if $\nu>-1$ is an integer then $R_N\to R$ locally uniformly where $R(z)=F(2\im z)$ and $F(z)=\gamma*\1_{(-2a,2a)}(z)$, $a=a(p_*)=\frac{\pi}{2}\cdot \rho(p_*)\cdot \MPa(p_*)$.
\end{thm}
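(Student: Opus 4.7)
The plan is to run the same argument as for Theorem~\ref{mthWish} in Subsection~\ref{BALUE}, replacing the fixed parameter $\nu$ throughout by the growing parameter $\nu_N$. By Lemma~\ref{LPF} any subsequential limit $R$ of the rescaled $1$-point functions $R_N$ exists, and by Theorem~\ref{ThMP2} combined with Lemma~\ref{LPF2} such a limit satisfies $\int_\R R(x+iy)\,dy=\pi\cdot\rho(p_*)\cdot\MPa(p_*)$. Given this normalization, Theorem~\ref{tithm} (the characterization of translation invariant solutions to Ward's equation) identifies any \emph{horizontal translation invariant} subsequential limit with the explicit function $R(z)=\gamma *\1_{(-2a,2a)}(2\im z)$ with $a=\tfrac{\pi}{2}\rho(p_*)\MPa(p_*)$. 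Thus the entire task reduces to proving that every subsequential limit $R=\lim R_{N_k}$ is horizontally translation invariant, i.e.\ $\partial R/\partial x\equiv 0$.

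First I would insert the orthonormal polynomials \eqref{labb} into the diagonal kernel formula to obtain $R_N(z)=P_N\cdot G_N\bigl(\tfrac{a^2-b^2}{2b}N\,\zeta(z)\bigr)\cdot e^{-NQ_N(\zeta(z))}$, where $\zeta(z)=p_*+z/\sqrt{N\Delta Q_N(p_*)}=p_*+(2c\sqrt{p_*}/N)z+o(N^{-1})$, $P_N$ collects the explicit constant prefactors, and $G_N(w)=\sum_{j=0}^{N-1}\tfrac{\tau^{2j}\,j!}{(j+\nu_N)!}\,|L_j^{\nu_N}(w)|^2$. Next I would extend the contour-integral identity \eqref{tre}--\eqref{U_N} and the differentiation lemma \ref{U_N pax} to the present setting; these identities rest only on the integral representation \eqref{flax} for Laguerre polynomials (valid for arbitrary parameter) and on routine contour deformations, so they carry over verbatim provided $\nu_N$ is an integer (which is the content of the hypothesis ``$\nu$ integer''). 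This reduces the calculation of $\partial R_N/\partial x$ to an expression of the form
\[
\tfrac{\partial R_N}{\partial x}(z)\sim C_N\cdot e^{-2(\im z)^2}\cdot e^{-\tfrac{a^2-b^2}{2b}N\re\zeta(z)}\cdot L_{N+\nu_N-1}^{1-\nu_N}(w)\cdot L_{N-1}^{\nu_N+1}(\bar w),
\]
with $w=\tfrac{a^2-b^2}{2b}N\,\zeta(z)$ and $C_N$ an explicit prefactor of at most polynomial size in $N$.

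The main obstacle is the asymptotic analysis of the product $L_{N+\nu_N-1}^{1-\nu_N}\cdot L_{N-1}^{\nu_N+1}$ in the double-scaling regime where both the degree and the parameter scale linearly with $N$. I would appeal to Plancherel--Rotach type asymptotics for Laguerre polynomials with linearly growing parameter, as developed in \cite{V} and recorded in the appendix. For $p_*\in(\lambda_-,\lambda_+)$ the point $w$ lies in the bulk of the corresponding limiting zero distribution, and such asymptotics should produce oscillatory representations whose dominant exponential factor exactly cancels the prefactor $e^{-\tfrac{a^2-b^2}{2b}N\re\zeta(z)}$ coming from the contour integral, leaving on each factor a subleading polynomial contribution. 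The expected net bound is $\partial R_N/\partial x=O(N^{-1})$ locally uniformly on $\C$, which forces $\partial R/\partial x\equiv 0$ along any convergent subsequence and completes the proof via Theorem~\ref{tithm}. A minor auxiliary point is to control the factorial prefactor $1/(j+\nu_N)!$ in $G_N$ when $\nu_N\asymp N$; Stirling's formula handles this uniformly for the relevant range of $j$, so no new difficulty arises beyond the Plancherel--Rotach asymptotics themselves.
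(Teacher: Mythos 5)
Your outline matches the paper's (very brief) sketch of the proof: the paper simply remarks that ``our proof in the case $\alpha=0$ $\ldots$ generalizes in a straightforward way to $\alpha>0$, using again the form of orthogonal polynomials \eqref{labb}.'' Your structure --- reduce to translation invariance via Theorem~\ref{tithm}, note that the contour-integral identity \eqref{tre}--\eqref{U_N} and Lemma~\ref{U_N pax} are \emph{exact} identities valid for every integer parameter (so they pass to $\nu_N$ with no changes), then bound $\partial R_N/\partial x$ by Plancherel--Rotach asymptotics --- is the same program the paper intends.

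There is, however, a concrete gap in how you justify the final asymptotic step. You write that the needed asymptotics ``for Laguerre polynomials with linearly growing parameter'' are ``developed in \cite{V} and recorded in the appendix,'' but neither source covers the regime you actually need. Lemma~\ref{vanl} is \cite[Theorem 2.4(b)]{V} for a \emph{fixed} Laguerre parameter, and Lemma~\ref{BAAL} invokes it with the fixed parameters $1-\nu$ and $\nu+1$. When $\nu_N\sim\alpha N$, the two factors $L_{N+\nu_N-1}^{1-\nu_N}$ and $L_{N-1}^{\nu_N+1}$ have degree and parameter both of order $N$ (the first with a large \emph{negative} parameter, which after the standard reflection identity is again a large positive one), and the double-scaling asymptotics in that regime --- whose oscillations track the $\alpha$-dependent Marchenko--Pastur measure, not the $\alpha=0$ one --- are not what \cite{V} or Appendix~\ref{AppLag} supply. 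Such asymptotics exist in the literature, but the step as cited does not go through; a complete proof must replace Lemma~\ref{BAAL} by its varying-parameter counterpart. A secondary point: Theorem~\ref{ThMP2} as stated gives only weak cross-section convergence, whereas the reduction via Theorem~\ref{ecor}/Theorem~\ref{tithm} needs the pointwise statement $c_N(p_*)\to\pi\,\MPa(p_*)$; you should note explicitly that the proof of Theorem~\ref{ThMP2} (like that of Theorem~\ref{mthMP}) in fact yields the pointwise version.
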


\begin{proof}[Remark on the proof]
 Our proof in the case $\alpha=0$ in Subsection \ref{BALUE} generalizes in a straightforward way to $\alpha>0$, using again the form of orthogonal polynomials \eqref{labb}. We omit details.
\end{proof}

\subsection{Induced Almost-Hermitian GUE}
 We now briefly consider the \textit{induced AGUE}. This is a natural generalization of AGUE introduced in the paper \cite{ADDV}, obtained by inserting a point charge (of strength $2\nu$) at the origin.

For the definition, we fix $c>0$ and put
$$q_N(\zeta)=\tfrac12 \xi^2+\tfrac12 \tfrac{N}{c^2}\eta^2.$$
Ne next fix
a real number $\nu>-1/2$ and consider the potential
\begin{equation}\label{cone}
	Q_N(\zeta)=q_N(\zeta)+\tfrac{2\nu}{N}\log\tfrac 1 {|\zeta|}.
\end{equation}

Let $\{\zeta_j\}_1^N$ be a random sample with respect to the corresponding Boltzmann-Gibbs measure \eqref{prob} (with $\beta=1$) and rescale about the origin via $z_j={\scriptstyle \sqrt{N\Delta q_N}}\cdot \zeta_j$.
Writing $R_N^{(\nu,c)}(z)$ for the 1-point function of $\{z_j\}_1^N$, it is natural to try to characterize limiting 1-point functions $R^{(\nu,c)}=\lim_{k\to\infty}R_{N_k}^{(\nu,c)}$. It was observed
in \cite{ADDV} that when $\nu$ is an integer, $R^{(\nu,c)}$ as well as a correlation kernel $K^{(\nu,c)}$,
can be obtained from the known case $\nu=0$ by an inductive procedure. For example, since $K^{(\nu=0)}$ is nothing but the kernel $K$ from \eqref{ulg2}, we obtain after a brief computation
\begin{equation}\label{bm}
	\begin{split}
		R^{(1,c)}(z)&=R^{(0,c)}(z)-B^{(0,c)}(0,z)
		\\
		&=R^{(0,c)}(z)
		-\tfrac{1}{4\erf(\sqrt{2}c)}e^{-|z|^2}  | \erf( \tfrac{2c+iz}{\sqrt{2}} )+ \erf(\tfrac{2c-iz}{\sqrt{2}} ) |^2,
	\end{split}
\end{equation}
where we used the interpretation of the Berezin kernel $B^{(0,c)}(0,z)=\tfrac{ |K^{(0,c)}(0,z)|^2 }{R^{(0,c)}(0)}$ as the
difference $R^{(0,c)}(z)-R^{(1,c)}(z)$, see e.g. \cite[Lemma 7.6.2]{AHM}. A similar reasoning is used in \cite{ADDV} to go from $\nu=1$ to $\nu=2$, and this procedure
can in principle be repeated to obtain correlation kernels for all positive integers $\nu$. (We are not aware a result for general real $\nu$ at this time.)

We now explain how the above kernels $K^{(\nu,c)}$ (for integers $\nu$) interpolate between known insertion ensembles in dimension one in \cite{KV,Dan}, and in dimension
two from the papers \cite{FBKS,AKS}. It will suffice to treat the case $\nu=1$ in detail.

More precisely, let $V(\xi)$ be the Gaussian potential, $V(\xi)=\tfrac 1 2\xi^2$ for $\xi\in\R$ and $+\infty$ otherwise, and consider the corresponding insertion potential
$V_N(\zeta)=V(\zeta)+\tfrac{2\nu}{N}\log\tfrac 1 {|\zeta|}$.

It is shown in \cite[Theorem 1.1]{KV} that in the appropriate scaling limit, a microscopic correlation kernel at the origin takes the following form, for $x,y\in\R$ with $xy>0$
\begin{equation}\label{K gen sine}
K_\R^{(\nu)}(x,y)=\tfrac{ \pi }{2}  \tfrac{ \sqrt{x y} } { (x-y) } (  J_{ \nu+\frac12 } (\pi x) J_{\nu-\frac12} (\pi y) -  J_{ \nu+\frac12} (\pi y) J_{ \nu-\frac12 } (\pi x)   ).
\end{equation}

On the other hand, taking $Q_N(\zeta)=|\zeta|^2+\tfrac{2\nu}{N}\log\tfrac 1 {|\zeta|}$ and rescaling in a natural way about the origin, one obtains
a point field with correlation kernel
\begin{equation} \label{K ML1c}
K^{(\nu)}_{\C}(z,w)=G(z,w) P(\nu,z\bar{w}),\qquad P(a,z)=\tfrac{\gamma(a,z)}{\Gamma(a)},
\end{equation}
where $G$ is the Ginibre kernel \eqref{gin0} and $\gamma(a,z)$ is the lower incomplete Gamma function. This kernel appears in \cite{FBKS} and in \cite{AKS}, for example.

\begin{figure}[h!]
	\begin{subfigure}[h]{0.32\textwidth}
		\begin{center}
			\includegraphics[width=1.69in,height=1.36in]{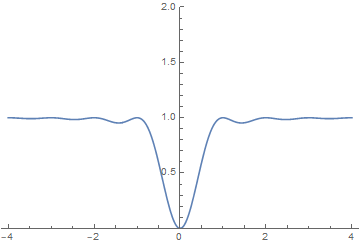}	
		\end{center}
		\caption{$c=0$, $\nu=1$}
	\end{subfigure}
	\begin{subfigure}[h]{0.32\textwidth}
		\begin{center}
			\includegraphics[width=1.69in,height=1.36in]{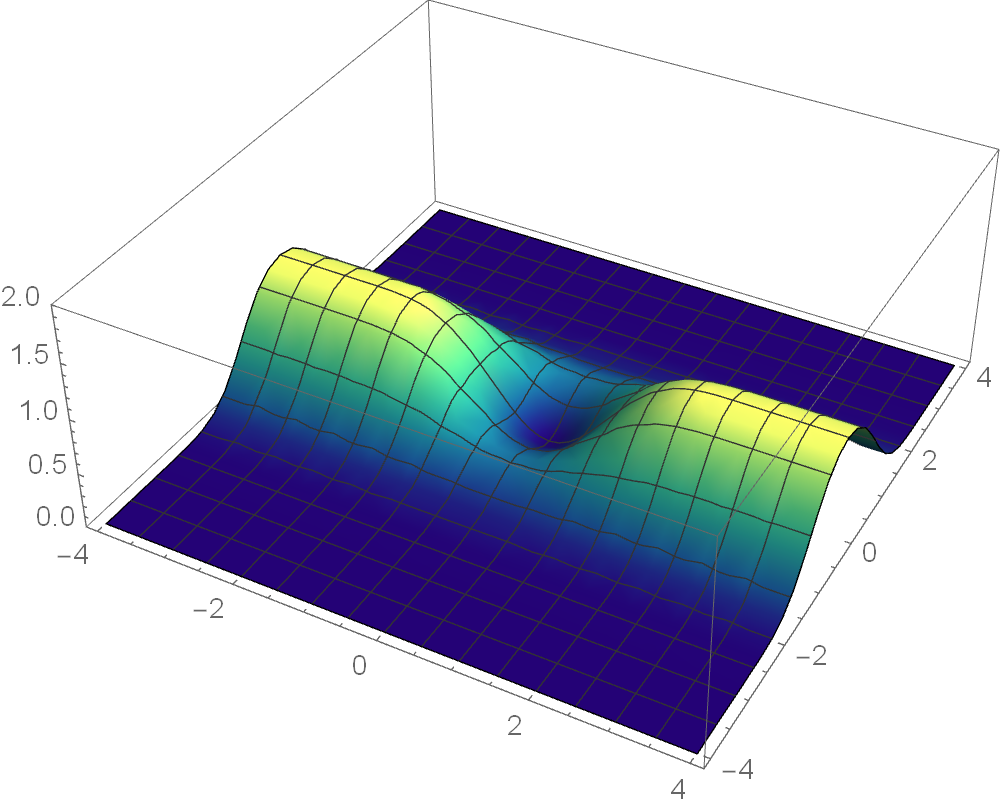}
		\end{center}
		\caption{$c=1$, $\nu=1$}
	\end{subfigure}	
	\begin{subfigure}[h]{0.32\textwidth}
		\begin{center}
			\includegraphics[width=1.69in,height=1.36in]{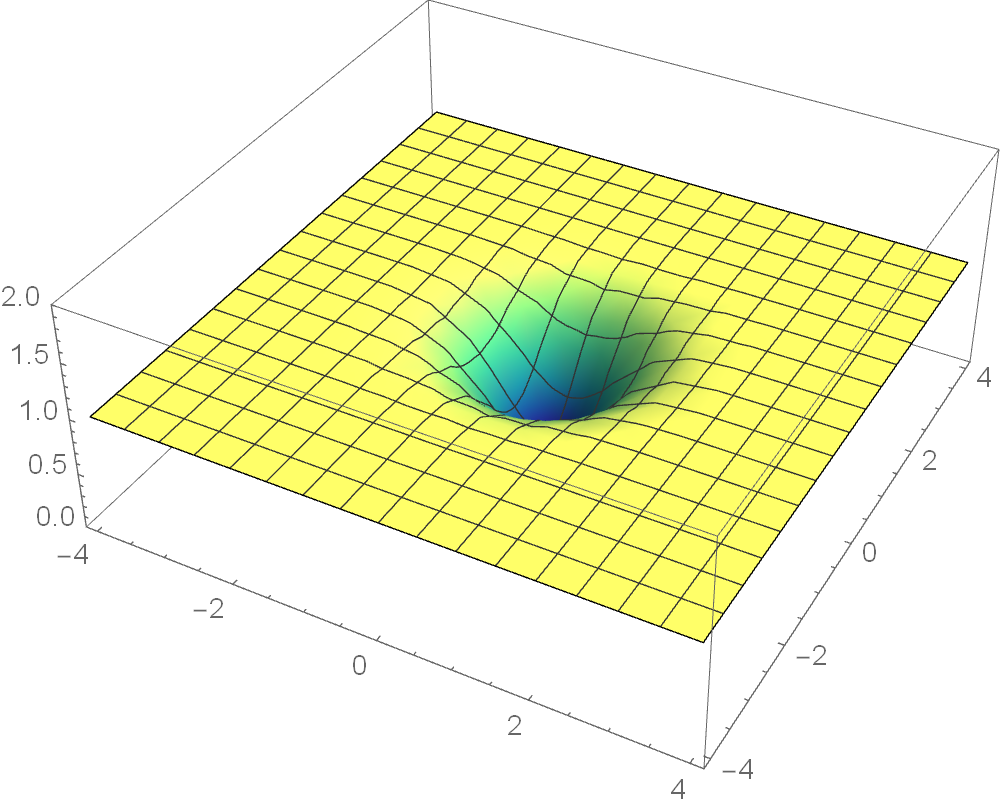}
		\end{center}
		\caption{$c=\infty$, $\nu=1$}
	\end{subfigure}	
	\caption{Graphs of $\tilde{R}^{(1)}_\R(x)$ and of $R^{(1,1)}(z)$, $R^{(1)}_\C(z)$. }
	\label{Fig_ADDP}
\end{figure}

Sending $c \to \infty$ in \eqref{bm} we can recover \eqref{K ML1c}, i.e.,
$$
\lim_{c\to\infty} R^{(1,c)}(z)=1-e^{-|z|^2}=R^{(1)}_\C(z).
$$
Here, we have used $R^{(0,c)}(z) \to 1$ as $c \to \infty$ and $\erf(x) \to 1$ as $x \to +\infty$.

To pass to the corresponding one-dimensional limit, after an appropriate rescaling, we are led to consider the 1-point function
$$\tilde{R}^{(\nu,c)}(z)=\tfrac{1}{\alpha^2}R^{(\nu,c)}(\tfrac{z}{\alpha}),\qquad \alpha=\tfrac{2}{\pi}c.$$
Write $\tilde{K}^{(0,c)}$ for a corresponding rescaled correlation kernel, $\tilde{R}^{(\nu,c)}(z)=\tilde{K}^{(\nu,c)}(z,z)$.

 By the discussion after the formulation of Theorem \ref{mainth1}, we can arrange that $\lim_{c\to0}\tilde{K}^{(0,c)}= \pi K^{\mathrm{sin}}$. Hence $\tilde{R}^{(1,c)}(z)$ converges to $\tilde{R}_\R^{(1)}(x)=\pi K_\R^{(1)}(x,x)$, as $c\to 0$ if $z=x$ is real (and $\tilde{R}^{(1,c)}(z)\to 0$ otherwise)
where
$$
K_\R^{(1)}(x,y)=\tfrac{1}{\pi}\tfrac{ \sin (\pi x-\pi y)}{x-y}-\tfrac{1}{\pi^2}\tfrac{\sin (\pi x) \sin (\pi y)}{xy}
$$
which is the same as \eqref{K gen sine} when $\nu=1$.

We have shown that the 1-point function $R^{(\nu,c)}$ interpolates between $R^{(\nu)}_\R$ and $R^{(\nu)}_\C$ when $\nu=1$, see Figure \ref{Fig_ADDP}. As we indicated,
a similar proof works in the case when $\nu$ is an arbitrary integer.

\subsection{Almost-Hermitian hard-edge ensembles}
Let $(Q_N)_1^\infty$ be a suitable sequence of potentials, for instance the AGUE-potentials in \eqref{wish}, or a more general admissible sequence in the sense of Section \ref{BCG}. In particular, the potential $Q_N$ is real-analytic in a neighbourhood of the boundary $\d S_{Q_N}$.

In the hard edge setting  we completely confine the gas to the droplet by redefining the potential $Q_N$ by setting
$Q_N(\zeta)=+\infty$ when $\zeta\not\in S_{Q_N}.$
Similarly, we redefine the limiting potential $V=\lim Q_N$ by setting $V=+\infty$ outside of $S_V$. This kind of hard edge ensembles are studied in the papers \cite{AKM,AKMW,AKS1,CK,Seo}.

Fix a point $p_*$ in the bulk, i.e., a point
 $p_* \in \R$ satisfying $\sigma_V(p_*)>0$. We also assume that the limit $\rho(p_*)$ in \eqref{rhop} exists and is strictly positive. We recall that the asymptotic height of the rescaled droplet is then given by
 the expression $a(p_*)$ in \eqref{ap}, i.e.,
 $$a(p_*)=\tfrac \pi 2\cdot \rho(p_*)\cdot \sigma_V(p_*).$$

Under these hypothesis we look at the cross-sections
$$c_N(p_*)=\tfrac 1 N\int_\R\bfR_N(p_*+i\eta)\, d\eta,$$
where $\bfR_N$ is the $1$-point function for the hard edge ensemble $\{\zeta_j\}_1^N$ associated with the redefined potential. (So in particular $\bfR_N=0$ on $\C\setminus S_{Q_N}$.)

As in the case of ``free-boundary ensembles'', we expect that the cross-sections should enjoy the convergence $\tfrac 1 \pi c_N(p_*)\to\sigma_V(p_*)$, but our above proofs in the cases
of AGUE/ALUE, which depend on computations with particular orthogonal polynomials, do not immediately carry over to a hard edge setting. In order not to complicate matters, let us
\textit{assume} for the sake of argument that this limit holds, i.e., that
\begin{equation}\label{assa}\lim_{N\to\infty} c_N(p_*)=\pi\cdot \sigma_V(p_*).\end{equation}

Given these assumptions, we now rescale about $p_*$ precisely as in the free-boundary case (see \eqref{blowup}) and we write $R_N$ for the $1$-point function of the rescaled system.
The structure theorem for limiting kernels (analogue of Lemma \ref{LPF}) then takes the following form.

\begin{lem} (``\emph{Structure of limiting hard edge kernels}'') Under the above hypotheses, there exists a sequence $c_N(z,w)$ of cocycles such that each subsequence of the kernels
$c_NK_N$ has a further subsequence which converges locally uniformly on the set
$$\Sigma(p_*)=\{(z,w)\in \C^2\, ;\, |\im z|<a(p_*),\,|\im w|<a(p_*)\}$$ to a Hermitian kernel of the form
$K(z,w)=G(z,w)\cdot L(z,w)$ where $G$ is the Ginibre kernel and $L$ is a Hermitian-analytic function on $\Sigma(p_*)$.
\end{lem}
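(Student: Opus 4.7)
The plan is to adapt the strategy of \cite[Section 3]{AKM} (as summarized in Lemma \ref{LPF}) to the hard-edge setting. The key structural change is that the reproducing kernel $\bfK_N$ now lives in the subspace $\calW_N \subset L^2(S_{Q_N}, dA)$ of weighted polynomials $q\cdot e^{-NQ_N/2}$, so the rescaled kernel $K_N(z,w) = \tfrac{1}{N\Delta Q_N(p_*)} \bfK_N(\zeta,\eta)$ is only defined when both $\zeta = \Gamma_{N,p_*}^{-1}(z)$ and $\eta = \Gamma_{N,p_*}^{-1}(w)$ lie in $S_{Q_N}$. By Theorem \ref{aplem}, the rescaled droplets $\Gamma_{N,p_*}(S_{Q_N})$ exhaust the horizontal strip $\{|\im z| \le a(p_*)\}$ as $N\to\infty$, so for every compact $E \subset \Sigma(p_*)$ the kernels $K_N$ are defined on $E \times E$ for all sufficiently large $N$, and are analytic in $z$ and $\bar{w}$ there.

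First I would prove local boundedness. Fix a compact subset $E$ of the strip $\{|\im z| < a(p_*)\}$; then there exists $\epsilon > 0$ such that for all large $N$ and every $z \in E$, the disc $D(\zeta_0, 1/\sqrt{N\Delta Q_N(p_*)})$ about $\zeta_0 = \Gamma_{N,p_*}^{-1}(z)$ lies inside $S_{Q_N}$. Under this condition the proof of Lemma \ref{spike} applies verbatim: the log-subharmonic majorant $|f|^2 e^{NC_0 \Delta Q_N(\zeta_0)|\zeta-\zeta_0|^2}$ still admits the sub-mean inequality over a disc contained in the integration domain. This yields $R_N(z) \le C$ uniformly on $E$, and via the reproducing-kernel Cauchy--Schwarz inequality $|K_N(z,w)|^2 \le R_N(z) R_N(w)$, uniform boundedness of $|K_N|$ on $E \times E \subset \Sigma(p_*)$.

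Next I would remove the non-analytic Gaussian factors and pass to a subsequential limit. Following \cite[Lemmas 3.3--3.5]{AKM}, one verifies that the modified kernels $\wt{K}_N(z,w) = K_N(z,w)\exp(-z\bar{w} + \tfrac{1}{2}|z|^2 + \tfrac{1}{2}|w|^2)$ satisfy $|\wt{K}_N(z,w)| = |K_N(z,w)| e^{-|z-w|^2/2}$, and so are uniformly bounded on compact subsets of $\Sigma(p_*)$. Choose unimodular phase-fixing functions $g_N$, determined from the germ of $K_N(\,\cdot\,, w_0)$ at some reference point $w_0$, such that the products $g_N(z)\overline{g_N(w)} \wt{K}_N(z,w)$ are Hermitian-analytic on $\Gamma_{N,p_*}(S_{Q_N})^2$. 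Montel's theorem and a diagonal extraction then produce a subsequence along which these converge locally uniformly on $\Sigma(p_*)$ to a Hermitian-analytic function $L$. Setting $K(z,w) = G(z,w) L(z,w)$ gives the desired decomposition, and the cocycles in the statement are $c_N(z,w) = g_N(z)\overline{g_N(w)}$.

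The main technical obstacle will be control of $K_N$ near the horizontal boundary $\im z = \pm a(p_*)$: the sub-mean argument degenerates there because the disc around $\zeta_0$ may cease to fit in $S_{Q_N}$, and there is no analogue of the off-droplet Gaussian decay of Lemma \ref{spike3}, since $\bfR_N$ simply vanishes outside $S_{Q_N}$ in the hard-edge case. Fortunately the conclusion only concerns compact subsets of $\Sigma(p_*)$, each of which sits a positive distance from the horizontal boundary, so the argument above suffices; indeed, consistency with this obstacle is the very reason the limiting object is only claimed to be Hermitian-analytic on $\Sigma(p_*)$ rather than Hermitian-entire as in the free-boundary Lemma \ref{LPF}.
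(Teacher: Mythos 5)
Your overall approach is the right one and matches what the paper gestures at---adapting the normal-families argument of Lemma~\ref{LPF} by working on compact subsets of the open strip $\Sigma(p_*)$, where the rescaled droplets eventually cover a fixed neighbourhood. The discussion of removing the Gaussian factor, phase-fixing, Montel, and diagonal extraction follows the \cite[Section~3]{AKM} template faithfully, and your concluding paragraph correctly identifies why Hermitian-analyticity on $\Sigma(p_*)$ (rather than Hermitian-entireness) is all that can be claimed.

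There is, however, a concrete gap in the local boundedness step. You assert that for a compact $E\subset\{|\im z|<a(p_*)\}$ and $\zeta_0=\Gamma_{N,p_*}^{-1}(z)$ with $z\in E$, the disc $D(\zeta_0,1/\sqrt{N\Delta Q_N(p_*)})$ lies inside $S_{Q_N}$ for large $N$, and hence that the proof of Lemma~\ref{spike} ``applies verbatim.'' But that disc has \emph{radius $1$ in the rescaled coordinate $z$}, while the rescaled droplet has half-height $a_N(p_*)\to a(p_*)$; if $a(p_*)\le 1$ (the generic case---there is no reason for the band to be that thick) the disc of rescaled radius $1$ about a real $z\in E$ necessarily protrudes out of the rescaled droplet, no matter how large $N$ is. Across $\partial S_{Q_N}$ the hard-edge weight $e^{-NQ_N}$ drops to zero, so the majorant $F$ is no longer log-subharmonic there and the sub-mean inequality over that disc is not available. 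The fix is standard but has to be made explicit: apply the sub-mean inequality over the smaller disc of radius $\delta/\sqrt{N\Delta Q_N(\zeta_0)}$, with $\delta$ chosen smaller than $\dist(E,\partial\Sigma(p_*))$ so that the disc sits inside $S_{Q_N}$ for all large $N$. This yields $|f(\zeta_0)|^2\le C(\delta)\,N\Delta Q_N(\zeta_0)\,\|f\|_{L^2(S_{Q_N})}^2$ with $C(\delta)=e^{C_0\delta^2}\delta^{-2}$, which is all you need, and the remainder of your argument then goes through unchanged.
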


\begin{proof}[Remark on the proof] This follows by a simple adaptation of the normal-families argument used in the proof of Lemma \ref{LPF}. See \cite[Subection 1.9]{AKMW} for related comments.
\end{proof}

By the lemma, we can form limiting $1$-point functions $R(z)=\lim R_{N_k}(z)$ which are smooth in the strip
$|\im z|<a(p_*)$ and satisfy $R(z)=0$ when $|\im z|>a(p_*)$. Since
$R$ is not identically zero by the assumption \eqref{assa}, we can assert that $R>0$ throughout the strip, and that Ward's equation
\begin{equation}\label{whard}\dbar C(z)=R(z)-1-\Delta \log R(z),\qquad |\im z|<a(p_*)\end{equation}
holds if we understand $C(z)$ as the function
$$B(z,w)=\tfrac {|K(z,w)|^2}{R(z)}\1_{\Sigma(p_*)}(z,w),\qquad C(z)=\int\tfrac {B(z,w)}{z-w}\, dA(w).$$

Note that \eqref{whard} gives a hard edge analogue of Corollary \ref{ecor}. The proof in the hard edge case works basically the same way as in the free boundary case,
again see \cite[Subection 1.9]{AKMW}.

Now recall the function $F=\gamma*\1_{(-2a,2a)}$ appearing in the free boundary case (Theorem \ref{mainth1}), i.e.,
$$
F(z)=\gamma * \1_{(-2a,2a)}(z)=\tfrac 1 {\sqrt{2\pi}}\int_{-2a}^{\, 2a}e^{-\frac 1 2 (z-t)^2}\, dt.
$$

We can now state the following theorem.

\begin{thm}\label{vom} Under the above assumptions (in particular we assume that the cross-section convergence \eqref{assa} holds), each translation invariant
limiting 1-point function $R$ at $p_*$ is of the form
	\begin{align}
		\label{rett_h}
		R(z)&=\1_{ \{ |\im z|<a \} } \cdot \tfrac{1  }{ \sqrt{2\pi} } \int_{-2a}^{2a} \tfrac{ e^{ -\frac12 ( 2\im z-t )^2 } }{  F(t) }\,dt,
	\end{align}
where $a=a(p_*)$ is given in \eqref{ap}.
\end{thm}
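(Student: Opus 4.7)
The plan is to follow the template of the proof of Theorem \ref{tithm}, adapted to the hard-edge setting. First, I apply the hard-edge structure theorem stated before the theorem to write each subsequential limit in the form $K(z,w)=G(z,w)L(z,w)$ on $\Sigma(p_*)\times\Sigma(p_*)$, with $L$ Hermitian-analytic. Horizontal translation invariance of $R(z)=L(z,z)$, combined with the separate analyticity of $L$ in $z$ and $\bar w$, forces $L(z,w)=\psi(z-\bar w)$ for some analytic function $\psi$ on the slab $\{|\im u|<2a\}$, where $a=a(p_*)$. Consequently $R(z)=\psi(2i\im z)$ inside $\Sigma(p_*)$, while $R$ vanishes identically off the strip by the hard-edge confinement.

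Next, by adapting the characterization of translation-invariant Ward solutions from \cite[Theorem 1.6]{AKM} to the slab (using the hard-edge Ward equation \eqref{whard} together with the analyticity of $\psi$ on the slab and the positivity $R>0$ guaranteed by the hard-edge analogue of Lemma \ref{LPF2}), I expect $\psi$ to admit a Laplace-type representation
\begin{equation*}
\psi(u)=\frac{1}{\sqrt{2\pi}}\int_{-2a}^{2a}\mu(t)\,e^{\tfrac{1}{2}(u-it)^2}\,dt
\end{equation*}
for some nonnegative measurable $\mu$ on $(-2a,2a)$, the window being fixed by the width of the slab of analyticity and by the hard-wall vanishing of $R$ on $\partial\Sigma(p_*)$.

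To pin down $\mu$, I exploit that each canonical kernel $\bfK_N$ is an exact projection onto $\calW_N$, so $\bfK_N*\bfK_N=\bfK_N$ holds pointwise for every $N$. The confinement of the droplet to a vertical strip of width $O(1/N)$, combined with the pointwise bound of Lemma \ref{spike}, should allow this identity to pass to the subsequential limit, giving $K*K=K$ on $L^2(\Sigma(p_*))$. Specializing to the diagonal $z=w=0$, substituting the Ansatz for $\psi$, and carrying out the $x$-integration by Plancherel (which produces a Dirac mass collapsing the two $t$-variables into one), one is reduced to a purely one-dimensional Gaussian computation. The key identity
\begin{equation*}
\int_{-a}^{a} e^{-2(y-t/2)^2}\,dy=\sqrt{\pi/2}\cdot F(t)
\end{equation*}
then forces the pointwise relation $\mu(t)\,F(t)=\1_{(-2a,2a)}(t)$, i.e.\ $\mu=\1_{(-2a,2a)}/F$. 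Substituting back gives the desired formula \eqref{rett_h}, and a direct Fubini computation confirms $\int_{-2a}^{2a}\rho(y)\,dy=\int_{-2a}^{2a}F(t)/F(t)\,dt=4a$, in agreement with the rescaled cross-section condition.

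The main technical obstacle is justifying the passage to the limit in the projection identity, since limiting determinantal kernels need not remain exact projections in general. What makes the hard-edge case tractable compared to the free-boundary one is the compactness of $\Sigma(p_*)$ in the vertical direction: the strip-confinement provides the missing compactness and forces the quadratic form $\int_\Sigma|\bfK_N^{\,\mathrm{resc}}|^2$ to converge without loss. Once this compactness step is carried out, the Ansatz derivation and the identification of $\mu$ reduce to the essentially algebraic calculations sketched above.
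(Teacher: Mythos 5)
The paper's proof is a one-liner: it invokes \cite[Theorem 6]{AKMW} for the characterization of translation-invariant solutions to the hard-edge Ward equation \eqref{whard}, which already delivers \eqref{rett_h} for some $a>0$, and then \eqref{assa} fixes $a=a(p_*)$. You instead try to re-derive the characterization from scratch, replacing the cited theorem by an Ansatz-plus-projection-identity argument. This is a genuinely different route, but as written it has two serious gaps.

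First, the Laplace-type representation $\psi(u)=\tfrac1{\sqrt{2\pi}}\int_{-2a}^{2a}\mu(t)\,e^{\frac12(u-it)^2}\,dt$ with a nonnegative $\mu$ supported exactly in $(-2a,2a)$ is stated as something you ``expect,'' but it is precisely the content that \cite[Theorem 6]{AKMW} (or its free-boundary antecedent \cite[Theorem 1.6]{AKM}) supplies. Neither the slab of analyticity of $\psi$ alone nor ``hard-wall vanishing'' gives you positivity of $\mu$, the exact width of its support, or the Gaussian kernel in the representation; these come out of the Ward-equation analysis in the cited sources, and you cannot simply assume them.

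Second, the identification $\mu F=\1_{(-2a,2a)}$ is claimed to follow from the projection identity ``specialized to the diagonal $z=w=0$.'' But evaluating at a single point yields only the scalar constraint $\int\mu(t)e^{-t^2/2}\,dt=\int\mu(t)^2F(t)e^{-t^2/2}\,dt$, which does not determine $\mu$ pointwise. To make the argument work you must let $z=w=iy$ vary over $|y|<a$, which yields $\int_{-2a}^{2a}\mu(t)\bigl(1-\mu(t)F(t)\bigr)e^{-\frac12(2y-t)^2}\,dt=0$ for all $|y|<a$; then analytic continuation of the (compactly supported) Gaussian convolution gives $\mu F=1$ a.e. In addition, the passage to the limit in $\bfK_N*\bfK_N=\bfK_N$ is not simply a consequence of vertical strip-confinement (the domain is still unbounded horizontally, and the uniform integrability that prevents loss of mass is exactly what \cite[Theorem 1.3]{AKM} and its hard-edge analogue establish via the zero-one law). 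The cross-section check at the end is fine, but without the two points above the derivation does not close.
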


\begin{proof} We appeal to the characterization of translation invariant solutions to the hard edge Ward equation \eqref{whard} found in \cite[Theorem 6]{AKMW}, which gives that each translation
invariant $R$ must be given by the formula \eqref{rett_h} for some $a>0$. Then the rescaled version of the cross-section convergence \eqref{assa} fixes the value of $a$ as $a(p_*)$.
\end{proof}

The assumptions (cross-section convergence and translation invariance) may in general be subtle to check, even in the simplest model cases such as for hard edge AGUE. We shall not dwell on this
matter here (it will be the topic of a forthcoming study), but we remark that Theorem \ref{vom} yields further support for the conjecture in \cite{AKMW} that the parameter-value $a=a(p_*)$ should
yield the correct ``physical'' solution to Ward's equation in a strip.

See Figure~\ref{Fig_RH} for the $1$-point density $R$ in \eqref{rett_h} for a few values of $c$.

\begin{figure}[h!]
	\begin{subfigure}[h]{0.32\textwidth}
		\begin{center}
			\includegraphics[width=1.69in,height=1.36in]{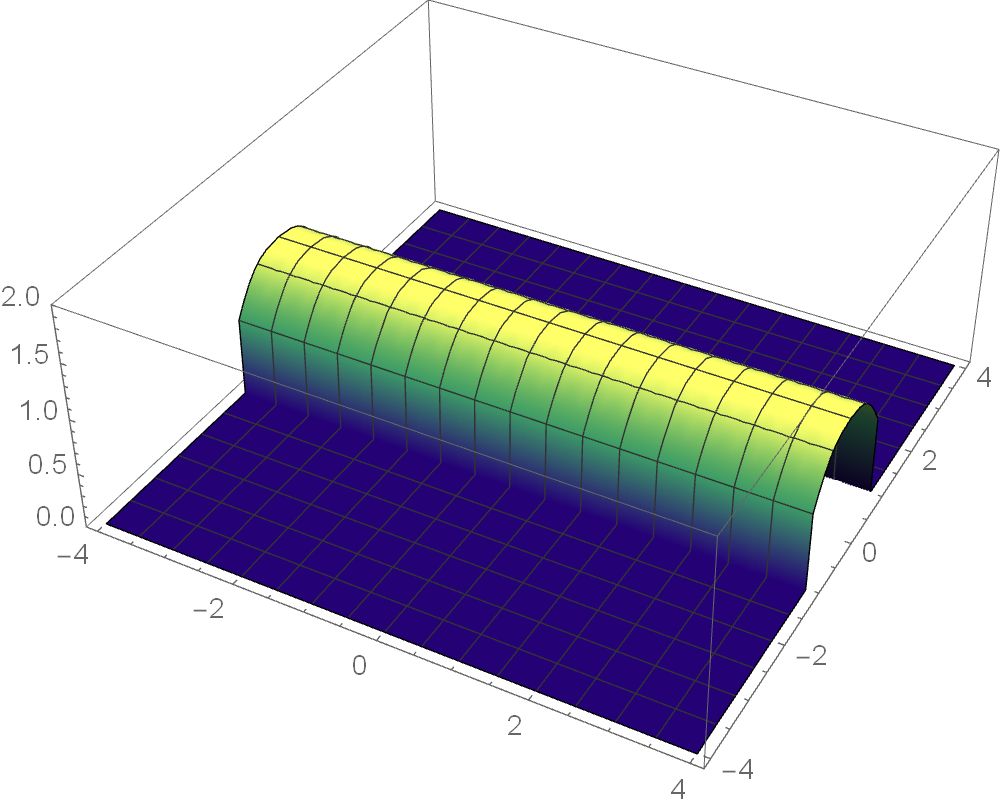}	
		\end{center}
		\caption{$c=1$}
	\end{subfigure}
	\begin{subfigure}[h]{0.32\textwidth}
		\begin{center}
			\includegraphics[width=1.69in,height=1.36in]{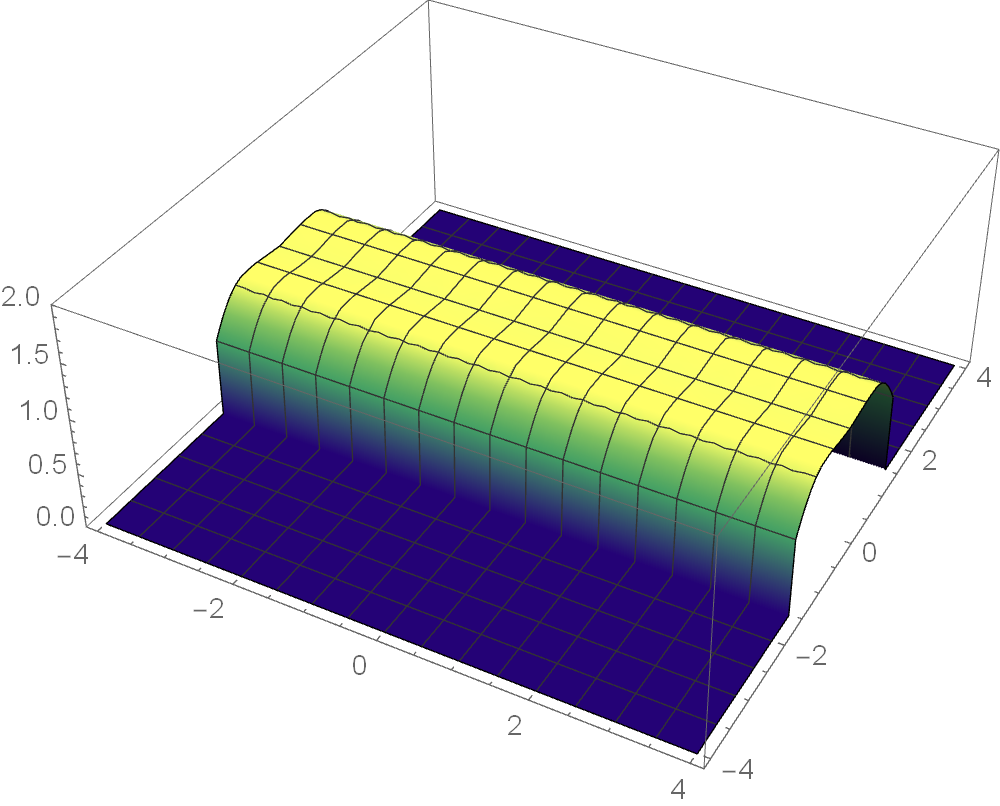}
		\end{center}
		\caption{$c=1.5$}
	\end{subfigure}	
	\begin{subfigure}[h]{0.32\textwidth}
		\begin{center}
			\includegraphics[width=1.69in,height=1.36in]{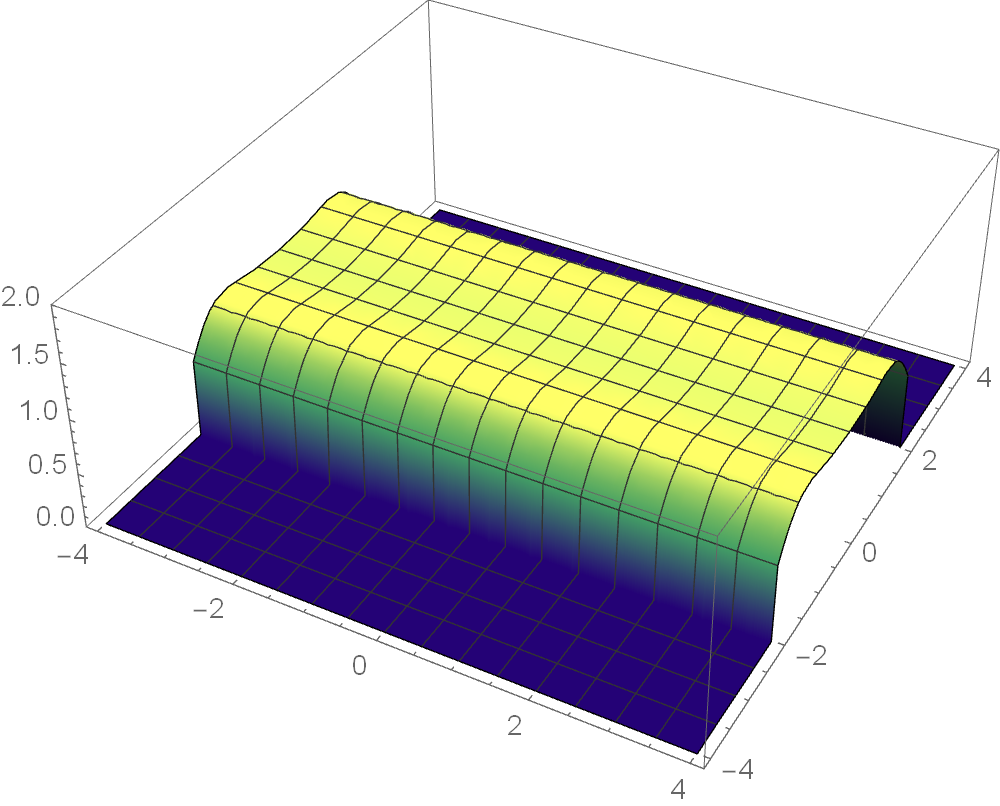}
		\end{center}
		\caption{$c=2$}
	\end{subfigure}	
	\caption{Graphs of natural candidates for limiting $1$-point density $R(z)$ about $p_*=0$.
	}
	\label{Fig_RH}
\end{figure}

\subsection{Further related works} Beyond the topics we study in this work, there have been several investigations on almost-Hermitian regime in various contexts. In \cite{ACV}, a fixed-trace version of
AGUE is studied, which brings about some new challenges since this ensemble is not determinantal.
The paper \cite{J2} studies
the interpolation problem between the Tracy-Widom and
Gumbel distributions.
An almost-Hermitian analogue of the Jacobi unitary ensemble
was recently discovered in \cite{NAKP}.
The bulk statistics for almost-Hermitian Gaussian ensembles in orthogonal and symplectic symmetry classes are found in \cite{FN} and \cite{Kan} respectively, and edge statistics is studied in \cite{AP}. Analogous problems for Laguerre ensembles are investigated in \cite{Ake05,APS}. These works, which construct $2\times 2$ matrix-valued kernels of Pfaffian point processes, use explicit computations with skew-orthogonal polynomials.

We also want to mention the paper \cite{FK}, where a different kind of almost-Hermitian model is introduced, motivated by applications to resonance statistics in quantum chaotic scattering. This model is non-determinantal, but in \cite{FK} it is argued that one can still define determinantal universality limits given by certain explicit kernels. Some rigorous results in this direction were obtained by Kozhan in \cite{Koz}. (We thank Yan Fyodorov for this remark.)

We finally want to mention that in dimensions 1 and 2, various microscopic limits in the determinantal case $\beta=2$ have been used to extract information about other $\beta$-ensembles, and specifically, they were used to study ``freezing transitions'', which occur for large $\beta$, in \cite{AMR,MR} and the references there. As far as we are aware, a rigorous analysis for almost Hermitian $\beta$-ensembles, relying on properties of various explicit kernels found above, has not yet been carried out, and could potentially be quite interesting.

\appendix

\section{Plancherel-Rotach asymptotics for Hermite polynomials} \label{AppHermite}

In this appendix we collect certain asymptotic formulas for Hermite polynomials which are used in our proofs of cross-section convergence in Subsection \ref{SCAGUE} and of translation invariance in Subsection \ref{BAGUE}. In both cases, we fix a point $p\in (-2,2)$, and we look at asymptotics for the scaled Hermite polynomial $H_n(\sqrt{\tfrac N 2}\zeta)$
where $n=N-1$ or $n=N$ and $\zeta$ is in a microscopic neighbourhood of $p$ (of radius proportional to $N^{-1/2}$) or on the vertical line
$p+i\R$. Recall that here $H_n(z)=(-1)^ne^{z^2}\tfrac {d^n}{d z^n}e^{-z^2}$ is the $n$:th ``physicists' Hermite polynomial'', which has leading coefficient $2^n$.

It turns out that our desired estimates can be deduced using strong asymptotic formulas of Plancherel-Rotach type, as given in the paper \cite{Wa}. These formulas look a little different depending on whether $-2<p<0$, $p=0$, or $0<p<2$.

The following partially overlapping asymptotic formulas are known.
By \cite[Corollary 4.1]{Wa}, we have \emph{for $z$ in a neighborhood of $p$, where $0<p<2$,}
\begin{align} \label{herna1}
	\begin{split}
	H_n(\sqrt{\tfrac{n}{2}}z)&=( \tfrac{2n}{e} )^{\frac{n}{2}} (4-z^2)^{-\frac{1}{4} } e^{ \frac{nz^2}{4} }
	\\
	&\times 2 \cos [ (n+\tfrac{1}{2}) \arccos \tfrac{z}{2}-\tfrac{nz}{4}\sqrt{4-z^2}-\tfrac{\pi}{4} ]\cdot(1+o(1)).
		\end{split}
\end{align}
For comparison, note that \cite{Wa} uses monic orthogonal polynomials $\pi_n$ rather than $H_n$, so $H_n=2^n\pi_n$. (There is a similar formula for $-2<p<0$ in \cite{Wa}, which we skip stating here.)

In a microscopic neighbourhood of the origin, we have instead the following Mehler-Heine formula (see \cite[Section 18.11]{OLBC})
\begin{align}\label{herna15}
	\lim_{n \to \infty} \tfrac{(-1)^n \sqrt{n}}{2^{2n} n!} H_{2n}(\tfrac{z}{2\sqrt{n}})=\tfrac{1}{\sqrt{\pi}} \cos z, \quad
	\lim_{n \to \infty} \tfrac{(-1)^n }{2^{2n} n!} H_{2n+1}(\tfrac{z}{2\sqrt{n}})=\tfrac{2}{\sqrt{\pi}} \sin z,
\end{align}
which holds uniformly for $z$ in any compact subset of $\C$.

Finally, for $z$ \emph{in the complement of a neighbourhood of the interval $[-2,2]$}, we have by \cite[Lemma 2.5]{LR} or \cite[Corollary 4.1]{Wa}
\begin{align} \label{herna2}
	\begin{split}
		H_n(\sqrt{\tfrac{n}{2}}z)&=( \tfrac{n}{2e} )^{ \frac{n}{2} } (z+\sqrt{z^2-4})^n (  \tfrac{z+\sqrt{z^2-4}}{2\sqrt{z^2-4}} )^{\frac12} e^{ \frac{n}{4} z(z-\sqrt{z^2-4}) } \cdot(1+o(1)).
	\end{split}
\end{align}

Using \eqref{herna1}, \eqref{herna15} and Stirling's formula, one easily obtains the following lemma.

\begin{lem} \label{herman} Suppose that $|p|\le \alpha$ where $\alpha<2$ and
fix $M>0$. Then for all $z$ with $|z|\le M$, we have
\begin{equation} \label{PR Hermite}
	H_N(\sqrt{\tfrac N 2}\cdot p+\tfrac z {\sqrt{N}}) =
e^{\frac N 4 p^2} 2^{\frac N 2} \sqrt{N!} \cdot N^{-\frac 1 4}\cdot O(1),\qquad (N\to\infty),
\end{equation}
where the $O(1)$-constant may depend on $M$ and $\alpha$.
\end{lem}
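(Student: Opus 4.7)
The plan is to apply the Plancherel-Rotach asymptotic \eqref{herna1} with argument $\tilde p := p + 2z/N$, noting the identity
\[
\sqrt{N/2}\,p + z/\sqrt{N} = \sqrt{N/2}\,\tilde p,
\]
and observing that $\tilde p$ lies in a fixed compact complex neighborhood of $[-\alpha, \alpha] \subset (-2, 2)$ for all $N$ large when $|p| \le \alpha$ and $|z| \le M$. Although \eqref{herna1} is stated for $p \in (0, 2)$, the Plancherel-Rotach asymptotic is uniform on compact subsets of the oscillatory region $(-2, 2)$. For $p<0$ one invokes the symmetric companion formula, and the Mehler-Heine formula \eqref{herna15} can be used to verify the behavior at (and hence uniformity through) $p=0$.

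Having applied the asymptotic, I would estimate each factor on the right side of \eqref{herna1} separately. The leading power is rewritten using Stirling's formula $N! = \sqrt{2\pi N}(N/e)^N (1+o(1))$ as
\[
(2N/e)^{N/2} = 2^{N/2}\sqrt{N!}\cdot(2\pi N)^{-1/4}(1+o(1)) = 2^{N/2}\sqrt{N!}\,N^{-1/4}\cdot O(1),
\]
which supplies the exact combination demanded by the right-hand side of \eqref{PR Hermite}. The factor $(4-\tilde p^2)^{-1/4}$ is bounded since $|\tilde p|$ stays away from $\pm 2$. Expanding $\tilde p^2 = p^2 + 4pz/N + 4z^2/N^2$ yields
\[
e^{N\tilde p^2/4} = e^{Np^2/4}\cdot e^{pz + z^2/N},
\]
and the second factor is $O(1)$ uniformly in the stated ranges because $|pz|\le \alpha M$ and $|z^2/N|\le M^2$. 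Finally, the cosine appearing in \eqref{herna1} is evaluated at an argument of the form $A+iB$ with $|B| = O(N\cdot |\tilde p-p|) = O(M)$, whence $|\cos(A+iB)|\le \cosh(|B|) = O(1)$. Multiplying the four bounds produces the claimed estimate.

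The main obstacle is verifying the uniformity of the Plancherel-Rotach asymptotic across the full range $|p|\le\alpha$, which includes the transition through $p=0$ (where the phase $\arccos(\tilde p/2)$ and oscillation pattern change character). This uniformity is classical — compact subsets of $(-2,2)$ lie strictly inside the oscillatory region — but if one prefers a direct argument, one can patch \eqref{herna1} for $|p|\ge\delta$ with \eqref{herna15} for $|p|<\delta$. In the latter regime a direct application of Mehler-Heine (treating parities $N=2n$ and $N=2n+1$ separately) combined with Stirling confirms the bound, and at $p=0$ both formulas yield the same constant up to $O(1)$, so the patched estimate is uniform.
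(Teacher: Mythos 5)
Your proposal is essentially correct and follows the paper's intended route (apply \eqref{herna1} away from the origin, \eqref{herna15} near the origin, and Stirling's formula for the prefactor), with a careful treatment of the three factors $(2N/e)^{N/2}$, $e^{N\tilde p^2/4}$, and the cosine. One small algebraic slip: from $\sqrt{N/2}\,\tilde p = \sqrt{N/2}\,p + z/\sqrt{N}$ one gets $\tilde p = p + \sqrt{2}\,z/N$, not $p + 2z/N$; consequently $e^{N\tilde p^2/4}=e^{Np^2/4}\,e^{pz/\sqrt{2}+z^2/(2N)}$. This changes none of the $O(1)$ conclusions, and the final bound and the comparison with Stirling are exactly as in the paper.
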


\subsection{Computation for cross-section} Again fix $p$ with $0<p<2$.

Recall from \eqref{juice} that the $\tfrac \d {\d \xi}$-derivative of the 1-point function with respect to potential \eqref{ellipse} obeys
\begin{align}
	\nonumber \tfrac 1 {N^2}\tfrac {\d \bfR_N} {\d \xi}(p+i\tfrac y N)=&-(1+o(1))\tfrac{1}{c\sqrt{2}}  e^{-\frac N 2 p^2-\frac 1 {2 c^{2}} y^2-2 c^2} \tfrac{ 1 }{2^N(N-1)!}
	\\
	&\times \Re \{  H_{N-1}(  \sqrt{\tfrac N 2}(p+i\tfrac{y }{N})  ) H_{N}(  \sqrt{\tfrac N 2}(p-i\tfrac{y }{N})  )  \}.
\end{align}

We use \eqref{herna1} to conclude that there is $\delta=\delta(p)>0$ and $k=k(\delta)>0$ such that when $|y|\le\delta N$, then
\begin{align*}
 H_{N}(  \sqrt{\tfrac N 2}(p-i\tfrac{y }{N})  )  &= ( \tfrac{2N}{e} )^{\frac{N}{2}} e^{\frac{N}{4}p^2 }
 \cdot \cosh (ky)\cdot O(1)
\end{align*}
and
\begin{align*}
	H_{N-1}(  \sqrt{\tfrac N 2}(p+i\tfrac{y }{N})  )&=( \tfrac{2N}{e} )^{\frac{N-1}{2}} e^{\frac{N}{4}p^2 }
\cdot \cosh (ky)\cdot O(1).
\end{align*}

Therefore there is a constant $C$ such that for all $|y|\le N\delta$,
$$
\tfrac 1 {N^2}\tfrac {\pa \bfR_N} {\pa \xi}(\xi+i\tfrac y N) \le C_1e^{ -\frac 1 {2c^2} y^2} \cosh^2(ky),
$$
where $C_1=C_1(C,k)$ is a new constant.

On the other hand, for $y$ such that $|y|/N > \delta$, letting  $z=p-i \tfrac{y}{N}$
\begin{align*}
\quad H_{N}(  \sqrt{\tfrac N 2}(p-i\tfrac{y }{N})  )H_{N-1}(  \sqrt{\tfrac N 2}(p+i\tfrac{y }{N})  )&\le C_2( \tfrac{2N}{e} )^{N-\frac12} |\tfrac{z+\sqrt{z^2-4}}{2}|^{2N-1} e^{ \frac{N}{2} \Re\,[z(z-\sqrt{z^2-4})] }\\
&\le C_3( \tfrac{2N}{e} )^{N-\frac12}(\tfrac {C_4 y} N)^{2N}e^{-\frac {y^2}{2N}}.
\end{align*}

Combining all of the above, we obtain that
\begin{equation}\label{f1p}\tfrac 1 {N^2}\tfrac {\d \bfR_N} {\pa \xi}(p+i\tfrac y N)\le Ce^{ -\frac 1 {2c^2} y^2 }\max\{1,y^{2N}N^{-N}\} \cosh^2(ky).
\end{equation}

This latter estimate \eqref{f1p} is used in Subsection \ref{SCAGUE} to deduce convergence of cross-sections of the AGUE.

\section{Plancherel-Rotach asymptotics for Laguerre polynomials} \label{AppLag}
In this appendix, we explain how to deduce certain asymptotic formulas for Laguerre polynomials, which were used in our proof of translation invariance of bulk scaling limits for the ALUE, in Subsection \ref{BALUE}. More precisely, we shall adapt to our present situation some Plancherel-Rotach type asymptotic formulas from Vanlessen's paper \cite{V}.

Fix a small $\delta>0$ and define a ``bulk region'' by
$$B_\delta=\{z\in\C\, ;\, \delta<\Re\,z< 1-\delta,\, -\delta <\Im\,z<\delta\}.$$
The following lemma is a special case of \cite[Theorem 2.4, (b)]{V}.

\begin{lem} \label{vanl} For $z\in B_\delta$ we have the following form of Plancherel-Rotach asymptotic for Laguerre polynomials, as $n\to\infty$.
\begin{align} \label{PR Lag z}
	\begin{split}
	L_n^\alpha(4n z)&=(4n z)^{-\frac \alpha 2} e^{2nz} (2\pi \sqrt{z(1-z)})^{-\frac 12} n^{-\frac 12} ( \tfrac{(n+\alpha)!}{n!} )^{\frac 12}
	\\
	&\times [ \cos( 2n \sqrt{z (1-z)}  -(2n+\alpha+1)\arccos\sqrt{z} +\tfrac{\pi}{4} )(1+O(1/n))
	\\
	&\quad +\cos( 2n \sqrt{z (1-z)}  -(2n+\alpha-1)\arccos\sqrt{z} +\tfrac{\pi}{4} ) O(1/n) ].
		\end{split}
\end{align}
\end{lem}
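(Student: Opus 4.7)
The plan is to invoke \cite[Theorem 2.4 (b)]{V} and translate Vanlessen's statement for monic orthogonal polynomials into one for the classical Laguerre polynomials $L_n^\alpha$. Vanlessen treats monic polynomials $\pi_n$ orthogonal with respect to a weight of the form $x^\alpha e^{-Q(x)}$ on $[0,\infty)$; specialising to $Q(x)=x$ and performing the rescaling $x=4nz$ places us in the Laguerre setting, with the oscillatory bulk of the support corresponding to $\re z \in (0,1)$.

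First I would translate the normalisation. Since $L_n^\alpha$ has leading coefficient $(-1)^n/n!$, the monic polynomial is $\pi_n=(-1)^n\, n!\, L_n^\alpha$, and the $L^2$-normalised polynomial $\hat\pi_n=\pi_n/\sqrt{n!\,(n+\alpha)!}$ satisfies $L_n^\alpha = (-1)^n \sqrt{(n+\alpha)!/n!}\,\hat\pi_n$. This accounts for the factor $((n+\alpha)!/n!)^{1/2}$ appearing in \eqref{PR Lag z}; the sign $(-1)^n$ is absorbed into the phase of the oscillatory factor by shifting the cosine argument through an integer multiple of $\pi$.

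Second I would match the envelope and phase. The bulk statement in \cite{V} produces an envelope $(4nz)^{-\alpha/2} e^{2nz}(2\pi\sqrt{z(1-z)})^{-1/2}n^{-1/2}$ multiplied by a cosine with principal phase
\[
\phi_n(z)=2n\sqrt{z(1-z)}-(2n+\alpha+1)\arccos\sqrt{z}+\tfrac{\pi}{4},
\]
with relative error $1+O(1/n)$. The first subleading contribution in the Riemann--Hilbert steepest-descent expansion of \cite{V} produces a second cosine with $(2n+\alpha-1)$ in place of $(2n+\alpha+1)$, weighted by $O(1/n)$. Assembling the prefactor, the two cosines, and the normalisation constant above yields precisely the form of \eqref{PR Lag z}.

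The main obstacle is merely verifying that Vanlessen's bulk asymptotic, which is naturally phrased on the real interval $(0,1)$, extends to the complex neighbourhood $B_\delta$. This is handled by observing that both sides of \eqref{PR Lag z} are holomorphic in $z$ on a fixed complex neighbourhood of $[\delta,1-\delta]$, so the real-axis statement extends to $B_\delta$ by analytic continuation together with the local uniform boundedness supplied by \cite[Theorem 2.4]{V}.
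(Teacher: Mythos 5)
Your overall route (invoke Vanlessen's Theorem 2.4\,(b), translate between orthonormal polynomials and $L_n^\alpha$, read off the bulk asymptotic) matches the paper's. The difficulty is that you assert rather than derive the phase. Vanlessen's theorem produces the oscillatory factor
\[
\cos\bigl(\tfrac12(\alpha+1)\arccos(2z-1)-\pi n\int_1^z\psi_n(s)\,ds-\tfrac\pi4\bigr),
\]
where $\int_1^z\psi_n(s)\,ds$ is a potential-dependent integral. For the Laguerre weight ($Q(x)=x$) one must still verify that Vanlessen's density polynomial $H_n$ reduces to the constant $4$, so that $\int_1^z\psi_n(s)\,ds=\tfrac2\pi\bigl(\sqrt{z(1-z)}-\arccos\sqrt z\bigr)$, and then apply the identity $\arccos(2z-1)=2\arccos\sqrt z$ to arrive at $\phi_n(z)=2n\sqrt{z(1-z)}-(2n+\alpha+1)\arccos\sqrt z+\tfrac\pi4$. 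This evaluation and simplification is the entire content of the lemma, and your plan passes over it by simply naming the final phase.

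Two smaller points. First, your claim that $(-1)^n$ ``is absorbed into the phase by shifting through a multiple of $\pi$'' is not correct: the $n$-dependent part of $\phi_n(z)$ is $2n(\sqrt{z(1-z)}-\arccos\sqrt z)$, which contains no $n\pi$ term to soak up $(-1)^n=\cos(n\pi)$. The paper's normalisation relation \eqref{p Lag} is written without a $(-1)^n$; if one is present in Vanlessen's sign convention it would show up as an overall $(-1)^n$ in \eqref{PR Lag z}, which is harmless for the only use of Lemma~\ref{vanl} (the magnitude bound in Lemma~\ref{BAAL}) but cannot be argued away by a phase shift. Second, your closing analytic-continuation argument is superfluous: Vanlessen's Theorem 2.4\,(b) is already stated uniformly on a complex neighbourhood of the bulk interval, which is precisely why the paper can apply it directly for $z\in B_\delta$.
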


\begin{proof}
We explain how to see this formula from \cite[Theorem 2.4, (b)]{V}.
Consider the Laguerre-type weight
$$
w(x)=x^\alpha e^{-x}, \qquad x\in[0,\infty).
$$
We denote that the associated orthonormal polynomials (on $\R$) are given in terms of the Laguerre polynomials $L_n^\alpha$ by
\begin{equation}\label{p Lag}
p_n(x)=( \tfrac{n!}{(n+\alpha)!} )^{\frac 12}\, L_n^\alpha(x).
\end{equation}

Then by \cite[Theorem 2.4, (b)]{V} (in the special case $m=1$, $Q(x)=x$, $A_1=\tfrac 12$, $\beta^{(0)}=4$, $\beta_n=4n$) we have that for $z$ as above,
\begin{align*}
p_n(\beta_n z)&=(\beta_n z)^{-\alpha/2 } e^{ Q(\beta_n z)/2 } \sqrt{\tfrac{2}{\pi \beta_n}} \tfrac{1}{z^{1/4} (1-z)^{1/4}}
\\
&\times [ \cos( \tfrac12 (\alpha+1)\arccos(2z-1)-\pi n \int_{1}^{z} \psi_n (s)\,ds-\tfrac{\pi}{4} )(1+O(1/n))
\\
&\quad +\cos( \tfrac12 (\alpha-1)\arccos(2z-1)-\pi n \int_{1}^{z} \psi_n (s)\,ds-\tfrac{\pi}{4} ) O(1/n) ].
\end{align*}
Here the function $\int_1^z \psi_n(s)\,ds$ is explicitly computable (see \cite[Remark 2.5, 3.14, 3.15]{V}) and the result is
\begin{align*}
\int_1^z \psi_n(s)\,ds= \tfrac{1}{2\pi} H_n(z) \sqrt{z(1-z) } -\tfrac{2}{\pi} \arccos \sqrt{z},
\end{align*}
where $H_n$ is certain polynomial with real coefficients of degree $m-1$, i.e., it is a constant. (Indeed, $H_n$ is the polynomial appearing in the density
$
\tfrac{m}{2\pi} \sqrt{\tfrac{1-x}{x}}\,H_n(x)
$
of the associated equilibrium measure.)

In the special case when $Q(x)=x$, we have $H_n(x)=4$, (again see \cite[Remark 2.3]{V}) thus
$$
\int_1^z \psi_n(s)\,ds= \tfrac{2}{\pi} ( \sqrt{z(1-z)} - \arccos \sqrt{z} ) .
$$
Combining all of the above we obtain that when $Q(x)=x$,
\begin{align*}
	p_n(4n z)&=(4n z)^{-\frac{\alpha}{2}} e^{2nz} (2\pi \sqrt{z(1-z)} )^{-1/2} n^{-1/2}
	\\
	&\times [ \cos( \tfrac12 (\alpha+1)\arccos(2z-1)-2n ( \sqrt{z (1-z)} - \arccos \sqrt{z} ) -\tfrac{\pi}{4} )\cdot (1+O(1/n))
	\\
	&\quad +\cos( \tfrac12 (\alpha-1)\arccos(2z-1)-2n ( \sqrt{z (1-z)} - \arccos \sqrt{z} ) -\tfrac{\pi}{4} ) O(1/n) ].
\end{align*}
Note that since $\arccos(2z-1)=2\arccos \sqrt{z},$ we can simplify above expression as
\begin{align*}
	p_n(4n z)&=(4n z)^{-\alpha/2} e^{2nz} (2\pi \sqrt{z(1-z)} )^{-1/2} n^{-1/2}
	\\
	&\times [ \cos( 2n \sqrt{z (1-z)}  -(2n+\alpha+1)\arccos\sqrt{z} +\tfrac{\pi}{4} )\cdot (1+O(1/n))
	\\
	&\quad +\cos( 2n \sqrt{z (1-z)}  -(2n+\alpha-1)\arccos\sqrt{z} +\tfrac{\pi}{4} ) O(1/n) ].
\end{align*}
Now \eqref{PR Lag z} follows from this and the relation \eqref{p Lag}.
\end{proof}

\subsection{Computation for translation invariance} We now use Lemma \ref{vanl} to verify the following estimate, which is used
in our proof of translation invariance in Subsection \ref{BALUE}. We will use the notation in Subsection \ref{BALUE},
i.e., we fix a point $p_*\in I_\alpha=[\alpha,4-\alpha]$ where $\alpha>0$ is small and we write
$$\zeta(z)=p_*+\tfrac {2c\sqrt{p_*}}{N}z+o(\tfrac 1 N).$$
Moreover, we put $a=\tfrac N {c^2}$ and $b=\tfrac N {c^2}-1$.

\begin{lem}\label{BAAL} In the above setting, we have as $N\to\infty$,
\begin{align*}
	L_{ N+\nu-1 }^{ 1-\nu }(\tfrac{a^2-b^2}{2b}N\,\zeta(z) ) L_{N-1}^{\nu+1}(\tfrac{a^2-b^2}{2b}N\,\bar{\zeta}(z) ) = \tfrac{1}{N}\, e^{\frac{a^2-b^2}{2b}N \frac{\zeta(z)+ \bar{\zeta}(z)}{2}  }\cdot O(1).
\end{align*}
\end{lem}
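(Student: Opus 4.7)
The plan is to apply the Plancherel--Rotach asymptotic \eqref{PR Lag z} from Lemma \ref{vanl} separately to each of the two Laguerre factors, multiply the two expansions, and verify that all $N$-powers collapse to $N^{-1}$ while the exponentials combine correctly. To set up parameters, observe that $a-b=1$ and $a+b=2N/c^2-1$, so $\tfrac{a^2-b^2}{2b}N = N+O(1)$ as $N\to\infty$. Writing
\begin{equation*}
\tfrac{a^2-b^2}{2b}N\,\zeta(z)=4n_1 X_1,\qquad \tfrac{a^2-b^2}{2b}N\,\bar\zeta(z)=4n_2 X_2
\end{equation*}
with $n_1=N+\nu-1$ and $n_2=N-1$, and using $\zeta(z)=p_*+O(1/N)$ uniformly for $z$ in a compact subset of $\C$, one finds $X_j=p_*/4+O(1/N)$. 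Since $p_*\in[\alpha,4-\alpha]$, for $\delta:=\alpha/8$ both $X_1,X_2$ lie in the bulk region $B_\delta$ for all large $N$.

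Next, I would apply \eqref{PR Lag z} with $n=n_j$. The product of the two exponential factors combines into
\begin{equation*}
e^{\,2n_1 X_1+2n_2 X_2}=e^{\tfrac{a^2-b^2}{4b}N\,(\zeta(z)+\bar\zeta(z))}=e^{\tfrac{a^2-b^2}{2b}N\cdot\tfrac{\zeta(z)+\bar\zeta(z)}{2}},
\end{equation*}
which is exactly the exponential required by the statement. For the algebraic prefactors, the product of the two power factors $(4n_j X_j)^{-\alpha_j/2}$ is of order $N^{-(1-\nu)/2-(\nu+1)/2}=N^{-1}$; the two factorial ratios $((n_j+\alpha_j)!/n_j!)^{1/2}$ contribute a total of order $N^{(1-\nu)/2+(\nu+1)/2}=N$; and the explicit $n_j^{-1/2}$ factors multiply to $N^{-1}$. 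Thus the polynomial powers collapse to $N^{-1}$, while the bounded quantities $(2\pi\sqrt{X_j(1-X_j)})^{-1/2}$ absorb into the $O(1)$ constant.

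It remains to control the two cosine brackets in \eqref{PR Lag z}. Their arguments have the form $2n\sqrt{X(1-X)}-(2n+\alpha\pm 1)\arccos\sqrt{X}+\tfrac{\pi}{4}$, which is real (so that the cosine is bounded by $1$) when $X\in(0,1)$. Writing $X_j=p_*/4+\varepsilon_j$ with $\varepsilon_j=O(1/N)$ and using analyticity of $\sqrt{z(1-z)}$ and $\arccos\sqrt{z}$ on $B_\delta$, the imaginary part of the argument is of order $n_j\varepsilon_j=O(1)$, so each cosine stays uniformly bounded on the chosen compact set of $z$.

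The main technical nuisance is that Lemma \ref{vanl}, as stated, requires the Laguerre index $\alpha>-1$, whereas the first factor carries index $1-\nu$, which is non-positive once $\nu\ge 1$ and strictly below $-1$ once $\nu\ge 2$. This is circumvented by the classical identity
\begin{equation*}
L_n^{-k}(x)=(-x)^k\,\tfrac{(n-k)!}{n!}\,L_{n-k}^{\,k}(x),\qquad k\in\{0,1,2,\dots\},\; k\le n,
\end{equation*}
immediate from the explicit series. Applied with $n=N+\nu-1$ and $k=\nu-1$ it yields $L_{N+\nu-1}^{1-\nu}(w)=(-w)^{\nu-1}\tfrac{N!}{(N+\nu-1)!}L_N^{\nu-1}(w)$, reducing the problem to $L_N^{\nu-1}$, to which Lemma \ref{vanl} applies directly with $n=N$; the extra prefactor $(-w)^{\nu-1}N!/(N+\nu-1)!$ is of order $1$ (since $w\asymp N$ and $N!/(N+\nu-1)!\asymp N^{1-\nu}$) and is absorbed into the final $O(1)$ term.
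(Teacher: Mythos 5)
Your argument is correct and follows essentially the same route as the paper: apply the Plancherel--Rotach asymptotic \eqref{PR Lag z} to each Laguerre factor, multiply, combine the exponentials, and verify that the $N$-powers coming from the $(4nz)^{-\alpha/2}$ factors, the factorial ratios, and the explicit $n^{-1/2}$ factors collapse to $N^{-1}$. You additionally notice, and repair via the identity $L_n^{-k}(x)=(-x)^k\,\tfrac{(n-k)!}{n!}\,L_{n-k}^{\,k}(x)$, a technical point that the paper's proof leaves implicit: Vanlessen's theorem is stated for Laguerre parameter $\alpha>-1$, while the first factor carries index $1-\nu\le -1$ as soon as $\nu\ge 2$, so the direct invocation of Lemma \ref{vanl} needs precisely the reduction you supply.
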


\begin{proof}
Recall that we have for $p_*\in I_\alpha$ we have
$
\tfrac{a^2-b^2}{2b}N \zeta =Np_*+\tfrac{c^2}{2}p_*+2c\sqrt{p_*}\,z+O(N^{-1}).
$
Hence by \eqref{PR Lag z} together with the asymptotic
$( \tfrac{(n+\alpha)!}{n!} )^{\frac 12}=n^{\frac \alpha 2} \cdot (1+o(1)),$
we obtain
\begin{align}
	\begin{split}
		L_n^\alpha(4n z)&=(4n z)^{-\alpha/2} e^{2nz} (2\pi \sqrt{z(1-z)})^{-1/2} n^{\alpha/2-1/2} \cdot O(1)
		\\
		&= n^{-1/2} e^{2nz} \cdot O(1).
	\end{split}
\end{align}
Setting $n=N+\nu-1$ and then $n=N-1$ we obtain
\begin{align*}
L_{ N+\nu-1 }^{ 1-\nu }(\tfrac{a^2-b^2}{2b}N\,\zeta(z) ) &=N^{-1/2} e^{\frac{a^2-b^2}{2b}N \frac{\zeta(z)}{2}  } \cdot O(1)
\\
L_{N-1}^{\nu+1}(\tfrac{a^2-b^2}{2b}N\,\bar{\zeta}(z) ) &=N^{-1/2} e^{\frac{a^2-b^2}{2b}N \frac{\bar{\zeta}(z)}{2}  } \cdot O(1),
\end{align*}
which completes the computation.
\end{proof}

\subsection*{Data availability} Data sharing not applicable to this article as no datasets were generated or analysed during the current study.

\subsection*{Conflict of interest statement} We have no conflicting interests.

\end{document}